\numberwithin{equation}{section}
\newtheorem{theo}{Theorem}[section]
\newtheorem{lem}[theo]{Lemma}
\newtheorem{pro}[theo]{Proposition}
\newtheorem{cor}[theo]{Corollary}
\newtheorem{defi}[theo]{Definition}
\newtheorem{Rk}[theo]{Remark}
\newtheorem{Not}[theo]{Notation}
\newtheorem{Conjecture}[theo]{Conjecture}
\newtheorem{Opro}[theo]{Open Problem}
\def\({\left(}
\def\){\right)}
\def\l{\left}
\def\r{\right}
\def\bdf{\begin{defi}}
\def\edf{\end{defi}}
\def\bnt{\begin{Not}}
\def\ent{\end{Not}}
\def\bth{\begin{theo}}
\def\eth{\end{theo}}
\def\bpp{\begin{pro}}
\def\epp{\end{pro}}
\def\bpf{\begin{proof}}
\def\epf{\end{proof}}
\def\blm{\begin{lem}}
\def\elm{\end{lem}}
\def\bcor{\begin{cor}}
\def\ecor{\end{cor}}
\def\brk{\begin{Rk}}
\def\erk{\end{Rk}}
\def\bcs{\begin{cases}}
\def\ecs{\end{cases}}
\def\bfig{\begin{picture}}
\def\efig{\end{picture}}
\def\beq{\begin{equation}}
\def\eeq{\end{equation}}
\def\ben{\begin{enumerate}}
\def\een{\end{enumerate}}
\def\bal{\begin{aligned}}
\def\eal{\end{aligned}}
\def\bmt{\left[\begin{array}}
\def\emt{\end{array}\right]}
\def\beqa{\begin{equation}\begin{aligned}}
\def\eeqa{\end{aligned}\end{equation}}
\def\-{\setminus}
\def\0{\emptyset}
\renewcommand{\th}{\theta}
\newcommand{\al}{\alpha}
\newcommand{\da}{\dagger}
\newcommand{\cA}{\mathcal{A}}
\newcommand{\cM}{\mathcal{M}}
\newcommand{\cR}{\mathcal{R}}
\newcommand{\cG}{\mathcal{G}}
\newcommand{\cN}{\mathcal{N}}
\newcommand{\R}{\mathbb{R}}
\newcommand{\Z}{\mathbb{Z}}
\newcommand{\C}{\mathbb{C}}
\newcommand{\cC}{\mathcal{C}}
\newcommand{\bk}{\mathbf{k}}
\newcommand{\bK}{\mathbf{K}}
\newcommand{\sH}{\mathsf{H}}
\newcommand{\sa}{\mathsf{a}}
\newcommand{\sC}{\mathsf{c}}
\newcommand{\sP}{\mathsf{P}}
\newcommand{\sR}{\mathsf{R}}
\newcommand{\sY}{\mathsf{Y}}
\newcommand{\sV}{\mathsf{V}}
\newcommand{\sA}{\mathsf{A}}
\newcommand{\sG}{\mathsf{G}}
\newcommand{\sx}{\mathsf{x}}
\newcommand{\sy}{\mathsf{y}}
\newcommand{\sk}{\mathsf{k}}
\newcommand{\sK}{\mathsf{K}}
\newcommand{\N}{\mathbb{N}}
\newcommand{\om}{\omega}
\newcommand{\bt}{\beta}
\newcommand{\dt}{\delta}
\newcommand{\lb}{\lambda}
\newcommand{\eps}{\varepsilon}
\newcommand{\T}{\mathbb{T}}
\def\bs{\boldsymbol}
\title[Arnold diffusion]{Arnold diffusion in nearly integrable Hamiltonian systems of arbitrary degrees of freedom}
\author{Chong-Qing Cheng and Jinxin Xue}
\address{Department of mathematics, Nanjing Univerisity, Nanjing 210093, China}
\email{chengcq@nju.edu.cn.}
\address{Department of Mathematics \& Yau Mathematical Sciences Center, Tsinghua University, Beijing 100084, China}
\email{jxue@mail.tsinghua.edu.cn}
\begin{document}
\maketitle
\begin{abstract} In this paper Arnold diffusion is proved to be a generic phenomenon in the smooth categroy for nearly integrable convex Hamiltonian systems with arbitrarily many degrees of freedom:
$$
H(x,y)=h(y)+\eps P(x,y), \qquad x\in\mathbb{T}^n,\ y\in\mathbb{R}^n,\quad n\geq 3.
$$
Under typical perturbation $\eps P$, the system admits ``connecting" orbit that passes through any finitely many prescribed small balls in the same energy level $H^{-1}(E)$ provided $E>\min h$.
\end{abstract}
\begin{spacing}{0.5}
\tableofcontents
\end{spacing}
\renewcommand\contentsname{Index}
\section{ Introduction}
\setcounter{equation}{0}
In this paper, we consider nearly integrable Hamiltonian systems of the form
\begin{equation}\label{EqHam}
H(x,y)=h(y)+\eps P(x,y),\quad (x,y)\in T^*\T^n,\quad n\geq 3.
\end{equation}
where $h$ is strictly convex, namely, the Hessian matrix $\frac{\partial^2h}{\partial y^2}$ is positive definite. It is also assumed that both $h$ and $P$  are $C^r$-function with $7\leq r\leq \infty$ and $\min h=0$.

The problem of studying the (in)stability of the above system $H$ was considered to be the fundamental problem of Hamiltonian dynamics by Poincar\'e. According to the celebrated KAM theorem, there exists a large measure Cantor set of Lagrangian tori on which the dynamics is conjugate to irrational rotations and the oscillation of the slow variable (or called action variable) $y$ is at most $O(\sqrt\eps)$. The KAM theorem also excludes the possibility of large oscillation of $y$ in the case of $n=2$ since each energy level, which is three dimensional, is laminated by two dimensional KAM tori and each orbit either stays on a KAM torus or is confined between two tori.

For $n\geq 3$, there does not exist topological obstruction for the slow variables $y$ to have $O(1)$ oscillation. Arnold was the first one who had realized such instability \cite{A63} and constructed the first example in \cite{A64} half a century ago
\beq\label{EqArnold}
H(I,\th,y,x,t)=\dfrac{I^2}{2}+\dfrac{y^2}{2}+\eps(\cos x+1)(1+\mu(\cos\th+\sin t)),
\eeq
where there are orbits giving rise to large oscillations of the action variable $I$. Although the perturbation is far from being typical, Arnold still proposed

\begin{Conjecture}[\cite{A66}] The ``general case" for a Hamiltonian system \eqref{EqHam} with $n\ge 3$ is represented by the situation that for an arbitrary pair of neighborhood of tori $y=y'$, $y=y''$, in one component of the level set $h(y)=h(y')$ there exists, for sufficiently small $\eps$, an orbit intersecting both neighborhoods.
\end{Conjecture}

In this paper, we prove the conjecture in the smooth category in the sense of cusp-residual genericity for nearly integrable convex Hamiltonian systems of $n\geq 3$ degrees of freedom. To state our result, let us introduction some notations and definitions.

By adding a constant to $H$ and introducing a translation $y\to y+y_0$, one can assume $\min h(y)=h(0)=0$. For $E>0$, let $H^{-1}(E)=\{(x,y):H(x,y)=E\}$ denote the energy level set, and $B\subset\mathbb{R}^n$ denote a ball in $\mathbb{R}^n$ such that $\bigcup_{E'\le E+1}h^{-1}(E')\subset B$. Let $\mathfrak{S}_a,\mathfrak{B}_a\subset C^r(\mathbb{T}^n\times B)$ denote a  sphere and a ball with radius $a>0$ respectively: $F\in\mathfrak{S}_a$ if and only $\|F\|_{C^r}=a$ and $F\in\mathfrak{B}_a$ if and only $\|F\|_{C^r}\le a$. They inherit the topology from $C^r(\mathbb{T}^n\times B)$. For a perturbation $P$ independent of $y$ (for instance, in classical mechanical systems), we use the same notation $\mathfrak{S}_a,\mathfrak{B}_a\subset C^r(\mathbb{T}^n)$ to denote a sphere and a ball with radius $a>0$.

\begin{defi}
Let $\mathfrak{R}_a$ be a set open-dense in $\mathfrak{S}_a$, each $P\in\mathfrak{R}_a$ is associated with a set $R_P$ residual in the interval $[0,a_P]$ with $a_P\le a$. A set $\mathfrak{C}_a$ is said \emph{cusp-residual} in $\mathfrak{B}_a$ if
$$
\mathfrak{C}_a=\{\lambda P:P\in\mathfrak{R}_a,\lambda\in R_P\}.
$$
\end{defi}
Let $\Phi_H^t$ denote the Hamiltonian flow determined by $H$. Given an initial value $(x,y)$, $\Phi_H^t(x,y)$ generates an orbit of the Hamiltonian flow $(x(t),y(t))$.  An orbit $(x(t),y(t))$ is said to {\it visit} $B_{\varrho}(y_0)\subset\mathbb{R}^n$ if there exists $t\in\mathbb{R}$ such that $y(t)\in B_{\varrho}(y_0)$ a ball centered at $y_0$ with radius $\varrho$. Our main theorem is as follows.
\begin{theo}
\label{ThmMain} Given any small $\varrho>0$, there exists $\eps_0$, such that given finitely many small balls $B_{\varrho}(y_i)\subset \mathbb{R}^n$, where $y_i\in h^{-1}(E)$ with $E>\min h$, there exists a cusp-residual set $\mathfrak{C}_{\eps_0}\subset C^r(\mathbb{T}^n\times B)$ with $7\leq r\leq\infty$ such that for each $\eps P\in\mathfrak{C}_{\eps_0}$, the Hamiltonian flow $\Phi_H^t$ admits orbits which visit the balls $B_{\varrho}(y_i)$ in any prescribed order. Moreover, the theorem still holds if we replace the function space $C^r(\mathbb{T}^n\times B)$ by $C^r(\T^n).$
\end{theo}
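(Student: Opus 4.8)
The plan is to translate the statement into a question about action-minimizing orbits and then to run Mather's variational mechanism on the energy shell. First I would fix the energy value $E$ and, via the Legendre transform together with the Maupertuis principle, replace the study of $\Phi_H^t$ on $H^{-1}(E)$ by the study of $c$-minimal orbits of the autonomous Tonelli Lagrangian $L(x,\dot x)$ dual to $H$, organized by the cohomology class $c\in H^1(\T^n,\R)\cong\R^n$. This puts at one's disposal Mather's $\alpha$-function $\alpha(c)$ (the effective Hamiltonian), the Mather, Aubry and Ma\~n\'e sets $\tilde\cM(c)\subset\tilde\cA(c)\subset\tilde\cN(c)$, and the forcing (or $c$-equivalence) relation on the flat $\{\alpha=E\}$. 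The key structural input is Mather's theorem that if $c$ and $c'$ lie in the same forcing class then there is an orbit whose $\alpha$- and $\omega$-limit sets meet $\tilde\cA(c)$ and $\tilde\cA(c')$ respectively, and that this relation chains along finite sequences of classes. Since for small $\eps$ each prescribed ball $B_\delta(y_i)$ contains (pieces of) Aubry sets for a whole range of classes $c$ near $y_i$ with $\alpha(c)=E$, it then suffices to construct a continuous path $\Gamma$ in $\{\alpha=E\}$ joining these ranges such that any two sufficiently nearby classes on $\Gamma$ are $c$-equivalent; concatenating over the prescribed order of the $y_i$ yields the required visiting orbit.

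\textbf{The diffusion path and its regular pieces.} Next I would design $\Gamma$ inside the resonance web of the frequency map $\om=\partial h/\partial y$. Away from resonances, and along single-resonance segments $\langle\bk,\om\rangle=0$ that stay a definite distance from double resonances, the averaged $\eps$-perturbed system carries a two-dimensional normally hyperbolic invariant cylinder, and the Aubry sets of the relevant classes sit on it. On such pieces the situation reduces essentially to the a priori unstable case, so I would import the a priori unstable variational mechanism: for a residual set of perturbations the Ma\~n\'e sets along the cylinder have no superfluous branches and only isolated bifurcation values of $c$, whence $c$-equivalence holds along the cylinder and $c$ can be pushed along the whole single-resonance corridor. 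The path $\Gamma$ is built by threading such corridors, and it must leave the safe region only at the finitely many double-resonance points where it switches from one resonant lattice $\bk_1$ to another $\bk_2$. Resonances of multiplicity $\ge 3$ have codimension $\ge 3$, so a generic choice of $\Gamma$ avoids them for every $n\ge 3$; this is why the number of degrees of freedom introduces no new resonant phenomenon, and the only genuinely new mechanism beyond the a priori unstable theory is the crossing of strong double resonances.

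\textbf{Crossing a strong double resonance: the main obstacle.} This is the heart of the matter and the step I expect to be hardest. Near a double-resonance point $y_*$ one rescales and passes to the slow system, which after a linear symplectic normalization is a mechanical system $\tfrac12\langle Ap,p\rangle+V(q)$ on $T^*\T^2$ with $V$ depending on the perturbation $P$. Here the cylinder of the single-resonance analysis degenerates and must be reconstructed from the minimal geodesics of the Jacobi metric $(E_*-V)\langle A^{-1}dq,dq\rangle$, which I would analyze by homology class $h\in H_1(\T^2,\Z)$: for $E_*>\max V$ the geodesic flow is Tonelli and the Mather sets vary nicely with $h$; at $E_*=\max V$ the minimizers run through the hyperbolic fixed point(s) of $V$ and one must control homoclinic and heteroclinic channels; for $E_*$ slightly below $\max V$ one uses the pendulum-like normal form. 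The goal is to show that, for a residual set of $V$, the homology classes realized by minimal or almost-minimal geodesics through the double resonance form a connected family linking the homology direction of the incoming corridor $\bk_1$ to that of the outgoing corridor $\bk_2$, with $c$-equivalence holding along this family. The real difficulties are: (i) the matching of the rescaled local picture to the two single-resonance corridors, where the reconstructed cylinder is defined only on a half-neighborhood and its boundary behaviour must be controlled; (ii) the bifurcation as $E_*\downarrow\max V$, which forces a delicate analysis of the barrier functions near the separatrices; and (iii) checking that the required non-degeneracy (a unique non-degenerate maximum of $V$, a unique non-degenerate minimal geodesic in each needed class, transversality of the relevant homoclinic channels, isolated bifurcation values) is a residual condition --- here one must use that, because the balls $B_\delta(y_i)$ are finitely many and $h^{-1}(E)$ is compact, only finitely many corridors and double-resonance points enter $\Gamma$, so that finitely many transversality requirements suffice.

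\textbf{Genericity and assembly.} Finally I would cast the genericity into the cusp-residual form of the statement. For a fixed $P$, each of the finitely many non-degeneracy conditions entering the chosen path fails only on a first-category subset of the parameter $\eps$, so they hold simultaneously for $\eps$ in a residual subset $R_P\subset[0,a_P]$; letting $P$ range over a residual set $\mathfrak R_{\eps_0}\subset\mathfrak S_{\eps_0}$ on which the $P$-side transversality holds produces the cusp-residual set $\mathfrak C_{\eps_0}$. For the last assertion of the theorem, perturbations independent of $y$, I would verify that each perturbative and transversality argument above --- the slow potential $V$ at every double resonance and the splitting data along every corridor --- already depends on sufficiently many Fourier modes of $P\in C^r(\T^n)$ to realize the needed genericity, so the entire scheme carries over verbatim with variations taken in $C^r(\T^n)$.
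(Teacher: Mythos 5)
Your outline is essentially the $n=3$ (two-and-a-half or three degrees of freedom) strategy, and it breaks down exactly where this paper's work begins. The decisive error is the claim that single-resonance corridors carry two-dimensional NHICs and that resonances of multiplicity $\ge 3$ can be avoided by a generic choice of the path. For $n\ge 4$ a single resonance $\langle\bk,\omega\rangle=0$ leaves $n-1$ fast angles, so the system restricted to the normally hyperbolic manifold still has $n-1$ degrees of freedom; the invariant cylinder has dimension $2(n-1)$, its Aubry sets are generically $(n-1)$-dimensional tori, and the twist-map Aubry--Mather machinery of the a priori unstable case does not apply there. To reduce to a genuinely two-dimensional cylinder one must place the frequency on an $(n-2)$-fold resonance, which forces the diffusion path to live \emph{on}, not away from, high-multiplicity resonant submanifolds; connecting two arbitrary balls then unavoidably passes through complete ($(n-1)$-fold) resonances. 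The paper handles this by choosing frequency segments $\omega_a=\lambda_a(a,\frac{P}{Q},\frac{p}{q},\hat\omega^*_{n-3})$ carrying a hierarchy $|\bk^1|\ll\cdots\ll|\bk^{n-2}|$ of resonant vectors and performing $n-3$ successive KAM normal forms and reductions of order (Sections 2--4); nothing in your scheme plays this role, and the ``finitely many transversality requirements'' you invoke do not survive once the corridors are $2(n-1)$-dimensional.

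The second missing idea is the crossing of the complete resonance itself. Even after reduction, the local model near a strong resonance is a mechanical system on $T^*\T^2$ coupled (after a shear transformation) to an integrable block in the remaining $n-3$ action variables, and the two arms of the channel $\mathbb{C}$ meet the flat $\mathbb{F}_0$ of the $\alpha$-function at different values of the last $n-3$ cohomology components. A path of $c$-equivalence turning around $\mathbb{F}_0$ keeps those components fixed, so for $n>3$ it cannot join the two arms; the paper's ladder construction (Section 6), together with the $1/3$- and $1/6$-H\"older parametrization of weak KAM solutions (Section 9) needed to make the ladder generic, is introduced precisely to move those components. Your proposal has no mechanism for this, so the chain of $c$-equivalences you describe terminates on one side of each complete resonance. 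Relatedly, your genericity discussion misses why the set is only cusp-residual with a multiple-filtration structure: the normal-hyperbolicity thresholds at each of the $n-2$ stages of reduction depend on the previous stages and on $P$, and cannot be made uniform in $\eps$.
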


In recent years, it has become clear that Arnold diffusion is a typical phenomenon in so called {\it a priori} unstable systems, which are small perturbations of compound pendulum-single rotator system. There are many works studying this problem based on two streams of methods: the variational method (c.f. \cite{Be2,CY1,CY2,LC}) and the geometric method (c.f. \cite{DLS06,DLS13,Tr}). With the variational method, the genericity of perturbations was established in \cite{CY1,CY2}, which relies on the existence of a normally hyperbolic invariant cylinder (NHIC) of dimension two given by the {\it a priori} unstable condition, as well as a parametrization of all weak KAM solutions into a H\"older family. 

Nearly integrable Hamiltonian systems like \eqref{EqHam} are also called {\it a priori} stable systems. %Arnold diffusion for $n=3$ case was announced by Mather \cite{M04}. The first published proof is due to the first author \cite{CZ1,C17a,C17b,C18}. 
It was known to Arnold \cite{A66} that the first thing to do is to study the dynamics around double resonances. 
\begin{defi}A frequency $\omega(y)=\frac{\partial h}{\partial y}\neq 0$ is said to admit \emph{a resonance relation}, if there exists an integer vector $\bk\in \Z^n\setminus\{0\}$ such that $\langle\bk,\omega(y)\rangle=0$ at the point $y$. The number of linearly independent resonance relations is called the \emph{multiplicity} of the resonance. A nonzero frequency is called a \emph{complete resonance point} if the multiplicity is $n-1$.
\end{defi}
Away from strong double resonance, certain normally hyperbolic invariant cylinders can be found so that diffusing orbit can be constructed as in the \emph{a priori} unstable case. See \cite{BKZ} for the existence of NHICs $O(\eps^{1/4})$-away from double resonances and \cite{CZ1,C17a} for the NHICs $o(\sqrt\eps)$ away from double resonances. Without studying how to pass through these neighborhoods, it would be impossible to construct orbits which can drift for large scale. In \cite{C17b}, the first author analyzes dynamics around strong double resonances in details and discovers a mechanism of skirting around the strong double resonance (see Figure \ref{fig11}), hence proves the Arnold diffusion conjecture in the smooth category in the sense of cusp-residual genericity for nearly integrable convex Hamiltonian systems of three degrees of freedom (c.f. \cite{C17b,C19}). There is another mechanism suggested by Mather, the phase space dynamics of which in our understanding is to move along the NHIC with single homology class to the zeroth energy level, next to jump along a heteroclinic orbit to the hyperbolic fixed point corresponding to the double resonance, and then to jump along another heteroclinic orbit to another NHIC with opposite homology class. For NHIC with compound type homology class, an extra jump to a NHIC of single homology class is needed on energy levels slightly above zero. There are two groups of people working on details of this approach, for which we refer the readers to the preprints \cite{KZ1,GM,Mo}.% and announcement \cite{KZ3}.

In the case of $n>3$, the main theme of this paper, we again find NHICs $\sqrt\eps$-away from the complete resonance (resonance with multiplicity $n-1$) and study the dynamics within a $\sqrt\eps$-neighborhood of the complete resonance. 
First, away from the complete resonance, using a scheme of reduction of order, we find two dimensional NHICs restricted to which the time-1 map of the system is a twist map and construct diffusing orbits as in {\it a priori} unstable systems. The scheme of order reduction shares some similarity with \cite{KZ3} appeared earlier than us, though our construction is straightforward and explicit. The idea of the scheme is to consider frequency path along which there are at least $(n-2)$ linearly independent resonant integer vectors $\bk',\bk'',\ldots,\bk^{(n-2)} \in \Z^n$ forming a hierarchy $|\bk^{(i)}|\ll|\bk^{(i+1)}| ,\  i = 1,\ldots,n-3$ except that for finitely many points, there are $(n-1)$ linearly independent resonant integer vectors forming such a hierarchy $|\bk^i|\ll|\bk^{i+1}| ,\  i\neq j$, with two vectors having comparable lengths. We show that for any two balls in the frequency space of a given energy level, there is such a frequency curve with a hierarchy structure shadowing a path with Diophantine property (Lemma \ref{LmAllDiop}).
 It is a folklore that Fourier modes in the span$_\Z\{\bk\}$ appears in the KAM normal form if $\bk$ is a resonance relation to the frequency $\omega(y)$ (Section \ref{sct:NormalForm}). In other words, ``resonance produces pendulum".
 The hierarchy structure allows us to treat the Fourier modes in span$_\Z\{\bk',\bk'',\ldots,\bk^{(i+1)}\}$ as a small perturbation of the subsystem depending only on Fourier modes in span$_\Z\{\bk',\bk'',\ldots,\bk^{(i)}\}$. NHICs can always be found in the latter by looking for the unstable equilibrium of the pendulum. With the persistence and  symplecticity of the NHICs (\cite{DLS08}), we restrict the Hamiltonian to the NHICs to get a system of less degrees of freedom. By repeated reduction of order utilizing the hierarchy structure, we eventually obtain two dimensional NHICs.
To construct diffusing orbit along the NHICs, we adapt the Arnold mechanism to the hierarchy structure to allow ``incomplete intersections". Namely, the (un)stable ``manifolds" of these Aubry-Mather sets do not always need to intersect {\it transversally} in order to implement Arnold's mechanism. Instead, sometimes it is enough for them to split along some but not all directions (see Appendix \ref{SVariation} for detailed formulations and proofs).

Second, the dynamics near the complete resonance, without knowing the existence of NHICs, is much more delicate. In particular, repeated order reductions are not allowed near complete resonance due to the lack of regularity of the NHICs after the first step of order reduction (generically only $C^{1+}$ by the theorem \ref{NHIM} of NHIM). The mechanisms of crossing the double resonance in the $n=3$ case are not sufficient to cross the complete resonance here. Indeed, when viewed in the space of cohomological classes, the two channels corresponding to two NHICs in the phase space that we would like to find orbits to connect typically have a misalignment in the extra dimensions so that they cannot be connected by the paths constructed in \cite{C17b} (Figure \ref{fig1}). To overcome this difficulty, we find a mechanism, which is essentially an autonomous version of Arnold's mechanism (Lemma \ref{ladder} and Remark \ref{RkLadder}), to bridge the channels complementary to the paths obtained by the mechanism of \cite{C17b} (the blue path of Figure \ref{fig1}). To implement the genericity argument of \cite{CY1,CY2}, we need to parametrize all the weak KAM solutions of a subsystem of two degrees of freedom on a fixed energy level into a H\"older family, which is done in a separate paper \cite{CX}.

\begin{figure}[htp] % use float package if you want it here
  \centering
  \includegraphics[width=7.8cm,height=4cm]{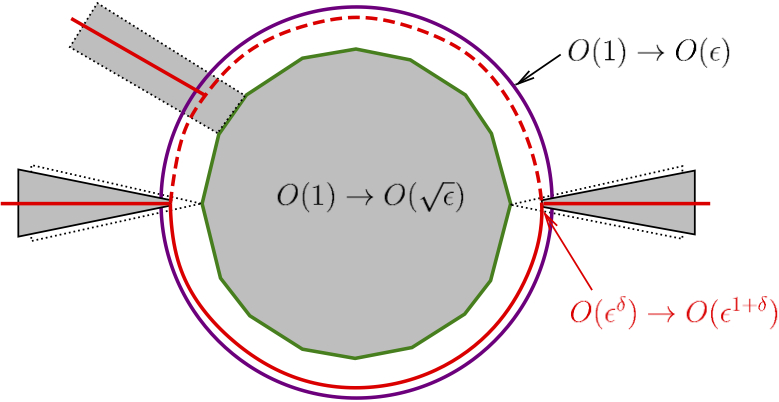}
  \caption{The $n=3$ case, (red) curves of cohomology equivalence}
  \label{fig11}
\end{figure}
\begin{figure}[htp] % use float package if you want it here
  \centering
  \includegraphics[width=8.8cm,height=3.5cm]{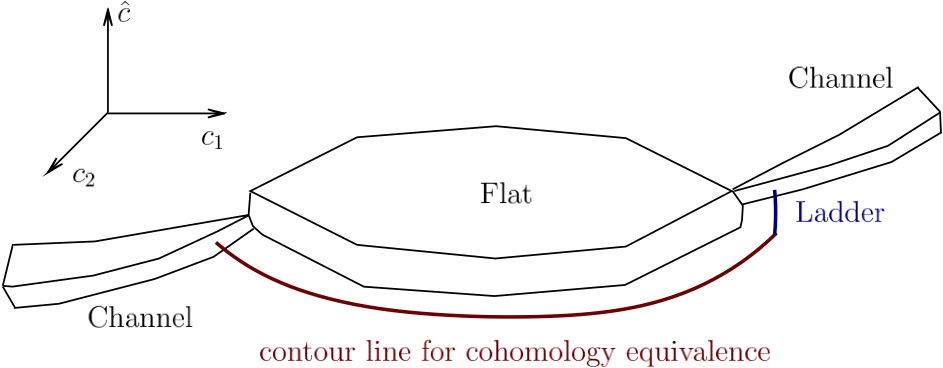}
  \caption{The $n>3$ case, the pizza and the ladder climbing}
  \label{fig1}
\end{figure}

To summarize, we have invoked and introduced the following mechanisms to construct diffusing orbits in this paper:
\begin{enumerate}
\item A mixture of Arnold (\cite{A64}) and Mather (\cite{M91}) mechanisms in the presence of NHIC as in {\it a priori} unstable systems whose genericity is established in \cite{CY1,CY2},
\item the mechanism of $c$-equivalence introduced by the first author in \cite{C17b} near strong double resonance,
\item a new mechanism which is autonomous version of (1) (see Lemma \ref{ladder} and Remark \ref{RkLadder}) for multiple resonances complementary to (2). 
\end{enumerate}

A main disadvantage of our proof is that the speed of the diffusing orbits gets slowed down as the dimension $n$ gets larger, which is unnatural considering statistical physics. This is because the argument relies crucially on the complete understanding of the Aubry-Mather sets in the two dimensional case and most part of our diffusing orbit shadows Aubry-Mather sets for twist maps. So we propose the following:

\begin{Opro}Find an effective proof of Arnold diffusion, which does not rely on the Aubry-Mather theory for twist maps and gives more abundant and faster diffusing orbits as $n$ gets larger. 
\end{Opro}

The paper is organized as follows.
\begin{enumerate}
\item In Section \ref{SOutline}, we prove the main theorem by introducing the main abstract framework and a main technical theorem.
\item In Section \ref{sct:NormalForm}, we explain the first approximation of the frequency path and prove a general KAM normal form.
\item In Section \ref{SSingle}, we perform the reduction of order in the single resonance regime to obtain a system of one less degree of freedom.
\item In Section \ref{SDouble}, we study dynamics around the strong double resonance. This part is a higher dimensional generalization of the results of \cite{C17a,C17b} to our setting. We show the existence of NHICs and a path of cohomological equivalence. Techniques such as shear coordinates transform and center straightening are introduced to facilitate the generalization and to make preparation for further order reduction.
\item In Section \ref{STriple}, we study the dynamics around triple resonance. This involves the second approximation of the frequency path and the second step of order reduction. The channel misalignment issue mentioned above already appears here, so we introduce our mechanism to overcome the issue. The result in this section completes the proof in the $n=4$ case. This section is the heart of the whole paper.
\item In Section \ref{SHierarchy}, we perform induction to generalize the constructions in the previous three sections to the case of multiple resonances. We show how to construct orbits crossing the complete resonance by repeating the argument in Section \ref{STriple}. 
\item In Section \ref{SSwitchMain} the proof of the main technical theorem \ref{ThmMainNHIC} is given.
\end{enumerate}
Finally, we have five appendices.
\begin{enumerate}
\item In Appendix \ref{AppMather}, we give a brief introduction to Mather theory and weak KAM theory.
\item In Appendix \ref{AppDiop}, we give the proof of Lemma \ref{LmAllDiop} on the existence of a Diophantine frequency path to be shadowed by a true frequency path.
\item In Appendix \ref{SNHICSpecial}, we present the proof of a special NHIM theorem adapted to our needs in this paper. 
\item In Appendix \ref{SVariation}, we give the variational construction of local and global connecting orbit shadowing generalized transition chain constructed in the main body of the paper. The new ingredients in this appendix are ``incomplete intersections" of the stable and unstable manifolds.
\item In Appendix \ref{AppGenericity}, we give the genericity argument.
\end{enumerate}
\section{Proof of the main theorem}\label{SOutline}
In this section, we prove the main theorem based on some propositions.

\subsection{Conventions and notations}
We first fix some standing conventions of notations for the rest of the paper. Please refer to Appendix \ref{AppMather} for a brief introduction of the Mather theory where more notations are given including Tonelli Hamiltonian $H$ and Lagrangian $L$, minimal measures, cohomology class $c$, rotation vector $h$, Mather set $\widetilde\cM(c)$ and $\widetilde\cM_h$, Aubry set $\widetilde \cA(c)$, Ma\~n\'e set $\widetilde \cN(c)$, $\al$ and $\beta$ functions, weak KAM solutions $u^\pm_c$ and barrier functions $B_c$, etc. 
\begin{defi}[$c$-minimal curve and  $c$-minimal orbit] Given cohomology class $c\in H^1(M,\R)$ where $M$ is a closed manifold, a curve $\gamma$: $\mathbb{R}\to M$ is called \emph{$c$-minimal} if for any curve $\xi$: $\mathbb{R}\to M$ and for any $t_0,t_1,t_1'\in\mathbb{R}$ with $t_1'=t_1\mod 1$ one has
$$
\int_{t_0}^{t_1}(L(\gamma(t),\dot\gamma(t),t)-\langle c,\dot\gamma(t)\rangle+\alpha(c))\,dt\le \int_{t_0}^{t'_1}(L(\xi(t),\dot\xi(t),t)-\langle c,\dot\xi(t)\rangle+\alpha(c))\,dt,
$$
where the Tonelli Lagrangian $L$ is assumed time-1-periodic: $L(\cdot,t)=L(\cdot,t+1)$. If a curve $\gamma$ is $c$-minimal, then $d\gamma:=(\gamma,\dot\gamma)$  is called a \emph{$c$-minimal orbit}.
\end{defi}

\begin{defi}[$\lambda g$-minimal periodic curve and $\lambda g$-minimal periodic orbit] Consider a Tonelli Lagrangian $L(x,\dot x)$ independent of time defined on $TM$. A periodic curve $\gamma$: $[0,\lambda^{-1}]\to M$ is associated with a class $[\gamma]=g\in H_1(M,\mathbb{Z})\setminus\{0\}$. It is called a \emph{$\lambda g$-minimal periodic curve} if one has
$$
\int_{0}^{\frac 1{\lambda}}L(\gamma(t),\dot\gamma(t))dt=\inf_{[\xi]=g}\int_{0}^{\frac 1{\lambda}} L(\xi(t),\dot\xi(t))dt.
$$
In this case $d\gamma=(\gamma,\dot\gamma)$  is called a \emph{ $\lambda g$-minimal orbit.}
\end{defi}

\begin{Not} We list some conventions and notations.
\begin{itemize}
\item $(${\it The vector norm}$)$ Our convention of using $|\cdot|$ as follows.
\begin{enumerate}
\item[$\ast$] It is the usual absolute value when applied to real or complex numbers.
\item[$\ast$] It is the $\ell_1$ norm when applied to an integer vector $\bk\in \Z^n$ which is a \emph{row vector}.
\item[$\ast$] It is the $\ell_\infty$ norm when applied to a frequency $\omega\in \R^n$ which is a \emph{column vector}.
\end{enumerate}
So we can write estimate $|\langle\bk,\omega\rangle|\leq |\bk|\cdot|\om|$.\\
We use $\|\cdot\|$ to denote the Euclidean norm. 

\item $(${\it the $hat$ notation}$)$ We fix the meaning of the $hat$ notation throughout the paper. For a vector $v=(v_1,\ldots,v_n)\in \R^n$, we use $\hat v_{n-i}$ to denote the vector $(v_{i+1},v_{i+2},\ldots,v_n) $ in $\R^{n-i}$ for $1<i<n$.
\item  $(${\it the $tilde$ notation}$)$ Dual to the $hat$ notation, we introduce the $tilde$ notation. For a vector $v=(v_1,\ldots,v_n)\in \R^n$, we use $\tilde v_i$ to denote the vector $(v_1,\ldots,v_i)\in \R^i,\ 3<i<n$. We omit the subscript $i$ if $i=2$.

\end{itemize}
\end{Not}

\subsection{The choice of the frequency path}\label{SSFrequency0}

\begin{defi} We say that a vector $v\in \R^{d},\ d\geq 1,$ is \emph{Diophantine}, if there exist $\al,\tau>0$ such that
\begin{equation}\label{EqDiop}
\l|\l\langle v, \bk\r\rangle\r|\geq \dfrac{\al}{\l| \bk\r|^{\tau}},\quad \forall\ \bk\in \Z^d\setminus\{0\}.
\end{equation}
We denote $v\in \mathrm{DC}(d,\al,\tau).$
\end{defi}

The next lemma gives us $n(M-1)$ segments of frequency paths with certain special Diophantine property which will be shadowed by the diffusing orbit.
\begin{lem}\label{LmAllDiop}
Given any $\varrho>0,\tau>n$ and any finitely many frequency vectors $\bs\omega_1,\ldots, \bs\omega_M\in \partial h(h^{-1}(E))$,\ $E>\min h,\ M>1$, there exist constant $\al>0$ and vectors
$$
\bs\omega_i^{*}=(\omega_{i,1}^{*},\ldots,\omega_{i,n}^{*})\in \partial h(h^{-1}(E))
$$
satisfying 
$
\l|\bs\omega_{i}-\bs\omega^{*}_i\r|<\varrho, \ i=1,\ldots,M,
$
and $$\bs\omega^{*}_{i,[j]}:=(\omega_{i+1,1}^{*},\ldots, \omega_{i+1,j}^{*},\omega_{i,j+1}^{*},\ldots,\omega_{i,n}^{*})\in\mathrm{DC}(n,\al,\tau)$$ for all $i=1,\ldots,M-1$ and $j=0,1,2,\ldots,n.$
\end{lem}
The proof of this lemma is given in Appendix B.
%Since the resonance relation is unchanged if the frequency vector is multiplied by a nonzero scalar, we consider each frequency as a a point in $\R\mathbb P^n$. The scalar multiple will be determined by requiring that the point lies on a fixed energy level $E$ so each point in $\R\mathbb P^{n-1}$ will determined a unique point in $\partial h(h^{-1}(E))$ for each $E>\min h$. 

From the Diophantine vectors $\bs\omega^{*}_{i,[j]}$, we construct $n(M-1)$ frequency segments
$$\Omega_{i,[j]}(t)=\rho_{i,[j]}(t)\left(\omega_{i+1,1}^{*},\ldots, \omega_{i+1,j-1}^{*},t,\omega_{i,j+1}^{*},\ldots,\omega_{i,n}^{*}\right),\ t\in [\omega_{i,j}^{*},\omega_{i+1,j}^{*}],$$
$j=1,\ldots,n,\ i=1,\ldots,M-1,$ where the scalar multiple $\rho_{i,[j]}(t)$ is determined by requiring that the segment $\Omega_{i,[j]}$ lies on $ \partial h(h^{-1}(E))$. By the construction, the end point  of $\Omega_{i,[j]}$ agrees with the starting point of $\Omega_{i,[j+1]}$ (for $j<n$) and the end point of $\Omega_{i,[n]}$ agrees with the starting point of $\Omega_{i+1,[1]}$, for all $i=1,\ldots,M-1$.  So the segments concatenate into a connected curve in $ \partial h(h^{-1}(E))$ connecting $\bs\omega^*_1$ to $\bs\omega^*_M$ and passing by the points $\bs\omega^*_i,\ i=1,\ldots,M$ with the given order. We remark that it may happen that $\omega_{i,j}^{*}>\omega_{i+1,j}^{*}$ so the interval $[\omega_{i,j}^{*},\omega_{i+1,j}^{*}]$ is empty. If that happens, we use the interval $[\omega_{i+1,j}^{*},\omega_{i,j}^{*}]$ instead. %Here the subscript $i$ counts the transition from $ \bs\omega_i^{*}$ to $\bs\omega_{i+1}^{*}$ and between the two points we construct $n$ segments $\Omega_{i,[j]},\ j=1,\ldots,n.$
%For simplicity of notation, we also use $\Omega_{\kappa}$ to denote its counterpart in $\partial h(h^{-1}(E))$.

The diffusing orbit will be constructed to shadow these frequency segments when projected to the frequency space.
\subsection{The abstract variational framework}
Roughly speaking, a generalized transition chain is such a path $\Gamma$: $[0,1]\to H^1(M,\mathbb{R})$ that, for any $s, s'\in[0,1]$ with $|s-s'|\ll 1$, the Aubry sets $\widetilde{\mathcal{A}}(\Gamma(s))$ and $\widetilde{\mathcal{A}}(\Gamma(s'))$ are connected by an orbit. An orbit $(\gamma,\dot\gamma)$ of the Euler-Lagrange flow $\phi^t_L$ is said to \emph{connect two Aubry sets} if the $\alpha$-limit set of the orbit is contained in one Aubry set and the $\omega$-limit set is contained in the other.

Let us formulate the definition of generalized transition chain for autonomous Hamiltonian $H:T^*M\to\mathbb{R}$ where $M=\mathbb{T}^n$ with $n\ge 3$.

\begin{defi}[Generalized transition chain: the autonomous case]\label{chaindef1}
Two cohomology classes $c,c'\in H^1(M,\mathbb{R})$ are said to be joined by a \emph{generalized transition chain} if there exists a continuous path $\Gamma$: $[0,1]\to H^1(M,\mathbb{R})$ such that $\Gamma(0)=c$, $\Gamma(1)=c'$, $\alpha(\Gamma(s))\equiv E>\min\alpha$ and for each $s\in [0,1]$ at least one of the following cases takes place:
\begin{enumerate}
\item[(H1)] In some finite covering manifold: $\check{\pi}:\check{M}\to M$ the Aubry set $\mathcal{A}(\Gamma(s))$ consists of two classes $\mathcal{A}_1(\Gamma(s))$ and $\mathcal{A}_2(\Gamma(s))$. There are two open domains $N_1$ and $N_{2}$ with $\bar N_1\cap\bar N_{2}=\varnothing$, a decomposition $\check M=M_1\times\mathbb{T}^{\ell}$, $(n-\ell-1)$-dimensional disks $\{O_m\subset M_1\}$ with $\bar O_m\cap\bar O_{m'}=\varnothing$, an $(n-1)$-dimensional disk $D_{s}$ and two small numbers $\delta_s,\delta'_s>0$ such that
\begin{enumerate}
    \item[(i)] the Aubry sets $\mathcal{A}_1(\Gamma(s))\subset N_1$, $\mathcal{A}_2(\Gamma(s))\subset N_2$ and $\mathcal{A}(\Gamma(s'))\subset (N_1\cup N_2)$ for each $|s'-s|<\delta_s$,
    \item[(ii)] $\check\pi\mathcal{N} (\Gamma(s),\check M)|_{D_{s}}\backslash (\mathcal{A}(\Gamma(s))+\delta'_s)$ is non-empty, of which each connected component is contained in $O_m\times\mathbb{T}^{\ell}$,
    \item[(iii)] $\langle\Gamma(s')-\Gamma(s),g\rangle=0$ holds for each $g\in H_1(\check M,M_1,\mathbb{R})$;
\end{enumerate}
\item[(H2)] For each $s'\in (s-\delta_s,s+\delta_s)$, the cohomology class $\Gamma(s')$ is equivalent to $\Gamma(s)$: some section $\Sigma_s$ and some small neighborhood $U$ of $\mathcal{N}(\Gamma(s))\cap \Sigma_{s}$ exist such that  $\langle\Gamma(s')-\Gamma(s),g\rangle=0$ holds for each $g\in H_1(U,\mathbb{Z})$. 
\end{enumerate}
\end{defi}
\begin{Rk}
 Item $(H1)$ with $\ell=0$ is a variational reformulation of Arnold's mechanism and $(H2)$ is also called the cohomological equivalence which was first introduced by Mather for nonautonomous systems in \cite{M93} and introduced by the first author and Li for autonomous systems in \cite{LC}.

If $\ell=0$, the assumption $(H1)\mathrm{(iii)}$ on the class $\Gamma(s')$ turns out to be trivial. The case $(H1)$ with $\ell>0$ is a generalization of Arnold's mechanism by allowing the stable and unstable sets of the Aubry sets to have incomplete intersection in the sense that the stable and unstable sets are allowed to merge in the $\T^\ell$ components and are only required to intersect transversally in the $M_1$ component.
\end{Rk}

Once such a generalized transition chain exists, one can construct diffusion orbits by variational method  (see Appendix \ref{SGlobalConn}). 
\begin{theo}[\cite{LC, C17b}, Appendix \ref{SGlobalConn}]\label{constructionthm1}
If $c$ is connected to $c'$ by a generalized transition chain $\Gamma$ as in Definition \ref{chaindef1}, then
\begin{enumerate}
\item there exists an orbit of the Lagrange flow  $d\gamma:=(\gamma,\dot\gamma)$: $\mathbb{R}\to TM$ which connects the Aubry set $\widetilde{\mathcal{A}}(c)$ to $\widetilde{\mathcal{A}}(c')$, namely, the $\al$-limit set $\alpha(d\gamma)\subseteq\widetilde{\mathcal{A}}(c)$ and the $\omega$-limit set $\omega(d\gamma)\subseteq\widetilde{\mathcal{A}}(c')$;
\item for any $c_1,c_2,\cdots, c_k\in\Gamma$ and arbitrarily small $\delta>0$, there exist times $t_1<t_2<\cdots<t_k$ such that the orbit $(\gamma,\dot\gamma)$  passes through $\delta$-neighborhood of the Aubry set $\widetilde{\mathcal{A}}(c_i)$ at the time $t=t_i$.
\end{enumerate}
\end{theo}
\subsection{Existence of the generalized transition chain}
We have the following more elaborate statement on the existence of generalized transition chain. The proof occupies the main body of the paper and is completed in Section \ref{SSProofMain}.
\begin{theo}\label{ThmMainNHIC} Let the Hamitonian system $H=h+\eps P\in C^r(T^*\T^n,\R),\ 7\leq r\leq \infty,$ be as in \eqref{EqHam} restricted to the energy level $E>\min h$. For any $\varrho>0$,  and any $M$ open balls $B_1,\ldots,$$B_M$ of radius $\varrho$ centered on $h^{-1}(E)$, there exist some $\eps_0>0$ and an open-dense set $\mathfrak R\subset \mathfrak S_1$, such that for each $P\in \mathfrak R$ there exist $\eps_P$ and a residual set $R_P\subset(0,\min\{\eps_P,\eps_0\})$ such that for all $\eps\in R_P$ the following hold.
\begin{enumerate}
\item   There exists a continuous frequency path $\omega(t)$ with $\partial\beta(\omega(t))\in \al^{-1}(E),\ t\in[1,M]$, lying in a $\varrho$-neighborhood of the union of frequency segments $\Omega_{i,[j]}\subset \partial h(h^{-1}(E))$, $i=1,\ldots,M-1,\ j=0,\ldots,n$, and satisfying
\begin{enumerate}
\item $(\partial h)^{-1}(\omega(i))\cap B_i\neq\emptyset,\ i=1,2,\ldots,M.$
\item Each point $\omega(t)$ is resonant with multiplicity at least $n-2$. There are finitely many marked points on $\omega(t)$ denoted by $\omega_1,\ldots, \omega_m$, where $m$ is independent of $\eps$, that are resonant with multiplicity $n-1$.
\end{enumerate}
\item  On the energy level $E$ there are finitely many disjoint $C^r$ normally hyperbolic weakly invariant cylinders $($wNHICs, see Section  \ref{SSDLS}$)$ homeomorphic to $T^*\T\times \T$.
\item For each $\omega_i,\ i=1,\ldots,m,$ there exists $\lambda_i>0$ such that \begin{enumerate}
\item  the Mather sets of rotation vectors $\omega(t)$ with $|\omega(t)-\omega_i|\geq \lambda_i\sqrt\eps$ for all $i=1,2,\ldots, m$, lie in the wNHICs;
\item any continuous curve lying in the interior of $\{\partial\beta(\omega(t))\ | \ |\omega(t)-\omega_i|\geq \lambda_i\sqrt\eps\}\subset \al^{-1}(E)$ is a generalized transition chain satisfying $(H1)$; and
 \item the two neighboring connected components $\{\partial\beta(\omega(t))\ | \ |\omega(t)-\omega_i|\geq \lambda_i\sqrt\eps\}$ $\subset \al^{-1}(E)$ near $\partial \beta(\omega_i)$ are joined by a generalized transition chain.% lying entirely in $\{\partial \beta(\omega)\subset \al^{-1}(E)\ |\ |\omega-\omega_i|<2\lambda_i\sqrt\eps\}$.
 \end{enumerate}

\end{enumerate}
\end{theo}
Next we explain how the main Theorem \ref{ThmMain} follows from this theorem. Indeed, given balls $B_1,\ldots,B_M$ of radius $\varrho$ centered on $h^{-1}(E)$, we first construct a frequency path $\omega(t),\ t\in[0,M]$ as stated. By item (3.b) and (3.c) of the above theorem, there exists a continuous curve of generalized transition chain visiting small neighborhoods of $\partial \beta(\omega(i))\subset \al^{-1}(E),\ i=1,\ldots,M.$ By Theorem \ref{constructionthm1}, we see that once a generalized transition chain is known to exist, an orbit can be constructed shadowing Aubry sets whose cohomology classes are on the chain. By item (1.a) such an orbit necessarily visit the two balls $B_1,\ldots,B_M$ as ordered. This proves Theorem \ref{ThmMain}.

The remaining part of the paper is devoted to proving Theorem \ref{ThmMainNHIC}. The proof is completed in Section \ref{SSProofMain}. The proof consists of mainly two parts. In the first part, we establish the existence of wNHICs away from complete resonances (part (2)). The generalized transition chain along the wNHICs (part (3.a), (3.b)) are constructed following the standard procedure for {\it a priori} unstable systems (\cite{CY1, CY2}, see Appendix \ref{AppGenericity}). In the second part, we construct generalized transition chains passing through the complete resonances and connecting nearby wNHICs (part (3.c)). %In both parts, we need to construct the frequency segment inductively and derive Hamiltonian normal forms carefully along it.

\section{The frequency segment and the KAM normal form}\label{sct:NormalForm}
\setcounter{equation}{0}

In this section, we construct the frequency vectors with special number theoretic properties and derive Hamiltonian normal forms associated to such frequency vectors.

\subsection{Number theoretic properties of the frequency line}\label{ssct:Number}
\subsubsection{Single resonance} For given $\varrho,\tau >0$, let $\al$ be as in Lemma \ref{LmAllDiop}. We first study how to move along one frequency segment. 
Consider frequency segment $\omega_a\in\mathbb{R}^n$ of the form
\begin{equation}\label{EqFreq}
\omega_a=\rho_a\Big(a,\frac{P}{Q}\omega_2^*,\dfrac{p}{q}\omega_2^*,\hat\omega_{n-3}^*\Big)^t,\ P,Q,p,q\in \Z,\ a\in [\omega_1^{*i}-\varrho,\omega_1^{*f}+\varrho],\
\end{equation}
$\hat\omega_{n-3}^*=(\omega_3^*,\ldots,\omega_n^*)\in \mathrm{DC}(n-3,\al,\tau)$ and $\hat\omega_{n-2}^*=(\omega_2^*,\hat\omega_{n-3}^*)\in \mathrm{DC}(n-2,\al,\tau).$ We choose $\frac PQ$ and $\frac pq$ such that $|\frac PQ-1|<\varrho/2$ and $|\frac pq\omega_2^*-\omega_3^*|<\varrho/2 $ and in addition g.c.d.$(pQ,Pq)=1$.
The scalar $\rho_a$ does not influence the resonance relations. Since we know that $y$ lies on an energy level $E$ and since the energy hyper surface $h^{-1}(E)$ encloses a convex set containing the origin, the equation $h(\omega^{-1}(\omega_a))=E$, $\omega(y)=\partial h(y)$, determines uniquely $\rho_a$. For example, when $h(y)=\frac{1}{2}\Vert y\Vert^2$, we see easily that $\rho_a=\frac{\sqrt{2E}}{\l\Vert \l(a,\frac{P}{Q}\omega_2^*,\frac{p}{q}\omega_2^*,\hat\omega_{n-3}^*\r)\r\Vert}.$

Since we assume $\hat\omega_{n-2}^*\in$ DC$(n-2,\al,\tau)$, we have at most two resonances as $a$ varies in an interval. We always have a first resonance given by the integer vector
$$
\bk'=(0,Qp,-qP,\hat 0_{n-3}).
$$
The g.c.d. of all the components of $\bk'$ is 1. Then we have
\begin{equation}\label{EqSympZ}
\left[\begin{array}{llll}
1&0&0& \hat 0_{n-3}\\
0 &Qp&-qP& \hat 0_{n-3}\\
0&r&s&\hat 0_{n-3}\\
\hat 0_{n-3}& \hat 0_{n-3}& \hat 0_{n-3}&\mathrm{id}_{n-3}
\end{array}\right]\left[\begin{array}{c}
a\\
\frac{P}{Q}\omega_2^*\\
\frac{p}{q}\omega_2^*\\
\hat\omega^*_{n-3}\\
\end{array}\right]=\left[\begin{array}{c}
a\\
0\\
\frac{1}{qQ}\omega_2^*\\
\hat\omega^*_{n-3}\\
\end{array}\right]
\end{equation}
where $r,s$ are such that $s Qp+rqP=1.$ We denote the $n\times n$ matrix by $M'\in\mathrm{SL}(n,\Z)$.
\subsubsection{Double resonance, away from triple or more resonances}
In this section, we consider that the vector \eqref{EqFreq} at double resonance.
We fix some large number $K$ and define $\Z^n_K=\{\bk\in \Z^n\ |\ |\bk|< K\}$. As $a$ varies in an interval, we may encounter {\it double resonant} points
\[
\left\{\omega_a\ |\ \l\langle\bk,\omega_a\r\rangle=0,\ \mathrm{\ for\ some\ } \bk\in \Z_K^n\setminus\mathrm{span}_\Z\l\{\bk'\r\}\right\}.
\]
There are finitely many such double resonant points, whose number depends only on $K$.

In this paper, we consider only those resonant integer vectors that are irreducible. 
\begin{defi}
An integer vector $\bk\in \Z^n\setminus\{0\}$ is called \emph{irreducible} if its entries have no common divisor except 1.
\end{defi}

The next lemma shows that for fixed $K$, points along the frequency line $\omega_a$ are uniformly bounded away from triple or more resonances.
\begin{lem}\label{LmDouble}
Let an irreducible vector $\bk^o\in \Z_K^n\setminus\mathrm{span}_\Z\left\{\bk'\right\}$ be the second resonance of $\omega_a$, i.e.
$
\l\langle \bk^o,\omega_a\r\rangle=0
$
 at some point $a=a^o$.  Then for all
$
\bk\in \Z_K^n\setminus\mathrm{span}_\Z\left\{ \bk' ,\bk^o \right\},
$
we have the estimate
\begin{equation}\label{EqDiop1}
|\langle\bk,\omega_{a^o}\rangle|\geq\dfrac{\al\cdot \inf_a\rho_a}{2^\tau(qQ )^{\tau+1} (\Vert M'\Vert_\infty K)^{2\tau+1}}.
\end{equation}
\end{lem}
\begin{proof}
We use the linear transformation \eqref{EqSympZ} to convert $\omega_a$ to the vector
$$
\omega_a'=M'\omega_a=\rho_a\Big(a,0,\frac{1}{qQ}\omega_2^*,\hat\omega_{n-3}^*\Big)^t.
$$
Denote by $\tilde \bk^o=(\tilde k^o_{1},\tilde k^o_{2},\ldots,\tilde k^o_{n}):=\bk^oM'^{-1}$ so that we have
$$
0=\langle \bk^o,\omega_{a^o}\rangle=\langle \bk^oM'^{-1},M'\omega_{a^o}\rangle:=\langle \tilde\bk^o,\omega_{a^o}'\rangle.
$$
We have that $\tilde{k}_1^o\neq 0$ since otherwise $\langle \tilde\bk^o,\omega_a'\rangle=0$ for all $a$, which is impossible considering that $\hat\omega_{n-2}^*$ is Diophantine. We want to bound
$|\langle\bk,\omega_{a^o}\rangle|$ from below for all
$$
\bk=(k_1,k_2,\ldots,k_n)\in \Z_K^n\setminus\mathrm{span}_\Z\left\{ \bk' ,\bk^o \right\}.
$$
We denote $\tilde \bk=(\tilde k_1,\tilde k_2,\ldots,\tilde k_n):= \bk M'^{-1}$ to get
$\langle\bk,\omega_a\rangle=\l\langle \bk M'^{-1},M'\omega_a\r\rangle=\l\langle\tilde\bk,\omega_a'\r\rangle.$

We introduce a new vector $
\bar \bk=\tilde \bk-\dfrac{\tilde k_1}{ \tilde k^o_{1}}\tilde \bk^o=\dfrac{1}{\tilde k^o_{1}}(\tilde k^o_{1}\tilde \bk- \tilde k_1 \tilde \bk^o).
$
The new vector
\[
\tilde k^o_{1}\bar \bk=\tilde k^o_{1}\tilde \bk-\tilde k_1\tilde \bk^o:=\l(0,\bar k_2,\bar k_3,\hat{\bar \bk}_{n-3}\r)\in \Z^n
\]
has zero first entry. We introduce further a new vector
$
\bar{\bar \bk}=\left(0,\bar k_2, \bar k_3,qQ\hat{\bar \bk}_{n-3}\right)\in \Z^n.
$
We estimate the norm of $\bar{\bar \bk}$ as
\[
|\bar{\bar \bk}|\leq qQ|\tilde k^o_{1}\tilde \bk-\tilde k_1\tilde \bk^o|\leq 2qQ|\tilde \bk^o|\cdot|\tilde \bk|  \leq 2qQ (\Vert M'\Vert_\infty\cdot K)^2
\]
Using the Diophantine conditions and the fact that $\omega_a'$ has zero second entry, we have
\begin{equation}
\begin{aligned}
\l|\l\langle\bk,\omega_{a^o}\r\rangle\r|&=\l|\l\langle\tilde\bk,\omega_{a^o}'\r\rangle\r|=\l|\l\langle\bar\bk, \omega_{a^o}'\r\rangle\r|=\left|\dfrac{1}{\tilde k^o_{1}}\l\langle(\tilde k^o_{1}\tilde \bk-\tilde k_1 \tilde \bk^o),\omega_{a^o}'\r\rangle\right|\\
&=\dfrac{\rho_a}{\tilde k^o_{1} qQ}\l|\l\langle\bar{\bar \bk},(0,0,\omega_2^*,\hat\omega_{n-3}^*) \r\rangle\r| \geq\dfrac{\inf_a\rho_a}{\tilde k^o_{1}qQ}\dfrac{\al}{\l|\bar{\bar \bk}\r|^\tau}\\
&\geq\dfrac{\al \inf_a\rho_a}{2^\tau(qQ )^{\tau+1} (\Vert M'\Vert_\infty K)^{2\tau+1}}.
\end{aligned}
\end{equation}
\end{proof}
Finally, we have the following fact.
\begin{lem}\label{LmM''}
Let $\bk^o$ and $\omega_{a^o}$ be as in Lemma \ref{LmDouble}. Then there exists a matrix $M^o \in \mathrm{SL}(n,\Z)$ such that $M'':=M^oM'\in \mathrm{SL}(n,\Z)$ has the first row $\bk^o$ and the second row $\bk'$.
\end{lem}
\bpf
Denote $\om'_{a^o}=M'\om_{a^o}$ and $\tilde\bk^o=\bk^oM'^{-1}$. We have $\langle \bk^o,\om_{a^o}\rangle=\langle \bk^oM'^{-1}, \om'_{a^o}\rangle=0.$ We set the second entry of $\tilde \bk^o$ to be zero and treat it as a vector in $\Z^{n-1}$. We claim that we can find $n-2$ integer vectors in $\Z^{n-1}$ spanning unit volume together with $\tilde\bk^o$. Indeed, suppose without loss of generality, the first two entries $k_1,k_2$ of $\tilde \bk^o$ are nonzero and have common divisor 1. This is always possible after permutation of entries. Then using Euclidean algorithm, we find two numbers $s_1,s_2$ such that $k_1s_2-k_2s_1=1$. Extending $s_1,s_2$ by adding zeros to a vector in $\Z^{n-1}$ as the second row of the matrix and for the remaining rows, we put $1$'s on the diagonal and zeros off diagonal. This gives the desired matrix.

By adding a number 0 as their second entries, we extend these vectors to be $n$-dimensional and  put these vectors together to get an $n\times n$ matrix $M^o$ whose first row is $\tilde \bk^o:=\bk^o(M')^{-1}$, and second row is $(0,1,0,\ldots,0)$, and it satisfies the properties stated in the lemma.
\epf
\subsection{Resonant submanifolds and their neighborhoods}\label{SSNbd}
Let $\omega_a,\, \bk',\, \bk^o$ be as in Section \ref{ssct:Number}.
\begin{defi}
\begin{enumerate}
\item We define the \emph{single resonant sub-manifold} associated to the vector $\bk'$
\beq\label{EqSigma1}
\Sigma(\bk'):=\l\{y\in h^{-1}(E)\ |\ \l\langle\bk',\omega(y)\r\rangle=0\r\}.
\eeq

\item In the single resonant sub-manifold we define the \emph{double resonant sub-manifold} for the resonant vectors $\bk',\bk^o$
\begin{equation}\label{EqSigma2}
\Sigma(\bk',\bk^o):=\l\{y\in h^{-1}(E)\ |\ \l\langle\bk',\omega(y)\r\rangle=\l\langle\bk^o,\omega(y)\r\rangle=0\r\}.
\end{equation}
\end{enumerate}
\end{defi}
Next, we find a number $\mu$ as the size of the neighborhood of the single resonant manifold to apply the KAM normal forms. 
\begin{Not}
We use the notation $B(a,r)$ to denote a ball of radius $r$ centered at $a$ and the notation $B(A,r):=\cup_{a\in A}B(a,r)$ to denote the $r$-neighborhood of a set $A$.
\end{Not}
We denote $a^o_1,\ a^o_2,\ldots,a^o_m$ the list of points such that the corresponding frequency vector $\omega_{a^o}$ admits a second resonant vector $\bk^o_{a^o_i},\ i=1,2,\ldots,m.$ The total number of such $a^o$'s is bounded  if we require $|\bk^o|\leq K$.

\blm\label{Lm:nbd}
Let $\omega_a,\, K,\, \bk',\, \bk^o_{a^o_i},\ i=1,2,\ldots,m$ be as above. Let $(\bk^o_{a^o_i})^\perp$ be the $(n-1)$-dimensional space orthogonal to the vector $\bk^o_{a^o_i}$. Then there exists $\mu=\mu(K)$ such that
\ben
\item
for all $\omega$ in the neighborhood
$
B(\omega_a,\mu)\setminus \bigcup_{i}B\l(\omega_{a_i^o}+(\bk^o_{a^o_i})^\perp,\eps^{1/3}\r),
$
and for sufficiently small $\eps$ we have
\[|
\langle \bk,\omega\rangle|> \eps^{1/3},\quad \forall\ \bk\in \Z_K^n\setminus\mathrm{span}_\Z\{\bk'\}.
\]
\item for all $\omega$ in $B(\omega_a,\mu)\bigcap B\l(\omega_{a_i^o}+(\bk^o_{a^o_i})^\perp,\eps^{1/3}\r)$, and for all
    $
    \bk\in \Z_K^n\setminus\mathrm{span}_\Z\left\{ \bk' ,\bk_{a^o_i}^o \right\},
    $ $i=1,\ldots,m$,
we have
\begin{equation}\label{EqDiop1/2}
|\langle\bk,\omega\rangle|\geq nK\mu.
\end{equation}
\een
\elm
\bpf
Part (1). We consider two cases depending on if $\bk$ in the assumption is one of the double resonant vector $\bk^o_{a^o_i}$ or not.

First we suppose $\bk=\bk^o_{a^o_i}$ for some $i$, then we get
\beqa\nonumber
|\langle \bk,\omega\rangle|=|\langle \bk,\omega_{a_i^o}\rangle+\langle \bk,\omega-\omega_{a_i^o}\rangle|=|\langle \bk,\omega-\omega_{a_i^o}\rangle|.
\eeqa
By the assumption, the projection of $\omega-\omega_{a_i^o}$ to the vector $\bk^o_{a^o_i}$ has length at least $\eps^{1/3}.$
This completes the proof in the case $\bk=\bk^o_{a^o_i}$ for some $i$ since $|\bk|\geq 1$.

We define
\begin{equation}\label{EqKmu}
\mu=\frac{1}{2nK}\frac{\al\cdot \inf_a\rho_a}{2^\tau(qQ )^{\tau+1} (\Vert M'\Vert_\infty K)^{2\tau+1}}.
\end{equation}

Next, suppose $\bk\neq \bk^o_{a^o_i}$, $\forall\ i.$ Consider the case where the first entry $k_1$ of $\bk$ is 0. We have that the vector $\bk M'^{-1}$ has zero first entry and $M'\omega_a=(a,0,\frac{1}{qQ}\omega_2^*,\hat\omega_{n-3}^*)$ has zero second entry. We have the estimate
\begin{equation}
|\langle \bk, \omega_a\rangle|=|\langle \bk M'^{-1},M'\omega_a\rangle|\geq \dfrac{\al\cdot \inf_a\rho_a} {(qQ)^{\tau+1}(\|M'\|_\infty K)^{2\tau+1}}
\end{equation}
using the Diophantine property of $(\omega_2^*,\hat\omega_{n-3}^*)$. 
We get
\beqa
|\langle \bk, \omega^\dagger\rangle|&\geq |\langle \bk, \omega_a\rangle|-|\langle \bk, \omega^\dagger-\omega_a\rangle|\\
&\geq \dfrac{\al\cdot \inf_a\rho_a}{(qQ )^{\tau+1} (\|M'\|_\infty K)^{2\tau+1}}-nK\mu\\
&\geq \dfrac{\al\cdot \inf_a\rho_a}{2(qQ )^{\tau+1} (\|M'\|_\infty K)^{2\tau+1}}\gg \eps^{1/3}.
\eeqa
Next consider the case $k_1\neq 0$. We change the first entry $a$ of $\omega_a$ to $a^o:=a-\frac{\langle \bk,\omega_a\rangle}{k_1}$ to get another frequency vector $\omega_{a^o}$. We have by definition $\langle \bk,\omega_{a^o}\rangle=0.$ This contradicts to the assumption that $\bk\neq \bk^o_{a^o_i}$, $\forall\ i.$

Part (2). For given $\omega$ as assumed, we have $|\omega-\omega_{a_i^o}|\leq \mu$. As $\bk\in \Z_K^n\setminus\mathrm{span}_\Z\{\bk' ,\bk_{a_i^o}^o\}$, we have the following estimate
\beqa\nonumber
|\langle \bk,\omega\rangle|&=|\langle \bk,\omega_{a^o_i}\rangle+\langle \bk,\omega-\omega_{a_i^o}\rangle|\\
&\geq |\langle \bk,\omega_{a_i^o}\rangle|-|\langle \bk,\omega-\omega_{a_i^o}\rangle|\\
&\geq \dfrac{\al\cdot \inf_a\rho_a}{2^{\tau}(qQ )^{\tau+1} (\Vert M'\Vert_\infty K)^{2\tau+1}}-nK\mu\\
&\geq \dfrac{\al\cdot \inf_a\rho_a}{2^{\tau+1}(qQ )^{\tau+1} (\Vert M'\Vert_\infty K)^{2\tau+1}}
\eeqa
where in the second inequality, we apply Lemma \ref{LmDouble} and in the third inequality, we apply the definition of $\mu.$
\epf

\subsection{Homogenization}
We first introduce the $C^r$-norm as follows, $r\in \N$.
\begin{defi}
\begin{enumerate}
\item For a function $f(x,y)$ defined on a domain $\mathcal D\times \T^n$, we define the $C^r$ norm as
\[|f|_{C^r}:=\sup_{y\in \mathcal D}\left(\sum_{ \bk\in \Z^n} \sum_{|\al|+|\beta|\leq r}\left|\dfrac{\partial^{|\al|} f^\bk}{\partial y^\al}(y)\right|\(\l|k^\beta\r|+1\r)\right)\]
where $f^\bk$ is the $\bk$-th Fourier coefficient and we use the multi-index notation $x^\al=x_1^{\al_1}\cdots x_n^{\al_n},$ etc. for $\al=(\al_1,\al_2,\ldots,\al_n),\ \beta=(\bt_1,\bt_2,\ldots,\bt_n)\in \Z^n,\ \al_i,\ \bt_i\geq 0,\ i=1,2,\ldots,n$.
\item For a function $f(x)$ defined on a domain $\T^n$, the $C^r$ norm is defined by setting $\al=0$ in the previous item. Namely, 
\[|f|_{C^r}:=\sum_{ \bk\in \Z^n} \sum_{|\beta|\leq r}\left| f^\bk\right|\(\l|k^\beta\r|+1\r).\]
\item For a function $f$ defined on a domain $\mathcal D\subset \R^n$, the $C^r$ norm is standard
$$|f|_{C^r}:=\sup_{y\in \mathcal D}\sum_{|\beta|\leq r}\left|\dfrac{\partial^{|\beta|} f}{\partial y^\beta}(y)\right|.$$
\end{enumerate}
\end{defi}

\subsubsection{Covering a $\mu$-neighborhood $B(\omega_a,\mu)$ of the frequency line $\omega_a$}

Consider the $\mu$-neighborhood $B(\omega_a,\mu)$ of the frequency line $\omega_a$. In the space of action variables, its preimage under the frequency map $\omega$ is $\omega^{-1}(B(\omega_a,\mu))$. We fix a large constant $\Lambda>0$ and cover the set $\omega^{-1}(B(\omega_a,\mu))$ by balls of radius $\Lambda \sqrt\eps$. We choose the covering to be locally finite and the Lebesgue number of the covering to be $0.1 \Lambda\sqrt\eps$ so that any ball of radius $1/20 \Lambda\sqrt\eps$ lies entirely in the $\Lambda \sqrt\eps$-ball that it intersects.

\subsubsection{Homogenization}
Fix $y^\star\in h^{-1}(E)$. We introduce the homogenization operator
\beq\label{EqHom0}
\mathfrak{H}:\quad y-y^{\star}:=\sqrt{\eps}Y, \quad t=\tau/\sqrt{\eps},\quad H(x,y)=\eps\sH(x,Y),
\eeq
where $Y,\tau,\sH$ are the homogenized action variable, time and Hamiltonian respectively. The homogenization is done in the region $\|y-y^\star\|<\sqrt\eps\Lambda$ so that $\|Y\|<\Lambda$. The Hamiltonian becomes
\begin{align}\label{EqHomog1}
\sH(x,Y)=\dfrac{h(y^{\star})}{\eps}+\dfrac{1}{\sqrt\eps}\langle\omega^\star,Y \rangle+\dfrac{1}{2}\langle \sA Y,Y\rangle+
\sV(x)+\sP(x,\sqrt\eps Y),
\end{align}
where
\ben
\item $\frac{h(y^{\star})}{\eps}+\frac{1}{\sqrt\eps}\langle\omega^{\star},Y\rangle+\frac{1}{2}\langle AY,Y\rangle$ is the first three terms of the Taylor expansion of $h(y)$ around $y^{\star}$;
\item  $\omega^\star=\frac{\partial h}{\partial y}(y^{\star})$; 
\item $\sA=\frac{\partial^2 h}{\partial y^2}(y^{\star})$ is a positive definite constant matrix;
\item $\sV(x)=P(x,y^\star)$;
\item The term $\sP$ has a decomposition $\sP=\sP_I+\sP_{II}$ where
      \beqa\label{EqP1}
      \sP_I=&\frac 1{\eps}\Big(h(y^{\star}+\sqrt{\eps}Y)-h(y^{\star})-\sqrt{\eps}\langle\omega,Y\rangle -\frac {\eps}2\langle AY,Y\rangle\Big),\\
      =&\dfrac{\sqrt\eps}{6} \sum_{1\leq i,j,k\leq n}Y_iY_jY_k\int_0^1 \dfrac{\partial^3 h}{\partial y_i\partial y_j\partial y_k}(t\sqrt\eps Y+y^\star)t^2\,dt,\\
      \sP_{II}=&P(x,y^{\star}+\sqrt{\eps}Y)-P(x,y^{\star})\\
     =& \sqrt\eps \l\langle Y,\int_0^1\dfrac{\partial P}{\partial y}(x,t\sqrt\eps Y+y^\star)\,dt\r\rangle.
      \eeqa
\een

We have the following estimates
      \begin{equation}\label{EqOepsilon}
      \left|\dfrac{\partial^{|\al|+|\beta|}\sP_{II}}{\partial x^\al\partial Y^\beta}\right| \le C_{\beta,\Lambda} |P|_{C^r}\sqrt{\eps}^{|\beta|+1},\quad 0\leq |\beta|\leq r-1,
      \end{equation}
       \beq\label{EqPI} \l|\frac{\partial^\beta \sP_I}{\partial Y^\beta}\r|\leq C_{\beta,\Lambda}|h|_{C^r}\sqrt\eps^{|\beta|+1},\quad 0\leq |\beta|\leq r-3.\eeq
In the following, we assume that $|P|_{C^r}\leq 1$ and $|h|_{C^r}\leq 1$. 
\begin{Not}
We use the notation $|\cdot |_r$ to denote the $C^r$ norms with respect the variables $x,Y$ in the homogenized system. So we get $|\sP|_{r-3}\leq C_{r,\Lambda}(|P|_{C^r}+|h|_{C^r})\sqrt\eps.$
\end{Not}
\subsection{The KAM normal form}\label{SSNormalForm1}

In this section, we work out a general normal form. 
\begin{Not}
\begin{enumerate}
\item Given a collection of linearly independent irreducible integer vectors $\bk_1,\ldots,\bk_m\in \Z^n$, $m< n,$ and a function $f\in C^r(\T^n)$, we denote by $\Pi_{\bk_1,\ldots,\bk_m} f$ the function consisting of Fourier modes of $f$ in $\mathrm{span}_\Z\{\bk_1,\ldots,\bk_m\}$.
    \item We denote by $\Pi_{\bk_1,\ldots,\bk_m} C^r(\T^n)$ the space of $C^r$ functions on $\T^n$ consisting of Fourier modes in $\mathrm{span}_\Z\{\bk_1,\ldots,\bk_m\}$. Similarly for $\Pi_{\bk_1,\ldots,\bk_m} C^r(T^*\T^n)$.
    \end{enumerate}
\end{Not}
\begin{pro}\label{prop: codim1}
Let $\bk_1,\ldots,\bk_m$ be $m(<n)$ linearly independent irreducible integer vectors.
Given any small $\dt$, there exists $\eps_0=\eps_0(\dt,\Lambda)$ such that for all $\eps<\eps_0$, the following holds. Let $\omega^\star=\partial h(y^\star)$ satisfy the following,
\beq\label{EqFreqSingle}
|\langle  \bk,\omega^\star\rangle |>\eps^{1/3},\ \forall \ \bk\in \Z_K^n\setminus\mathrm{span}_\Z\{\bk_1,\ldots,\bk_m\},\  K=(\dt/3)^{-\frac{1}{2}}.
\eeq
Then there exists a symplectic transformation $\phi$ defined on $B(0,\Lambda)\times \T^n$ satisfying $|\phi-\mathrm{id}|_{{r}}=O(\eps^{1/6})$ and sending the Hamiltonian $\sH$ in equation \eqref{EqHomog1} to the following form
\begin{equation}\label{EqNormalForm}
\begin{aligned}
\sH\circ \phi(x,Y)=&\dfrac{1}{\sqrt{\eps}}\langle \omega^\star, Y\rangle+\dfrac{1}{2}\langle \sA Y,Y\rangle +\Pi_{\bk_1,\ldots,\bk_m}\sV+\dt \sR(x,Y)
\end{aligned}
\end{equation}
where
\begin{enumerate}
\item the remainder $\dt \sR(x,Y)=\dt \sR_I(x)+\dt \sR_{II}(x,Y),$ and $\dt\sR_I$ consists of all the Fourier modes of $\sV$ not in the set $\mathrm{span}_\Z\{\bk_1,\ldots,\bk_m\}\cup \Z^n_K$;
\item the remainders $\sR_I,\, \sR_{II}$ satisfy $|\sR_I|_{ r-2}\leq 1,\ |\sR_{II}|_{r-5}\leq 1$.
\end{enumerate}
\end{pro}

\begin{proof}
We decompose the Hamiltonian \eqref{EqHomog1} as follows
\begin{align*}
\sH=&\frac{1}{\sqrt{\eps}}\langle \omega^\star, Y\rangle+\dfrac{1}{2}\langle \sA Y,Y\rangle+\Pi_{\bk_1,\ldots,\bk_m}\sV+R_\leq ( x)+ R_>( x)+\sP(x,\sqrt{\eps}Y),
\end{align*}
where \begin{enumerate}
\item $R_\leq (x)+R_>(x)$ consists of all the Fourier modes of $\sV(x)$ in $\Z^n\setminus $span$_\Z\l\{  \bk_1,\ldots,\bk_m\r\}$. 
\item The Fourier modes with $\bk\in \Z_K^n\setminus\mathrm{span}_\Z\{\bk_1,\ldots,\bk_m\}$ are put in $R_\leq$ and those in $\Z^n\setminus ($span$_\Z\l\{  \bk_1,\ldots,\bk_m\r\}\cup \Z_K^N)$ are put in $R_>$. 
\end{enumerate}
We have the estimate $|R_>|_{{r-2}}\leq \dt$ since we have $ K=(\dt/3)^{-1/2}$. 

Only one step of KAM iteration is enough. We use a new Hamiltonian $\sqrt{\eps}F$ whose induced time-1 map $\phi_{\sqrt{\eps}F}^1$ gives rise to a symplectic transformation
\begin{align*}
{\sH}\circ \phi_{\sqrt{\eps}F}^1=&{\sH}+\sqrt{\eps}\{{\sH},F\}+\frac{\eps}{2}\int_0^1(1-t)\{\{{\sH}, F\},F\}(\Phi_{\sqrt{\eps}F}^t)\,dt\\
=&\dfrac{1}{\sqrt{\eps}}\langle\omega^\star, Y\rangle+\dfrac{1}{2}\langle \sA Y,Y\rangle+ \Pi_{\bk_1,\ldots,\bk_m}\sV+\Big\langle \omega^\star,\dfrac{\partial F}{\partial x}\Big\rangle\\
&+R_\leq (x)+ R_>(x)+\sP(x,\sqrt{\eps}Y)\\
&+\sqrt{\eps}\Big\langle \sA Y+\frac{\partial\sP}{\partial Y},\frac{\partial F}{\partial x}\Big\rangle+\dfrac{\eps}{2}\int_0^1(1-t)\{\{{\sH}, F\},F\}(\Phi_{\sqrt{\eps}F}^t)\,dt,
\end{align*}
where $F$ solves the cohomological equation
$ R_\leq ( x)+ \left\langle \omega^\star,\frac{\partial F}{\partial  x}\right\rangle=0.$

Notice that $F$ is a function of only $x$. Notice also $|P|_{C^r}\leq 1$ and $\sV(x)=P(x,y^\star)$, so we get
$\sqrt\eps|F|_{{r}}\leq \eps^{1/6}$ by solving the cohomological equation under the assumption \eqref{EqFreqSingle}.

Let $\delta\sR_I=R_>$, so we have $|\sR_I|_{r-2}\le 1$. Let
\begin{align*}
\dt\sR_{II}&=\sP(x,\sqrt{\eps}Y)+\sqrt{\eps}\Big\langle \sA Y+\frac{\partial\sP}{\partial Y},\frac{\partial F}{\partial x}\Big\rangle+\dfrac{\eps}{2}\int_0^1(1-t)\{\{{\sH}, F\},F\}\l(\phi_{\sqrt{\eps}F}^t\r)\,dt.
\end{align*}
We have
\begin{enumerate}
   \item $|\sP|_{r-3}\leq |\sP_{I}|_{r-3}+ |\sP_{II}|_{r-3}\le C_r\eps^{1/2}$ from formula \eqref{EqOepsilon} and \eqref{EqPI}.
   \item Using the derivative estimates of $F$ and the fact that $\|Y\|\leq \Lambda$ we find
   $$
   \Big|\sqrt{\eps}\Big\langle \sA Y+\frac{\partial\sP}{\partial Y},\frac{\partial F}{\partial x}\Big\rangle\Big|_{{r-4}}=O(\eps^{1/6}).
   $$
   \item Since $\{{\sH}, F\}=\Big\{ \frac{1}{\sqrt{\eps}}\langle\omega^\star, Y\rangle+\dfrac{1}{2}\langle \sA Y,Y\rangle+ \sV(x)+\sP,F\Big\}$, we find
   \[
   \l|\dfrac{\eps}{2}\int_0^1(1-t)\{\{{\sH}, F\},F\}\l(\phi_{\sqrt{\eps}F}^t\r)\,dt\r|_{{r-5}} =O(\eps^{1/3}).
   \]
\end{enumerate}
Therefore, we have $|\dt\sR_{II}|_{r-5}= O(\eps^{1/6})$ and can make the term $\dt\sR_{II}$ less than $\dt$ in the $C^{r-5}$ norm by decreasing $\eps$. The proof is now complete.
\end{proof}

\section{The reduction of order for single resonances}\label{SSingle}
In this section, we perform the reduction of order in the single resonance regime.
\setcounter{equation}{0}
\subsection{Normally hyperbolic invariant manifold for Hamiltonian system}\label{SSDLS}
In this section, we introduce the theory of normally hyperbolic invariant manifold (NHIM). 
We introduce the definition of the normally hyperbolic invariant manifold following \cite{DLS00, DLS08}.

\bdf\label{DefNHIM} Let $f: M \to M$ be a $C^r$-diffeomorphism on a smooth manifold $M$ with $r>1$.  Let $N \subset M$ be a submanifold invariant under $f$, $f(N) = N$. We say that $N$ is a \emph{normally hyperbolic invariant manifold(NHIM)} if there exist a constant $C > 0$, rates $0 <\lb <\mu^{-1} < 1$ and a splitting $T_x M = E_x^s \oplus E_x^u \oplus T_xN$ for every $x \in N$
in such a way that
\beqa\nonumber
v\in E_x^s\quad &\Leftrightarrow\quad  |Df^k(x)v| \leq C\lb^k |v|, \quad k \geq 0,\\
v\in E_x^u\quad &\Leftrightarrow \quad |Df^k(x)v| \leq C\lb^{|k|} |v|, \ \ k \leq 0,\\
v\in T_xN\quad &\Leftrightarrow \quad |Df^k(x)v| \leq C\mu^k |v|, \quad k \in \Z.
\eeqa
\edf
\begin{Not}\label{NotUniformNormal} In the following, we use the phrase ``with uniform normal hyperbolicity independent of $\eps$", which means that neither the normal Lyapunov exponents nor the splitting angle between $E^s$ and $E^u$ depends on $\eps$. 
\end{Not}

\begin{theo}[Theorem A.14 of \cite{DLS00}]\label{NHIM} Let $N_X\subset M$ - not necessarily compact - be normally hyperbolic invariant for the map $f_X$ generated by the vector field $X$, which is uniformly $C^r$ in a neighborhood $U$ of $N_X$ such that dist$(M \setminus U,N_X) > 0$. Let $f_Y$ be the $C^r$-map generated by another vector field $Y$ which is sufficiently close to $X$ in the $C^1$-topology. Then, we can find a manifold $N_Y$ which is normally hyperbolic for $Y$ and close to $N_X$ in the $C^{\min\{r,\left|\frac{\ln\lambda}{\ln\mu}\right|-\eps\}}$ topology, for any small $\eps$. The Lyapunov exponents for $N_Y$ are arbitrarily close to those of $N_X$ if $Y$ is sufficiently close to $X$ in the $C^1$ topology. The manifold $N_Y$ is the only $C^1$ manifold close to $N_X$ in the $C^0$ topology, and invariant under the flow of $Y$.
\end{theo}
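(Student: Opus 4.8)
This is the standard persistence statement for normally hyperbolic invariant manifolds, and we only indicate the strategy; complete details are in \cite{DLS00} (compare also \cite{DLS13, Fe}). The argument is the Hadamard graph transform adapted to a possibly non-compact $\Pi_X$, where the hypotheses that $X$ is uniformly $C^r$ on $U$ and that $\mathrm{dist}(M\setminus U,\Pi_X)>0$ supply exactly the uniform estimates that compactness would otherwise give. First I would fix a uniform tubular neighborhood of $\Pi_X$ adapted to the splitting $T_xM=E^s_x\oplus E^u_x\oplus T_x\Pi_X$, producing coordinates $(\theta,v)$ with $\theta\in\Pi_X$ and $v=(s,u)\in E^s_\theta\oplus E^u_\theta$ of small norm. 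In these coordinates $f_X$ fixes the zero section and is block-triangular along $\Pi_X$: a contraction of rate $\le\lambda$ on the $s$-block, a backward contraction of rate $\le\lambda$ on the $u$-block, and a center block of rate $\le\mu$, with the gap $\lambda<\mu^{-1}<1$. Writing $f_Y$ in the same coordinates, $C^1$-closeness of $Y$ to $X$ makes $f_Y$ a $C^1$-small perturbation of $f_X$ on $U$.

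The core step is to introduce the complete metric space $\mathcal{G}$ of continuous sections $\sigma:\Pi_X\to E^s\oplus E^u$ with $\|\sigma\|_{C^0}$ small and Lipschitz constant $\le 1$, equipped with the sup distance, and to define the graph transform $\Gamma_Y:\mathcal G\to\mathcal G$. Using the rate gap together with the $C^1$-smallness of $f_Y-f_X$, one checks via the inverse function theorem applied to the base projection that $f_Y(\mathrm{graph}\,\sigma)$ is again a graph over $\Pi_X$; let $\Gamma_Y\sigma$ denote the section representing it. A direct estimate shows $\Gamma_Y$ is a $C^0$-contraction with rate controlled by the spectral gap, hence it has a unique fixed point $\sigma_Y$, and $\Pi_Y:=\mathrm{graph}\,\sigma_Y$ is $f_Y$-invariant, Lipschitz, and $C^0$-close to $\Pi_X$. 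The uniqueness assertion is then immediate: any $C^1$ manifold $C^0$-close to $\Pi_X$ and invariant under $f_Y$ is the graph of a section in $\mathcal{G}$, hence a fixed point of $\Gamma_Y$, hence coincides with $\Pi_Y$.

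The remaining work, and the main technical obstacle, is regularity: upgrading $\sigma_Y$ from Lipschitz to $C^{\min\{r,\lambda/\mu-\delta\}}$ and establishing continuity of the splitting $E^s_Y\oplus E^u_Y\oplus T\Pi_Y$, hence of the Lyapunov exponents, in $Y$. I would do this through the fiber contraction theorem: lift the graph transform to an action on the bundle of $k$-jets of sections over $\Pi_X$, so that the base map is the already-constructed $C^0$-contraction $\Gamma_Y$ and the fiber maps are affine with contraction rate $\lesssim\mu^k\lambda$; these contract precisely while $k$ stays below the threshold recorded by $\min\{r,\lambda/\mu-\delta\}$, the cutoff at $r$ being forced because $f_Y$ is merely $C^r$. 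A Hirsch--Pugh--Shub fiber-contraction argument then gives that the fixed section lies in $C^k$, and the same scheme on the Grassmannian bundle produces the invariant sub-bundles $E^s_Y,E^u_Y$ together with their continuous dependence on $Y$ in $C^1$; continuity of the Lyapunov exponents follows since they are read off from the growth of the derivative cocycle restricted to these bundles. Throughout, the delicate points are keeping every estimate uniform in $\theta\in\Pi_X$ to handle non-compactness, and tracking the derivative loss in the jet-bundle step so as to obtain the sharp smoothness class.
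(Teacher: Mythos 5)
The paper does not prove this statement; it imports it verbatim as Theorem A.14 of \cite{DLS00}, so there is no internal proof to compare against. Your sketch is the standard and correct route — graph transform on Lipschitz sections for existence and uniqueness, then a Hirsch--Pugh--Shub fiber-contraction on jet and Grassmannian bundles for the $C^{\min\{r,\lambda/\mu-\dt\}}$ regularity, the invariant splitting, and continuity of the exponents, with non-compactness handled by the uniform $C^r$ bounds on $U$ and $\mathrm{dist}(M\setminus U,\Pi_X)>0$ — which is essentially the argument of the cited reference (and of \cite{Fe}).
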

We give a proof of the result in Appendix \ref{SNHICSpecial} in a special setting adapted to the need of the paper. 

When the normally hyperbolic flow is Hamiltonian, we have the following theorem saying that the restriction of the Hamiltonian system to the central manifold is also Hamiltonian with less number of degrees of freedom.
\bth[Theorem 23 and 26 of \cite{DLS08}]\label{NHIMHam} Suppose $M$ is endowed with a $($an exact$)$ symplectic form $\omega$
Let $f_\eps :M \to M$ be a $C^r$ family of Hamiltomorphisms, $r \geq 2$ preserving $\omega$. Assume that $N \subset M$ is
a normally hyperbolic invariant manifold for $f_0$ with rate $\lb,\mu$.
\ben
\item Then for sufficiently small $\eps,$ there exist $C^\ell$-families of diffeomorphisms $k_\eps,\ r_\eps$ with $\ell\leq \min\l\{r,\l|\frac{\ln\lb}{\ln\mu}\r|\r\}$,  satisfying $f_\eps\circ k_\eps=k_\eps\circ r_\eps$ where $k_\eps$ is the map such that $k_\eps (N)=N_\eps$ and  $r_\eps: N\to N$ is the restricted map on $N$ .
 \item
We denote by $\mathcal R_\eps$ the generating vector field corresponding to $r_\eps$ defined by $\frac{d}{d\eps}r_\eps=\mathcal R_\eps\circ r_\eps$. Then we have
 \ben
\item[$\bullet$] $k_\eps^*\omega=\omega_N$ is a $($an exact$)$ symplectic form on $N$. It is independent of $\eps$.
\item[$\bullet$] The vector field $\mathcal R_\eps$ is $($exactly$)$ Hamiltonian vector field with respect to $\omega_N$. Moreover, its $($global$)$ Hamiltonian is
$R_\eps =F_\eps\circ k_\eps$ where $F_\eps$ is the Hamiltonian for $f_\eps.$
\een
\een
\eth

In this paper, we will deal with submanifolds that might not be invariant. We introduce the following definition.
\begin{defi}
A piece of submanifold $N$ of $M$ is called \emph{weakly invariant} under the Hamiltonian flow of $H$, if the Hamiltonian flow is tangent to the manifold at each point of $N$. We use the abbreviation wNHIC to mean a weakly invariant normally hyperbolic cylinder.
\end{defi}

\subsection{Normally hyperbolic invariant cylinder (NHIC) around single resonances}\label{SSSingle}

We apply the normal form Proposition \ref{prop: codim1} to the case (1) of Lemma \ref{Lm:nbd}.
\begin{lem}\label{LmNormalForm1} Let $\omega_a,\mu(K)$ be as in Lemma \ref{Lm:nbd} where $ K=(\dt/3)^{-1/2}$ for a small $\dt$. Then there exists $\eps_1=\eps_1(\dt,\Lambda)$ such that for $\eps<\eps_1$ the following holds.  Let $\omega^\star\in B(\omega_a,\mu( K))\setminus \bigcup_{i}B\l(\omega_{a_i^o}+(\bk^o_{a^o_i})^\perp,\eps^{1/3}\r)$ be as in case $(1)$ of Lemma \ref{Lm:nbd}. Then there exists a symplectic transform $\phi$ defined on $B(0,\Lambda)\times \T^n$ that is $o_{\eps\to 0}(1)$ close to identity in the $C^{r}$ norm, such that
\begin{equation}\label{EqNormalForm1}
\begin{aligned}
\sH\circ  \phi(x,Y)=&\dfrac{1}{\sqrt{\eps}}\langle \omega^\star, Y\rangle+\dfrac{1}{2}\langle \sA Y,Y\rangle+ V(\langle \bk',x\rangle)+\dt\sR(x,Y),
\end{aligned}
\end{equation}
where
\begin{enumerate}
\item $V(\langle \bk',x\rangle)= \Pi_{\bk'}\sV$;
\item $\dt\sR(x,Y)=\dt\sR_I(x)+\dt\sR_{II}(x,Y)$, where $\sR_I$ consists of Fourier modes of $\sV$ not in $\mathrm{span}_\Z\{\bk'\}\cup \Z^n_K$, and we have $|\sR_I|_{{r-2}}\leq 1$ and $|\sR_{II}|_{{r-5}}\leq 1$.
\end{enumerate}
\end{lem}

Using Formula \eqref{EqSympZ}, we introduce a linear symplectic transformation denoted by $\mathfrak{M}':\ T^*\T^n\to T^*\T^n$,
$$
\mathfrak{M}'(x,Y)=(M'  x,(M' )^{-t}Y):=(x',Y').
$$
%The frequency $\omega_a$ is transformed to $\omega'_a=M'\omega_a=\rho_a(a,0,\frac{1}{qQ}\omega_2^*,\hat\omega^*_{n-2})$.

In \eqref{EqNormalForm1}, we choose $y^\star\in \Sigma(\bk')$ such that $\omega'^\star=M'\omega^\star$ has zero as the second entry.  Applying the symplectic transformation $\mathfrak M'$ to the normal form \eqref{EqNormalForm1}, we get the following system up to an additive constant
\begin{equation}\label{EqH'}
\begin{aligned}
\sH'_\dt:=\mathfrak M'^{-1*}\sH\circ \phi=\dfrac{1}{\sqrt{\eps}}\langle \omega'^\star, Y'\rangle+\dfrac{1}{2}\langle A Y',Y'\rangle +V\l(x'_2\r)+\dt R(x',Y'),
\end{aligned}
\end{equation}
where $A=M'\sA M'^t$ and $R(x',Y')=\mathfrak M'^{-1*}\sR(x,Y)$. %We denote by $Z'(x',y')=\mathfrak M'^{-1*}\Pi_{\bk'}P(x,y )$. 

We next cite a result from \cite{CZ1} in order to find NHICs in the system $\sH'_\dt$.
\begin{pro}[Theorem 3.1 of \cite{CZ1}] \label{PropCZ}

Let $F_\zeta\in C^r(\T^1,\mathbb R)$ with $r\ge4,\ \zeta \in [0,1],$ and $F_\zeta$ be Lipschitz in the parameter $\zeta$. Then, there exists an open-dense set $\mathfrak V\subset C^r(\T^1, \R)$ so that for each $V\in \mathfrak V$, it holds simultaneously for all $\zeta\in [0, 1]$ that the global max of $F_\zeta+V$ is non-degenerate. Moreover, given $V \in \mathfrak V$ there are finitely many $\zeta_i\in [0,1]$ such that $F_\zeta+V$ has only one global max for $\zeta\neq\zeta_i$ and has two global max if $\zeta=\zeta_i$.

\end{pro}
%The result with $d=1$ is Theorem 3.1 of \cite{CZ1} and the $d>1$ case (in fact any compact manifold case) is the main result of \cite{CZ2}. 

%We apply this result to  $\Pi_{\bk'}P(y,\cdot )$ and restrict $y$ to the segment $\omega^{-1}(\omega_a)$, so there exists an open dense set $\mathcal O_1\subset C^r(T^*\T^n),\ r\geq 4,$ such that for each $P\in \mathcal O_1$, we have that the corresponding $Z'(x_2',y')$ determined by $P$ has unique nondegenerate global max up to finitely many bifurcations, where there are two nondegenerate global max.

The next result establishes the existence of wNHICs. 
%Under this genericity hypothesis, we get that the function $V(x_2')$ has non degenerate global maximum point for all $y^\star$ in a $\mu$-neighborhood of $\Gamma_a'$ and $\dt$ small enough, since $\mu=o_{\dt\to 0}(1)$ .
\begin{pro}\label{NHICSingle}There exists an open dense set $\mathcal O_1=\mathcal O_1(\bk')\subset \Pi_{\bk'}C^r(T^*\T^n)$, $r\geq 7$, such that for each $P\in C^r(T^*\T^n)$ with $\Pi_{\bk'}P\in \mathcal O_1$, there exists $\dt_1=\dt_1(\Pi_{\bk'}P)$ such that for all $0<\dt<\dt_1$, the system \eqref{EqNormalForm1} based at a point $y^\star\in \Sigma(\bk')$ and defined on $B(0,\Lambda)\times \T^n$ 
\begin{enumerate}
\item admits a $C^r$ wNHIC $\mathcal C(\bk') $ homeomorphic to $T^*\T^{n-1}$ with uniform normal hyperbolicity, independent of $\dt$ or $\eps$;
\item Mather sets with rotation vectors in $\{\eps^{-1/2}\omega(y^\star+\sqrt\eps Y),\quad \|Y\|\leq 0.9\Lambda\}$ and perpendicular to $\bk'$ lie inside $\cC(\bk')$.
\end{enumerate}
\end{pro}
\begin{proof}
We first apply Proposition \ref{PropCZ} to the function $\Pi_{\bk'}P$ along the segment $y\in \omega^{-1}(\omega_a)$ to get an open dense set $\mathcal O_1$ in $\Pi_{\bk'}C^r(T^*\T^n)$ such that that for each $y$, the function $\Pi_{\bk'}P(y,\cdot)\in \mathcal O_1$ admits a nondegenerate global max up to finitely many bifurcations where there are two nondegenerate global max. Let us now choose a $P$ with $\Pi_{\bk'}P\in \mathcal O_1$ and determine $V(x'_2)$ from $\Pi_{\bk'}P$ by applying the homogenization and Lemma \ref{LmNormalForm1} so that $V$ has nondegenerate global max.
 
In \eqref{EqH'}, we neglect the remainder $\dt R$ to get that the remaining system $$\sH'_0:=\frac{1}{\sqrt{\eps}}\langle \omega'^\star, Y'\rangle+\dfrac{1}{2}\langle A Y',Y'\rangle +V\l(x'_2\r)$$
admits a NHIC given by \beq\label{EqNHIC}\l\{\dot Y'_2=\frac{\partial V}{\partial x_2'}=0,\quad\dot x'_2=\frac{1}{2}\frac{\partial\langle AY',Y'\rangle }{\partial Y'_2}=\sum_{i=1}^n A_{2i} Y'_i=0\r\}. \eeq
The normal hyperbolicity depends only on $A$ and the second order derivative of $V$ at the global max, hence does not depend on $\eps$ or $\dt$. Restricted to the NHIC we get a system with one less degrees of freedom due to Theorem \ref{NHIMHam}. 

Let us now make preparation for the application of the theorem of NHIM. The system $\sH$ (without the linear transformation) is defined in a $\Lambda$-ball in the $Y$ variables since the homogenization is done in a $\Lambda\sqrt\eps$ ball. We introduce a $C^\infty$ bump function $\chi$ supported in $B(0,\Lambda)$ satisfying $\chi(Y)=1$ if $\|Y\|<0.95\Lambda$ and is zero for $\|Y\|>0.98\Lambda$. To apply the NHIM theorem, we replace the remainder $\dt \sR$ in \eqref{EqNormalForm1} by $\chi(Y)(\dt \sR)$. The modification vanishes the perturbation for in the region $\{\|Y\|\geq 0.98\Lambda\}$ so that the dynamics therein is integrable when restricted to the NHIC which is the unperturbed NHIC. We will show below how to apply the theorem of NHIM to obtain a NHIC for the modified system. Since the modified system agrees with the original system on $\{\|Y\|\leq 0.95\Lambda\}$, the NHIC for the modified system is indeed a wHNIC for the original system in the region $\{\|Y\|\leq 0.95\Lambda\}$.

We next apply the NHIM theorems \ref{NHIM} and \ref{NHIMHam}. However, there is a subtle point. In the Hamiltonian equations, the vector field in the center is fast $\dot{ x}=\frac{\omega^\star}{\sqrt \eps}+O(1)$. This is a nonstandard setting where the NHIM theorems are applicable. We present the statement and proof in Appendix \ref{SNHICSpecial}. The conclusions of the NHIM theorems still hold since the large term $\frac{\omega^\star}{\sqrt \eps}$ is constant, which does not contribute to the derivatives of the Hamiltonian flow, hence the normal hyperbolicity.  The perturbation $\dt\sR$ is $\dt$-small in the $C^{n-5}$ norm, so its perturbation to the Hamiltonian vector field is $\dt$-small in the $C^{n-6}$ norm. By assumption $r\geq 7$, and applying the NHIM theorem (Theorem \ref{NHICSpecial}) we get a NHIC which is $C^r$ and is $\dt$-close to the unperturbed one in the $C^{r-6}$-topology as the center Lyapunov exponents are zero. 

In this case, we apply Theorem \ref{NHIMHam} to restrict the system to the NHIC to get a Hamiltonian system with one degree of freedom less. Note that here the $\dt_1$ is determined by the normal hyperbolicity which comes from the second order derivative of $V$ at the global max, hence $\dt_1$ is determined by $\Pi_{\bk'}P.$

Finally, we study the oscillation of the action variables of orbits in the Mather set. First we know that for the modified system, all the Mather sets with cohomology classes $\|c\|\leq \Lambda$ and with rotation vectors perpendicular to $\bk'$ lie inside the NHIC, since these Mather sets necessarily lie in a small neighborhood of the NHIC if $\dt$ is small and if a Mather set does not lie on the NHIC, the normal hyperbolicity will push it away from the NHIC violating the invariance of Mather sets. We next show that within the NHIC, the action variables of orbits in the Mather sets has $O(\sqrt\dt)$ oscillation. Write the Lagrangian as $$\mathsf L_c(x,\dot x)=\frac{1}{2}\langle A^{-1}(\dot x-\eps^{-1/2}\omega^\star-c),(\dot x-\eps^{-1/2}\omega^\star-c)\rangle-V(x_2)-\dt \chi R-\frac{1}{2}\langle Ac,c\rangle+\al(c),$$ where $\mathsf L_c(x,\dot x):=\mathsf L(x,\dot x)-\langle c,\dot x\rangle+\al(c)$ and $$\mathsf L(x,\dot x):=\frac{1}{2}\langle A^{-1}(\dot x-\eps^{-1/2}\omega^\star),(\dot x-\eps^{-1/2}\omega^\star)\rangle-V(x_2)-\dt \chi R.$$
 Let $\mu$ be a measure in the Mather set of cohomology class $c$. Fix a large number $C$ and decompose $\mu=\mu_1+\mu_2$ such that supp$ \mu_1\subset \{\|\dot x-\eps^{-1/2}\omega^\star-c\|\leq C\sqrt\dt\}\times \T^n$ and supp$\mu_2$ lies in the complement. Denote by $\hat\mu_i=\frac{1}{m_i}\mu_i$, the normalization of $\mu_i$, where $m_i=\int d\mu_i$, $i=1,2$. So we get the action $A_c(\mu):=\int \mathsf L_c\,d\mu=m_1\int\mathsf  L_c\,d\hat \mu_1+m_2\int\mathsf  L_c\,d\hat \mu_2$.
 We always have $\int\mathsf  L_c\,d\hat \mu_1\geq 0$. For the second term, we have $$\frac{1}{2}\left\langle A^{-1}(\dot x-\eps^{-1/2}\omega^\star-c),(\dot x-\eps^{-1/2}\omega^\star-c)\right\rangle>C^2\|A\|^{-1}\dt/2$$ by the definition of $\mu_2$ and $|V(x_2)|_{\mathrm{supp}\mu_2}|\leq \ell\dt^2$ for some constant $\ell$, since the Mather set lies on the NHIC and the NHIC undergoes a $O(\dt)$ perturbation from the unperturbed one given by $x_2^*$, a nondegenerate global max of $V$. We denote by $\mu_0$ the Haar measure supported on the torus $\{\dot x=\eps^{-1/2}\omega^\star+c\}\times \{x_2=x_2^*\}$ and we have $A_c(\mu_0)=\int \mathsf L_c\,d\mu_0= -\frac{1}{2}\langle Ac,c\rangle+\al(c)+O(\dt)\geq 0$.  We also have $\sup |R|\leq 1$, so we conclude $$\int \mathsf L_c\,d\hat \mu_2-A_c(\mu_0)\geq \frac{1}{2} C^2\|A\|^{-1}\dt-\ell\dt^2-\dt.$$ Choose $C$ large and $\dt$ small, we find that $A_c(\mu)\geq m_2 A_c(\hat\mu_2)>0$ violating the definition of minimal measure. Part (2) of the proposition is proved since Mather sets intersecting the region $\{\|Y\|\leq 0.9\Lambda\}$ have to stay in $\{\|Y\|\leq 0.95\Lambda\}$ where the modified system agrees with the original system.

\end{proof}

\section{Dynamics around strong double resonances}\label{SDouble}
The number of double resonances depends on $\dt$. However, most of the double resonances are weak and can be treated as single resonances. The number of strong double resonances is independent of $\delta,\eps$.

\subsection{Distinguishing weak and strong double resonances}
We apply the normal form Proposition \ref{prop: codim1} to the case (2) of Lemma \ref{Lm:nbd} to obtain the following.
\begin{lem}\label{LmNormalForm2}
Let $\omega_a$ and $\mu(K)$ be as in Lemma \ref{Lm:nbd}, where $ K=(\dt/3)^{-1/2}$ for a small $\dt$. Then there exists $\eps_2=\eps_2(\dt,\Lambda)$ such that for $\eps<\eps_2$, the following holds. Let $\omega^\star\in B(\omega_a,\mu( K))\bigcap B\l(\omega_{a_i^o}+(\bk^o_{a^o_i})^\perp,\eps^{1/3}\r)$ be as in case $(2)$ of Lemma \ref{Lm:nbd}. Then there exists a symplectic transform $\phi$ defined on $\{|Y|\leq \Lambda\}\times \T^n$ that is $o_{\eps\to 0}(1)$ close to identity in the $C^{r}$ norm, such that
\begin{equation}\label{EqNormalForm2}
\begin{aligned}
\sH\circ  \phi(x,Y)=&\dfrac{1}{\sqrt{\eps}}\langle \omega^\star, Y\rangle+\dfrac{1}{2}\langle \sA Y,Y\rangle+V\l(\l\langle  \bk', x\r\rangle,\l\langle \bk^o_{a^o_i}, x\r\rangle\r)+\dt \sR(x,Y),
\end{aligned}
\end{equation} where
\begin{enumerate}
\item $V\l(\l\langle  \bk', x\r\rangle,\l\langle \bk^o_{a^o_i}, x\r\rangle\r)=\Pi_{\bk',\bk_{a^o_i}^o}\sV$;
\item $\dt\sR(x,Y)=\dt\sR_I(x)+\dt\sR_{II}(x,Y)$, where $\sR_I$ consists of Fourier modes of $\sV$ not in $\mathrm{span}_\Z\{\bk',\bk^o_{a^o_i}\}\cup \Z^n_K$, and we have $|\sR_I|_{{r-2}}\leq 1$ and $|\sR_{II}|_{{r-5}}\leq 1$.
\end{enumerate}
\end{lem}

We next give a criteria to distinguish weak and strong double resonances, where the former will be treated as single resonance while the latter will need special care.

Consider a double resonance associated to the vector $\bk^o=\bk^o_{a^o_i}.$ We decompose $\Pi_{\bk',\bk^o}\sV(x)$ in \eqref{EqNormalForm2} in Proposition \ref{LmNormalForm2} as
\begin{equation}\label{EqCriteria}
\Pi_{\bk',\bk^o}\sV(x)= Z'(\langle \bk',x\rangle)+ Z''(\langle \bk',x\rangle,\langle \bk^o,x\rangle)
\end{equation}
where $Z'$ includes all the Fourier harmonics in the span$\{\bk'\}$ and $Z''$ contains the rest.

Notice $Z''$ must contain at least one term with $\bk^o$. Since $\bk'$ does not depend on $\dt$, we get $|Z''|_{C^{r-2}}\leq \frac{C}{|\bk^o|^{2}}$ for some constant $C$ independent of $\dt$. We first treat $ Z''+\dt R$ as a perturbation to the truncated Hamiltonian $\frac{1}{\sqrt{\eps}}\langle \omega^\star, Y\rangle+\frac{1}{2}\langle \sA Y,Y\rangle+Z'(\langle \bk',x\rangle)$, which has a wNHIC following from exactly the same reasoning as Proposition \ref{NHICSingle}. There is a threshold denoted by $\bs\dt$ that is the maximal allowable $C^1$ norm of the perturbation for applying the NHIM Theorem (Appendix \ref{SNHICSpecial}) to the NHIC in the truncated Hamiltonian. The threshold $\bs\dt$ does not depend on $\dt,\eps$ so we get when $\bs\dt> 2\frac{C}{|\bk^o|^{2}}$, we treat the corresponding double resonance point as a single resonance, otherwise we call the point a {\bf strong double resonance point} and will focus on it in the next. The total number of strong double resonance points are bounded by $\l(\frac{2C}{\bs\dt}\r)^{n/r'}$ which is
independent of $\eps,\dt$ for given $P\in\mathcal O_1$.
\subsection{The shear transformation for strong double resonances}\label{SSSShear}
In the following, we work on the strong double resonances. 
\begin{Not}
We denote by $\Sigma_!(\bk',\bk^o)$ the double resonance submanifold determined by a \emph{strong} double resonance. 
\end{Not}

%We denote by $a^o$ the $a$ for which there is a second resonance vector $\bk^o$ that is strong. 
In the homogenization and Lemma \ref{LmNormalForm2}, we choose the base point $y^\star\in \Sigma(\bk')$ so that $\omega^\star=\omega(y^\star)\in \bk'^\perp$.  We introduce the matrix $M''\in \mathrm{SL}(n,\Z)$ in Lemma \ref{LmM''} whose first two rows are $\bk^o$ and $\bk'$ respectively, and introduce the linear symplectic transformation 
\beq\label{Eqx''Y''}
\mathfrak{M}'': T^*\T^n\to T^*\T^n,\quad(x,Y)\mapsto (M''x, M''^{-t}Y):=(x'',Y'').
\eeq
We also keep track of the frequency vector $\omega''_a=M'' \omega_a=(\nu(a),0,*,\ldots,*)$ where $\nu(a)$ satisfies $\nu(a^o)=0$, where $a^o$ is such that $\omega_{a^o}\in \Sigma_!(\bk',\bk^o)$. We get a Hamiltonian system
\begin{equation}\label{EqH''}
\begin{aligned}
\sH''_\dt:=&(\mathfrak{M}''^{-1})^*\sH\circ \phi=\dfrac{1}{\sqrt{\eps}}\langle \omega''_a, Y''\rangle+\dfrac{1}{2}\langle A''Y'',Y''\rangle+ V( x''_1, x''_2)+\dt \sR''
\end{aligned}
\end{equation}
by applying $\mathfrak{M}''$ term by term to \eqref{EqNormalForm2}. %The first two entries of $\omega''_{a}$ are equal to zero if we fixed the base point of homogenization $y^{\star}\in\Sigma_!(\bk',\bk^o)$ such that $\omega(y^\star)=\omega_{a^o}$. 

In the next lemma, we are going to introduce a linear symplectic transformation induced by a matrix in $\mathrm{SL}(2n,\R)$ but not in $\mathrm{SL}(2n,\Z)$ so that it is not a symplectic transformation on $T^*\T^n$. We introduce the following notation. 
\begin{Not}
Given a matrix $S\in \mathrm{SL}(n,\R)$, we denote by $\T^{n}_S$ the torus $\R^n/(S\Z^n)$ where $S\Z^n=\{S \bk\ |\ \bk\in \Z^n\}$.
\end{Not}

\blm\label{shear}
There is a linear symplectic transformation from $T^*\T^n\to T^*\T^n_S$, defined by $$\mathfrak S\mathfrak{M}'':\ (x,y)\mapsto (SM''x,(SM'')^{-t}y):=(\sx,\sy)\in T^*\T^n_S$$ where $S\in \mathrm{SL}(n,\R)$ is in \eqref{EqShear}, such that the Hamiltonian system $\sH\circ \phi$ in Proposition \ref{LmNormalForm2} is reduced to the following Hamiltonian defined on $(SM'')^{-t}B(0,\Lambda) \times \T_S^n\subset T^*\T_S^n$, up to an additive constant
\beqa\label{EqHamShear}
\begin{aligned}
\sH_{S,\dt}:=&(\mathfrak S\mathfrak{M}'')^{-1*}\sH\circ \phi =\tilde{\sG}(\tilde{\sx},\tilde{\sy})+\hat{\sG}(\hat{\sy}_{n-2})+\dt R(\sx,\sy),
\end{aligned}
\eeqa
where
\begin{equation}\label{EqHamHom2}
\begin{aligned}
\tilde{\sG}(\tilde{\sx},\tilde{\sy})=&\eps^{-1/2} \omega_{S,1}\sy_1+\dfrac{1}{2}\Big\langle \tilde A \tilde \sy,\tilde \sy\Big\rangle+V(\tilde \sx):\ T^*\T^2\to \R,\\
\hat{\sG}(\hat{\sy}_{n-2})=&\dfrac{1}{2}\l\langle   \hat{\sy}_{n-2},B \hat{\sy}_{n-2}\r\rangle
+\frac{1}{\sqrt{\eps}}\Big\langle\hat{\omega}_{S,n-2},\hat \sy_{n-2}\Big\rangle,
\end{aligned}
\end{equation}
where 
\begin{enumerate}
\item $\omega_S=SM''\omega_{a^o}=(\tilde\omega_{S},\hat{\omega}_{S,n-2})$ with $\omega_{S,2}=0$ since $y^\star\in \Sigma(\bk')$, and $\tilde\omega_S=(\omega_{S,1},\omega_{S,2})=0$ if $y^\star\in \Sigma_!(\bk',\bk^o)$. 
\item The two matrices $\tilde A$ and
$B=(\hat A-\breve A^t\tilde A^{-1}\breve A)$ are positive definite, where $\tilde A,\, \breve A,\, \hat A$ in $\R^{2^2},\, \R^{2\times(n-2)},\, \R^{(n-2)^2}$ respectively form the matrix
\begin{equation}\label{matrixA}
A=\left(\begin{matrix}
\tilde A & \breve A\\
\breve A^t & \hat A
\end{matrix}\right).
\end{equation}
\item
The remainder $R(\sx,\sy)=(\mathfrak S\mathfrak{M}'')^{-1*}\sR$ satisfies $|R|_{{r-5}}<C$ where the constant $C$ is determined by $M''$ and $S$ hence is independent of $\eps$ or $\dt$.
\end{enumerate}
\elm
\bpf

In the proof, for simplicity of notations and without causing confusion, we also remove the $''$ in \eqref{Eqx''Y''}.
Let us denote
\begin{equation}\label{EqG}
\sG(Y,x)=\dfrac{1}{\sqrt\eps}\langle\omega''_{a^o},Y\rangle+\dfrac{1}{2}\langle AY,Y\rangle+V(x_1,x_2).
\end{equation}
We write the matrix $A$ in block form of (\ref{matrixA}). We also denote $\tilde v=(v_1,v_2)$ as the first two entries of a vector $v\in \R^n.$
Next we have the following formal derivations
\begin{equation}\label{fanren}
\begin{aligned}
\sG(Y,x)=&\frac{1}{\sqrt{\eps}}\langle\omega,Y\rangle+\frac 12\langle AY,Y\rangle+V(\tilde x)\\
=&\dfrac{1}{2}\langle \tilde A \tilde Y,\tilde Y\rangle+\langle \tilde Y,\breve A\hat Y_{n-2}\rangle+V(\tilde x)+\frac{1}{\sqrt{\eps}}\langle\tilde\omega,\tilde Y\rangle\\
&+\dfrac{1}{2}\langle \hat{A}\hat{Y}_{n-2},\hat{Y}_{n-2}\rangle+\frac{1}{\sqrt{\eps}}\langle\hat\omega_{n-2},\hat Y_{n-2}\rangle\\
=&\dfrac{1}{2}\langle \tilde A (\tilde Y+\tilde A^{-1}\breve A \hat{Y}_{n-2}),(\tilde Y+\tilde A^{-1}\breve A \hat{Y}_{n-2})\rangle+V(\tilde x)+\frac{1}{\sqrt{\eps}}\langle\tilde\omega,\tilde Y\rangle \\
&-\dfrac{1}{2}\langle  \breve A \hat{Y}_{n-2},\tilde A^{-1}\breve A \hat{Y}_{n-2}\rangle
 +\dfrac{1}{2}\langle \hat{A}\hat{Y}_{n-2},\hat{Y}_{n-2}\rangle+\frac{1}{\sqrt{\eps}}\langle\hat\omega_{n-2},\hat Y_{n-2}\rangle\\
=&\dfrac{1}{2}\langle \tilde A (\tilde Y+\tilde A^{-1}\breve A \hat{Y}_{n-2}),(\tilde Y+\tilde A^{-1}\breve A \hat{Y}_{n-2})\rangle+V(\tilde x)+\frac{1}{\sqrt{\eps}}\langle\tilde\omega,\tilde Y\rangle\\
&+\dfrac{1}{2}\langle   \hat{Y}_{n-2},(\hat A-\breve A^t\tilde A^{-1}\breve A) \hat{Y}_{n-2}\rangle
+\eps^{-1/2}\langle\hat\omega_{n-2},\hat Y_{n-2}\rangle.
\end{aligned}
\end{equation}
We perform the following linear shear symplectic transformation denoted by $\mathfrak{S}$,
\begin{equation}\label{EqSympSepa}
\begin{aligned}
&\left[\begin{matrix}\tilde \sy\\
\hat \sy_{n-2}\end{matrix}\right]
=\left[\begin{matrix}
\mathrm{id}_2& \tilde A^{-1}\breve A\\0& \mathrm{id}_{n-2}\end{matrix}\right]
 \left[\begin{matrix}\tilde Y\\ \hat Y_{n-2}\end{matrix}\right],\quad\left[\begin{matrix}\tilde \sx\\ \hat \sx_{n-2}\end{matrix}\right]
=\left[\begin{matrix}
\mathrm{id}_2& 0\\-\breve A^t\tilde A^{-t}&\mathrm{id}_{n-2}
\end{matrix}\right]
\left[\begin{matrix}\tilde x\\ \hat x_{n-2}\end{matrix}\right]
\end{aligned}
\end{equation}
so that the homogenized system in the new coordinates is written in the form $\sG=\tilde \sG+\hat\sG$ stated in the lemma. Here the variables $\sx$ are local coordinates on $\T^n_S$ and can be viewed as global coordinates on the universal covering space $\R^n$.

We denote
\beq\label{EqShear}
S=\bmt{cc}
\mathrm{id}_2& 0\\
-\breve A^t\tilde A^{-t}&\mathrm{id}_{n-2}
\emt,\quad S^{-t}=\bmt{cc}
\mathrm{id}_2& \tilde A^{-1}\breve A\\
0&\mathrm{id}_{n-2}
\emt
\eeq
so that the above symplectic transformation $\mathfrak{S}$ simplifies to $\sx=Sx,\ \sy=S^{-t}Y$.

Since $A$ is positive definite and the linear symplectic transformation $\mathfrak{S}$ does not change the signature so we get both $\tilde A$ and $B=(\hat A-\breve A^t\tilde A^{-1}\breve A) $ are positive definite.

Notice the above matrix $S$ is identity in the $\tilde x$ component, hence the Hamiltonian $\tilde\sG$ depends on $\tilde\sx$ $\Z^2$-periodically. So $\tilde\sG$ is a Hamiltonian defined on $T^*\T^2. $
\epf
\begin{Rk}
This lemma implies that configuration space dynamics on $\T^n$ of the system $\sH''_\dt,\ \dt=0,$ in \eqref{EqH''} has a skew product structure. The base dynamics is given by the configuration space dynamics on $\T^2$ of $\tilde \sG:\ T^*\T^2\to \R$. Each fiber is a $\T^{n-2}$. The dynamics on each fiber at the point $\tilde x$ depends on the base point $\tilde x$ by equation \eqref{EqSympSepa}.
\end{Rk}

For $\omega^\star$ satisfying the assumption of Lemma \ref{LmNormalForm2}, we again distinguish two cases depending on if $\omega^\star$ is in $ B(\omega_a,\mu( K))\bigcap B\l(\omega_{a_i^o}+(\bk^o_{a^o_i})^\perp,\Lambda\eps^{1/2}\r)$ or not. If $\omega^\star$ lies in the set, then when choosing the covering defining the homogenization, we require $y^\star\in \Sigma_!(\bk',\bk^o)$ so that $\omega^\star=\omega(y^\star)$ is at strong double resonance. In the following, we will focus mainly on this case. The other case is easy and will be studied in Section \ref{SSSHighEnergy}.

\subsection{Hamiltonian systems of two degrees of freedom}\label{SSNHICDouble}
Suppose $y^\star\in \Sigma_!(\bk',\bk^o)$ so in \eqref{EqHamHom2}, the frequency $\tilde\omega_{S}=0$ and 
we have obtained a mechanical system \begin{equation}\label{EqtsG}\tilde{\sG}(\tilde{\sx},\tilde{\sy})=\frac{1}{2}\langle \tilde A \tilde \sy,\tilde \sy\rangle+V(\tilde \sx),\quad (\tilde \sx,\tilde \sy)\in T^*\T^2.\end{equation} We normalize $V$ such that $\max V=0.$
In this section, we give the main properties of this system quoted from \cite{CZ1,C17a,C17b}.
\bth[Proposition 2.1 of \cite{C17b}]\label{flatthm1}
Let $H:\ T^*\T^n\to \R$ be a Tonelli Hamiltonian.
Given a class $c_0\in H^1(\mathbb{T}^n,\mathbb{R})$, if the minimal measure is supported on a hyperbolic fixed point, then there exists an $n$-dimensional convex flat $\mathbb{F}_0\subset H^1(\mathbb{T}^n,\mathbb{R})$ containing $c_0$ such that this fixed point supports a $c$-minimal measure for all $c\in\mathbb{F}_0$.
\eth
\begin{figure}[htp] % use float package if you want it here}
  \centering
  \includegraphics[width=7.5cm,height=4cm]{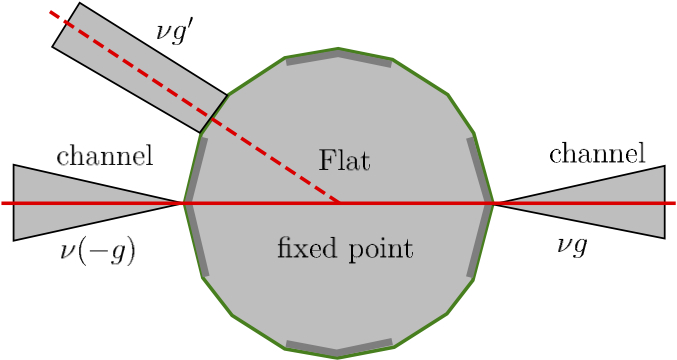}
  \caption{Two ways that the flat $\mathbb F_0$ connects to the channels}
 \label{fig 7}
\end{figure}
In the following, we specialize to the case of $n=2$.
\subsubsection{The NHIC in the low energy region}\label{SSSNHIC}

We cite the following result from \cite{CZ1}, which gives the existence of NHIC for low energy levels for Tonelli Hamiltonian systems of two degrees of freedom.
\bth[Theorem 2.1 of \cite{CZ1}] \label{ThmCZ}
Consider a $C^r,\ r\geq 5, $ Tonelli Hamiltonian $H:\ T^*\T^2\to \R$ normalized such that $\min\al_H=0$ by adding a constant. Given a class $g \in H_1(\T^2,\Z)$ and a closed interval $[E_-,E_+] \subset (0,\infty)$, there exists an open-dense set $\tilde{\mathcal O}_2(E_-,E_+) \subset C^r(\T^2)/\R,$ such that for each $V \in \tilde{\mathcal O}_2(E_-,E_+)$ normalized by adding a constant such that $\min\al_H=\min\al_{H+V}=0$, it holds simultaneously for all $E \in [E_-, E_+]$ that the Mather set $\widetilde\cM(E,g)$ on the energy level $E$ having the homology class $g$ for $H + V$ consists of hyperbolic periodic orbits. Moreover, except for finitely many $E_j \in [E_-,E_+]$ where the Mather set consists of two hyperbolic periodic orbits, for all other $E \in [E_-, E_+]$ the Mather set is exactly one hyperbolic periodic orbit.
\eth

In this paper, we will only apply this result to the homology class $g=(1,0)$. We denote by $\nu_\pm(1,0)$ the rotation vector of the Mather set on the energy levels $E_\pm$ with homology class $(1,0)\in H_1(\T^2,\Z)$. 

The next lemma shows that each hyperbolic periodic orbit corresponds to a one-dimensional flat in $H^1(\T^2,\R)$.
\begin{lem}\label{LmFlatPer}
Let $H(\tilde \sx,\tilde \sy):\ T^*\T^2\to \R$ be a Tonelli Hamiltonian and  $c^*\in H^1(\T^2,\R)$. We assume that the Mather set $\widetilde\cM(c^*)$ is supported on a hyperbolic periodic orbit with rotation vector $\nu g$ for $g\in H_1(\T^2,\Z)$ and $\nu\neq 0$. Then, the set $\partial\beta_{H}(\nu g)$ is an interval $\{c^*+s c_g\ |\ s_-\leq s\leq s_+\}\subset H^1(\mathbb{T}^2,\mathbb{R})$ with $s_-<s_+$, $s_-\leq 0\leq s_+$, $c_g\perp g$, and $\|c_g\|=1$ such that for each $c\in\{c^*+s c_g\ |\ s_-<s<s+\}$ we have
$
\widetilde \cA(c)=\widetilde\cM(c^*).
$
\end{lem}
\begin{proof}The proof is a variant of Proposition 2.1 of \cite{C17b}. 
As the system is autonomous with two degrees of freedom, $\partial\beta_{H}(\nu g)$ is either an interval or a point since $\partial\beta_{H}(\nu g)$ lies on an energy level $\al^{-1}(E)$, which is a closed curve. In the case of interval, some $c_g\in H^1(\mathbb{T}^2,\mathbb{R})$ exists such that $\partial\beta_{H}(\nu g)=\{c^*+s c_g\ |\ s_-\leq s\leq s_+\}$. It follows from \cite{Ms} that for all classes in the set $\{c^*+s c_g\ |\ s_-< s< s_+\}$, the Aubry sets $\widetilde \cA(c)$ are the same. Let us show that $s_-<s_+$ and $\widetilde \cA(c)=\widetilde\cM(c^*)$ for $c\in\{c^*+s c_g\ |\ s_-< s< s_+\}$.

Given any absolutely continuous curve $\gamma$, its Lagrange action is defined as follows
$$
A_c(\gamma)=\int L_{H}(\dot\gamma,\gamma)-\eta_c+\al_{H}(c)\,dt,\quad [\eta_c]=c.
$$
Denote by $\gamma_0$ the hyperbolic periodic orbit, we consider {\it minimal} homoclinic orbits to $\gamma_0$, which is located in the intersection of the stable and unstable manifolds of $(\dot\gamma_0,\gamma_0)$. A homoclinic orbit $(\dot\gamma,\gamma)$ is called {\it minimal} if the lift of $\gamma$, $\check{\gamma}$: $\mathbb{R} \to\check{M}$ is semi-static for the class $c^*$, where $\check{M}$ is the largest covering space of $\mathbb{T}^2$ so that $\pi_1(\check{M})=\pi_1(U)$ holds for each open neighborhood of $\mathcal{M}(c^*)$. Because of the topology of $\mathbb{T}^2$, there are only two types of minimal homoclinic orbits, denoted by $(\dot\gamma^{\pm},\gamma^{\pm})$. Given a point $x\in\gamma_0$, there are four sequences of time $t_{i,\pm}^{\pm}$ such that $\gamma^{-}(t_{i,-}^{\pm})\to x$ as $t_{i,-}^{\pm}\to\pm\infty$ and $\gamma^{+}(t_{i,+}^{\pm})\to x$ as $t_{i,+}^{\pm}\to\pm\infty$ and $t_{i,-}^{\pm}\to\pm\infty$ as $i\to\infty$. We define
\begin{align*}
A_{c}(\gamma^-,x)=&\liminf_{i\to\infty}\int_{t_{i,-}^{-}}^{t_{i,-}^{+}}\Big(L_{H}(\dot\gamma^-,\gamma^-) -\langle c,\dot\gamma^-\rangle +\al_{H}(c)\Big)dt\\
A_{c}(\gamma^+,x)=&\liminf_{i\to\infty}\int_{t_{i,+}^{-}}^{t_{i,+}^{+}}\Big(L_{H}(\dot\gamma^+,\gamma^+) -\langle c,\dot\gamma^+\rangle +\al_{H}(c)\Big)dt
\end{align*}
We obviously have $A_{c^*}(\gamma^\pm,x)\geq 0$. Next, we claim that
$$
A_{c^*}(\gamma^+,x)+A_{c^*}(\gamma^-,x)>0.
$$
Otherwise, we would have $A_{c^*}(\gamma^\pm)= 0$ for both $\pm$, which implies that $\gamma^\pm\subset \widetilde\cA(c^*)$. However, this violates the graph property of the Aubry set since in the first relative homology group $H_1(\mathbb{T}^2,\gamma_0,\mathbb{Z})$ we have $[\gamma^+]\neq [\gamma^-]$, when lifted to $\R^2$, the two curves $\gamma^\pm$ lying in the same strip bounded by two neighboring lifts of $\gamma_0$ hence the projections of $\gamma^\pm$ on $\T^2$ must intersect. The contradiction proves our claim. Let us assume $A_{c^*}(\gamma^+)>0$ without loss of generality.

Pick $\Delta c$ small enough and satisfying $$\langle\Delta c,[\gamma_0]\rangle=0,\  \langle\Delta c,[\gamma^{+}]\rangle>0\ \mathrm{and}\ A_{c^*}(\gamma^+)-\langle\Delta c,[\gamma^{+}]\rangle>0.$$ 
%Since $\gamma^\pm$ are a $c^*$-semi-static curves on the energy level $E=\al(c^*)$, we have that $\gamma^\pm$ is the minimal homoclinic orbit on the energy level $E$ with relative homology $[\gamma^\pm]$. 
%Since the minimal measure for class $c^*$ is supported only on the periodic orbit, $\tilde{\mathcal{N}}(c^*)$ is composed of those minimal homoclinic orbits along which the action equals zero. 
According to the upper semi-continuity of Ma\~n\'e set in cohomology class, any minimal measure $\mu_c$ is supported by a set lying in a small neighborhood of these homoclinic orbits if $c=c^*+\Delta c$ and $|\Delta c|$ is very small. By assumption $A_{c^*}(\gamma^+)>0$, it can only happen that $\mu_c$ is supported in a neighborhood of $\gamma^-\cup \gamma_0$.% only when $A_{c^*}(\gamma^-)=0$.

%For a $\Delta c\neq 0$ so that $\langle\Delta c,[\gamma_0]\rangle=0$, we get $\langle\Delta c,[\gamma^{+}]\rangle>0$ and $\langle\Delta c,[\gamma^{-}]\rangle<0$ by changing $\Delta c$ to $-\Delta c$. 
We claim that the minimal measure $\mu_c$ for $c=c^*+\Delta c$ is still supported on the periodic orbit $\gamma_0$. First we show that $\rho(\mu_c)\parallel \rho(\mu_{c^*})\perp \Delta c$. Otherwise, since supp$(\mu_c)$ lies in the small neighborhood of $\gamma^-$, it follows that $-\langle\Delta c,\rho(\mu_c)\rangle>0$. On the other hand, as the $c^*$-minimal measure is uniquely supported on the periodic orbits, the $\beta$-function is strictly convex at $\rho(\mu_{c^*})$ hence the $\alpha$-function is differentiable at $c^*$ and $\rho(\mu_{c^*})=\nu[\gamma_0]$ hold for certain number $\nu\neq 0$. Therefore, we have $\alpha_{H}(c^*+\Delta c)-\alpha_{H}(c^*)=o(|\Delta c|)$. Consequently, we obtain from the definition that
\begin{align*}
A_{c}(\mu_c)=&\int (L_{H}-\eta_{c^*})d\mu_c+\alpha_{H}(c^*+\Delta c)-\langle\Delta c,\rho(\mu_c)\rangle\\
=&\int (L_{H}-\eta_{c^*})d\mu_c+\alpha_{H}(c^*)-\langle\Delta c,\rho(\mu_c)\rangle+o(|\Delta c|),
\end{align*}
from which we have $A_{c}(\mu_c)>0$ as $A_{c^*}(\mu_{c^*})\geq 0$, $-\langle\Delta c,\rho(\mu_c)\rangle>0$ and $o(|\Delta c|)$ is a higher order term of $|\Delta c|$. The contradiction implies that $\rho(\mu_c)\perp \Delta c$. Next, by the convexity of $\al$, we have $$\al(c)-\al(c^*)\geq \langle \Delta c,\rho(\mu_{c^*})\rangle=0,\ \mathrm{and\ }\al(c^*)-\al(c)\geq \langle -\Delta c,\rho(\mu_{c})\rangle=0,$$
so we have $\al(c)=\al(c^*)$. We get that the interval $c^*+s\Delta c,\ s\in[0,1]$ lies entirely on the energy level $\al(c^*)$, on which the Mather set in the homology class $g\in H_1(\T^2,\Z)$ is known to be the unique hyperbolic periodic orbit hence the rotation vector is constant for $c$ in the interval. Finally, from the proof we see that the curves $\gamma^\pm$ appears in the Aubry set only when the cohomology class lies on the endpoints of the interval. Otherwise, the Aubry set agrees with the Mather set being the periodic orbit. This completes the proof. 

\end{proof}

\begin{pro}\label{NHICDouble} Let $y^\star\in \Sigma_!(\bk',\bk^o)$ so that $\omega^\star=\omega(y^\star)$ is at strong double resonance with integer vectors $\bk'$ and $\bk^o$. Then for any $\lambda>0$,
there is an open dense set $\mathcal O_2=\mathcal O_2(\bk',\bk^o;\lambda,\Lambda)\subset \Pi_{\bk',\bk^o}C^r(\T^n)/\R,\ r\geq 7,$ such that for each $P$ with $\Pi_{\bk',\bk^o}P(x,y^\star)\in \mathcal O_2$ normalized such that $\max \Pi_{\bk',\bk^o}P(x,y^\star)=0$, 
there exists $\dt_2=\dt_2(\Pi_{\bk',\bk^o}P(x,y^\star),\lambda)>0$ such that for all $0<\dt<\dt_2$ and all $0<\eps<\eps_2(\dt,\Lambda)$ as in Proposition \ref{LmNormalForm2},  the following holds
\begin{enumerate}
\item the Hamiltonian system \eqref{EqNormalForm2} admits a wNHIC $\cC(\bk')$ homeomorphic to $T^*\T^{n-1}$, up to finitely many bifurcations,   entering a $\lambda$-neighborhood of $\Sigma_!(\bk',\bk^o)\times \T^n$;
\item the wNHIC has uniform normal hyperbolicity, independent of $\dt$ or $\eps$;
\item Mather sets lying in $B(0,0.9\Lambda)\times \T^n$ and with rotation vectors perpendicular to $\bk'$ and of distance $\lambda$-away from $-\eps^{-1/2}\omega^\star+(\bk^o)^\perp$, are contained in $\cC(\bk')$.
\end{enumerate}
\end{pro}
\begin{proof}
In system $\sH_{S,\dt} $\eqref{EqHamShear}, we first discard the $\dt$-perturbation and consider the system \eqref{EqG}
$\sH_{S,0}=\tilde \sG(\hat\sx,\hat\sy)+\hat \sG(\hat\sy_{n-2}):\ T^*\T^{n}_S\to \R$.

First, the system $\tilde \sG(\hat\sx,\hat\sy)$ admits a NHIC by Theorem \ref{ThmCZ} with homology class $(1,0)\in H_1(\T^2,\Z)$ for $V$ chosen in an open dense subset $\mathcal O_2(E_-,E_+)$ of $C^r(\T^2)/\R$, $r\geq 5$. Here we choose $E_-=\al_{\tilde\sG}(\partial\beta_{\tilde\sG}(\lambda(1,0)))$ and $E_+$ to be the highest possible energy level for $\|Y\|\leq \Lambda$. This gives the open-dense set $\mathcal O_2(\bk^o,\bk';\lambda,\Lambda)$, since $V$ is obtained from $\Pi_{\bk^o,\bk'}P(y^\star,x)$ after a linear transform. We next show that the system $\sH_{S,0}$ admits a wNHIC.  Indeed,
given a periodic orbit $\tilde \gamma=(\tilde\sx_E(t),\tilde\sy_E(t))$ of the system $\tilde\sG$ in the Mather set $\widetilde\cM(E)$, it gives rise to an orbit of the system $\sH_{S,0}$ $$(\tilde\sx_E(t), \hat\sx(0)+(\eps^{-1/2}\hat\omega^\star+B\hat \sy(0)) t,\tilde\sy_E(t),\hat \sy(0))\subset T^*\T^n_S,\ t\in \R.$$
Taking union over all the periodic orbits and all initial conditions $\hat \sx(0)\in (-\breve A^t\tilde A^{-t}\tilde \sx+\T^{n-2})$ and $\|\hat \sy(0)\|\leq \Lambda$, we get a NHIC for the system $\sH_{S,0}$ that is homeomorphic to $T^*\T^{n-1}_{\bar S}$ where $\bar S$ is obtained from $S$ by removing the second row and second column. Going back to the system \eqref{EqNormalForm2} with $\dt=0$ by inverting the symplectic transform $\mathfrak S\mathfrak M''$, we get a NHIC homeomorphic to $T^*\T^{n-1}$.

Due to the uniform hyperbolcity, when the $\dt$-perturbation in \eqref{EqHamShear} is turned on, we get the persistence of the wNHIC as we did in the proof of Theorem \ref{NHICSingle}. Here the modification of the $\dt\sR$ should be done as follows in addition to that used in the proof of Theorem \ref{NHICSingle} in order to smoothen the Hamiltonian in the region of $0\leq\tilde \sG(\hat\sx,\hat\sy)<E_-$. We introduce a $C^\infty$ monotone cut-off function $\rho:\ [0,\infty)\to [0,1]$ satisfying $\rho(x)=0$ for $x\leq 1/3$ and $\rho(x)=1$ if $x>2/3$. We next modify $\dt \sR$ to $\rho(\frac{\tilde \sG(\hat\sx,\hat\sy)}{\al_{\tilde\sG}(\partial\beta_{\tilde\sG}(\lambda(1,0)))}))\chi(\|Y\|/\Lambda)\dt\sR$.
%Part (1) of the result is proved in the same way as Theorem 5.1 in \cite{C17a}. In fact it was shown in \cite{C17a} (Theorem 2.3 and Lemma 2.3) that the wNHIC exists up to a $\dt^d$-neighborhood of $\Sigma_!(\bk',\bk^o)$ for some constant $d>0$, so that we determine $\dt_2$ by requiring $\lambda>3 \dt_2^d$. Part (2) and (3) 
Now the Proposition follows from the same argument as Proposition \ref{NHICSingle}.
\end{proof}

\subsection{The high energy regime}\label{SSSHighEnergy}

In this section, we give the existence of NHIC in $B(y^\star,\Lambda\sqrt\eps)\times \T^n$ for $y^\star\in \Sigma(\bk')$ and\begin{equation}\label{Eqomegastar}\omega^\star=\omega(y^\star)\in  B(\omega_a,\mu)\bigcap \left(B\l(\omega_{a_i^o}+(\bk^o_{a^o_i})^\perp,\eps^{1/3}\r)\setminus B\l(\omega_{a_i^o}+(\bk^o_{a^o_i})^\perp,\Lambda\eps^{1/2}\r)\right)\end{equation}
that is $\Lambda\eps^{1/2}$-away from but $\eps^{1/3}$-close to strong double resonance. 

We first cite a result from \cite{C17a} concerning the high energy regime of the system $\tilde\sG$ in \eqref{EqtsG}. Without loss of generality we fix the homology class $g=(1,0)\in H_1(\T^2,\R)$. In the system $\tilde \sG$ in \eqref{EqtsG}, we define $$[V](\sx_2)=\int_{\T^1}V(\sx_1,\sx_2)\,d\sx_1.$$ Suppose $[V]$ has a unique nondegenerate global max at a point denoted by $\sx_2^*$, which is a $C^2$ open-dense condition.

\begin{theo}[Theorem 3.1 and Proposition 3.1 of \cite{C17a}]\label{LmHighEnergy}Suppose the potential $V$ of the system $\tilde\sG$ in \eqref{EqtsG} satisfies that $[V]$ has a unique nondegenerate global max at $\sx_2^*$.
Then there exists $E_*>0$, such that
\begin{enumerate}
\item the action minimizing periodic orbits in the homology class $g=(1,0)\in H_1(\T^2,\Z)$ on the energy levels $\{E>E_*\}$, form a unique $C^r$ NHIC homeomorphic to $T^*\T$ with uniform normal hyperbolicity.
\item As $E\to\infty$, the periodic orbit $\{(\tilde \sx_E(t),\tilde \sy_E(t))\} $ on the energy level $E$  has the following uniform convergence: $\sx_{2,E}(t)\to \sx_2^*$, and $\dot{\sx}_{2,E}(t)\to 0$. 
\end{enumerate}
\end{theo}
By the reversibility of the system $\tilde\sG$, the same conclusion holds for the homology class $g=(-1,0)$. In fact the periodic orbit in the Mather set $\widetilde\cM_{\nu (-1,0)}$ is the time reversal of $\widetilde\cM_{\nu(1,0)}$.

Here we only sketch the proof and the details can be found in Theorem 3.1 and Proposition 3.1 of \cite{C17a}.
 \begin{proof}[Sketch of proof] The main idea of the proof is that on the high energy level the fast oscillation in the $\sx_1$ component will effectively averge out the dependence on $\sx_1$ in $V$, so the Hamiltonian system is effectively $\frac{1}{2}\langle \tilde\sy,\tilde A\tilde\sy\rangle+ [V](\sx_2)$ as $E\to \infty$. So we get that the normal hyperbolicity is determined by $\tilde A$ and the second order derivative $[V]''(\sx_2^*)$ hence is independent of the energy levels. The genericity assumption on $V$ is to guarantee that $[V](\sx_2)$ has a nondegenerate global max. 
\end{proof}

With this theorem, we obtain the following existence of NHICs in the high energy regime. 

\begin{pro}\label{NHICHighEnergy}

Let $P\in \mathcal O_1$ and $\dt_1$ be as in Proposition \ref{NHICSingle}. Then there exists $\Lambda_*$, such that for all $\Lambda>\Lambda_*$ and $y^\star$ be such that $\omega^\star=\omega(y^\star)$ is as in \eqref{Eqomegastar}, all $0<\dt<\dt_1$ and all $0<\eps<\eps_2(\dt,\Lambda)$, the Hamiltonian system \eqref {EqNormalForm2}  defined in $B(0,\Lambda)\times \T^n$ admits a $C^r$ wNHIC $\cC(\bk')$ homeomorphic to $T^*\T^{n-1}$ with the following properties:
\begin{enumerate}
\item the normal hyperbolicity is uniform, independent of $\Lambda,\dt$ or $\eps$;
\item Mather sets lying in $\{\|Y\|\leq 0.9\Lambda\}\times \T^n$ with rotation vectors perpendicular to $\bk'$  lie inside $\cC(\bk')$.
\end{enumerate}

\end{pro}

\begin{proof}
We first show how to convert this case to the previous result on the high energy regime. In \eqref{EqtsG}, we consider $\tilde\sy^\star$ and $\Lambda_*$ such that $\tilde A\tilde \sy^\star=\nu(1,0)$ for some large $\nu$ with $\nu \|\tilde A^{-1}(1,0)\|>\Lambda_*$ and $\frac{1}{2}\|\tilde A^{-1}\|^{-1}\Lambda_*^2+\min V\geq E_*$, and introduce $\tilde\sy-\tilde\sy^\star=\tilde \sY$. In the coordinates $(\tilde\sx,\tilde\sY)$, the Hamiltonian becomes $$\tilde \sG(\tilde\sx,\tilde\sY)=\frac{1}{2}\langle  \tilde A\tilde\sy^\star,\tilde\sy^\star\rangle+\nu\tilde\sY_1+\frac{1}{2}\langle  \tilde A\tilde\sY,\tilde\sY\rangle+V(\tilde\sx).$$
This means that the Hamiltonian $\tilde\sG$ in \eqref{EqHamHom2} with a linear term in $Y_1$ with large $\omega_{S,1}$ can be considered as the high energy regime of the Hamiltonian $\tilde\sG$ in \eqref{EqtsG}. By the previous Theorem \ref{LmHighEnergy}, we get the existence of NHIC in the system $\tilde\sG$ in \eqref{EqHamHom2}. By the same argument as the proof of Proposition \ref{NHICDouble}, we get the existence of the wNHIC in the original system \eqref{EqNormalForm2}. 

The assumption in Theorem \ref{LmHighEnergy} on the nondegeneracy of $[V]$ turns out to be the nondegeneracy of the global max of $\Pi_{\bk'}P(x,y^\star)$ which is guaranteed by $\Pi_{\bk'}P\in \mathcal O_1\subset \Pi_{\bk'}C^r(T^*\T^n)$. The remaining statements are proved in the same way as 
Proposition \ref{NHICSingle}.

\end{proof}

This result tells us that this high energy regime can be treated in the same way as the single resonance regime in Proposition \ref{NHICSingle}. 
\subsection{Cohomology equivalence around strong double resonances}
In this section, we first recall the main result of \cite{C17b} on the existence of cohomological equivalence for Hamiltonian systems of two degrees of freedom near the zero energy level. Next, we generalize it to the full system to build a piece of transition chain.
\subsubsection{Cohomological equivalence for the subsystem of two degrees of freedom}
The following theorem is one of the main result in \cite{C17b}.
\begin{theo}[Theorem 3.1 of \cite{C17b}]\label{beltthm1}
 There is an open-dense set $\tilde{\mathcal O}_3\subset C^r(\T^2)/\R,\ r\geq 2,$  such that for each $V\in \tilde{\mathcal O}_3$ normalized by $\max V=0$,  for each $c\in\partial\tilde{\mathbb{F}}_0$, where $\tilde{\mathbb{F}}_0=\al^{-1}(\min \al)$ is the flat of the $\al$-function for $\tilde\sG$, the Ma\~n\'e set  $\cN(c)$ does not cover the whole configuration space $\mathbb{T}^2$, i.e. $\cN(c)\subsetneq \T^2$.
\end{theo}
\begin{Rk} Theorem 3.1 of \cite{C17b} gives only a residual set. The openness of the set follows immediately applying the upper-semi-continuity of the Ma\~n\'e set. 
\end{Rk}
This theorem allows us to construct orbit connecting two Aubry sets $\widetilde\cA(c)$ and $\widetilde\cA(c')$ for any $c$ and $c'$ in $\partial \tilde{\mathbb{F}}_0$. To state the result, we need the following notion of $c$-equivalence defined for Tonelli Hamiltonian $H:\ T^*M\to \R$.

\begin{defi} We call $\Sigma$ non-degenerately embedded  $(n-1)$-dimensional torus by assuming a smooth injection $\varphi$: $\mathbb{T}^{n-1}\to M$ such that $\Sigma$ is the image of $\varphi$, and the induced map $\varphi_*$: $H_1(\mathbb{T}^{n-1}, \mathbb{Z})\hookrightarrow H_1(M,\mathbb{Z})$ is an injection.
\end{defi}

For each class $c\in H^1(M,\R) $, we assume that there exists a non-degenerate embedded $(n-1)$-dimensional torus $\Sigma_c\subset M$ such that each $c$-semi static curve $\gamma$ transversally intersects $\Sigma_c$. Let
$$
\mathbb{V}_{c}=\bigcap_U\{i_{U*}H_1(U,\mathbb{R}): U\, \text{\rm is a neighborhood of}\, \mathcal {N}(c) \cap\Sigma_c\mathrm{\ in\ }\T^n\},
$$
where $i_U$: $U\to M$ denotes inclusion map. Define $\mathbb{V}_{c}^{\bot}$ to be the annihilator of $\mathbb{V}_{c}$, i.e. if $c'\in H^1(M,\mathbb{R})$, then $c'\in \mathbb{V}_{c}^{\bot}$ if and only if $\langle c',h \rangle =0$ for all $h\in \mathbb{V}_c$. Clearly,
$$
\mathbb{V}_{c}^{\bot}=\bigcup_U\{\text{\rm ker}\, i_{U}^*: U\, \text{\rm is a neighborhood of}\, \mathcal {N}(c) \cap\Sigma_c\mathrm{\ in\ }\T^n\}.
$$
Note that there exists a neighborhood $U$ of $\mathcal {N}(c)\cap\Sigma_c$ such that $\mathbb{V}_c=i_{U*}H_1(U,\mathbb{R})$ and $\mathbb{V}_{c}^{\bot}=\text{\rm ker}i^*_U$ (see \cite{M93}).
\begin{defi}[$c$-equivalence]
We say that $c,c'\in H^1(M,\mathbb{R})$ are \emph{cohomologically equivalent} if there exists a continuous curve $\Gamma$: $[0,1]\to H^1(M,\R)$ such that 
\begin{enumerate}
\item $\Gamma(0)=c$, $\Gamma(1)=c'$, 
\item $\alpha(\Gamma(s))$ keeps constant for all $s\in [0,1]$, and 
\item for each $s_0\in [0,1]$ there exists $\epsilon>0$ such that $\Gamma(s)-\Gamma(s_0)\in \mathbb{V}_{{\Gamma}(s_0)}^{\bot}$ whenever $s\in [0,1]$ and $|s-s_0|<\epsilon$.
\end{enumerate}
\end{defi}

Guaranteed by the upper semi-continuity of the Ma\~{n}\'{e} set, we obtain the description of the structure of the Ma\~{n}\'{e} set extends to energy levels slightly higher than $\min\alpha$.
\begin{pro}[Theorem 1.1 and 3.2 of \cite{C17b}]\label{PropManeBroken}

Given $V\in \tilde{\mathcal O}_3$ normalized by $\max V=0$, there exists some positive numbers $\tilde\Delta_0=\tilde\Delta_0(V)>0$ such that for each $E\in (0,\tilde\Delta_0)$ and each $c\in \alpha^{-1}(E)$ there exists a circle $\Sigma_{c}\subset\mathbb{T}^2$ such that all $c$-semi-static curves of the system $\tilde \sG$ pass through that circle transversally and
\begin{equation*}
\mathcal{N}(c)\cap\Sigma_{c}\subset\bigcup I_{c,i}
\end{equation*}
where $I_{c,i}\subset\Sigma_{c}$ are finitely many disjoint open intervals. Therefore any two cohomology classes $c$ and $c'$ in $\alpha^{-1}(E)$ are $c$-equivalent.
\end{pro}

\subsubsection{Cohomological equivalence for the full system}
We next construct a generalized transition chain using the $c$-equivalent mechanism in the full system $\sH_{S,\dt}:\ T^*\T_S^n\to \R$ near strong double resonance. We assume $y^\star\in \Sigma(\bk',\bk^o)$. Such a generalized transition chain will give rise to one for the original system $\sH$ after the linear symplectic transformations. We first study the $\al$-function for $\sH_{S,\dt}$. Note that Mather theory is defined for Tonelli systems on $T^*M$ for a general closed manifold $M$. Here we have $H^1(\T^n_S,\R)$ isomorphic to $H^1(\T^n,\R)$ with the basis vectors transformed by $S^{-t}$ in the same way as the $\sy$ variables in \eqref{EqSympSepa}.

\begin{lem}\label{LmAlphaH''}
The $\al$-functions of $\sH_{S,\dt}$ satisfies $\|\al_{\sH_{S,\dt}}-\al_{\sH_{S,0}}\|_{C^0}<\dt$, where $$\al_{\sH_{S,0}}(c)=\al_{\tilde\sG}(\tilde c)+\frac{1}{\sqrt\eps}\langle \hat\omega_{S,n-2},\hat c\rangle+\frac{1}{2}\langle B\hat c,\hat c\rangle,\ c\in H^1(\T^n_S,\R)=\R^n.$$
\end{lem}
\begin{proof}
For the $\dt$-estimate of the difference, we denote by $\mathsf L_\dt$ and $\mathsf L_0$ the Lagrangian corresponding to $\sH_{S,\dt}$ and $\sH_{S,0}$ respectively. Then we have $\|\mathsf L_\dt-\mathsf L_0\|_{C^0}\leq \dt$. Given cohomology class $c$, we denote by $\mu_\dt$ and $\mu_0$ the $c$-minimal measure for $\mathsf L_\dt$ and $\mathsf L_0$ respectively. Choose a closed one-form $\eta_c$ with $[\eta_c]=c$, then we get $$-\al_{\sH_{S,\dt}}=\int\mathsf  L_\dt-\eta_c \,d\mu_\dt\leq \int\mathsf  L_\dt-\eta_c \,d\mu_0,\quad -\al_{\sH_{S,0}}=\int\mathsf  L_0 -\eta_c\,d\mu_0\leq \int\mathsf  L_0-\eta_c \,d\mu_\dt.$$
The $\dt$-estimate of the difference follows by taking difference.

To determine the form of the $\al$ function for $\sH_{S,0}$, let us consider an invariant measure $\mu$ in the Mather set with cohomology class $c=(\tilde c,\hat c)$ of the system $\sH_{S,0}$. Denote by $\tilde \mu$ the corresponding invariant measure in the cohomology class of $\tilde c$ of the subsystem $\tilde \sG$.
By Mather's graph theorem, we know that $\mu$ is a graph from a subset of $\T^n_S$ to $\R^n$ and $\tilde\mu$ is a graph from a subset of $\T^2$ to $\R^2$. Next we know that $\mu$ has a skew product structure: for each $\tilde \sx\in \T^2$, there is a measure $\hat\mu_{\tilde \sx}$ supported on the torus $\mathrm{Graph}\tilde\mu(\tilde \sx)\times (-\breve A^t\tilde A^{-t}\tilde \sx+\T^{n-2})\times \{\hat c\}$ using the transformation $\mathfrak S$ in \eqref{EqSympSepa} as well as the fact $\dot{\hat \sy}=0$. So the integration with respect to $d\mu$ disintegrates into $d\mu(\sx)=d\hat\mu_{\tilde \sx}(\hat \sx)d\tilde \mu(\tilde \sx)$. When doing the inner integral with the integrand being the Lagrangian of $\sH_{S,0}$, note that the Lagrangian does not depend on $\hat \sx$, so the integration with respect to $d\tilde\mu_{\hat \sx}(\tilde \sx)$ is effectively the integration with respect to a Haar measure supported on the above torus containing the support of $\hat\mu_{\tilde\sx}$. In particular, in the $\hat \sy$ component, the measure is Dirac-$\dt$ supported on $\{\hat \sy=\hat c\}$. This gives the term $\frac{1}{\sqrt\eps}\langle \hat\omega_{S,n-2},\hat c\rangle+\frac{1}{2}\langle B\hat c,\hat c\rangle$. Finally, the outer integral with respect to $d\tilde\mu$ gives the term $\al_{\tilde\sG}(\tilde c)$.

\end{proof}
The next lemma shows that in the system $\tilde\sG$, the NHIC overlaps the region of c-equivalence.

\begin{lem}\label{LmOverlap}There exists an open-dense subset $ \hat{\mathcal O}_3\subset C^r(\T^2)/\R$, $r\geq 5$, such that for each $V\in \hat{\mathcal O}_3$ with $\max V=0$, there exists $\lambda>0$ such that the following holds for the system $\tilde\sG$:
\begin{enumerate}
\item the system $\tilde\sG$ admits a NHIC on the energy interval $[\al_{\tilde\sG}(\partial \beta_{\tilde\sG}((\lambda,0))), \infty)$, foliated by hyperbolic periodic orbits in the Mather sets with rotation vectors $\nu(1,0),\ |\nu|>\lambda,$ up to finitely many bifurcations and 
\item each curve $\al_{\tilde\sG}^{-1}(E),\ E/\al_{\tilde\sG}(\partial \beta_{\tilde\sG}((\lambda,0)))\in [1,2),$ is a curve of $c$-equivalence.
\end{enumerate}

\end{lem}
\begin{proof}
By Proposition \ref{PropManeBroken}, there exists an open-dense subset $\tilde{\mathcal O}_3$ in $C^r(\T^2)/\R$ such that for each $V\in \tilde{\mathcal O}_3$ with $\max V=0$ there exists $\tilde \Delta_0(V)>0$ such that each curve $\al_{\tilde\sG}^{-1}(E),\ E\in [0,\tilde\Delta_0),$ is a curve of $c$-equivalence. We introduce a sequence of open sets $\tilde{\mathcal O}_{3,\ell},\ \ell\in \N $ satisfying $ \tilde{\mathcal O}_{3,\ell}\subset \tilde{\mathcal O}_{3,\ell+1}$ and $\tilde{\mathcal O}_{3}=\cup_{\ell\in \N} \tilde{\mathcal O}_{3,\ell}$ where $$ \tilde{\mathcal O}_{3,\ell}:=\{ V\in \tilde{\mathcal O}_{3}\ |\ \tilde\Delta_0(V)>2/\ell\}.$$
Each set $ \tilde{\mathcal O}_{3,\ell}$ is open due to the upper-semi-continuity of the Ma\~n\'e set.  Indeed, suppose $V_*\in \tilde{\mathcal O}_{3,\ell}$ with $\tilde\Delta_0(V_*)>2/\ell$, so for all $c$ with $\al_{\tilde\sG}(c)<2/\ell $ the Ma\~n\'e sets $\cN(c)$ are broken in the sense of the conclusion of Proposition \ref{PropManeBroken}. By the upper-semi-continuity of the Ma\~n\'e set with respect to the Lagrangian, the same is true for any potential $V$ that is $C^2$ sufficiently close to $V_*$, so $\tilde \Delta_0(V)\geq \tilde\Delta_0(V_*)>2/\ell$. This means that there is a $C^r$-ball , $r\geq 2$, centered at $V_*$ contained in $\tilde{\mathcal O}_{3,\ell}$.

Next we fix large $E_+=E_*$ (see Theorem \ref{LmHighEnergy}) and choose $E_-=1/\ell$, we introduce an open-dense set $\tilde{\mathcal O}_{2,\ell}:=\tilde{\mathcal O}_{2}(1/\ell, E_+)\subset C^r(\T^2)/\R$ as in Theorem \ref{ThmCZ}. Now the intersection $\tilde{\mathcal O}_{3,\ell}\cap \tilde{\mathcal O}_{2,\ell}$ is open in $ C^r(\T^2)/\R$ and the union $\hat{\mathcal O}_3:=\cup_\ell (\tilde{\mathcal O}_{3,\ell}\cap \tilde{\mathcal O}_{2,\ell})$ is open-dense in $ C^r(\T^2)/\R$. To get the statement, it is enough to set $1/\ell= \al_{\tilde\sG}(\partial \beta_{\tilde\sG}((\lambda,0)))$ if $V\in (\tilde{\mathcal O}_{3,\ell}\cap \tilde{\mathcal O}_{2,\ell}).$ 
\end{proof}
Going back to the original system, we have the following. 
\begin{pro}\label{PropAlpha} 
Let $y^\star\in \Sigma_!(\bk',\bk^o)$ so that $\omega^\star=\omega(y^\star)$ is at strong double resonance with integer vectors $\bk'$ and $\bk^o$. Then there exists an open-dense set $\mathcal O_{3}=\mathcal O_{3}(\bk',\bk^o)\subset \Pi_{\bk',\bk^o}C^r(\T^n)/\R $, $r\geq 7$, such that for any $P$ with $\Pi_{\bk',\bk^o}P(x,y^\star)\in \mathcal O_3$ normalized by $\max \Pi_{\bk',\bk^o}P(x,y^\star)=0$,  there exist $\lambda=\lambda(\Pi_{\bk',\bk^o}P(x,y^\star))$ and $\dt_3=\dt_3( \Pi_{\bk',\bk^o}P,\lambda)$ such that for all $0<\dt<\dt_3$, the following holds. Suppose $c_*=(\tilde c_*,\hat c_*)\in \R^2\times \R^{n-2}=H^1(\T^n,\R)$ and $\sC_*:=(\tilde\sC_*,\hat\sC_*)=S^{-t} c_*$ satisfy $\al_{\tilde\sG}(\partial\beta_{\tilde\sG}((\lambda,0)))<\al_{\tilde\sG}(\tilde \sC_*)<2\al_{\tilde\sG}(\partial\beta_{\tilde\sG}((\lambda,0)))$ and $\|\hat\sC_*\|\leq \Lambda$. Then 
\begin{enumerate}
\item the path $\Gamma_\dt( \sC_*):=\{(\tilde \sC, \hat \sC_*)\ |\  \al_{\sH_{S,\dt}}(\tilde \sC, \hat \sC_*)=\al_{\sH_{S,\dt}}(\sC_*)\}$ is a path of $c$-equivalence for the system $\sH_{S,\dt}$ in \eqref{EqHamShear};
\item the path $\Gamma_\dt( \sC_*)$ lies in a $\dt$-neighborhood of the curve $\Gamma_0(\sC_*):=(\al_{\tilde \sG}^{-1}(\al_{\tilde\sG}(\tilde \sC_*)),\hat\sC_* ) $; 
\item the path $(SM'')^{t}\Gamma_\dt( \sC_*)$ is a path of $c$-equivalence for the original system \eqref{EqHomog1}. 
\end{enumerate}

\end{pro}
\begin{proof}
The open-dense set $\mathcal O_3$ is obtained by transforming the open-dense set $\hat{\mathcal O}_3$ in Lemma \ref{LmOverlap} by the linear transform $M''$. 
 Let us now go back to the system $\tilde \sG$ for which we choose $V\in \hat{\mathcal O}_3$ which determines $\lambda$.

We denote $\tilde\Gamma(\tilde \sC_*)=\al_{\tilde\sG}^{-1}(\al_{\tilde\sG}(\tilde \sC_*))$ for  given $\tilde \sC_*$ satisfying $\al_{\tilde\sG}(\tilde \sC_*)/\al_{\tilde\sG}(\partial\beta_{\tilde\sG}((\lambda,0)))\in (1,2)$.
The coordinates change $S$ does not change the $\tilde\sx$ components, so for each $\hat \sC$, denoting $\Gamma_0(\tilde \sC_*,\hat \sC)=(\tilde\Gamma(\tilde \sC_*),\hat \sC)$, the Ma\~n\'e set $\widetilde \cN_{\sG}(\sC(t))$ for the system $\sG$ in \eqref{EqG} and cohomology class $\sC(t)\in \Gamma_0(\tilde \sC_*,\hat \sC)$, when projected to the first $\T^2$ factor, coincides with the Ma\~n\'e set $\cN_{\tilde\sG}(\tilde \sC(t))$ for the system $\tilde\sG$. So by Proposition \ref{PropManeBroken}, for each $\sC(t)\in \Gamma_0(\tilde \sC_*,\hat \sC)$, there exist  a circle $\Sigma_{ \sC(t)}\subset\mathbb{T}^2$, and disjoint open intervals $I_{\sC(t),i}$, so that all $c$-semi-static curves of the system $\sG$ pass through that circle transversally and
\begin{equation*}
\mathcal{N}_{\sG}(\sC(t))\cap S(\Sigma_{\sC(t)}\times \T^{n-2})\subset\bigcup S(I_{\sC(t),i}\times \T^{n-2}),
\end{equation*}
whose homology is in the set $\{(0,0)\}\times \R^{n-2}$. For each $\hat\sC$ with $\|\hat\sC\|<\Lambda$, the curve $(\tilde\Gamma(\tilde \sC_*),\hat \sC)$ is a curve of cohomological equivalence for the system $\sH_{S,0}$ since for two points $\sC(t)$ and $\sC(t')$ on the curve, the difference $\sC(t)-\sC(t')=(*,\hat 0)$ is perpendicular to the subspace $\{(0,0)\}\times \R^{n-2}$ which contains the homology of $\mathcal{N}_{\sG}(\sC(t))\cap S(\Sigma_{\sC(t)}\times \T^{n-2})$. 

Next we show that the level set
$\{(\tilde \sC, \hat \sC_*)\ |\  \al_{\sH_{S,\dt}}(\tilde \sC, \hat \sC_*)=\al_{\sH_{S,\dt}}(\sC_*)\}$ is $O(\dt)$-close to that of the case $\dt=0$ which is $\Gamma_0(\tilde \sC_*,\hat \sC)$. This follows from the following fact about convex functions: given two convex functions $\al_\dt$ and $\al_0$ with $|\al_\dt-\al_0|_{C^0}\leq \dt$ and $\|D\al_0\|\geq C>0$ on the level set $\{\al_0(c)=E\}$, then the level sets $\{\al_\dt(c)=E\}$ and $\{\al_0(c)=E\}$ are $O(\dt)$-close to each other. To prove this fact, it is enough to measure the distance of the intersection points of the two level sets with each radial line. Since the subdifferential $D\al$ is bounded away from zero, to maintain constant $E$, the distance can at most be $O(\dt)$.

By the upper semi-continuity of the Ma\~n\'e set, since the Hamiltonians and the cohomology paths are $O(\dt)$-close, when we consider the system $\sH_{S,\dt}$ with $\dt$ small enough, the same conclusion holds.

\end{proof}
\subsection{The generalized transition chain mapped to the frequency space}\label{SSChainFreq2}
Our construction of the generalized transition chain applying the mechanism of $c$-equivalence (Proposition \ref{PropManeBroken}) is done in the space of cohomology classes dual to the frequency space. In this section, we describe the corresponding path in the frequency space.

Our goal is to move a frequency $\omega^i\in \bk'^\perp\cap \partial\al(\al^{-1}(E))$ to $\omega^f\in \bk'^\perp\cap \partial\al(\al^{-1}(E))$ separated by $\bk'^\perp\cap (\bk^o)^\perp\cap \partial\al(\al^{-1}(E))$. Mather sets with rotation vectors in $\bk'^\perp\cap \partial\al(\al^{-1}(E))$ and outside a $\lambda$-neighborhood of $\bk'^\perp\cap (\bk^o)^\perp\cap \partial\al(\al^{-1}(E))$ lie on wNHICs $\cC(\bk')$ in the phase space (Proposition \ref{NHICDouble}). However,
it is not clear if it is possible to cross $\bk'^\perp\cap (\bk^o)^\perp\cap \partial\al(\al^{-1}(E))$ inside $\bk'^\perp\cap \partial\al(\al^{-1}(E))$ due to the lack of wNHICs, so our strategy (Proposition \ref{PropAlpha}) is to take a detour outside $\bk'^\perp\cap \partial\al(\al^{-1}(E))$ to turn around $\bk'^\perp\cap (\bk^o)^\perp\cap \partial\al(\al^{-1}(E))$.

Let $\omega^\star\in \bk'^\perp\cap (\bk^o)^\perp\cap \partial\al(\al^{-1}(E)) $ be the strong double resonance, then after the coordinate change induced by $SM''$, the frequency $SM''\omega^\star$ has 0 as the first two entries. In the subsystem $\tilde\sG$, the Legendre transform of the $c$-equivalent path given by Proposition \ref{PropManeBroken} is a closed convex curve enclosing 0  and of diameter $<2\lambda$ on the plane $H_1(\T^2,\R)=\R^2$. This plane $\R^2$ is the first two coordinates in the frequency space.  Going back to the coordinates system before the linear symplectic transform $SM''$, we get a loop enclosing 0 on the plane $(SM'')^{-1}\mathrm{span}\{e_1,e_2\}$, where $e_1=(1,0,\ldots,0)$ and $e_2=(0,1,0,\ldots,0)$. Let us call the loop $\ell(\bk',\bk^o)$. 

We claim that 
{\it on the loop $ \omega^\star+\ell(\bk',\bk^o)$ there are two points in  $\bk'^\perp\cap \partial\al(\al^{-1}(E))$ separated by $\bk'^\perp\cap (\bk^o)^\perp\cap \partial\al(\al^{-1}(E))$. }

Indeed, it is enough to find two points on $ \omega^\star+\ell(\bk',\bk^o)$ orthogonal to $\bk'$. It is known that $\omega^\star\perp \bk'$, and $\ell(\bk',\bk^o)$ is a loop enclosing 0 on the plane $(SM'')^{-1}\mathrm{span}\{e_1,e_2\}$, so we can project $\bk'$ to the plane $(SM'')^{-1}\mathrm{span}\{e_1,e_2\}$ and find exactly two points orthogonal to the projection, provided $\bk'$ is not perpendicular to the plane, which can be verified directly.

\subsection{Center straightening}\label{ssctDouble}
Let the Tonelli Hamiltonian $H:\ T^*\T^2\to\R$, the homology class $g\in H_1(\T^2,\Z)$, the energy interval $[E_-,E_+]$ and the potential $V\in \tilde{ \mathcal O}_2(E_-,E_+)$ be as in Proposition \ref{ThmCZ}. Then we get at most finitely many pieces of NHICs foliated by hyperbolic periodic orbits.

\begin{pro}\label{PropSymp}
Let the Tonelli Hamiltonian $H:\ T^*\T^2\to\R$, the homology class $g=(1,0)\in H_1(\T^2,\Z)$, the energy interval $[E_-,E_+]$ and the potential $V\in\tilde{ \mathcal O}_2(E_-,E_+)$ be as in Proposition \ref{ThmCZ}. Suppose on this energy interval $H$ admits a NHIC $N$ foliated by hyperbolic periodic orbits in the Mather set of with rotation vectors $\nu g,\ \nu\in [\nu_-,\nu_+]\subset (0,\infty)$. Then 
\begin{enumerate}
\item 
restricted on the cylinder $N$, there exist two numbers $0<I_-<I_+$ and a symplectic change of variables $\Phi: (I,\varphi)\in [I_-,I_+]\times\mathbb{T}\to (x,y)|_{N}$, such that the Hamiltonian $H$ can be written as
$
\Phi^*H=H\circ\Phi=\tilde h(I),
$
where $\tilde h$ is as smooth as $H$ and satisfies
\[\tilde h(I_\pm)=E_\pm,\ \tilde h'(I_\pm)=\nu_\pm, \mathrm{\ and\ } \tilde h'(I)>0,\quad \tilde h''(I)>0,\quad \forall \ I\in[I_-,I_+].\]
\item There is a neighborhood $\mathcal U$ of the $c_1$ line in $H^1(\T^2,\R)$, such that for each $c=(c_1,c_2)\in \mathcal U$ with $ c_1\in [I_-,I_+],$ we have
$\al_{H}(c)=\tilde h(c_1).$
\item Assume furthermore that $H$ is reversible, i.e. $H(x,y)=H(x,-y)$, then the Mather set of with rotation vector $-\nu g,\ \nu\in [\nu_-,\nu_+]$ is the time reversal of that of $\nu g$. On the NHIC foliated by Mather sets with rotation vectors $-\nu g,\ \nu\in [\nu_-,\nu_+]$, the restricted Hamiltonian system $\bar h: [-I_+,-I_-]\times \T\to \R$ of one degree of freedom 
satisfies $\bar h(I)=\tilde h(-I)$.
 \end{enumerate}
\end{pro}
\begin{proof}
The normal hyperbolicity gives rise to the following decomposition of the symplectic form (Equation (63) of \cite{DLS08})
\begin{equation}\label{EqSympForm}
\Omega=\left[\begin{array}{c|c|c}
0&\Omega^{su}&0\\
\hline
-\Omega^{su}&0&0\\
\hline
0&0&\Omega|_{E^c}
\end{array}\right],
\end{equation}
with respect to the splitting of the tangent space $T_xM=E^s_x\oplus E^u_x\oplus E^c_x,\ x\in N$ (see Definition \ref{DefNHIM}). 
In particular, the symplectic form $\Omega$ restricted to the cylinder is still a symplectic form.

% $\Pi_0$ is diffeomorphic to a piece of cylinder, which is a symplectic sub-manifold as one has
%$$
%\int_{\Pi_0}\Omega=\int_{E_-}^{E_+}\int_0^{T(E)}dE\wedge dt>0
%$$
%where $T(E)$ is the period of the periodic orbit lying on the energy level set $H^{-1}(E)$.
Let $\Omega_{N}$ be the restriction of standard symplectic form $\Omega$ on the cylinder. Denoted by $\mathbb{T}\times[I_-,I_+]$ the standard cylinder where $I_\pm$ are to be determined later, and let $\Psi_0$: $\mathbb{T}\times[I_-,I_+]\to N$ be a diffeomorphic map. Then the pull back $\Psi_0^*\Omega_{N}$ of $\Omega_{N}$ is a symplectic form on the standard cylinder $\mathbb{T}\times[I_-,I_+]$. As the second de Rham cohomology group of cylinder $\mathbb{T}\times[I_-,I_+]$ is trivial, Moser's argument on the isotopy of symplectic forms shows that certain diffeomorphism $\Psi$: $\mathbb{T}\times[I_-,I_+]\to\mathbb{T}\times[I_-,I_+]$ exists such that
$$
\Psi^*\Psi_0^*\Omega_{N}=dI\wedge d\varphi.
$$
The Hamiltonian $H$ induces a Hamiltonian defined on $\mathbb{T}\times[I_-,I_+]$: $H\Psi_0\Psi(I,\varphi)$.

Restricted to $N$, the Hamiltonian system has one degree of freedom hence is integrable. We have a standard method of introducing action-angle coordinates (c.f. Section 50 B and C of \cite{A89}). Namely, the action variable $I$ is defined as integrating the Poincar\'e-Cartan one form $ydx$ along the periodic orbits, and an angular variable $\varphi$ is introduced as symplectic conjugate of $I$. In action-angle coordinates, the Hamiltonian depends only on $I$, so we denote it by $\tilde h(I)$. We define $I_\pm$ by $\tilde h(I_\pm)=E_\pm$ and $\tilde h'(I_\pm)=\nu_\pm$.

It remains to show the twist. We use a result of Carneiro \cite{Car} saying that Mather's $\beta$ function is differentiable in the radial direction for autonomous systems. Now $\tilde h(I)$ is actually Mather's $\alpha$ function since Mather set is exactly the periodic orbit $\gamma_\nu$. The direction of $\nu g$ is the radial direction as $\nu$ varies. The $\alpha$ function is strictly convex $\frac{d^2\tilde h(I)}{dI^2}>0,\ $a.e. in order that $\beta$ is differentiable.
\[\dfrac{d\tilde h(I)}{dI}=\dfrac{d\tilde h(I_-)}{dI}+\int_{I_-}^I \dfrac{d^2\tilde h(t)}{dI^2}\,dt=\int_{I_-}^I \dfrac{d^2\tilde h(t)}{dI^2}\,dt>0.\]
Since the symplectic transformation is explicit, we get that $\tilde h$ is as smooth as $H$.

By Lemma \ref{LmFlatPer}, we get that for each rotation vector $\nu(1,0)$, $\nu\in [\nu_-,\nu_+]$, its Legendre transform is a line segment perpendicular to the homology class $(1,0)$. Taking union over all the line segments, we get a two-dimensional strip in $H^1(\T^2,\R)$ as the $\mathcal U$ in the statement.  It remains to locate $\mathcal U$. Note that integrating a closed one-form $\eta$ with cohomology class $c$ along a loop of homology class $(1,0)$ will pick out the first entry of $c$.  For Hamiltonian system of one degree of freedom defined on $T^*\T$, the cohomology class of each periodic orbit $\gamma$ is given by $\oint_\gamma y\,dx$. In our case, the restricted Hamiltonian system on the NHIC foliated by periodic orbits has one degree of freedom, so we get the cohomology class by integrating the Poincar\'e-Cartan form $y_1dx_1+y_2dx_2$ along the periodic orbit. Restricted to the NHIC, the Hamiltonian system is integrable whose $\al$-function is known to be the same as the Hamiltonian.

Finally, to see the system $\tilde h(I)$ is reversible, we notice that the reversibility of the system $H(x,y)$ implies that the Mather sets with rotation vectors $\nu g$ and $-\nu g,\ \nu>0$ are supported on the same periodic orbit with reversed time. Since the Legrendre transform of an even function is also even, so we get the Lagrangian $L(\dot x,x)$ is even with respect to $\dot x,$ hence $p=\frac{\partial L}{\partial \dot x}$ get a negative sign when we reverse the time. The two periodic orbits lie on the same energy level and their corresponding action variables are opposite to each other from the formula $I=\frac{1}{2\pi}\oint_\gamma y\, dx$. The proof is now complete. 
\end{proof}
\section{Dynamics around triple resonances}\label{STriple}
In this section, we describe the second step of reduction of order. We will construct NHICs homeomorphic to $T^*\T^{n-2}$ and build generalized transition chains connecting the NHICs crossing the triple resonance. This section gives the major part of the proof of Theorem \ref{ThmMainNHIC} in the  case of $n=4$.

We fix $\bk'$ and choose a $\Pi_{\bk'}P\in\mathcal O_1$ and  $\Pi_{\bk',\bk^o}P\in \mathcal O_3$ for finitely many strong second resonances $\bk^o$. This determines $\dt_1,\dt_2,\dt_3$ by Proposition \ref{NHICSingle}, \ref{NHICDouble} and \ref{PropAlpha}. We also fix a $\dt(<\min\{\dt_1,\dt_2,\dt_3\})$ so that Proposition \ref{NHICSingle}, \ref{NHICDouble} and \ref{PropAlpha} are applicable. In this section, we will construct further open-dense sets of admissible perturbations in $C^{r}(T^*\T^n)/(\Pi_{\bk'}C^{r}(T^*\T^n))$ and $C^{r}(T^*\T^n)/(\Pi_{\bk',\bk^o}C^{r}(\T^n))$. Here the space $C^{r}(T^*\T^n)/(\Pi_{\bk'}C^{r}(T^*\T^n))$ is defined each $P\in C^{r}(T^*\T^n)$ admits a decomposition $ P=\Pi_{\bk'} P+(P-\Pi_{\bk'} P)$ respecting the decomposition $C^{r}(T^*\T^n)=\Pi_{\bk'}C^{r}(T^*\T^n)\oplus C^{r}(T^*\T^n)/(\Pi_{\bk'}C^{r}(T^*\T^n))$. The space $C^{r}(T^*\T^n)/(\Pi_{\bk'}C^{r}(T^*\T^n))$ inherits the $C^r$ norm of $C^r(T^*\T^n)$. Similarly for $C^{r}(T^*\T^n)/(\Pi_{\bk',\bk^o}C^{r}(\T^n))$.
\subsection{Frequency refinement}\label{SSTracking}

 We have been working in a $\mu$-neighborhood of the frequency segment
$
\omega_a=\rho_a\Big(a,\frac {P}{Q}\omega_2^*,\frac{p}{q}\omega_2^*,\hat\omega^*_{n-3}\Big),\ a\in[\omega^{*i}_1-\varrho,\omega^{*f}_1+\varrho].
$ Note that $\mu$ is determined by $\dt$ through $K$. 

We pick a rational number denoted by $\frac{\bar p}{\bar q}$ satisfying
\begin{equation}\label{frequencyapproximation}
\Big|\dfrac{\bar p}{\bar q}\omega_2^*-\omega_4^*\Big|<\mu,\quad \mathrm{g.c.d.}(\bar p qQ,\bar q)=1,\quad \mathrm{g.c.d.}(\bar qp,\bar p q)=1,
\end{equation}
and obtain a new segment of frequency $\bar{\omega}_a:=\bar\rho_a(a,\frac {P}{Q}\omega_2^*,\frac{p}{q}\omega_2^*,\frac{\bar p}{\bar q}\omega_2^*,\hat\omega_{n-4}^*)$. 

Besides $\bk'$, the frequency $\bar{\omega}_a$ now admits a new resonant integer vector denoted by $\bk''$ for all $a$. For $\mu$ sufficiently small, the rational number $\bar p/\bar q$ necessarily has large denominator bounded from below by $O(\mu^{-1})$. So we get that $|\bk''|$ is bounded from below by $O(\mu^{-1})$. Thus $|\bk''|\gg |\bk'|$ if $\mu$ is small enough.

The transformed frequency segment is $M'\bar \omega_a=\bar\rho_a(a, 0,\frac{\bar P}{\bar Q}\omega_2^*,\frac{\bar p}{\bar q}\omega_2^*,\hat\omega_{n-4}^* )$ where $\frac{\bar P}{\bar Q}=\frac{1}{qQ}$ with $\bar P=1$ and $\bar Q=qQ$. 

When restricted to the wNHIC in Proposition \ref{NHICSingle}, we remove the zero entry in $M'\bar\omega$.
Now we are in a situation completely parallel to Section \ref{ssct:Number}. Again we encounter the situation of single and double resonances. The new resonant integer vector can be determined from the equation $ \bk''(M')^{-1}=(0,0, \bar Q\bar p,-\bar q\bar P,\hat 0_{n-4})$ where $\mathrm{g.c.d.}(\bar q \bar P,\bar p \bar Q)=\mathrm{g.c.d.}(\bar p qQ,\bar q)=1$.

As we vary $a$ in an interval, a third resonance may appear. We fix $K=(\dt/3)^{-1/2}$ as in Lemma \ref{Lm:nbd} by fixing $\dt$. Parallel to Lemma \ref{Lm:nbd}, we have the following.

\blm\label{Lm:nbd3}
Let $\omega_a,\mu, \bar\omega_a,\, K,\, \bk',\, \bk''$ be as above. For any $\bar K>\max\{K,|\bk''|\}$,

 let $\bk^o_{a^o_i}$, $i=1,\ldots,m$, be the collection of all the irreducible integer vectors in $\Z_{\bar K}^n\setminus\mathrm{span}\{\bk',\bk''\}$ satisfying $\langle \bk^o_{a^o_i},\bar\omega_{a_i^o}\rangle=0,$ and let $(\bk^o_{a^o_i})^\perp$ be the $(n-1)$-dimensional space orthogonal to the vector $\bk^o_{a^o_i}$ where $a^o_i\in [\omega_1^{*i}-\varrho, \omega_1^{*f}+\varrho]$, $i=1,\ldots,m$. Then there exists $\bar\mu=\bar\mu(\bar K)$ such that $
B(\bar\omega_a,\bar\mu)\subset
B(\omega_a,\mu)$ and
\ben
\item
for any small $\eps$ and all $\omega$ in the neighborhood
$
B(\bar\omega_a,\bar\mu)\setminus \bigcup_{i}B\l(\bar\omega_{a_i^o}+(\bk^o_{a^o_i})^\perp,\eps^{1/3}\r),
$
 we have
$|
\langle \bk,\omega\rangle|> \eps^{{1/3}},\quad \forall\ \bk\in \Z_{\bar K}^n\setminus\mathrm{span}_\Z\{\bk',\bk''\}.
$
\item for all $\omega$ in $B(\bar\omega_a,\bar\mu)\bigcap B\l(\bar\omega_{a_i^o}+(\bk^o_{a^o_i})^\perp,\eps^{1/3}\r)$, for each $i$ and for all
    $
    \bk\in \Z_{\bar K}^n\setminus\mathrm{span}_\Z\left\{ \bk' ,\bk_{a_i^o}^o,\bk'' \right\},
    $
we have
\begin{equation}\label{EqDiop1/2,3}
|\langle\bk,\omega\rangle|\geq n\bar K\bar \mu.
\end{equation}
\een
\elm
\begin{proof}
We reduce the proof to Lemma \ref{Lm:nbd}.
The transformed frequency segment $M'\bar \omega_a=\bar\rho_a(a, 0,\frac{\bar P}{\bar Q}\omega_2^*,\frac{\bar p}{\bar q}\omega_2^*,\hat\omega_{n-4}^* )$ admits resonant integer vectors $$ \bk'(M')^{-1}=(0,1,0,\ldots,0),\quad  \bk''(M')^{-1}=(0,0, \bar Q\bar p,-\bar q\bar P,\hat 0_{n-4}).$$ If $a=a^o_{i}$, it also admits the integer vector $ \bk^o_{a^o_i}(M')^{-1}$. Now, remove the zero entry in $M'\bar \omega_a$, then the resulting vector has the form of $\omega_a$ but one dimension less. The integer vectors $\pi_{ -2}(\bk''(M')^{-1})$ and $\pi_{ -2}(\bk^o_{a^o_i}(M')^{-1})$ play the role of $\bk'$ and $\bk^o_{a^o_i}$ in Lemma \ref{Lm:nbd} respectively, where $\pi_{ -2}:\ \R^n\to \R^{n-1}$ means to remove the second entry.  Now this lemma follows from Lemma \ref{Lm:nbd} up to a linear transform.\end{proof}
\subsection{The nondegeneracy condition}
Similar to Proposition \ref{NHICSingle}, we have the following result.
%As we have explained in Section \ref{SOutline}, we will move $a$ from $\omega_1^{*i}$ to $\omega_1^{*f}$ where $\omega_1^{*i,f}$ forms a Diophantine vector with $(\omega_2^*,\hat\omega^*_{n-4})\in \R^{n-3}$. After that we will start moving the second entry of $ \omega_a$. We introduce the following vector
%$$\omega_{[2],b}=\rho_{[2],b}\l(\omega_1^{*f}, b, \frac{p}{q}\omega_2^*,\frac{\bar p}{\bar q}\omega_2^*,\hat \omega^*_{n-4}\r),\ b\in[\omega_2^{*i}-\varrho,\omega_2^{*f}+\varrho].$$

%Notice that this vector has the same form as $\omega_a$ up to permutation of entries. We need the following proposition to guarantee the genericity of $P$ in order to construct wNHIC along both $\bar\omega_{a}$ and $\omega_{[2],b}$. Denote by
%$$\bk'_{[2]}=(0,0, q\bar p,-\bar q p,\hat 0_{n-4})$$
%which is perpendicular to $\omega_{[2],b}$ for all $b$.

\begin{pro}\label{PropGenericity2}%\marginpar{Describe the filtration here}

%\begin{enumerate}

%\item There exists an open-dense subset $\mathcal O_{1,[2]}=\mathcal O_{1,[2]}(\bk'_{[2]})\subset \Pi_{\bk'_{[2]}}C^r(T^*\T^n)$ such that each $\Pi_{\bk'_{[2]}}P(y,\cdot )\subset \mathcal O_{1,[2]}$ has a unique nondegenerate global max along the segment $y\in \omega^{-1}(\omega_{[2],b})$, up to finitely many bifurcations, where there are two nondegenerate global max;
%\item 
Given $\Pi_{\bk'}P\in \mathcal O_1(\bk')\subset \Pi_{\bk'}C^r(T^*\T^n),\ r\geq 4$, we choose $\dt,\mu,$ $\bk''$ and $\bar\omega_a$ as above. Then there exists an open-dense subset $\mathcal O_{1,2}=\mathcal O_{1,2}(\bk',\bk'') $ in the unit ball of $\Pi_{\bk',\bk''}C^r(T^*\T^n)/\Pi_{\bk'}C^r(T^*\T^n)$ such that 
each $\Pi_{\bk',\bk''}P$ with $\Pi_{\bk'} \Pi_{\bk',\bk''}P=\Pi_{\bk'}P$ and $\Pi_{\bk',\bk''}P-\Pi_{\bk'}P\in\mathcal O_{1,2}$, has a unique nondegenerate global max along the segment $y\in \omega^{-1}(\bar\omega_a)$, up to finitely many bifurcations, where there are two nondegenerate global max. Moreover,  the curves $\{\mathrm{Argmax} \Pi_{\bk',\bk''}P(y,\cdot ),\ y\in  \omega^{-1}(\bar\omega_a)\}$, when projected to the set $\{\langle \bk',x\rangle, x\in \T^n\}\times\R^n$, is within $O(\mu)$ Hausdorff distance of  the curves $\{\mathrm{Argmax} \Pi_{\bk'}P(y,\cdot ),\ y\in  \omega^{-1}(\omega_a)\}$.
%\end{enumerate}
\end{pro}
\begin{proof}
%We apply Proposition \ref{PropCZ} to get the open dense set $\mathcal O_{1,[2]}$ in item (1).

%For item (2), %we can decompose $\Pi_{\bk',\bk''}P(y,x)=\Pi_{\bk'}P(y,x)+\bar P(y,\langle \bk', x\rangle,\langle \bk'', x\rangle)$. So we get $|\bar P|_{C^2}\leq C\frac{1}{|\bk''|^2}\leq C\mu^2$ due to $|\bk''|>C\mu^{-1}$. 
%We only work on the second item. 

The statement (without the ``Moreover" part) can be obtained directly by applying the main theorem of \cite{CZ2} which is a higher dimensional generalization of Proposition \ref{PropCZ}. Here we give an argument using only Proposition \ref{PropCZ}. Since we have $\Pi_{\bk'} P\in \mathcal O_1$ so $\Pi_{\bk'} P$ 
has a nondegenerate global max up to finitely many bifurcations where there are two nondegenerate global max. Moreover $\Pi_{\bk'} P$ determines $\dt,\mu,\bar\omega_a$ and $\bk''$. We next decompose $\Pi_{\bk',\bk''}P(y,x)=\Pi_{\bk'}P(y,x)+\bar P(y,\langle \bk', x\rangle,\langle \bk'', x\rangle)$ induced by the decomposition $ \Pi_{\bk',\bk''}C^r(T^*\T^n)=\Pi_{\bk'}C^r(T^*\T^n)\oplus \Pi_{\bk',\bk''}C^r(T^*\T^n)/\Pi_{\bk'}C^r(T^*\T^n)$. 
So we get $|\bar P|_{C^2}\leq C\frac{1}{|\bk''|^2}\leq C\mu^2$ since $|\bk''|\geq C\mu^{-1}$. We next make a linear coordinate change in $x$ so that the $Z_2(y, \sx_1,\sx_2)=Z(y, \sx_1) +\bar P(y, \sx_1,\sx_2)$, where $\langle \bk',x\rangle:=\sx_1$, $\langle \bk'',x\rangle:=\sx_2$, $Z(y, \sx_1)=\Pi_{\bk'}P(y,x)$, and $Z_2(y, \sx_1,\sx_2)=\Pi_{\bk',\bk''}P(y,x)$. %We consider a direct sum decomposition of $C^r(\omega^{-1}(B(\omega_a ,\mu))\times \T^n)$ into $$ \Pi_{\bk'}C^{r}( \omega^{-1}(B(\omega_a ,\mu))\times \T^n)\oplus C^r(\omega^{-1}(B(\omega_a ,\mu))\times \T^n)/\Pi_{\bk'}C^r(\omega^{-1}(B(\omega_a ,\mu))\times \T^n)$$ in such a way that each $P\in C^r$ is decomposed as $\Pi_{\bk'}P+(P-\Pi_{\bk'}P)$.

By the choice of  $\Pi_{\bk'} P\in \mathcal O_1$, for each $y$, we have $\max_x Z(y,\sx_1)$ is nondegenerate and attained at $\sx_1^*(y)$. Then by the implicit function theorem, for small enough $\mu$, the global max of $Z_2$ is attained at a point $(\bar\sx_1^*,\bar \sx_2^*)(y)$ with $|\bar\sx_1^*(y)-\sx_1^*(y)|\leq C\mu^2$.  To see the nondegeneracy of the global max for $Z_2$, we consider for each $y$ and $\sx_2$, the function $Z_2(y, \cdot, \sx_2)$ attains the global max at a point $\bar{\bar \sx}^*_1(y,\sx_2)$ that is within $\mu^2$-distance from $\sx_1^*$ by the implicit function theorem. Now the function $Z_2(y,\bar{\bar \sx}^*_1(y,\sx_2),\sx_2 )$ becomes a function of $y$ and $\sx_2$. We then apply Proposition \ref{PropCZ} to $Z_2$ to get an open-dense set $\tilde{\mathcal O}_{1,2}(\Pi_{\bk'}P)$ such that $Z_2$ has nondegenerate global max along $\omega^{-1}(\omega_a)$. The nondegeneracy can be achieved by adding a function $f\in C^r(\T)$ of $\sx_2$ only. This induces an open-dense set $\mathcal O_{1,2}(\Pi_{\bk'}P)$ in the unit ball of $ \Pi_{\bk',\bk''}C^r(T^* \T^n)/\Pi_{\bk'}C^r(T^* \T^n)$. 
\end{proof}

In the proposition, each $\mathcal O_{1,2}$ depends on $\Pi_{\bk'}P\in \mathcal O_1$, so we denote $\mathcal O_{1,2}=\mathcal O_{1,2}(\Pi_{\bk'}P)$. 
\begin{lem} \label{LmResidualKU}The  union $\bigcup_{\Pi_{\bk'}P\in \mathcal O_1}(\mathcal O_{1,2}(\Pi_{\bk'}P)\times (C^r(T^*\T^n)/ \Pi_{\bk',\bk''} C^r(T^*\T^n)))$ intersects the unit ball of $C^r(T^*\T^n)$ in an open-dense subset of the latter.
\end{lem}
\begin{proof}
We first decompose $C^r(T^*\T^n)=\Pi_{\bk'}C^r(T^*\T^n)\oplus (C^r(T^*\T^n)/\Pi_{\bk'}C^r(T^*\T^n))$ for each irreducible $\bk'\in\Z^n$. 
Applying the following Karatowski-Ulam Theorem \ref{ThmKU}, we get that the union $\bigcup_{\Pi_{\bk'}P\in \mathcal O_1}\mathcal O_{1,2}(\Pi_{\bk'}P)$ is a set of second category in $C^r(T^*\T^n)$. Indeed, we first divide $\mathcal O_1 =\mathcal O_1(\bk')$ into union of the form $\mathcal O_1 =\bigcup_{\bk''} \mathcal O_{1,\bk''}$ such that each $\Pi_{\bk'}P\in \mathcal O_{1,\bk''}$ admits the frequency segment $\bar\omega_a$ having a second resonance $\bk''$ (see Section \ref{SSTracking}, note that $\Pi_{\bk'}P$ determines $\dt$ hence $\mu$). Each $\mathcal O_{1,\bk''}$ is open (may be empty). We use the notation $\mathfrak B_1(E)$ to denote the unit ball of a Banach space $E$. 

Now each $\Pi_{\bk'}P\in \mathcal O_{1,\bk''}$ determines an open-dense subset $\mathcal O_{1,2}(\Pi_{\bk'}P)$ in\\
 $ \mathfrak B_1(\Pi_{\bk',\bk''}C^r(T^* \T^n)/\Pi_{\bk'}C^r(T^* \T^n))$ by Proposition \ref{PropGenericity2}. So by the following Karatowski-Ulam Theorem \ref{ThmKU}, the union $\bigcup_{\Pi_{\bk'}P\in \mathcal O_{1,\bk''}}\mathcal O_{1,2}(\Pi_{\bk'}P)$ is second category in the product space $\mathcal O_{1,\bk''}\times  \mathfrak B_1(\Pi_{\bk',\bk''}C^r(T^* \T^n)/\Pi_{\bk'}C^r(T^* \T^n))$. 
Next since each $\mathcal O_{1,2}(\Pi_{\bk'}P)$ is open in $\Pi_{\bk',\bk''}C^r(T^*\T^n)/\Pi_{\bk'}C^r(T^*\T^n)$, we get that the union $\bigcup_{\Pi_{\bk'}P\in \mathcal O_{1,\bk''}}\mathcal O_{1,2}(\Pi_{\bk'}P)$ is also open in $\Pi_{\bk',\bk''}C^r(T^*\T^n)/\Pi_{\bk'}C^r(T^*\T^n)$. So we get that $\bigcup_{\Pi_{\bk'}P\in \mathcal O_{1,\bk''}}(\mathcal O_{1,2}(\Pi_{\bk'}P)\times \mathfrak B_1(C^r(T^*\T^n)/ \Pi_{\bk',\bk''} C^r(T^*\T^n)))$ is open dense in $ \mathcal O_{1,\bk''}\times\mathfrak B_1(C^r(T^*\T^n)/ \Pi_{\bk'} C^r(T^*\T^n))$. 

Taking union over all the $\bk''$, we get the statement in the Proposition.

\end{proof}
\begin{theo}[Karatowski-Ulam, Theorem 15.1 of \cite{Ox}]\label{ThmKU} Let $X,Y$ be two topological spaces where $Y$ has a countable bases.
If $E\subset X\times Y$ is a set of first category, then $E\cap \{x\}\times Y$ is first category in $Y$ for all $x$ except a set of first category.
\end{theo}
%It is clear that along the frequency path $\omega_{[2],b}$, the analysis is exactly the same as that of $\omega_a$ and Proposition \ref{NHICSingle}, \ref{NHICDouble} and \ref{PropAlpha} can be proved along $\omega_{[2],b}$. So in the following, we focus only on the frequency path $\bar\omega_a$. 

\subsection{The KAM normal forms}\label{SSKAM+}

\begin{defi} Given three irreducible integer vectors $\bk^o,\bk',\bk''$, we define the triple resonance sub manifold as $$\Sigma(\bk^o,\bk',\bk''):=\{y\ |\ \langle\bk',\omega(y)\rangle=\langle\bk'',\omega(y)\rangle=\langle\bk^o,\omega(y)\rangle=0\}.$$
\end{defi}

Lemma \ref{Lm:nbd3} allows us to apply Proposition \ref{prop: codim1} in its two cases to obtain the following normal forms.
\begin{lem} \label{LmNormalForm1+}
Let $\bar\dt$ be a small number satisfying $\bar\dt<\min\{3(|\bk''|)^{-2},\dt\}$ and let $\bar K=(\bar\dt/3)^{-1/2}$. Then there exists $\bar\eps_1=\bar\eps_1(\bar\dt,\Lambda)$ such that for all $\eps<\bar\eps_1$, the following holds. 
Suppose $\omega^\star\in B(\bar\omega_a,\bar\mu(\bar K))\setminus \bigcup_{i}B\l(\bar\omega_{a_i^o}+(\bk^o_{a^o_i})^\perp,\eps^{1/3}\r)$ as in case $(1)$ of Lemma \ref{Lm:nbd3}, then there exists a symplectic transform $\bar\phi$ defined on $B(0,\Lambda)\times \T^n$ that is $o_{\eps\to 0}(1)$ close to identity in the $C^{r}$ norm, such that 

\begin{equation}\label{EqNormalForm1+}
\begin{aligned}
\sH\circ \bar \phi(x,Y)=&\dfrac{1}{\sqrt{\eps}}\langle \omega^\star, Y\rangle+\dfrac{1}{2}\langle \sA Y,Y\rangle+ \Pi_{\bk',\bk''}\sV+\bar\dt\bar \sR(x,Y),
\end{aligned}
\end{equation}
where
\begin{enumerate}
\item $ \Pi_{\bk',\bk''}\sV=V(\langle \bk',x\rangle)+\dt\bar V(\langle \bk',x\rangle,\langle \bk'',x\rangle)$ with $\sA$, $V$ the same as that in Lemma \ref{LmNormalForm1}, and $|\bar V(\langle \bk',x\rangle,\langle \bk'',x\rangle)|_{r-2}\leq 1$.
\item $\bar\sR(x,Y)=\bar\sR_I(x)+\bar\sR_{II}(x,Y)$, where $\bar\sR_I$ consists of Fourier modes of $\sV$ not in the set $\mathrm{span}_\Z\{\bk',\bk''\}\cup\Z^n_{\bar K}$, and we have $|\bar\sR_I|_{{r-2}}\leq 1$ and $|\bar\sR_{II}|_{{r-5}}\leq 1$.
\end{enumerate}
\end{lem}
\begin{lem}\label{LmNormalForm2+}
Let $\bar\dt$ and $\bar K$ be as in the previous lemma. Then there exists $\bar\eps_2=\bar\eps_2(\bar\dt,\Lambda)$ such that for all $\eps<\bar\eps_2$, the following holds. 
Suppose  $\omega^\star$ is in the set $ B(\bar\omega_a,\bar\mu(\bar K))\bigcap B\l(\bar\omega_{a_i^o}+(\bk^o_{a^o_i})^\perp,\eps^{1/3}\r)$ as in case $(2)$ of Lemma \ref{Lm:nbd3}, then there exists a symplectic transform $\bar\phi$ defined on $B(0,\Lambda)\times \T^n$ that is $o_{\eps\to 0}(1)$ close to identity in the $C^{r}$ norm, such that
\begin{equation}\label{EqNormalForm2+}
\begin{aligned}
\sH\circ \bar \phi(x,Y)=&\dfrac{1}{\sqrt{\eps}}\langle \omega^\star, Y\rangle+\dfrac{1}{2}\langle \sA Y,Y\rangle+ \Pi_{\bk',\bk'',\bk^o}\sV+\bar\dt\bar \sR(x,Y),
\end{aligned}
\end{equation}
where
\begin{enumerate}
\item 

\begin{enumerate}
\item if $|\bk^o|<K$, we have $$\Pi_{\bk',\bk'',\bk^o}\sV=V\l(\l\langle  \bk', x\r\rangle,\l\langle \bk^o, x\r\rangle\r)+ \dt \bar V\l(\l\langle  \bk', x\r\rangle,\l\langle \bk'', x\r\rangle,\l\langle \bk^o, x\r\rangle\r)$$ with $\sA$, $V$ the same as that in Lemma \ref{LmNormalForm2} or
\item if $|\bk^o|\geq K$, we have
$$\Pi_{\bk',\bk'',\bk^o}\sV=V\l(\l\langle  \bk', x\r\rangle\r)+ \dt \bar V\l(\l\langle  \bk', x\r\rangle,\l\langle \bk'', x\r\rangle,\l\langle \bk^o, x\r\rangle\r)$$ with $\sA$, $V$ the same as that in Lemma \ref{LmNormalForm1}. 
\end{enumerate}
In both cases, we have $|\bar V\l(\l\langle  \bk', x\r\rangle,\l\langle \bk'', x\r\rangle,\l\langle \bk^o, x\r\rangle)\r|_{r-2}\leq 1$
\item $\bar\sR(x,Y)=\bar\sR_I(x)+\bar\sR_{II}(x,Y)$, where $\bar\sR_I$ consists of Fourier modes of $\sV$ not in the set $\mathrm{span}_\Z\{\bk^o,\bk',\bk''\}\cup \Z^n_{\bar K}$, and we have $|\bar\sR_I|_{r-2}\leq 1$ and $|\bar\sR_{II}|_{r-5}\leq 1$.
\end{enumerate}
\end{lem}

Now there are several sub cases. We assume that $\langle\omega^\star,\bk'\rangle=\langle\omega^\star,\bk''\rangle=0$.

\begin{enumerate}
\item  $\omega^\star$ is as in Lemma \ref{LmNormalForm1+}. The same argument as Proposition \ref{NHICSingle} gives that there is a $C^r$ wNHIC homeomorphic to $T^*\T^{n-2}$ if $\bar \dt$ is sufficiently small. The normal hyperbolicity is independent of $\eps$ or $\bar\dt$, but may depend on $\dt$. This wNHIC is a subset of the wNHIC in Proposition \ref{NHICSingle}.

\item $\omega^\star$ is as in item (1.b) of  Lemma \ref{LmNormalForm2+}. This case occurs when $|\bk^o|\geq K$. We first apply Proposition \ref{NHICSingle} to reduce the Hamiltonian system to a system defined on $T^*\T^{n-1}$. The restricted system to the wNHIC would depend on $x$ through $\langle \bk^o,x\rangle$ and $\langle \bk'',x\rangle$ up to a $\bar\dt$ perturbation. That means that the restricted system is at double resonance. If the double resonance is weak, then it is treated as a single resonance given by $\bk''$. Otherwise, we apply Proposition \ref{NHICDouble} to find a wNHIC homeomorphic to $T^*\T^{n-2}$ and a Proposition \ref{PropAlpha} to find a generalized transition chain connecting two neighboring wNHICs. 
\item $\omega^\star$ is as in item (1.a) of  Lemma \ref{LmNormalForm2+}.  This case occurs when $|\bk^o|<K$, i.e. the vectors $\bk',\bk^o$ gives rise to a strong double resonance for the first step of reduction of order. We call this case a strong triple resonance. 

    \end{enumerate}

{\it In the following, without loss of generality, we focus on the third case to explain how to introduce the extra resonance $\bk''$.}
The other two cases can be reduced to Proposition \ref{NHICSingle}, \ref{NHICDouble} and \ref{PropAlpha}.

%As in the double resonance case, most of the triple resonances are weak where the theorem of NHIM can be applied to guarantee the existence of wNHIC homeomorphic to $T^*\T^{n-2}$. We call the rest {\it strong triple resonances}, whose cardinality is independent of $\eps$.  
\begin{Not}
We denote by $\Sigma_!(\bk^o,\bk',\bk'')$ the triple resonant submanifold determined by \emph{strong} triple resonances as in case $(1.a)$ of Lemma \ref{LmNormalForm2+}.
\end{Not}
\subsection{Reduction of order around triple resonance}\label{SSReduction+}
In this section, we perform the reduction of order around the triple resonance. We will find wNHICs getting close to the triple resonance. 
 
We assume $\omega^\star\in \Sigma(\bk',\bk'')$ and within $\eps^{1/3}$ distance of $\Sigma_{!}(\bk^o,\bk',\bk'')$. Again there are two subcases depending on if $\omega^\star$ is within $\Lambda\eps^{1/2}$ distance of $\Sigma_{!}(\bk^o,\bk',\bk'')$ or not. The case of dist$(\omega^\star, \Sigma_{!}(\bk^o,\bk',\bk''))>\Lambda\eps^{1/2}$ can be treated in the same way as Theorem \ref{LmHighEnergy} and Proposition \ref{NHICHighEnergy}, which is essentially reduced to the case of Lemma \ref{LmNormalForm1+}, so we skip this case and focus on the case of dist$(\omega^\star, \Sigma_{!}(\bk^o,\bk',\bk''))\leq\Lambda\eps^{1/2}$. Without loss of generality, we assume $y^\star\in \Sigma_!(\bk^o,\bk',\bk'')$ so that $\omega^\star=\omega(y^\star)$ is perpendicular to $\bk',\bk^o,\bk''.$

\subsubsection{The shear transformation}\label{SSSShear}
Similar to Lemma \ref{LmM''}, there exists a matrix $M'''\in \mathrm{SL}(n,\Z)$ whose first three rows are $\bk^o,\bk'$ and $\bk''$ respectively.
The matrix $M'''$ induces a symplectic transformation $$\mathfrak M''':\ T^*\T^n\to T^*\T^n,\quad (x,Y)\mapsto(M'''x, M'''^{-t}Y),\quad \quad A=M'''\sA M'''^t,$$
We denote $\bs\om=M'''\omega^\star$ which has 0 as the first three entries since $y^\star\in \Sigma(\bk^o,\bk',\bk'')$.
By the symplectic transformation $\mathfrak M'''$, one obtains the  Hamiltonian
\begin{equation}\label{EqHamReduced-1bar}
\begin{aligned}
H:=&{\mathfrak M'''}^{-1*}(\sH\bar \phi)=\dfrac{1}{2}\langle  A Y,Y\rangle+ V\l(x_1,x_2\r)+\dt \bar V\l(x_1,x_2,x_3\r)\\
&+\dfrac{1}{\sqrt{\eps}}\langle\widehat{ \bs\omega}_{n-3}, \hat Y_{n-3}\rangle+\bar\dt \bar R(x,Y),\end{aligned}
\end{equation}
where  $\bar R={\mathfrak M'''}^{-1*}\bar\sR$. The matrix $M'''$ depends on $\dt$ through $\bk''$ but is independent of $\bar \dt$.

We next introduce the shear transformation as we did in Lemma \ref{shear} to block diagonalize $A$. Let $A,S'''\in \mathrm{SL}(n,\R)$ be defined as follows
\begin{equation}\label{EqS'''}
A=\bmt{cc}\tilde{A}_3&\breve{A}_3\\
\breve{A}_3^t&\hat{A}_3 \emt,
\qquad
S'''=\bmt{cc}\mathrm{id}_3&0\\
-\breve{A}_3^{t}\tilde{A}_3^{-t}&\mathrm{id}_{n-3}\emt
\end{equation}
where $\tilde{A}_3,\breve{A}_3,\hat{A}_3$ are $3\times 3,3\times(n-3)$ and $(n-3)\times (n-3)$ respectively. With the shear matrix we introduce a symplectic transform
$$
\mathfrak S''':T^*\T^n\to T^*\T^n_{S'''},\ (x,Y)\mapsto (S'''x,S'''^{-t}Y):=(\sx,\sy),\quad \omega_{S'''}:= S''' \bs\om,
$$
which transforms the Hamiltonian into the following form defined on $T^*\T^n_{S'''}$
 \begin{equation}\label{EqHamReduced-1bar}
\begin{aligned}
\sH_{S'''}:=&(\mathfrak S'''{\mathfrak M'''})^{-1*}(\sH\circ \phi)
=\l[\dfrac{1}{2}\langle \tilde{A}_3 \tilde\sy_3,\tilde\sy_3\rangle+ V\l(\tilde\sx\r)+\dt \bar V\l(\tilde \sx_3\r)\r]\\
&+\dfrac{1}{\sqrt{\eps}}\langle\hat\omega_{S''',n-3}, \hat \sy_{n-3}\rangle+\dfrac{1}{2}\langle B_3\hat \sy_{n-3},\hat \sy_{n-3}\rangle+ \mathfrak S'''^{-1*}\l(\bar \dt \bar R(\sx,\sy)\r),\end{aligned}
\end{equation}
where we denote $B_3=(\hat{ A}_3-\breve{A}_3^t\tilde{ A}_3^{-1}\breve{A}_3)$ and $\tilde \sx_3=(\sx_1,\sx_2,\sx_3),\, \tilde \sy_3=(\sy_1,\sy_2,\sy_3)$, $\tilde \sx=(\sx_1,\sx_2).$ The norms of the matrices $B_3$ and $S'''$ depend on $\dt$ but not on $\bar \dt$.

\subsubsection{The existence of wNHICs}
To understand the full system $\sH_{S'''}$, we first need to understand its bracketed subsystem in \eqref{EqHamReduced-1bar}. The next lemma shows the existence of NHIC of dimensional 2 in the subsystem. 
\begin{lem}\label{Prop3DOF}
For any $\lambda>0$, there exists an open dense subset $\tilde{\mathcal O}_2\subset C^r(\T^2)/\R$, $r\geq 5$,  such that for each $V\in \tilde{\mathcal O}_2$ normalized by $\max V=0$, there exist $\tilde\dt_2=\tilde\dt_2(V)$ and an open-dense subset $\tilde{\mathcal O}_{2,*}$ in the $\tilde\dt_2$-ball of $ C^{r}(\T^3)/C^{r}(\T^2)$, such that for each each $\dt\bar V\in \tilde{\mathcal O}_{2,*}$, the subsystem
%Let $\bk^o,\bk',\bk''$ be as in item $(1.a)$ of Lemma \ref{LmNormalForm2+} and let $y^\star\in \Sigma(\bk',\bk^o,\bk'')$. Let $\Pi_{\bk',\bk^o}P(y^\star,x)\in \mathcal O_2$ and $\lambda,\dt_0$ be as in Proposition \ref{NHICDouble}, then there exists an open-dense set $\mathcal O_3=\mathcal O_3(\bk',\bk^o,\bk'')$ in the unit ball of $\Pi_{\bk',\bk^o,\bk''} C^r(\T^n)/\Pi_{\bk',\bk^o} C^r(\T^n)$ such that for each $\Pi_{\bk',\bk^o,\bk''}P(y^\star,x)$ with $\Pi_{\bk^o,\bk'}\Pi_{\bk',\bk^o,\bk''}P(y^\star,x)=\Pi_{\bk',\bk^o}P(y^\star,x)$ and $\Pi_{\bk',\bk^o,\bk''}P(y^\star,x)-\Pi_{\bk',\bk^o}P(y^\star,x)\in \mathcal O_3$
%Let the first two resonance integer vectors $\bk^o,\bk'$ be given. Let $\sV$ be as in \eqref{EqHomog1} and $V$ and $\bar V$ be determined from $\sV$ as in item $(1.a)$ of Lemma \ref{LmNormalForm2+} if  $\bk''$ is also known. Then there exists an open-dense set $\mathcal O_3\subset C^r(\T^n)$, such that for 
%each $\sV\in \mathcal O_3$ 
%and for any $\lambda>0$,
 %there exist $\dt_0$ such that 
 %and for all $0<\dt<\dt_0$, the subsystem
\begin{equation}\label{EqtG3}\sG_{3,\dt}:=\frac{1}{2}\langle \tilde{A}_3 \tilde\sy_3,\tilde\sy_3\rangle+ V\l(\tilde\sx\r)+\dt \bar V\l(\tilde \sx_3\r),\quad T^*\T^3\to \R\end{equation}
\begin{enumerate}
\item admits a $C^r$ NHIC homeomorphic to $T^*\T$, up to finitely many bifurcations;
\item the NHIC is foliated by hyperbolic periodic orbits as Mather sets with rotation vectors $\nu(1,0,0)$ and $|\nu|\geq \lambda$;
\item the absolute values of the normal Lyapunov exponents are bounded away from zero by $C\sqrt\dt$ for some constant $C>0$.
\end{enumerate}
\end{lem}
The next proposition establishes the existence of wNHICs in the full system. 

\begin{pro}\label{NHICTriple}  \begin{enumerate}
\item Given irreducible $\bk',\bk^o\in \Z_K^n$, let $y'^\star\in \Sigma_!(\bk^o,\bk')$, 
\item let $\lambda$, $\Pi_{\bk',\bk^o}P(y'^\star,x)\in \mathcal O_2$ and $\dt$ be as in the assumption of Proposition \ref{NHICDouble},
\item let $\bk''$ be the third resonance given in Section \ref{SSTracking} and consider $y''^\star\in \Sigma(\bk^o,\bk',\bk'')$ that is $\mu$-close to $y'^\star$.
\end{enumerate}
 Then there exists an open-dense set $\mathcal O_{2,*}=\mathcal O_{2,*}(\Pi_{\bk',\bk^o}P(y'^\star,\cdot),\bk'')$ in the unit ball of $\Pi_{\bk^o,\bk',\bk''} C^r(\T^n)/\Pi_{\bk',\bk^o} C^r(\T^n),\ r\geq 7,$ such that for each $\Pi_{\bk^o,\bk',\bk''}P(y''^\star,x)$ with $$\Pi_{\bk^o,\bk'}\Pi_{\bk^o,\bk',\bk''}P(y''^\star,x)=\Pi_{\bk',\bk^o}P(y''^\star,x),\ \Pi_{\bk^o,\bk',\bk''}P(y''^\star,x)-\Pi_{\bk',\bk^o}P(y''^\star,x)\in \mathcal O_{2,*},$$ there exists 
$\bar\dt_1=\bar\dt_1(\Pi_{\bk^o,\bk',\bk''}P(y''^\star,x),\lambda,\dt)>0$ such that for all $0<\bar\dt\leq \bar\dt_1$ and all $0<\eps<\bar\eps_2$,  
\begin{enumerate}
\item the Hamiltonian system \eqref{EqNormalForm2+} admits
a $C^r$ wNHIC $\cC(\bk',\bk'')$ homeomorphic to $T^*\T^{n-2}$ up to finitely many bifurcations.  The normal hyperbolicity is independent of $\eps$ or $\bar\dt$, but may depend on $\dt.$
\item Mather sets in the region $B(0,\Lambda)\times \T^n$ with rotation vectors orthogonal to both $\bk'$ and $\bk''$ and of distance $\lambda$-away from $\eps^{-1/2}\omega(y''^\star)+(\bk^o)^\perp$  lie inside $\cC(\bk',\bk'')$. 

%\item The wNHICs intersect a $\lambda$-neighborhood of $\Sigma_!(\bk^o,\bk',\bk'')$.% determined by $\omega_a$ for some $a$ giving rise to a triple resonance.
%\item a collection of wNHICs homeomorphic to $T^*\T^{n-1}$ restricted on which the Hamiltonian system has all the rotation vectors of its Mather set orthogonal to $\bk'_{[2]}$, and stays $\mu$-close to $\omega_{[2],b}.$ The wNHICs intersect a $\lambda$-neighborhood of $\Sigma_!(\bk_{[2]}^o,\bk'_{[2]})$ determined by $\omega_{[2],b}$ for some $b$ giving rise to a strong double resonance.

\end{enumerate}

\end{pro}
\begin{proof}
The proof is similar to that of Proposition \ref{NHICSingle} and \ref{NHICDouble}. After the linear transform induced by $S'''M'''$, the problem of finding NHIC is reduced to Lemma \ref{Prop3DOF}. The NHIC persists if the $\bar\dt$ perturbation is sufficiently small. 
Here we only explain two points. First, here we choose $\mathcal O_{2,*}$ to be in the unit ball of $\Pi_{\bk^o,\bk',\bk''} C^r(\T^n)/\Pi_{\bk',\bk^o} C^r(\T^n)$ rather than in a $\tilde \dt_2$-ball as in Lemma \ref{Prop3DOF}. The reason is that $\tilde\dt_2$ is determined by the persistence of the NHIC in the subsystem $\tilde\sG$ of $\sG_{3,\dt}$. The theorem of NHIC requires only $C^1$ smallness of the perturbation to the Hamiltonian flow and we have that every function in the unit ball of $\Pi_{\bk^o,\bk',\bk''} C^r(\T^n)/\Pi_{\bk',\bk^o} C^r(\T^n)$ has $C^{r-2}$ norm less than $|\bk''|^{-2}<\dt_2$ in Propostion \ref{NHICDouble}.

Next, we explain the difference of $y'^\star$ and $y''^\star$. For each $\Pi_{\bk',\bk^o}P(y'^\star,x)\in \mathcal O_2$, there exists a wNHIC $\mathcal C(\bk')$ that is $\lambda$-away from the double resonance by Proposition \ref{NHICDouble}. If we perform the homogenization based at the point $y''^\star$ that is $\mu$-close to $y'^\star$ the resulting $\tilde \sG$'s differ by $O(\mu)$ in the $C^2$ topology. Since the normal hyperbolicity of the wNHIC $\mathcal C(\bk')$ is independent of $\dt$ and $\mu=o(\dt)$, we see that for small enough $\dt$, Proposition \ref{NHICDouble} that is stated for any $y^\star=y'^\star\in \Sigma_!(\bk',\bk^o)$, remains to hold for another $y''^\star\in \Sigma_!(\bk^o,\bk',\bk'')$ that is $\mu$-close to $y'^\star$. 
\end{proof}
The remaining part of this subsection is devoted to the proof of Lemma \ref{Prop3DOF}.

\begin{proof}[Proof of Lemma \ref{Prop3DOF}]
Applying Theorem \ref{ThmCZ}, we get an open dense subset $\tilde{\mathcal O}_2\subset C^r(\T^2)/\R$ such that for each $V\in \tilde{\mathcal O}_2$, the system $\tilde \sG:\ T^*\T^2\to \R$ admits a NHIC foliated by periodic orbits with rotation vectors $\nu(1,0), |\nu|>\lambda$. Let us now fix such a $V\in \tilde{\mathcal O}_2$. 
%Without loss of generality, we set $\bs\om_1=0$ in the system $\sG_{3,\dt}$.

We next block diagonalize the quadratic form $\langle \tilde{A}_3 \tilde\sy_3,\tilde\sy_3\rangle$ by introducing one more shear transformation
$S_3=\bmt{ccc}
\mathrm{id}_2&0\\
-\sa_3 \tilde A^{-1}&1\\
\emt:=\bmt{ccc}
1&0&0\\
0&1&0\\
s_1&s_2&1\\
\emt,$
 where $\sa_3=(a_{31},a_{32})\in \R^2$ is the vector formed by the entries of $A$ on the third row to the left of the diagonal.
We can verify that \beq\label{Eqb3}
S_3
\tilde A_3
S_3^t=\bmt{ccc}
\tilde A&0\\
0&b_3
\emt \eeq
where $b_3=a_{33}-\sa_3 \tilde A^{-1}\sa_3^t$.
This linear transform induces a linear transform $\mathfrak S_3:\ (\tilde \sx_3,\tilde\sy_3)\mapsto (S_3 \tilde \sx_3, S_3^{-t}\tilde \sy_3):=(\tilde x_3,\tilde y_3)$ and transforms the Hamiltonian $ \sG_{3,\dt}$ to the following system $\mathfrak S_3^*\sG_{3,\dt}:=\mathcal G_{3,\dt}$ of the form
\begin{equation}\label{EqSsG3} \mathcal G_{3,\dt}=\frac{1}{2}\langle \tilde{A} \tilde y,\tilde y\rangle+ V\l(\tilde x\r)+\frac{b_3}{2} y_3^2+\dt \bar V(S_3^{-1}\tilde x_3),\quad T^*\T^3_{S_3}\to \R.\end{equation}

In the above system $\mathcal G_{3,\dt}$, we apply Theorem \ref{ThmCZ} with homology class $g=(1,0)$ and find NHIC in the subsystem $\tilde\sG:=\frac{1}{2}\langle \tilde{A} \tilde y,\tilde y\rangle+ V\l(\tilde x\r)$. Restricted to the NHIC, the subsystem $\tilde\sG$ is reduced to a system of one degree of freedom denoted by $\tilde h(I)$ in action-angle coordinates (Proposition \ref{PropSymp}). We restrict to the region $|h'(I)|>\lambda$. In the case of $\dt=0$, restricted to the NHIC, the system $\mathcal G_{3,0}$ becomes $\overline{\mathcal G_{3,0}}:=\tilde h(I)+\frac{b_3}{2}y_3^2$ defined on $T^*\T^2_{\bar S}$ where $\bar S=\bmt{ccc}
1&0\\
s_1&1
\emt\in \mathrm{SL}(2,\R)$ and $\T^2_{\bar S}=\T^3_{S_3}/\T^1$. 

When the $\dt$-perturbation in $\mathcal G_{3,\dt}$ is turned on, we apply the theorem of NHIM to get that
$\mathcal G_{3,\dt}$ admits a NHIC homeomorphic to $T^*\T^2_{\bar S}$ for sufficiently small $\dt$ and for any $\lambda>0$, the bound $\tilde\dt_2$ is determined in the same way as the proof of Proposition \ref{NHICDouble}(2). The restriction of $\mathcal G_{3,\dt}$ to the NHIC has the form 
\begin{equation}\label{EqBarSsG3}
\bar{\mathcal G}_{3,\dt}:=\tilde h(I)+\frac{b_3}{2} y_3^2+\dt\bar Z(I,\varphi,x_3,y_3),\ T^*\T^2_{\bar S}\to \R,\end{equation} where we have $\bar Z=\bar V(\tilde x(I,\varphi), x_3-(s_1,s_2)\cdot \tilde x(I,\varphi))+O(\dt)$. Indeed, the leading term in $\bar Z$ is obtained by evaluating $\bar V(S_3^{-1}\tilde x_3)$ restricted to the unperturbed NHIC with $\tilde x=\tilde x(I,\varphi)$. The $O(\dt)$ error is created by the deformation of the NHIC under the perturbation. 

Finally, going back to the original system $\sG_{3,\dt}$, we obtain an expression for the restricted system to the NHIC which is homeomorphic to $T^*\T^2$. We introduce the following undo-shear transformation
\beqa
\bar{\mathfrak S}:\ (\varphi,x_3;I,y_3)&\mapsto(\bar S(\varphi,x_3);\bar S^{-t}(I,y_3) )=(\varphi, s_1 \varphi+x_3;I-s_1 y_3, y_3)
:=(\varphi, \sx_3; J, \sy_3),
\eeqa
under which, we get the restriction of $\sG_{3,\dt}$ to the NHIC
\begin{equation}\label{EqDouble+}
\begin{aligned}
\bar\sG_{3,\dt}:=&\Big[\tilde h(J+s_1\sy_3)+\frac{1}{2}b_3\sy^2_3+\dt U(J,\sy_3,\varphi, \sx_3)\Big]:\quad T^*\T^2\to \R,
\end{aligned}
\end{equation}
where $U(J,\sy_3,\varphi, \sx_3)=\bar V(\tilde \sx(I,\varphi), \sx_3+(s_1,s_2)\cdot\tilde\sx(I,\varphi)-s_1 \varphi)+O(\dt)$ with $I=J+s_1 \sy_3$. Moreover, the $O(\dt)$ part depends on the angular variables $\sx_3,\varphi$ in the same way as the leading term. To see that $\sx_3$ is defined on $\T^1$,  we lift a periodic orbit $\tilde\sx$ with homology class $g=(1,0)$ to the universal cover, as $\varphi\mapsto \varphi+1$ we get $\tilde\sx\mapsto \tilde\sx+(1,0)$ and after the shear and undo-shear transformations $(s_1,s_2)\cdot\tilde\sx(I,\varphi)-s_1 \varphi\mapsto (s_1,s_2)\cdot\tilde\sx(I,\varphi)-s_1 \varphi$. 

We will apply the procedure of order reduction to the system $\bar\sG_{3,\dt}: \ T^*\T^2\to\R$. Namely, we want to apply Theorem \ref{ThmCZ} with homology class $g=(1,0)$ to get a NHIC and restrict the system to the NHIC to get a system of one degree of freedom. It is known that all its Mather sets with rotation vectors $\nu(1,0),\ |\nu|>\lambda,$ are supported on periodic orbits due to the two-dimensionality. Going back to the system $\sG_{3,\dt}$ of three degrees of freedom, we obtain that all its Mather sets with rotation vectors $\nu(1,0,0),\ |\nu|>\lambda,$ are supported on periodic orbits. It remains to show the nondegeneracy and hyperbolicity of the periodic orbits if $\dt\bar V$ is chosen in an open-dense subset $\tilde{\mathcal O}_{2,*}$ of the $\tilde\dt_2$-ball of the quotient $C^{r}(\T^3)/C^{r}(\T^2)$. The proof is essentially the same as the proof of Theorem \ref{ThmCZ} in \cite{CZ1}, but there is a subtle point: here we are only allowed to perturb the potential of the system $\sG_{3,\dt}$ of three degrees of freedom but cannot perturb $\bar\sG_{3,\dt}$ of two degrees of freedom directly.

%It remains to prove the genericity of $\sV=\Pi_{\bk',\bk^o,\bk''}P(y^\star,\cdot)$. We introduce the decomposition $C^{r}(\T^n)=C^{r}(\T^2)\oplus C^r(\T^n)/C^{r}(\T^2)$ such that each $\sV\in \Pi_{\bk',\bk^o,\bk''}C^{r}(\T^n)$ is decomposed into $\sV=\Pi_{\bk',\bk^o}\sV+(\sV-\Pi_{\bk',\bk^o}\sV)$ where $\Pi_{\bk',\bk^o}\sV=\Pi_{\bk',\bk^o}P(y^\star,\cdot)$.

%We see from the proof of Proposition \ref{NHICDouble} that there exists an open-dense set $\tilde{\mathcal O}_2\subset C^r(\T^2)$ such that for each $\Pi_{\bk',\bk^o}\sV\in \tilde{\mathcal O}_2$, the subsystem $\tilde \sG$ admits a NHIC with normal Lyapunov exponents uniformly bounded away from zero. Next 

%By assumption we have fixed a $V\in \tilde{\mathcal O}_2$ so that Proposition \ref{NHICDouble} holds for small enough $\dt$. We next choose $\bk''$ with sufficiently large norm such that the term $(M''')^*(\dt \bar V)=\Pi_{\bk^o,\bk',\bk''} \sV-\Pi_{\bk^o,\bk'} \sV$ is so small in $C^2$ norm that we can apply the theorem of NHIC to the system $\sG_{3,\dt}$ to get the persistence of the NHIC homeomorphic to $T^*\T^2$. Restricted to the NHIC, we get the system $\bar \sG_{3,\dt}$ of two degrees of freedom. 

%We first perform a linear symplectic transform induced by $M'''$ so that $(\langle\bk',x\rangle,\langle\bk^o,x\rangle,\langle\bk'',x\rangle)=(\sx_1,\sx_2,\sx_3)$.

We next show how to adapt the proof of \cite{CZ1} to our setting. Let us briefly recall the perturbation argument of \cite{CZ1}. For a Tonelli Lagrangian system $L(x,\dot x):\ T\T^2\to \R$. 
\begin{enumerate}
\item We first pick a section $\{x_1=0\}$ and reduce it to a nonautonomous system defined on $T\T\times \T\to \R$ then introduce the action functional $F(E,x_2):\ I\times \T\to \R$ where $E$ is the energy level and $I$ is the energy interval, by evaluating the action along the orbit on energy level $E$ starting and ending at the same point $x_2\in\T$. 
\item We then choose perturbation of the form $A_\ell \cos\ell x_2+B_\ell \sin\ell x_2,\ A_\ell,B_\ell\in [\epsilon,2\epsilon], \ell=1,2$. By the construction in Section 3 of \cite{CZ1}, such a perturbation to the Lagrangian becomes a perturbation of the same form to the action functional. 
\item Show that an open dense subset of the perturbation can make the global min of $F$ nondegenerate uniformly for $E\in I$ (Proposition \ref{PropCZ} here and Theorem 3.1 of \cite{CZ1}). 
\item Show that nondegenerate periodic orbits are hyperbolic. 
\end{enumerate}
Now we show that the above argument applies to the subsystem $\bar\sG_{3,\dt}$ of two degrees of freedom by perturbing the system $\sG_{3,\dt}$ of three degrees of freedom. In place of the above step (1), we pick the section $\{\varphi=0\}$ in the subsystem $\bar\sG_{3,\dt}$. 
%We pick a section $\{\sx_1=0\}\subset \T^3$ and realize the action functional as a function of depending variables $\sx_2,\sx_3$ on the parameter $E$, the energy. The realization is done by evaluating the action functional along curves that are orbits starting and ending at the same point $(\sx_2,\sx_3)$ on the section and on the energy level $E$. Since it is known that the action minimizers are attained by periodic orbits, 
Next consider a perturbation to the system $\sG_{3,\dt}$ depending only on $\sx_3$ of the form  $A_\ell \cos\ell \sx_3+B_\ell \sin\ell \sx_3,$ $\ A_\ell,B_\ell\in [\epsilon\dt,2\epsilon\dt], \ell=1,2$ as above item (2). Restricted to the section $\{\varphi=0\}$ in the subsystem $\bar \sG_{3,\dt}$ we get a perturbation of the same form up to a horizontal translation by a constant (see the expression of $U$ above). Then item (3) and (4) go through without any change. Since the system $\bar\sG_{3,\dt}$ is already restricted to a NHIC, its hyperbolic periodic orbit is also hyperbolic in the system $\sG_{3,\dt}$. %The method of \cite{CZ1} then allows the perturbation to the system $\bar\sG_{3,\dt}$. Since 

%With this, the perturbation argument in \cite{CZ1} applied to the system $\sG_{3,\dt}$ shows that the periodic orbits are generically nondegenerate and there are at most finitely many bifurcations.  Finally, restricted to the system $\bar\sG_{3,\dt}$ of two degrees of freedom, Theorem 4.1 of \cite{CZ1} shows that nondegenerate minimizing periodic orbits are hyperbolic.

%Finally, the same Karatowski-Ulam argument as the end of the proof of Proposition \ref{PropGenericity2} gives us the residual set $\mathcal O_3$.
\end{proof}

\begin{lem}\label{Lmb3s}We have the following estimates for the constant $b_3$ and $s_1$ appearing in equation \eqref{EqDouble+}
$$ b_3=\mathrm{const}_{b_3} |\bk''|^2,\quad s_1=\mathrm{const}_{s_1}|\bk''|,$$
where the constants are independent of $\dt$, $\mathrm{const}_{b_3}>0$ and $\mathrm{const}_{s_1}\in \R$.
\end{lem}
\begin{proof}Recall the definitions of $b_3$ (see \eqref{Eqb3})
$b_3=a_{33}-\sa_3 \tilde A^{-1}\sa_3^t$
and $s$ is the first entry of $\sa_3 \tilde A^{-1}$.
The $(i,j)$-th entry of $A=M'''\sA M'''^t$ is $m_i \sA m_j^t$ where $m_i,m_j$ are the $i$-th and $j$-th rows of $M'''$ respectively. Since the first three rows of $M'''$ are $\bk^o,\bk',\bk''$ respectively, we get that
 \[b_3=\bk'' \sA(\bk'')^t-(\bk'' \sA \mathbf K^t)(\bK \sA\bK^t)^{-1} (\bK \sA(\bk'')^t),\] and $s$ is the first entry of $\bk''\sA \bK^t(\bK \sA\bK^t)^{-1}$,
where we denote by $\mathbf K$ the matrix of $2\times n$ whose two rows are $\bk^o$ and $\bk'$ respectively.
Now $s_1$ is estimated easily as const.$|\bk''|$ since $\sA \bK^t(\bK \sA\bK^t)^{-1}$ does not depend on $\dt$.

We focus on $b_3$ in the following.
Since $\sA$ is positive definite, we decompose $\sA=CC^t$ for some $C\in \mathrm{GL}(n,\R)$ and denote $\bk''C:=\sk,\ \bK C=\sK$. This gives us
\[b_3=\sk (\sk^t- \sK^t (\sK\sK^t)^{-1} \sK\sk^t).\]
Now, we recall the Gauss least square method. The equation $\sK^t\sx=\sk^t$, though in general not solvable for $\sx\in \R^2$, we can seek for a least square solution given by $\sx_{ls}=(\sK\sK^t)^{-1} \sK\sk^t$, which has geometric interpretation as follows. The vector $\sK \sx_{ls}=\sK(\sK\sK^t)^{-1} \sK\sk^t$ is the projection of $\sk$ to the linear space spanned by the column vectors of $\sK^t$.  Hence $(\sk^t- \sK^t (\sK\sK^t)^{-1} \sK\sk^t)$ is the projection of $\sk$ to the orthogonal complement of the linear space spanned by the column vectors of $\sK^t$. We see from the construction of the vectors $\bk^o,\bk',\bk''$ that $\bk''$ forms a nonzero angle with the plane span$\{\bk',\bk^o\}$ independent of $\dt$, since as $\mu\to 0$ one has
$$
\bar\bk'=(0,\bar Q\bar p,-\bar q\bar P,\hat 0_{n-4})\parallel (0,1,-\frac{\bar q\bar P}{\bar Q\bar p},\hat 0_{n-3})\to(0,1,-\frac{\omega_2^*\bar P}{\omega_4^*\bar Q},\hat 0_{n-3}),
$$
which is obtained from $\bk''$ by removing the second entry of $\bk'' M'$, and the matrices $\bK,M',\sA$ do not depend on $\dt$. This linear independence relation is preserved by the linear transformation $C$. Hence we get that $b_3=c|\bk''|^2$ for some constant $c>0$ and independent of $\dt$.\end{proof}

\subsection{Description of the $\al$-function}\label{SSAlpha3}
Applying Theorem \ref{flatthm1} to the system $\sG_{3,\dt}$, we get a three dimensional flat on the energy level $\min\al_{\sG_{3,\dt}}$. Next, applying Lemma \ref{LmFlatPer} twice (since we have applied Theorem \ref{ThmCZ} twice in the proof of Lemma \ref{Prop3DOF}) we see that the NHICs in Lemma \ref{Prop3DOF} correspond to two channels $$\mathbb C_\pm=\{\partial\beta_{\sG_{3,\dt}}(\nu(1,0,0)),\ |\ \pm\nu>\lambda\}\subset H^1(\T^3,\R).$$ For each $c\in \mathbb C_\pm$, the corresponding Mather set $\widetilde\cM(c)$ lies in the NHIC with $\pm\nu>\lambda>0$. Lemma \ref{LmFlatPer} implies that the Mather set $\widetilde\cM(c)$ remains the same for $c$ in a two dimensional rectangle. Taking union over all the energy levels, we see that each $\mathbb C_\pm$ is a three dimensional rectangular prism. Moreover, the channels $\mathbb C_+$ and $\mathbb C_-$ are centrally symmetric to each other since $\sG_{3,\dt}$ is reversible.

In the following, since the rationality and irrationality of the rotation vectors do not play a role, for simplicity of notations, we will work with the system $\mathcal G_{3,\dt}:=\frak S_3^*\sG_{3,\dt}:\ T^*\T^3_{S_3}\to \R$ (equation \eqref{EqSsG3}), which is related to the system $\sG_{3,\dt}:\ T^*\T^3\to \R$ (equation \eqref{EqtG3}) by the symplectic transformation induced by $S_3$. Similarly we will work with  $\bar{\cG}_{3,\dt}: \ T^*\T^2_{\bar{S}}\to \R$ (equation \eqref{EqBarSsG3}) instead of the system $\bar\sG_{3,\dt}:\ T^*\T^2\to\R$ (equation \eqref{EqDouble+}) for the system restricted to the NHICs.
We first have the following description of the $\al$-functions.
\begin{lem}\label{lemAlpha}
\begin{enumerate}
\item We have the estimate for the $\al$-function of $\cG_{3,\dt}$:
$\|\al_{ \cG_{3,\dt}}-\al_{ \cG_{3,0}}\|_{C^0}\leq \dt $ with $\al_{ \cG_{3,0}}(c)=\al_{\tilde \sG}(\tilde c)+\frac{b_3}{2}c_3^2.$
\item For the $\al$-functions of the Hamiltonian $\bar\cG_{3,\dt}$ restricted to the NHIC, we have the estimate
$ \|\al_{ \bar\cG_{3,\dt}}-\al_{\bar \cG_{3,0}}\|_{C^0}\leq \dt$ with
$\al_{\bar\cG_{3,0}}(c_1,c_3)=\tilde h(c_1)+\frac{b_3}{2} c_3^2.$
\end{enumerate}

\end{lem}
The proof of this sublemma is the same as that of Lemma \ref{LmAlphaH''} so we skip it. 

\begin{pro}\label{pizza}
Under the assumption of Lemma \ref{Prop3DOF}, the flat $\mathbb F_0=\{c\ |\ \al_{\cG_{3,\dt}}(c)=\min \al_{\cG_{3,\dt}}\}$ is a three dimensional convex set lying in a $O(\sqrt{\delta/b_3})$-neighborhood of the disk $\tilde{\mathbb{F}}_0\times\{c_3=\hat 0\}$, where $\tilde{\mathbb{F}}_0=\mathrm{Argmin}\al_{\tilde\sG}$. 

\end{pro}

\begin{proof} The fact that the flat is three-dimensional is given by Theorem \ref{flatthm1}.
Since we have $|\sG_{3,\dt}-\sG_{3,0}|_{C^0}<\delta$, we have (c.f. Lemma \ref{LmAlphaH''})
\begin{equation*}
|\alpha_{\sG_{3,\dt}}(c)-\alpha_{\sG_{3,0}}(c)|\le\delta, \qquad \forall\ c\in H^1(\mathbb{T}^{3},\mathbb{R}).
\end{equation*}
After the same linear transformation $S_3^t$, this gives
\beq\label{deviation}|\al_{\cG_{3,\dt}}-\al_{\cG_{3,0}}|_{C^0}\le\delta.\eeq
Since we have
$\al_{\cG_{3,0}}(c)=\al_{\tilde \sG}(\tilde c)+\frac{b_3}{2}c_3^2,$ we get that $$\al_{\cG_{3,0}}(c)>2\delta,\ \mathrm{ if\ } |c_3|>2\sqrt{\delta/b_3},\quad \tilde c\in \tilde{\mathbb F}_0.$$ As $\alpha_{\tilde\sG}$ is non-negative, it follows from Formula \eqref{deviation} that $\al_{\cG_{3,\dt}}(c)>\delta$. Also due to Formula (\ref{deviation}), we have $\min\al_{\cG_{3,0}}\le\delta$. Therefore, $$\al_{\cG_{3,\dt}}(c)>\min\al_{\cG_{3,\dt}},\quad \mathrm{if\ }|c_3|>2\sqrt{\delta/b_3}.$$ This completes the proof for the $O(\sqrt{\dt/b_3})$ estimate. 

\end{proof}
Therefore, the flat looks like a pizza, horizontal in the direction of $\tilde c$ with small thickness of order $O(\sqrt{\delta/b_3})$ (see Figure \ref{fig1}). 

\subsection{Construction of the ladder}\label{SSLadder}
The generalized transition chain built by the application of the $c$-equivalence mechanism (Proposition \ref{PropManeBroken}) does not connect the channels $\mathbb C_\pm$ mainly due to the misalignment in the $c_3$ component. In this section, we show how the misalignment appears and how to overcome it to build a transition chain connecting $\mathbb C_\pm$, which is called a ladder (see Figure \ref{fig1}). 

The next result gives the existence of generalized transition chain in the subsystem $\sG_{3,\dt}.$
\begin{pro}\label{PropCross3Res}
Let $V\in \hat{\mathcal O}_3\cap \tilde{\mathcal O}_2\subset C^r(\T^2)/\R, \ r\geq 5,$ normalized by $\max V=0$ $($see Lemma \ref{LmOverlap} for $\hat{\mathcal O}_3$ and Lemma \ref{Prop3DOF} for $\tilde{\mathcal O}_2),$ and $\lb$ be as in Lemma \ref{LmOverlap}.  Then there exist $\tilde \dt_3(\leq \tilde\dt_2)$ and an open-dense subset $\hat{\mathcal O}_{3,*}(\subset \tilde{\mathcal O}_{2,*})$ in the $\tilde\dt_3$-ball of $C^r(\T^3)/C^r(\T^2)$ such that for each $
\dt\bar V\in \hat{\mathcal O}_{3,*}$ the following holds. For any point $\sC^*=(\tilde\sC^*,\sC^*_3)\in \mathbb C_+$ satisfying $$\al_{\tilde\sG}(\partial\beta_{\tilde\sG}(\lambda(1,0)))<\al_{\tilde \sG}(\tilde \sC^*+(s_1,s_2)\sC_3^*)< 2\al_{\tilde\sG}(\partial\beta_{\tilde\sG}(\lambda(1,0))),$$  on the energy level of $\al_{\sG_{3,\dt}}(\sC^*)$, there exists a generalized transition chain connecting $\sC^*\in \mathbb C_+$ to $-\sC^*\in \mathbb C_-$.
 
\end{pro}
Similar to Proposition \ref{PropAlpha}, we have the following result extending the generalized transition chain of the system $\sG_{3,\dt}$ to the full system.
\begin{pro}  \label{PropChainTriple}For each $\Pi_{\bk',\bk^o}P\in \mathcal O_3\subset\Pi_{\bk^o,\bk'}C^r(\T^n)/\R,\ r\geq 7, $ as in Proposition \ref{PropAlpha} and let $ \dt_3$ be as in  Proposition \ref{PropAlpha}. For any $\dt<\dt_3$ and $|\bk''|>K=(\dt/3)^{-1/2}$, there exists an open-dense subset $\mathcal O_{3,*}$ in the unit ball of $ \Pi_{\bk^o,\bk',\bk''}C^r(\T^n)/\Pi_{\bk',\bk^o}C^r(\T^n)$ such that for each $\Pi_{\bk^o,\bk',\bk''}P(y^\star,\cdot)$ satisfying $$\Pi_{\bk',\bk^o}(\Pi_{\bk^o,\bk',\bk''}P)=\Pi_{\bk',\bk^o}P\ \mathrm{and\ } \Pi_{\bk^o,\bk',\bk''}P-\Pi_{\bk',\bk^o}P\in \mathcal O_{3,*},$$ there exists $\bar\dt_2=\bar\dt_2(\Pi_{\bk^o,\bk',\bk''}P)>0$ such that for all $0<\bar\dt\leq \bar\dt_2$
and each $\hat c^*\in \R^{n-3}$ satisfying $\|\hat c^*\|<\Lambda$, there exists a generalized transition chain of  the Hamiltonian system \eqref{EqNormalForm2+} connecting the two channels corresponding to the NHICs $\cC(\bk',\bk'')$. %The path is $O(\bar \dt)$-close to the path $S'''^{t}(\Gamma_E,\hat c^*)\subset \R^n$. 
\end{pro}

% Indeed, the generalized transition chain consists of two pieces. The first piece is given by $c$-equivalence mechanism in systems of two degrees of freedom (Proposition \ref{PropManeBroken} and \ref{PropAlpha}). It turns out that this piece of generalized transition chain is not enough to connect the two channels $\mathbb C_\pm$. An extra piece is needed. 

We next prove Proposition \ref{PropCross3Res}, which is reduced to the following two lemmas.

\begin{lem}\label{LmCeqTriple} For each $V\in \hat{\mathcal O}_3\subset C^r(\T^2)/\R,\ r\geq 5, $ as in Lemma \ref{LmOverlap} normalized by $\max V=0$, and let $\lb$ be as in Lemma \ref{LmOverlap}.  
Then there exists $\tilde\dt_3(<\tilde \dt_2$ in Lemma \ref{Prop3DOF}) such that for any $\dt\bar V$ in the $\tilde\dt_3$-ball of $C^r(\T^3)/C^r(\T^2)$ and any $c^*$ as in Proposition \ref{PropCross3Res},
there is a generalized transition chain of the system $\sG_{3,\dt}$ connecting the point $\sC^*=S_3^t c^*=S_3^t(\tilde c^*, c_3^*)$ to a point $S_3^t(\tilde c^\sharp, c_3^*)$, where $\tilde c^\sharp$ is $\dt$-close to $-\tilde c^*$ and satisfies $$\al_{\cG_{3,\dt}}(\tilde c^\sharp, c_3^*)=\al_{\cG_{3,\dt}}( c^*)=\al_{\sG_{3,\dt}}(\sC^*).$$
\end{lem}
\begin{proof}

Given $c^*$, we fix $c_3=c_3^*$ and define the path
$$\Gamma_\dt(c_3^*)=\{\tilde c\in \R^2\ |\ \al_{\cG_{3,\dt}}(\tilde c,c_3^*)=\al_{\cG_{3,\dt}}(c^*)\}.$$
In the case of $\dt=0$, this path lies on the constant energy level of $\al_{\tilde\sG}$ and in the small positive $\dt$ case, the path undergoes a $\dt$-perturbation as proved in Proposition \ref{PropAlpha}.

On $\Gamma_\dt(c_3^*)$, we find a point that is closest to $(-\tilde c^*,c_3^*)$ and denote it by $(\tilde c^\sharp,c_3^*)$ where $|\tilde c^\sharp+\tilde c^*|\leq C\dt$.

The fact that the path $\Gamma_\dt(c_3^*)$ is a generalized transition chain follows from Proposition \ref{PropManeBroken} (see also Proposition \ref{PropAlpha}) and the upper-semi-continuity of the Ma\~n\'e set. %The piece of generalized transition chain is of type-$c$.
\end{proof}
\begin{figure}
        \centering
    \begin{subfigure}[b]{0.5\textwidth}
                \includegraphics[width=0.8\textwidth]{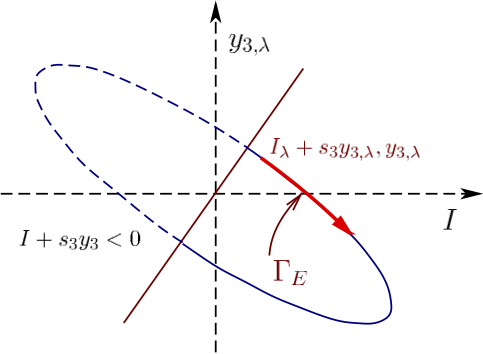}
                \caption{The ladder for $\al_{\bar\sG_{3,\dt}}$
                 \
                 }  \label{figLadder1}
       \end{subfigure}%
             \begin{subfigure}[b]{0.5\textwidth}
                \includegraphics[width=0.8\textwidth]{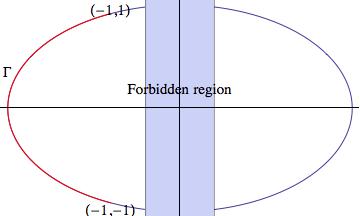}
                \caption{The ladder for $\al_{\bar{\mathcal G}_{3,\dt}}$}
                \label{figLadder2}
       \end{subfigure}
       \caption{The ladder construction}\label{fig:cone}
\end{figure}

\begin{lem}\label{ladder}
Let $\lb, \tilde{\mathcal O}_2\subset C^r(\T^2)/\R,\ r\geq 5,$ and $\tilde \dt_2$ be as in Lemma \ref{Prop3DOF} and $\hat{\mathcal O}_3$ be as in Lemma \ref{LmOverlap}, and let $V\in \tilde{\mathcal O}_2\cap \hat{\mathcal O}_3$. Then there exists an open-dense subset $\hat{\mathcal O}_{3,*}$ in the $\tilde\dt_3$-ball of $C^r(\T^3)/C^r(\T^2)$ such that for each $\dt\bar V\in\hat{\mathcal O}_{3,*}$, letting $\sC^*$ and $\tilde c^\sharp$ be as in Lemma \ref{LmCeqTriple}, 
there is a generalized transition chain of the system of $\sG_{3,\dt}$ lying on the energy level $\al_{\sG_{3,\dt}}(\sC^*)$ connecting $S_3^t(\tilde c^\sharp,c_3^*)$ to $-\sC^*\in\mathbb C_-$.
\end{lem}
\begin{proof}
We fix the energy level $E=\al_{\sG_{3,\dt}}(\sC^*)$ in the system $\sG_{3,\dt}$, on which there exists a NHIM restricted to which the Hamiltonian system is $\bar \sG_{3,\dt}$.
We define (see Figure \ref{figLadder2})$$\bar{\mathbb L}_\dt:= \{ (c_1,c_3)\ |\ \al_{ \bar\cG_{3,\dt}}(c_1,c_3)=E,\quad |\tilde h'(c_1)|> \lambda\}.$$
The variable $c_2$ does not appear due to Proposition \ref{PropSymp}(2).

To see the path $\bar{\mathbb L}_\dt$ clearly, we introduce the following coordinate change $\cR: (c_1,c_3)\mapsto(c_1,\frac{1}{s_1}c_3)$. In the new coordinates, the $\al$-function for the restricted system has the form
$$\mathcal R^*\al_{ \bar\cG_{3,0}}(c_1,c_3):=\tilde h(c_1)+\frac{b_3}{2s_1^2}c_3^2.$$
From Lemma \ref{Lmb3s}, we see that $b_3/s_1^2$ is of order one as $\dt\to 0.$ The frequency $\nu(1,0)$ for the system $\bar\sG_{3,0}$ is transformed to $\nu(1,1)$ under the linear transform $\bar S$ followed by $\mathcal R$.  Its Legendre transform of $\nu(1,1)$ is solved from the equation $(\tilde h'( c_1),\frac{b_3}{s_1^2}c_3)=\nu(1,1)$ for $\mathcal R^*\al_{ \bar\cG_{3,0}}$. For the $\al$-function  $\mathcal R^*\al_{ \bar\cG_{3,\dt}}$, we get that the projection of $\mathbb C_\pm$ to the $(c_1,c_3)$ plane is $\dt$-close to the set $\{(\tilde h'(c_1),\frac{b_3}{s_1^2}c_3)=\nu(1,1)\}.$

The path $\bar{\mathbb L}_\dt$ is $\dt$-close to $\bar{\mathbb L}_0$ which is  the path on the level set $\al_{ \bar\cG_{3,0}}(c_1,c_3)=E$ connecting the points $(-c_1^*,c_3^*)$ to $(-c_1^*,-c_3^*)$ symmetric around the $c_1$-axis. If we choose $c_3=c_3^*$, we get a unique point on $\bar{\mathbb L}_\dt$ near $-c_1^*.$
In the full system $\sG_{3,\dt}$, adding back the $c_2$ variable by Proposition \ref{PropSymp}(2), we obtain a two dimensional channel $\mathbb L_\dt$ in $ \al_{\sG_{3,\dt}}^{-1}(E).$ By the definition of the point $\tilde c^\sharp$ in the proof of the previous lemma, we get that the point $(\tilde c^\sharp,c_3^*)\in \Gamma_\dt(c^*_3)\cap \mathbb L_\dt.$ This shows that $\mathbb L_\dt\cap \Gamma_\dt(c^*_3)\neq\emptyset$ and $\mathbb L_\dt\cap \mathbb C_-\neq\emptyset$. 

We claim that {\it for $\dt\bar V$ chosen in an open-dense subset $\hat{\mathcal O}_{3,*}$ of the $\tilde\dt_3$-ball of \\ $C^r(\T^3)/C^r(\T^2)$, any continuous curve in the interior of $\mathbb L_\dt$ is a generalized transition chain.} 

We introduce a subset $\Delta\subset \mathbb L_\dt$ in the following way: $\sigma\in \Delta$ if and only if the weak KAM $u_{\sigma}$ of the restricted system $\bar \sG_{3,\dt}$ on the NHIM is $C^1$ (must be $C^{1,1}$ also \cite{Be4}), i.e. the Ma\~n\'e set is an invariant 2-torus. For $\sigma\notin\Delta$, certain section $\Sigma_{\sigma}$ of 2-torus exists such that $\mathcal{N}_{\sigma}\cap\Sigma_{\sigma}$ is shrinkable, so that (H2) of Definition \ref{chaindef1} can be verified. To prove that $\mathbb L_\dt$ is a generalized transition chain it remains to prove the following in order to verify (H1) of Definition \ref{chaindef1}:  

{\it for $\dt \bar V$ in $\tilde{\mathcal O}_{2,\star}$ and for all $\sigma\in\Delta$, each connected component of $\mathrm{Argmin}\{B_{\sigma},\Sigma_{0,\sigma}\backslash\cup_mN_m\}$ is contained in certain disk $O_m\subset\Sigma_{0,\sigma}$},

\noindent where $B_\sigma$ is the barrier function of the system $\sG_{3,\dt}$, $\Sigma_{0,\sigma}$ is a $2$-dimensional section of $\mathbb{T}^3$ which is transversal to $\sigma$-semi static curves, $\cup_mN_m$ denotes a neighborhood of the Aubry set in the finite covering space, $\mathrm{Argmin}\{B_{\sigma},\Sigma_0\backslash\cup_mN_m\}$ denotes the set of minimal points of $B_{\sigma}$ which fall into the set $\Sigma_{0,\sigma}\backslash\cup_mN_m$.

This is given by Theorem \ref{fundamental} in Appendix \ref{AppGenericity} (the autonomous case and the Ma\~n\'e perturbation case). The argument is similar to the case of {\it a priori} unstable systems (\cite{CY1,CY2}). %The main work is to show that if Aubry sets on the center manifold are invariant tori then then the stable and unstable manifolds of each Aubry set intersect transversally. 
\end{proof}
In the next remark, we explain our mechanism of ladder climbing.
\brk[The diffusion mechanism for the ladder climbing]\label{RkLadder}{\rm
Here we employ a variant of Arnold's mechanism \eqref{EqArnold}. Consider Hamiltonian system of three degrees of freedom of the form
\[H=\dfrac{y_1^2}{2}+\dfrac{y_2^2}{2}+\dfrac{y_3^2}{2}+(\cos x_3-1)(1+\eps (\cos x_1+\sin x_2)).\]
In this system, there exists a diffusing orbit for each $E>0$ such that $(y_1,y_2)$ stays close to the circle $\{y_1^2+y_2^2=2E\}$ and $\arctan\frac{y_1}{y_2}$ achieves any value in $[0,2\pi)$. Loosely speaking, $(y_1,y_2)$ moves along the circle $\{y_1^2+y_2^2=2E\}$. 

 This can be considered as a system of {\it a priori} unstable type. One can compute the Melnikov integral as in the Arnold's example to verify that Arnold diffusion exists. In our case, the system $\sG_{3,\dt}$ plays the role of $H$ here and the system $\bar \sG_{3,\dt}$ plays the role of $y_1^2+y_2^2$.}\erk
\subsection{The generalized transition chain in the frequency space}\label{SSChainFreq3}
Let us look at the generalized transition chain of Proposition \ref{PropChainTriple} in the frequency space. 

Now by Proposition \ref{NHICTriple}, we need to connect two frequencies $\omega^i,\omega^f\in \bk'^\perp\cap \bk''^\perp\cap \partial\al(\al^{-1}(E))$ turning around $(\bk^o)^\perp\cap \bk'^\perp\cap \bk''^\perp\cap \partial\al(\al^{-1}(E))$. Let $\omega^\star(\bk^o)^\perp\bk'^\perp\cap \bk''^\perp\cap \partial\al(\al^{-1}(E))$ be the triple resonance. By Proposition \ref{PropAlpha}, the $c$-equivalent mechanism (Proposition \ref{PropManeBroken}) gives a small loop $\omega^\star+\ell( \bk^o,\bk')$ around $(\bk^o)^\perp\cap \bk'^\perp\cap \partial\al(\al^{-1}(E))$ intersecting $\bk'^\perp\cap \partial\al(\al^{-1}(E))$ at two points (See Section \ref{SSChainFreq2}). Let us suppose one of the intersection points is $\omega^i$ and denote the other intersecting point by $\omega^\sharp$. The curve on $\omega^\star+\ell( \bk^o,\bk')$ connecting $\omega^i$ to $\omega^\sharp$ corresponds to the path in Lemma \ref{LmCeqTriple}. It remains to move $\omega^\sharp$ to some $\omega^f$, which is given by Lemma \ref{ladder}. 

In the coordinates system transformed by $S'''M'''$, a frequency $\omega\in \bk'^\perp\cap \bk''^\perp\cap \partial\al(\al^{-1}(E))$ has vanishing second and third entries. Since both $\omega^\sharp$ and $\omega^f$ are in $\bk'^\perp\cap \partial\al(\al^{-1}(E))$, the vanishing second entry means that they are frequencies of the restricted system to the NHIC $\cC(\bk')$. 
Since we have separated the subsystem $\sG_{3,\dt}$ from the full system, we focus only on the first three entries. 
Since $\omega^i\in  \bk'^\perp\cap \bk''^\perp\cap \partial\al(\al^{-1}(E))$ and $\lambda$ is small, we get that  $\omega^\sharp\in \bk'^\perp\cap \partial\al(\al^{-1}(E))$ has vanishing second entry and $\lambda$-small third entry.  The path in Lemma \ref{ladder} is constructed from the subsystem $\bar\sG_{3,\dt}$ which is the restriction of $\sG_{3,\dt}$ to the NHIC homeomorphic to $T^*\T^2$. Going back to the original system, this corresponds to a path for the restricted system to $\cC(\bk')$. So the path in Lemma \ref{ladder} when viewed in the frequency space is a curve fixing the second entry to be zero (restrict to the NHIC $\cC(\bk')$)  and moving the first entry and the third entry along  a convex curve that is the Legendre transform of a energy level curve of the subsystem $\bar\sG_{3,\dt} $. Moving along such a curve allows us to kill the $\lambda$-small third entry of $\omega^\sharp$ to arrive at $\omega^f$. %The energy conservation implies that the first entry will increase during the move so it will get farther from $\Sigma(\bk^o,\bk',\bk'')$ without the danger of touching it. 

\section{Induction and dynamics around complete resonances}\label{SHierarchy}
The main construction in this paper was done in the previous section for the $n=4$ case. In this section, we perform induction. We will find a frequency path $\bs\omega^\sharp_a$ admitting $n-2$ linear independent resonant relations $\bk',\bk'',\ldots,\bk^{(n-2)}$ for all $a$ in an interval and there is one more resonant integer vector $\bk^o$ for some $a$.
The goals are:
\begin{enumerate}
\item to show that away from the complete resonance, there are wNHICs homeomorphic to $T^*\T^2$, restricted to which the time-1 map of the Hamiltonian system is a twist map;
\item to show that near the complete resonance there is a generalized transition chain connecting nearby pieces of wNHICs.
\end{enumerate}

The frequency path $\bs\omega^\sharp_a$ is constructed to shadow the frequency segment $\Omega(a)=\rho_{a}(a,\omega_2^{*i},\omega_3^{*i},\ldots, \omega_n^{*i})$, $a\in [\omega^{*i}_{1}-\varrho,\omega^{*f}_{1}+\varrho]$, as in Section \ref{SOutline} where $\omega^{*i}=(\omega_1^{*i},\ldots, \omega_n^{*i})$ is Diophantine and $i$ means ``initial'' and $f$ means ``final". Let us recall the construction in the previous sections.
\begin{enumerate}
\item We first modify $\omega_2^{*i}$ and $\omega_3^{*i}$ to two rational numbers $\frac{p_2}{q_2}\omega_2^{*i}$ and $\frac{p_3}{q_3}\omega_2^{*i}$ respectively with $\mathrm{g.c.d.}(q_2p_3,q_3p_2)=1$ to obtain the frequency vector $$\omega^{(1)}(a)=\rho^{(1)}_{a}\left(a,\frac{p_2}{q_2}\omega_2^{*i},\frac{p_3}{q_3}\omega_2^{*i},\omega_4^{*i},\ldots, \omega_n^{*i}\right),$$ which admits a resonant vector $\bk'=(0,q_2p_3,-q_3p_2,\hat 0_{n-3})$ for all $a$ and one more $\bk^o$ for some $a^o$.
\item In a $\mu$-neighborhood of the frequency segment $\omega^{(1)}(a)$, we distinguish the frequencies therein into two cases, single resonance and near double resonance, according to Lemma \ref{Lm:nbd}. Accordingly, we have two Hamiltonian normal forms Lemma \ref{LmNormalForm1}  and Lemma \ref{LmNormalForm2}.
\item In the case of single resonance, we apply Theorem \ref{PropCZ} and the theorem of NHIM to find wNHIC $\cC(\bk')$ homeomorphic to $T^*\T^{n-1}$ as in Proposition \ref{NHICSingle}.
\item In the case of double resonance, we first apply a shear transformation (Lemma \ref{shear}) to separate a mechanical system of two degrees of freedom, which was well-understood in  \cite{CZ1,C17a,C17b}. The main results established in \cite{CZ1,C17a,C17b} enable us to find wNHIC $\cC(\bk')$ approaching the double resonance (Proposition \ref{NHICDouble}) and a path of $c$-equivalence connecting neighboring wNHICs (Proposition \ref{PropAlpha}).
\end{enumerate}
When this is done, we restrict the Hamiltonian system to the wNHICs to get a new system of one less degrees of freedom and repeat the above procedure. We need to refine the frequency segment to $$\omega^{(2)}(a)=\rho^{(2)}_{a}\left(a,\frac{p_2}{q_2}\omega_2^{*i},\frac{p_3}{q_3}\omega_2^{*i},\frac{p_4}{q_4}\omega_2^{*i},\omega_5^{*i},\ldots, \omega_n^{*i}\right),$$ along which we introduce a new resonant integer vector $\bk''$ and find wNHICs $\cC(\bk',\bk'')$ homeomorphic to $T^*\T^{n-2}$ away from triple resonances (Proposition \ref{NHICTriple}) and build a generalized transition chain (Proposition \ref{PropCross3Res}) connecting neighboring wNHICs near triple resonances. Details were carried out in Section \ref{STriple}. This part is parallel to the $\omega^{(1)}(a)$ case with the only difference being to construct a new piece of generalized transition chain near triple resonance in addition to that constructed via the mechanism of cohomology equivalence.

We repeat the above procedure of reduction inductively (see below in this section) for $n-2$ steps to get a frequency segment of the form $$\omega^{(n-2)}(a)=\rho^{(n-2)}_{a}\omega_2^{*i}\left(\frac{a}{\omega_2^{*i}},\frac{p_2}{q_2},\frac{p_3}{q_3},\ldots,\frac{p_n}{q_n}\right),$$ along which we have $n-2$ resonant integer vectors $\bk',\bk'',\ldots,\bk^{(n-2)}$ for all $a$ and $n-1$ for some $a$. We will get wNHICs $\cC(\bk',\bk'',\ldots,\bk^{(n-2)})$ homeomorphic to $T^*\T^2$ away from complete resonances and generalized transition chains connecting neighboring wNHICs. Diffusing orbits can be constructed (Theorem \ref{ThmMainNHIC}) along the wNHIC and generalized transition chain so that the first entry $a$ moves from $\omega_{1}^{*i}$ to $\omega_{1}^{*f}$.

This completes the construction of diffusing orbits whose projection in the frequency space shadows $\Omega(a)$. There are two things to do next. The first is to switch from shadowing $\Omega(a)$ to shadowing $\Omega'(b)=\rho'_{b}(\omega_1^{*f},b,\omega_3^{*i},\ldots, \omega_n^{*i})$, which will be done in Proposition \ref{PropSwitch} below. The second is to construct diffusing orbit shadowing $\Omega'(b)$. However, there is a problem. We should start with a frequency vector of the form $\rho'_{\frac{p_2}{q_2}\omega_2^{*i}}\omega_2^{*i}(\frac{\omega_{1}^{*f}}{\omega_2^{*i}},\frac{p_2}{q_2},\frac{p_3}{q_3},\ldots,\frac{p_n}{q_n})$ obtained after all the above procedure along $\Omega(a)$ and try to move the second entry from $\frac{p_2}{q_2}$ to $\frac{\omega_{2}^{*f}}{\omega_2^{*i}}$. Note that the above procedure of reduction relies crucially on the choice that $|\frac{p_{i+1}}{q_{i+1}}\omega_2^{*i}-\omega^{*i}_{i+1}|\ll|\frac{p_{i}}{q_{i}}\omega_2^{*i}-\omega^{*i}_{i}|$ when refining $\omega^{(i-2)}$ to $\omega^{(i-1)}$, so that the newly introduced resonance $\bk^{(i-1)}$ due to $\frac{p_{i+1}}{q_{i+1}}$ is much weaker (has a much longer length) than the previous resonances $\bk',\ldots,\bk^{(i-2)}$, so that the Fourier modes in span$\{\bk^{(i-1)}\}$ are treated as a small perturbation which does not destroy the wNHICs constructed along $\omega^{(i-2)}$. Moreover, $\frac{p_{i+1}}{q_{i+1}}$ can only be determined when the wNHIC along $\omega^{(i-2)}$ is determined. The rational numbers in $\omega^{(n-2)}$ cannot be determined simultaneously ahead of time. So in addition to the frequency refinement procedure describe above along one frequency segment, in Section \ref{SSFrequency} we will design a procedure to refine all the frequency segments $\Omega_{i,[j]},\ j=1,\ldots,n,\ i=1,\ldots,M-1$ in Section \ref{SSFrequency0} simultaneously into resonant frequency segments of multiplicity at least $n-2$.
\subsection{Frequency refinement: general strategy}\label{SSFrequency}
Let $\bs\omega_i^*,\ i=1,\ldots,M$ be the frequencies and $\Omega_{i,[j]},\ j=1,\ldots,n,\ i=1,\ldots,M-1$ be the frequency segments defined in Section \ref{SSFrequency0}. Up to permutation of entries and a scalar multiple, we can list the frequency segments as follows:
$$\begin{array}{c}
\Omega_{1,[1]}(a)=\\
\Omega_{1,[2]}(a)=\\
%\Omega_{[3]}(a)=\\
\cdots\\
\Omega_{1, [n]}(a)=\\
\Omega_{2, [1]}(a)=\\
\cdots\\
\end{array}\quad\begin{array}{ccccccccccccc}
a&\omega_{1,2}^{*}&\omega_{1,3}^{*}&\ldots& \omega_{1,n}^{*}\\
\ &a&\omega_{1,3}^{*}&\ldots&  \omega_{1,n}^{*}& \omega_{2,1}^{*}\\
%\ &\ &a_3&\omega_4^{*i}&\omega_5^{*i}&\ldots& \omega_n^{*i}&\omega_1^{*f}&\omega_2^{*f}\\
\ &\ &\ &\cdots &\ &\cdots&\ &\cdots\\
\ &\ &\ &\ &a&\omega_{2,1}^{*}&\omega_{2,2}^{*}&\ldots& \omega_{2, n-1}^{*}\\
\ &\ &\ &\ &\ &a&\omega_{2,2}^{*}&\ldots& \omega_{2, n-1}^{*}&\omega_{2, n}^{*}\\
\ &\ &\ &\ &\ &\cdots&\ &\cdots\\
\end{array}.$$
The rules are as follows. 
\begin{enumerate}
\item In $\Omega_{i,[j]}(a)$, the $j$-th entry is $a\in [\omega^{*}_{i,[j]}-\varrho, \omega^{*}_{i+1,[j]}+\varrho]$. The entries with subscripts $<j$ coincide with that of $\bs\omega^*_{i+1}$ and entries with subscripts $>j$ coincide with that of $\bs\omega^*_i$. 
\item We permute entries of $\Omega_{i,[j]}$ in such a way that $a$ is the leading entry and the entries with subscripts $<j$ are placed after its last entry. 
\item The vectors $\Omega_{i,[j]}$, $i=1,\ldots,M-1$ and $j=1,\ldots,n$ are arrayed in a parallelogram such that $\Omega_{i,[j]}$ is placed on the $((i-1)n+j)$-th row with the leading entry $a$ placed at the $((i-1)n+j)$-th column. 
\end{enumerate}We will inductively refine the frequency segment such that after $Mn-3$ steps, all the above $\omega^*_{i,j}$s become a rational multiple of $\omega^*_{1,2}$. Denoting the resulting vector by $\bs\omega^\sharp_{i,[j]}(a)$, we have $|\bs\omega^\sharp_{i,[j]}(a)-\Omega_{i, [j]}(a)|<\varrho$.
$$\begin{array}{c}
\bs\omega^\sharp_{1,[1]}(a)=\\
\bs\omega^\sharp_{1,[2]}(a)=\\
%\omega^\sharp_{[3]}(a)=\\
\cdots\\
\bs\omega^\sharp_{1,[n]}(a)=\\
\bs\omega^\sharp_{2,[1]}(a)=\\
\cdots\\
\end{array} \begin{array}{ccccccccccccc}
\frac{a}{\omega_{1,2}^{*}}&\frac{p_{2}}{q_{2}}&\frac{p_{3}}{q_{3}}&\ldots& \frac{p_{n}}{q_{n}}\\
\ &\frac{a}{\omega_{1,2}^{*}}&\frac{p_{3}}{q_{3}}&\ldots& \frac{p_{n}}{q_{n}}&\frac{p_{n+1}}{q_{n+1}}\\
%\ &\ &a&\frac{p_4}{q_4}&\ldots& \frac{p_n}{q_n}&\frac{P_1}{Q_1}&\frac{P_2}{Q_2}\\
\ &\ &\ &\cdots &\ &\cdots&\ &\cdots\\
\ &\ &\ &\ &\frac{a}{\omega_{1,2}^{*}}&\frac{p_{n+1}}{q_{n+1}}&\frac{p_{n+2}}{q_{n+2}}&\ldots& \frac{p_{2n-1}}{q_{2n-1}}\\
\ &\ &\ &\ &\ &\frac{a}{\omega_{1,2}^{*}}&\frac{p_{n+2}}{q_{n+2}}&\ldots& \frac{p_{2n-1}}{q_{2n-1}}& \frac{p_{2n}}{q_{2n}}\\
\ &\ &\ &\ &\ &\cdots&\ &\cdots\\
\end{array}.$$

The frequency refinements are done inductively as follows. 
We introduce the superscript $(\ell)$ with $1\leq \ell\leq Mn-3$ counting the step of refinement. During the $\ell$-th step of order reduction, we modify the Diophantine number in the $(\ell+2)$-nd column into $\frac{p_{\ell+1}}{q_{\ell+1}}\omega_{1,2}^*$, where the number $\frac{p_{\ell+1}}{q_{\ell+1}}$ is to be determined. 
\begin{Not}
\begin{enumerate}
\item For each $1\leq \ell\leq Mn-3$, there is an index set $\mathcal I(\ell)$ such that for each $(i,[j])\in \mathcal I(\ell)$, the $\Omega_{i,[j]}$ intersects the $(\ell+2)$-th column of the table not at the $a$ entry. If $(i,[j])\in \mathcal I(\ell)$, we denote the frequency vector by $\omega^{(\ell)}_{i,[j]}$. 
\item If the frequency vector with subscript  $(i,[j])$ lies entirely to the left of the $(\ell+2)$-nd column without intersecting it at step $(\ell)$, this vector has completed its refinement and has all entries rational multiples of $\omega^*_{1,2}$ except the leading $a$, so we denote it by $\bs\omega^\sharp_{i,[j]}.$ 
\item The frequency vector $\Omega_{i,[j]}$ lying to the right of the $\ell+2$-nd column without intersecting it will maintain its notation $\Omega_{i,[j]}$. 
\end{enumerate}
\end{Not}
For example, in the case of $\ell\leq n-2,$ we have that $\omega^{(\ell)}_{1,[1]}(a),
\omega^{(\ell)}_{1,[2]}(a),
\cdots
,\omega^{(\ell)}_{1,[\ell]}(a)$ are the following respectively: 
\begin{equation*}
\begin{aligned}
\begin{array}{cccccccccccccccccc}
a&\frac{p_{1,2}}{q_{1,2}}\omega_{1,2}^{*}&\frac{p_{1,3}}{q_{1,3}}\omega_{1,2}^{*}&\ldots&\frac{p_{1,\ell+1}}{q_{1,\ell+1}}\omega_{1,2}^{*}& \frac{p_{1,\ell+2}}{q_{1,\ell+2}}\omega_{1,2}^{*}&\omega^{*}_{1,\ell+3}&\ldots&\omega^{*}_{1,n}\\
\ &a&\frac{p_{1,3}}{q_{1,3}}\omega_{1,2}^{*}&\ldots&\frac{p_{1,\ell+1}}{q_{1,\ell+1}}\omega_{1,2}^{*}& \frac{p_{1,\ell+2}}{q_{1,\ell+2}}\omega_{1,2}^{*}&\omega^{*}_{1,\ell+3}&\ldots&\omega^{*}_{1,n}&\omega^{*}_{2,1}\\
\ &\ &\ &\cdots &\ &\cdots&\ &\cdots\\
\ &\ &\ &\ &a&\frac{p_{1,\ell+2}}{q_{1,\ell+2}}\omega_{1,2}^{*}&\omega^{*}_{1,\ell+3}&\ldots&\omega^{*}_{1,n}&\omega^{*}_{2,1}&\ldots&\omega_{2,\ell-3}^{*}\\
\end{array}.\end{aligned}\end{equation*}

The choice of the rational number during the step $\ell$, as we have seen from Section \ref{STriple}, relies crucially on the dynamics determined by $P$ along the frequencies $\omega^{(\ell-1)}_{i,[j]}$. In this section, we will show how to make the refinement going from step $\ell$ to step $\ell+1$ and study the dynamics along the frequencies $\omega^{(\ell)}_{i,[j]}$ and $\omega^{(\ell+1)}_{i,[j]}$.

Note that the frequency segments $\omega^{(\ell)}_{i,[j]}(a), \ (i,[j])\in \mathcal I(\ell)$ has distinct numbers of independent resonances holding for all $a$.  
\begin{Not}
To simplify the notation and for clarity,  instead of using double subscript $(i,[j])\in \mathcal I(\ell)$ we introduce a single subscript $\kappa=0,1,\ldots,\sharp \mathcal I (\ell)-1$ ($\kappa\leq n-2$) counting the number of independent irreducible resonant integer vectors for each $\omega_{\kappa}^{(\ell)}(a)$ for all $a$. 
\end{Not}
\subsection{Two types of resonances and normal forms}

Suppose we have completed step $\ell$ and are about to work on the $(\ell+1)$-st step of the induction. At step $\ell$, we are handed with the following data:
\begin{enumerate}
\item 
for each  $\kappa=0,\ldots,\sharp\mathcal I(\ell)-1$, we have a frequency segment $\omega^{(\ell)}_{\kappa}(a)$,
\item a number $\mu^{(\ell)}$: the size of the neighborhood of $\omega^{(\ell)}_{\kappa}(a)$ for all $\kappa$;
\item associated to each $\omega^{(\ell)}_{\kappa}(a)$ for all $a$,  a collection of irreducible resonant integer vectors $\bK_{\kappa}^{(\ell)}=\{\bk'^{(\ell)}_\kappa,\ldots, \bk^{(\kappa),(\ell)}_\kappa\}$, and for some $a$, there is one more denoted by $\bk^{o,(\ell)}_{\kappa}$. We denote $\bK^{o,(\ell)}_{\kappa}=\bK^{(\ell)}_{\kappa}\cup\{\bk^{o,(\ell)}_{\kappa}\}.$ By definition, we have $\sharp \bK^{(\ell)}_{\kappa}=\kappa$ and $\sharp \bK^{o,(\ell)}_{\kappa}=\kappa+1$. 
\end{enumerate}
We next pick a rational number $\frac{p_{\ell+1}}{ q_{\ell+1}}$ such that $\frac{p_{\ell+1}}{ q_{\ell+1}}\omega_{1,2}^{*}$ is within $\mu^{(\ell)}$-distance of the irrational number on the $(\ell+3)$-rd column of the table. 

This introduces $\mathcal I(\ell+1), \omega^{(\ell+1)}_{\kappa+1}(a), \bK^{(\ell+1)}_{\kappa+1}, \bK^{o,(\ell+1)}_{\kappa+1}$
as before.  We have $\bK_{\kappa}^{(\ell)}\subset \bK^{(\ell+1)}_{\kappa+1} $ and $\bK_{\kappa}^{o,(\ell)}\subset \bK^{o,(\ell+1)}_{\kappa+1}$.

When we update from $(\ell)$ to $(\ell+1)$, the subscript $(i,[j])$ remains unchanged, but the subscript $\kappa$ associated to each $(i,[j])$ will also update to $\kappa+1$. The $\kappa=0$ case is handled in Section \ref{sct:NormalForm}, \ref{SSingle} and \ref{SDouble}, and the $\kappa=1$ case was done in Section \ref{STriple}. The $\kappa=n-2$ case means that the frequency segment $\omega^{(\ell)}_{i,[j]}(a)$ has completed the reduction of order procedure so it becomes $\bs\omega^\sharp_{i,[j]}$ and it will be treated in Section \ref{SSCompleteRes}. So in remaining part of this Subection till Section \ref{SSCompleteRes}, we will consider the range $\ell=0,\ldots, Mn-3$ and $\kappa=0,1,\ldots,\min\{\sharp \mathcal I(\ell)-1, n-3\}$.

%Suppose all the integer vectors $\bk^{(\ell)}_{\kappa}$ satisfies $|\bk^{(\ell)}_{\kappa}|\leq K^{(\ell)} $, $i=1,\ldots,\ell-2$. The number $K^{(\ell)}$ determines $\mu^{(\ell)}$ through formula \eqref{EqKmu}. We pick a rational number $\frac{p_{\ell+1}}{ q_{\ell+1}}$ satisfying $|\frac{p_{\ell+1}}{ q_{\ell+1}}\omega_2^{*i}-\omega^{*i}_{\ell+1}|<\mu^{(\ell)}$. In the above frequencies, we update $\ell$ to $\ell+1$ and obtain totally $\ell-1$ vectors $\omega^{(\ell+1)}_{\kappa}(a_i),\ i=1,\ldots,\ell-1$. This introduces one more resonant integer vector $\bk^{(\ell-1)}_{\kappa}$ for each $\omega^{(\ell+1)}_{\kappa}(a_i)$ uniformly for all $a_i\in [\omega_i^i,\omega_i^f]$. We define $\bK^{(\ell+1)}_{\kappa}$ and $\bK^{o,(\ell+1)}_{\kappa}$ accordingly.

\subsubsection{Two types of resonances}
The following lemma is an analogue of Lemma \ref{Lm:nbd} and Lemma \ref{Lm:nbd3}. 

\blm\label{Lm:nbdell}
Let $\omega^{(\ell)}_{\kappa}(a),$ $\mu^{(\ell)}, $ and $\omega^{(\ell+1)}_{\kappa+1}(a),\, \bK^{(\ell+1)}_{\kappa+1},\ \kappa=0,1,\ldots,\min\{\sharp \mathcal I(\ell)-1, n-3\}$ be as above.
For any $ K^{(\ell+1)}> \max_\kappa |\bK^{(\ell)}_{\kappa}|$,  let $\bk^o_{\kappa+1, a^o_i}$, $i=1,\ldots,m_{\kappa}$, be the collection of all the integer vectors in $\Z_{K^{(\ell+1)}}^n\setminus\mathrm{span}\bK^{(\ell+1)}_{\kappa+1}$ satisfying $\langle \bk^o_{\kappa+1,a^o_i},\omega^{(\ell+1)}_{\kappa+1}(a_i^o)\rangle=0$ for some $a_i^o$, and let $(\bk^o_{\kappa+1,a^o_i})^\perp$ be the $(n-1)$-dimensional space orthogonal to the vector $\bk^o_{\kappa+1, a^o_i}$. Then there exists $\mu^{(\ell+1)}=\mu^{(\ell+1)}(K^{(\ell+1)})$ with $
B(\omega^{(\ell+1)}_{\kappa+1}(a),\mu^{(\ell+1)})\subset
B(\omega^{(\ell)}_{\kappa}(a),\mu^{(\ell)})$ and
\ben
\item
for all $\omega\in
B\left(\omega^{(\ell+1)}_{\kappa+1}(a),\mu^{(\ell+1)}\right)\setminus \bigcup_{i}B\l(\omega^{(\ell+1)}_{\kappa+1}(a^o_i)+(\bk^o_{\kappa+1,a^o_i})^\perp,\eps^{1/3}\r),
$
and for sufficiently small $\eps$ we have
$$|
\langle \bk,\omega\rangle|> \eps^{{1/3}},\quad \forall\ \bk\in \Z_{ K^{(\ell+1)}}^n\setminus\mathrm{span}_\Z\bK^{(\ell+1)}_{\kappa+1}.
$$
\item for all $\omega \in B\left(\omega^{(\ell+1)}_{\kappa+1}(a),\mu^{(\ell+1)}\right)\bigcap B\l(\omega^{(\ell+1)}_{\kappa+1}(a^o_i)+(\bk^o_{\kappa+1,a^o_i})^\perp,\eps^{1/3}\r)$, for each $i$ and for all
    $
    \bk\in \Z_{ K^{(\ell+1)}}^n\setminus\mathrm{span}_\Z\bK^{o,(\ell+1)}_{\kappa+1},
    $
we have
\begin{equation}\label{EqDiop1/2,3}
|\langle\bk,\omega\rangle|\geq nK^{(\ell+1)}\mu^{(\ell+1)}.
\end{equation}
\een
\elm
Note that in the lemma, our choice of $\mu^{(\ell+1)}$ and $K^{(\ell+1)}$ are independent of the subscript $\kappa$. We will next introduce a small parameter $\dt^{(\ell+1)}$, independent of $\kappa$, to determine $K^{(\ell+1)}$ hence $\mu^{(\ell+1)}$. 
%We define $\mu^{(\ell+1)}=\min_{i}\mu_{\kappa}^{(\ell+1)}$ and $K^{(\ell+1)}=\max_i K_{\kappa}^{(\ell+1)}$.
\subsubsection{The KAM normal forms}
Now we determine the resonance sub manifold as $$\Sigma(\bK^{o,(\ell)}_{\kappa}):=\{y\ |\ \langle\bk_{\kappa}^{o,(\ell)},\omega(y)\rangle=\langle\bk_{\kappa}'^{(\ell)},\omega(y)\rangle=\cdots=\langle\bk_{\kappa}^{(\kappa),(\ell)},\omega(y)\rangle=0\}.$$

Lemma \ref{Lm:nbdell} allows us to apply Proposition \ref{prop: codim1} in the two cases in Lemma \ref{Lm:nbdell} to obtain the following normal forms.
\begin{lem} \label{LmNormalFormell}
Let $\dt^{(\ell+1)}$ be a small number satisfying $\dt^{(\ell+1)}<\min_\kappa\{3(|\bK^{(\ell)}_{\kappa}|)^{-2}\}$ and denote  $K^{(\ell+1)}=(\dt^{(\ell+1)}/3)^{-1/2}$. Then there exists $\eps^{(\ell+1)}_1=\eps^{(\ell+1)}_1(\dt^{(\ell+1)},\Lambda)$ such that for all $\eps<\eps^{(\ell+1)}_1$, the following holds. 
Suppose $\omega^\star$ is in case $(1)$ in Lemma \ref{Lm:nbdell},  then there exists a symplectic transform $\phi_{\kappa+1}^{(\ell+1)}$ defined on $B(0,\Lambda)\times \T^n$ that is $o_{\eps\to 0}(1)$ close to identity in the $C^{r}$ norm, such that \begin{equation}\label{EqNormalFormell}
\begin{aligned}
\sH^{(\ell+1)}_{\kappa+1,\dt^{(\ell+1)}}:&=\sH\circ \phi_{\kappa+1}^{(\ell+1)}(x,Y)\\
&=\dfrac{1}{\sqrt{\eps}}\langle \omega^\star, Y\rangle+\dfrac{1}{2}\langle \sA Y,Y\rangle+ \Pi_{\bK^{(\ell+1)}_{\kappa+1}}\sV+\dt^{(\ell+1)}\sR^{(\ell+1)}_{\kappa+1}(x,Y),
\end{aligned}
\end{equation}
where
\begin{enumerate}
\item  $\sA$ and $\sV$ are the same as that in Lemma \ref{LmNormalForm1}.
\item $\sR^{(\ell+1)}_{\kappa+1}(x,Y)=\sR^{(\ell+1)}_{\kappa+1,I}(x)+\sR^{(\ell+1)}_{\kappa+1,II}(x,Y)$, where $\sR^{(\ell+1)}_{\kappa+1,I}$ consists of Fourier modes of $\sV$ not in  $\Z^n_{K^{(\ell+1)}}\cup\mathrm{span}_\Z\bK^{(\ell+1)}_{\kappa+1}$, and we have $|\sR^{(\ell+1)}_{\kappa+1,I}|_{{r-2}}\leq 1$ and $|\sR^{(\ell+1)}_{\kappa+1,II}|_{{r-5}}\leq 1$.
\end{enumerate}
\end{lem}
\begin{lem}\label{LmNormalFormell+1}
Let $\dt^{(\ell+1)}$ and $K^{(\ell+1)}$ be as in the previous lemma,  then there exists $\eps^{(\ell+1)}_2=\eps^{(\ell+1)}_2(\dt^{(\ell+1)},\Lambda)$ such that for all $\eps<\eps^{(\ell+1)}_2$ and any $y^\star$ such that $\omega^\star=\omega(y^\star)$ is as in case $(2)$ in Lemma \ref{Lm:nbdell}, there exists a symplectic transform $\phi_{\kappa+1}^{(\ell+1)}$ defined on $B(0,\Lambda)\times \T^n$ that is $o_{\eps\to 0}(1)$ close to identity in the $C^{r}$ norm, such that
\begin{equation}\label{EqNormalFormell+1}
\begin{aligned}
\sH^{(\ell+1)}_{\kappa+1,\dt^{(\ell+1)}}:&=\sH\circ  \phi_{\kappa+1}^{(\ell+1)}(x,Y)\\
&=\dfrac{1}{\sqrt{\eps}}\langle \omega^\star, Y\rangle+\dfrac{1}{2}\langle \sA Y,Y\rangle+ \Pi_{\bK^{o,(\ell+1)}_{\kappa+1}}\sV+\dt^{(\ell+1)} \sR^{(\ell+1)}_{\kappa+1}(x,Y),
\end{aligned}
\end{equation}
where
\begin{enumerate}
\item  $\sA$ and $\sV$ are the same as that in Lemma \ref{LmNormalForm1}.
\item $\sR^{(\ell+1)}_{\kappa+1}(x,Y)=\sR^{(\ell+1)}_{\kappa+1,I}(x)+\sR^{(\ell+1)}_{\kappa+1,II}(x,Y)$, where $\sR^{(\ell+1)}_{\kappa+1,I}$ consists of Fourier modes of $\sV$ not in $\Z^n_{K^{(\ell+1)}}\cup \mathrm{span}_\Z\bK^{o,(\ell+1)}_{\kappa+1}$, and we have $|\sR^{(\ell+1)}_{\kappa+1,I}|_{{r-2}}\leq 1$ and $|\sR^{(\ell+1)}_{\kappa+1,II}|_{{r-5}}\leq 1$.
\end{enumerate}
\end{lem}
\subsection{NHICs away from strong resonances}

The following result is an analogue of Proposition \ref{PropGenericity2}, which will be used to establish the existence of NHIC.
\begin{pro}\label{ProNDGInduction}
Suppose there exists an open-dense set $\mathcal O_{\kappa}^{(\ell)}\subset \Pi_{\bK_{\kappa}^{(\ell)}}C^{r}(T^*\T^n),\ r\geq 5$ such that each $\Pi_{\bK_{\kappa}^{(\ell)}}P(y,\cdot )\in \mathcal O_{\kappa}^{(\ell)}$ has a unique nondegenerate global max along the segment $y\in \omega^{-1}(\omega^{(\ell)}_{\kappa}(a))$, up to finitely many bifurcations, where there are two nondegenerate global max. 

 Then for each $\Pi_{\bK_{\kappa}^{(\ell)}}P \in \mathcal O_{\kappa}^{(\ell)}$ there exists $\dt^{(\ell)}_{0,\kappa}=\dt^{(\ell)}_{0,\kappa}(\Pi_{\bK_{\kappa}^{(\ell)}}P)$, such that defining $\dt^{(\ell)}_{0}=\min_\kappa \dt^{(\ell)}_{0,\kappa}$, for any $\dt^{(\ell)}<\dt^{(\ell)}_{0}$,  $K^{(\ell)}=(\dt^{(\ell)}/3)^{1/2}$ and $\mu^{(\ell)}=\mu^{(\ell)}(K^{(\ell)})$ as in \eqref{EqKmu}, and choosing $\omega^{(\ell+1)}_{\kappa+1}(a)\subset B(\omega^{(\ell)}_{\kappa}(a),\mu^{(\ell)})$ associated to irreducible $\bK_{\kappa+1}^{(\ell+1)}$, we have an open-dense set $\mathcal O_{\kappa+1}^{(\ell+1)}= \mathcal O_{\kappa+1}^{(\ell+1)}(\Pi_{\bK_{\kappa}^{(\ell)}}P)$ in  the unit ball of \\ $\Pi_{\bK_{\kappa+1}^{(\ell+1)}}C^{r}(T^*\T^n)/\Pi_{\bK_{\kappa}^{(\ell)}}C^{r}(T^*\T^n),\ r\geq 5,$ such that for each $\Pi_{\bK_{\kappa+1}^{(\ell+1)}}P$ with $$\Pi_{\bK_{\kappa}^{(\ell)}}(\Pi_{\bK_{\kappa+1}^{(\ell+1)}}P)=\Pi_{\bK_{\kappa}^{(\ell)}}P, \mathrm{\ and\ }\Pi_{\bK_{\kappa+1}^{(\ell+1)}}P-\Pi_{\bK_{\kappa}^{(\ell)}}P\in \mathcal O_{\kappa+1}^{(\ell+1)},$$ we have that  $\Pi_{\bK_{\kappa+1}^{(\ell+1)}}P(y,\cdot )$ has a unique nondegenerate global max along the segment $y\in \omega^{-1}(\omega^{(\ell+1)}_{\kappa+1}(a))$, up to finitely many bifurcations, where there are two nondegenerate global max.
\end{pro}
Similar to Lemma \ref{LmResidualKU}, by an application of the Karatowski-Ulam Theorem \ref{ThmKU}, we get that $\cup_{\Pi_{\bK_{\kappa}^{(\ell)}}P \in \mathcal O_{\kappa}^{(\ell)}}\mathcal O_{\kappa+1}^{(\ell+1)}(\Pi_{\bK_{\kappa}^{(\ell)}}P)$ intersects the unit ball of $C^r(T^*\T^n)$ in an open-dense set of the latter. 
%As in the cases of double and triple resonances, we distinguish weak and strong $\ell$-resonances depending on the applicability of the theorem of NHIM. We denote by $\Sigma_{!,\kappa}^{(\ell)}$ those $\ell$-resonance submanifolds determined by strong $\ell$-resonances. The next result establishes the existence of wNHICs away from the strong $\ell$-resonances.

In the case of Lemma \ref{Lm:nbdell}(1) and Lemma \ref{LmNormalFormell}, we can repeat the argument of Proposition \ref{NHICSingle} to find wNHICs $\cC(\bK_{\kappa+1}^{(\ell+1)})$ homeomorphic to $T^*\T^{n-\kappa-1}$ along the frequency $\omega_{\kappa+1}^{(\ell+1)}(a)$ with the help of Proposition \ref{ProNDGInduction}. In the case of Lemma \ref{Lm:nbdell}(2) and Lemma \ref{LmNormalFormell+1} in the presence of an extra resonance $\bk^{o,(\ell+1)}_{\kappa+1}$, the wNHICs may or may not exist. When the wNHICs do not exist, we denote the corresponding resonant submanifolds by $\Sigma_!(\bK_{\kappa+1}^{o,(\ell+1)})$. We have the following result. 

\begin{pro}\label{PropSingleell}
Let $\Pi_{\bK_{\kappa+1}^{(\ell+1)}}P$ be as in the conclusion of Proposition \ref{ProNDGInduction} with $r\geq 7$. Then for any $\lambda>0$, there exists $\dt_{1}^{(\ell+1)}$ such that in the system $\sH^{(\ell+1)}_{\kappa+1,\dt^{(\ell+1)}}$, for all $0<\dt^{(\ell+1)}<\dt^{(\ell+1)}_1$ we have the following,
\begin{enumerate}
\item there exists a $C^r$ wNHIC $\cC(\bK^{(\ell+1)}_{\kappa+1})$ homeomorphic to $T^*\T^{n-\kappa-1}$ up to finitely many bifurcations;
\item Mather set lying in $B(0,\Lambda)\times \T^n$ and with rotation vector orthogonal to $\bK^{(\ell+1)}_{\kappa+1}$ lies inside $\cC(\bK^{(\ell+1)}_{\kappa+1})$, provided the rotation vector does not intersect the $\lambda\sqrt\eps$-neighborhood of $\partial h(\Sigma_!(\bK_{\kappa+1}^{o,(\ell+1)}))$;
\item the normal hyperbolicity is independent of $\eps$ or $\dt^{(\ell+1)}$.
%\item the wNHIC intersects a $\lambda$-neighborhood of $\Sigma^{(\ell+1)}_{!,\kappa}\times \T^n$.
\end{enumerate}
\end{pro}
We next focus on the case (2) of Lemma \ref{Lm:nbdell}.
\subsection{Induction around strong resonances}\label{SSInductionDouble}
The material in this section is a higher dimensional generalization of that in Section \ref{SSReduction+}, \ref{SSAlpha3} and \ref{SSLadder}.
In this section, we perform the reduction of order around a resonance as in the case of Lemma \ref{Lm:nbdell}(2) and Lemma \ref{LmNormalFormell+1} in the presence of an extra resonance $\bk^{o,(\ell+1)}_{\kappa+1}$. For given $(i,[j])$, the extra resonance may appear during the $\kappa$-th step of reduction of order. Without loss of generality, we assume we encounter the extra resonance point during the $\kappa=0$ step of reduction of order. In this case $\bk'^{(\ell+1)}_{\kappa+1}$ and $\bk_{\kappa+1}^{o,(\ell+1)}$ have comparable lengths and are much shorter than other vectors in $\bK_{\kappa+1}^{o,(\ell+1)}$.
\subsubsection{The linear symplectic transform and Hamiltonian normal form}
%Recall that $i\in \mathcal I(\ell+1)$ counts the cardinality of $\bK^{(\ell+1)}_{\kappa+1}$. 
We construct a matrix $M^{(\ell+1)}_{\kappa+1}\in \mathrm{SL}(n,\Z)$ ,$\kappa=0,\ldots,\min\{\sharp\mathcal I(\ell), n-3\}$, whose first $\kappa+2$ rows are exactly the vectors in $\bK^{o,(\ell+1)}_{\kappa+1}$ ordered as $\bk_{\kappa+1}^{o,(\ell+1)},\bk'^{(\ell+1)}_{\kappa+1},\ldots,\bk_{\kappa+1}^{(\kappa+1),(\ell+1)} $. This is always possible by applying Lemma \ref{LmM''} repeatedly. The matrix $M^{(\ell+1)}_{\kappa+1}$ induces a symplectic transformation
\[ \mathfrak M^{(\ell+1)}_{\kappa+1}: (x,Y)\mapsto( M^{(\ell+1)}_{\kappa+1} x, (M^{(\ell+1)}_{\kappa+1})^{-t}Y).\]
We denote $\sA^{(\ell+1)}_{\kappa+1}= M^{(\ell+1)}_{\kappa+1} \sA  (M^{(\ell+1)}_{\kappa+1})^t$. Then the $(i,j)$-th entry of $\sA^{(\ell+1)}_{\kappa+1}$ is given by $\bk_{\kappa+1}^{(i-1),(\ell+1)}\sA(\bk_{\kappa+1}^{(j-1),(\ell+1)})^t$, $i,j=1,\ldots,\kappa+2,$ and we count $o$ as $0$.

We choose the base point $y^\star$ such that the frequency vector $\omega^\star=\omega(y^\star)\in \Sigma(\bK^{o,(\ell+1)}_{\kappa+1})$, then we get the transformed frequency vector $M^{(\ell+1)}_{\kappa+1}\omega^\star$ has zero as the first $\kappa+2$ entries. We denote  $M^{(\ell+1)}_{\kappa+1}\omega^\star=(0,\hat\omega^{(\ell+1)}_{\kappa+1})\in \R^n$ for some vector $\hat \omega^{(\ell+1)}_{\kappa+1}\in \R^{n-\kappa-2}.$

The Hamiltonian \eqref{EqNormalFormell+1} under the transformation becomes
\begin{equation}\label{Eqsympj}
\begin{aligned}
&(\mathfrak M^{(\ell+1)}_{\kappa+1})^{-1*}\sH^{(\ell+1)}_{\kappa+1,\dt^{(\ell+1)}}\\
=&\dfrac{1}{\sqrt\eps }\langle \hat \omega^{(\ell+1)}_{\kappa+1},\hat Y\rangle+\dfrac{1}{2} \langle \sA^{(\ell+1)}_{\kappa+1} Y ,Y \rangle+V_{\kappa+1}^{(\ell+1)}(x_1,\ldots,x_{\kappa+2})+\dt^{(\ell+1)}R^{(\ell+1)}_{\kappa+1}(x,Y),
\end{aligned}
\end{equation}
where $V_{\kappa+1}^{(\ell+1)}= (\mathfrak M^{(\ell+1)}_{\kappa+1})^{-1}\Pi_{\bK^{o,(\ell+1)}_{\kappa+1}}\sV$ and $R^{(\ell+1)}_{\kappa+1}=(\mathfrak M^{(\ell+1)}_{\kappa+1})^{-1*}\sR^{(\ell+1)}_{\kappa+1}$.

We denote by $A^{(\ell+1)}_{\kappa+1}$ the first $(\kappa+2)\times (\kappa+2)$ block of $\sA^{(\ell+1)}_{\kappa+1}$ and by $A^{(\ell)}_{\kappa}$ the first $(\kappa+1)\times (\kappa+1)$ block of $\sA^{(\ell+1)}_{\kappa+1}$. Note that $A^{(\ell)}_{\kappa}$ depends only on $\sA$ and $\bK^{o,(\ell)}_{\kappa}$ but does not depend on $\bk^{(\kappa+1),(\ell+1)}_{\kappa+1}$.

Next we introduce two subsystems \begin{equation}\begin{aligned}\sG_{\kappa}^{(\ell)}&=\dfrac{1}{2} \langle \sA^{(\ell)}_{\kappa} Y_{\kappa}^{(\ell)} ,Y_{\kappa}^{(\ell)} \rangle+V_{\kappa}^{(\ell)}(x_{\kappa}^{(\ell)}),\quad T^*\T^{\kappa+1}\to \R,\\
\sG_{\kappa+1}^{(\ell+1)}&=\dfrac{1}{2} \langle \sA^{(\ell+1)}_{\kappa+1} Y_{\kappa+1}^{(\ell+1)} ,Y_{\kappa+1}^{(\ell+1)} \rangle+V_{\kappa+1}^{(\ell+1)}(x_{\kappa+1}^{(\ell+1)}),\ T^*\T^{\kappa+2}\to \R.
\end{aligned}
\end{equation}
Defining $\dt^{(\ell)}\bar V_{\kappa+1}^{(\ell+1)}(x_{\kappa+1}^{(\ell+1)}):=V_{\kappa+1}^{(\ell+1)}-V_{\kappa}^{(\ell)}$, we have 
$$\|\dt^{(\ell)}\bar V_{\kappa+1}^{(\ell+1)}\|_{C^{r-2}}\leq \frac{1}{|\bk^{(\kappa+1),(\ell+1)}_{\kappa+1}|^2}\leq \dt^{(\ell)}/3,$$ since the difference comes from the Fourier modes in $\sV$ containing $\bk^{(\kappa+1),(\ell+1)}_{\kappa+1}$ whose length is greater than $K^{(\ell)}=(\dt^{(\ell)}/3)^{1/2}$.

%These subsystems can be separated from the above system \eqref{Eqsympj} using shear transformations as in Lemma \ref{shear}.

\subsubsection{The induction}
In the following, without loss of generality, {\it we fix $\lb$ such that $\al_{\tilde\sG}(\partial\beta_{\tilde\sG}(\lambda(1,0)))< \tilde\Delta_0$, where $\tilde\Delta_0$ and $\tilde\sG$ $($see equation \eqref{EqtsG}$)$depend only on $\Pi_{\bk',
\bk^o}\sV$ but not on other resonant integer vectors $($Proposition \ref{PropManeBroken}$).$ This is assumed in  Proposition \ref{PropAlpha} and Proposition \ref{PropCross3Res}.}

We make the following inductive hypothesis.

\noindent {\bf The Inductive Hypothesis:}

{\it  There exists an open-dense set $\mathcal O^{(\ell)}_{\kappa}$ in the unit ball of $\Pi_{\bK^{o,(\ell)}_{\kappa}}C^r(\T^{n}), \ r\geq 7,$ such that for each $\sV$ with $\Pi_{\bK^{o,(\ell)}_{\kappa}} V\in \mathcal O^{(\ell)}_{\kappa}$, we have the following for the system $\sG^{(\ell)}_{\kappa}$, $\kappa=0,1,\ldots,\min\{\sharp \mathcal I(\ell)-1, n-3\}$,

\begin{enumerate}
\item up to finitely many bifurcations, there exists a NHIC homeomorphic to $T^*\T^1$ foliated by Mather sets of rotation vector $\nu(1,0,\ldots,0)\in H_1(\T^{\kappa+1},\R)$, $|\nu|>\lb$. Each Mather set is a hyperbolic periodic orbit and at each bifurcation point, the Mather set consists of two periodic orbits;
\item the normal hyperbolicity is independent of $\dt^{(\ell)}$;
%\item the NHIC intersects a $\lambda$-neighborhood of $\Sigma^{(\ell)}_{!,\kappa}\times \T^n$;
\item there is a generalized transition chain connecting the channels \\ $\C^{(\ell)}_{\kappa,\pm}:=\{\partial \beta_{ \sG_{\kappa}^{(\ell)}}(\nu(1,0\ldots,0))\ |\ \pm\nu>\lambda\}\subset H^1(\T^{\kappa+1},\R)$.
\end{enumerate}}

With the hypothesis, we have the following result

\begin{pro}\label{PropDoubleell}
Assume the Inductive Hypothesis above, then for each $\sV$ with $\Pi_{\bK^{o,(\ell)}_{\kappa}}\sV\in \mathcal O^{(\ell)}_{\kappa}$, there exists $\dt^{(\ell)}_{2}=\dt^{(\ell)}_{2}(\Pi_{\bK^{o,(\ell)}_{\kappa}}\sV),$ such that for all $0<\dt^{(\ell)}\leq \dt^{(\ell)}_{2}$, $K^{(\ell)}=(\dt^{(\ell)}/3)^{1/2}$ and any $\bk^{(\kappa+1),(\ell+1)}_{\kappa+1}$ with $|\bk^{(\kappa+1),(\ell+1)}_{\kappa+1}|> K^{(\ell)}$, there exists 
an open-dense set $\mathcal O^{(\ell+1)}_{\kappa+1}=\mathcal O^{(\ell+1)}_{\kappa+1}(\Pi_{\bK^{o,(\ell)}_{\kappa}}\sV)$  in the unit ball of $\Pi_{\bK^{o,(\ell+1)}_{\kappa+1}}C^r(\T^{n})/\Pi_{\bK^{o,(\ell)}_{\kappa}}C^r(\T^{n}),$  $r\geq7$, such that for each $\Pi_{\bK^{o,(\ell+1)}_{\kappa+1}}\sV\in \mathcal O^{(\ell+1)}_{\kappa+1}$ with $\Pi_{\bK^{o,(\ell)}_{\kappa}}(\Pi_{\bK^{o,(\ell+1)}_{\kappa+1}}\sV)=\Pi_{\bK^{o,(\ell)}_{\kappa}}\sV $ and $\Pi_{\bK^{o,(\ell+1)}_{\kappa+1}}\sV-\Pi_{\bK^{o,(\ell)}_{\kappa}}\sV\in \mathcal O^{(\ell+1)}_{\kappa+1}$, the system $\sG_{\kappa+1}^{(\ell+1)}$, $\kappa=0,1,\ldots,\min\{\sharp \mathcal I(\ell)-1, n-3\}$, satisfies the following
\begin{enumerate}
\item up to finitely many bifurcations, there exists a NHIC homeomorphic to $T^*\T^1$ foliated by Mather sets of rotation vector $\nu(1,0,\ldots,0)\in H_1(\T^{\kappa+2},\R),$ $|\nu|>\lambda$. Each Mather set is a periodic orbit, and at each bifurcation point, the Mather set consists of two periodic orbits;
\item the normal hyperbolicity is independent of $\dt^{(\ell+1)}$;
%\item The NHIC intersects a $\lambda$-neighborhood of $\Sigma^{(\ell+1)}_{!,\kappa}\times \T^n$.
\item there is a generalized transition chain connecting the channels
$\C^{(\ell+1)}_{\kappa+1,\pm}:=\{\partial \beta_{ \sG_{\kappa+1}^{(\ell+1)}}(\nu(1,0\ldots,0))\ |\ \pm\nu>\lambda\}\subset H^1(\T^{\kappa+2},\R)$.
\end{enumerate}
\end{pro}

\begin{proof}
This proposition is a generalization of Lemma \ref{Prop3DOF} and Proposition \ref{PropCross3Res}. We consider the case of passage from the system $\sG^{(\ell)}_{\kappa}$ at resonance $\bK^{o,(\ell)}_{\kappa}$ to the system  $\sG^{(\ell+1)}_{\kappa+1}$ at resonance $\bK^{o,(\ell+1)}_{\kappa+1}$.

%In this case, we have resonant integer vectors $\bk^o,\bk',\ldots,\bk^{(\ell+1)}$.

We next form matrices
\beqa\label{EqST}
S^{(\ell+1)}_{\kappa+1}&=\bmt{cc}
\mathrm{id}_{\kappa+1 }&0\\
-\sa^{(\ell+1)}_{\kappa+1} (A^{(\ell)}_{\kappa})^{-1}&1 \\
\emt\in \mathrm{SL}(\kappa+2,\R),
\eeqa
where we denote by $\sa^{(\ell+1)}_{\kappa+1}\in \R^{\kappa+1}$ is obtained by removing the last entry in the last row of the matrix $A^{(\ell+1)}_{\kappa+1}\in \R^{(\kappa+2)^2}$.

We next denote
$$
b^{(\ell+1)}_{\kappa+1}=a^{(\ell+1)}_{\kappa+2,\kappa+2}-\sa^{(\ell+1)}_{\kappa+1} (A^{(\ell)}_{\kappa})^{-1} (\sa^{(\ell+1)}_{\kappa+1})^t
$$
where $a^{(\ell+1)}_{\kappa+2,\kappa+2}$ is the last diagonal entry of $A^{(\ell+1)}_{\kappa+1}$, and
 denote the first entry of the vector $-\sa^{(\ell+1)}_{\kappa+1} (A^{(\ell)}_{\kappa})^{-1}$ by $s^{(\ell+1)}_{\kappa+1}$.

The same argument as Lemma \ref{Lmb3s} gives us
\beq\label{Eqb3s}
b^{(\ell+1)}_{\kappa+1}=\mathrm{const}_{b} |\bk_{\kappa+1}^{(\kappa+1),(\ell+1)}|^2,\quad s^{(\ell+1)}_{\kappa+1}=\mathrm{const}_{s}|\bk^{(\kappa+1),(\ell+1)}_{\kappa+1}|,\eeq where const$_{b}(>0)$ and const$_{s}$ do not depend on $\dt^{(\ell+1)}$.

The matrix $S_{\kappa+1}^{(\ell+1)}$ induces a symplectic map $$\mathfrak S^{(\ell+1)}_{\kappa+1}: T^*\T^{\kappa+2}\to T^*\T_{S^{(\ell+1)}_{\kappa+1}}^{\kappa+2},\quad (x,Y)\mapsto (S^{(\ell+1)}_{\kappa+1}x,(S^{(\ell+1)}_{\kappa+1})^{-t}Y):=(\sx,\sy)$$ such that $(\mathfrak S^{(\ell+1)}_{\kappa+1})^*\sG^{(\ell+1)}_{\kappa+1}:=\mathcal G^{(\ell+1)}_{\kappa+1}$
$$\mathcal G^{(\ell+1)}_{\kappa+1}(\sx,\sy)=\sG^{(\ell)}_{\kappa}(\tilde \sx,\tilde \sy)+\dt^{(\ell)}((S^{(\ell+1)}_{\kappa+1})^{-1})^*\bar V^{(\ell+1)}_{\kappa+1}(\sx)+\frac{b^{(\ell+1)}_{\kappa+1}}{2}\hat \sy^2:\ T^*\T^{\kappa+2}_{S^{(\ell+1)}_{\kappa+1}}\to \R$$
where we use notation $\tilde \sx$, $\tilde \sy$ to denote the first $\kappa+1$ entries of $\sx$ and $\sy$ respectively and $\hat \sx$ and $\hat \sy$ to denote the last entry of $\sx$ and $\sy$ respectively.

 By assumption, $\sG^{(\ell)}_{\kappa}$ admits a NHIC homeomorphic to $T^*\T^1$ restricted to which we can introduce action-angle coordinates $(I,\theta)$ to write the Hamiltonian $\sG^{(\ell)}_{\kappa}$ as $\tilde h^{(\ell)}_{\kappa}(I)$ which is convex (Proposition \ref{PropSymp}). Correspondingly, the system $\sG^{(\ell+1)}_{\kappa+1}$ becomes the system $\bar\sG^{(\ell+1)}_{\kappa+1}:\ T^*\T^2\to \R$ of the form (see the proof of Proposition \ref{Prop3DOF})
$$\bar \sG^{(\ell+1)}_{\kappa+1}(I,\theta, \hat \sx, \hat \sy)=\tilde h^{(\ell)}_{\kappa}(I+s^{(\ell+1)}_{\kappa+1}\hat \sy)+\frac{b^{(\ell+1)}_{\kappa+1}}{2}\hat \sy^2+\dt^{(\ell)} U^{(\ell)}_{\kappa+1}(I,\hat \sy,\theta,\hat\sx).$$

As a Hamiltonian system of two degrees of freedom, we apply Theorem \ref{ThmCZ} to the system $\bar \sG^{(\ell+1)}_{\kappa+1}$ to find a NHIC foliated by periodic orbits in the homology class $(1,0)$. The proof of the hyperbolicity is also contained in the proof of Proposition \ref{Prop3DOF}.

It remains to prove item (3). By the inductive hypothesis, we get that there is a generalized transition chain $\Gamma^{(\ell)}_{\kappa}\subset H^1(\T^{\kappa+1}, \R)$ for the system $\sG^{(\ell)}_{\kappa}$ connecting sending a point $\tilde\sC\in \mathbb C^{(\ell)}_{\kappa,+}$ to $-\tilde\sC\in \mathbb C^{(\ell)}_{\kappa,-}$, where the centrally symmetric channels $\mathbb C^{(\ell)}_{\kappa,\pm}\subset H^1(\T^{\kappa+1},\R)$ correspond to two neighboring pieces of NHICs. Our goal is then to send some point $(\tilde\sC,\hat\sC)\in \mathbb C^{(\ell+1)}_{\kappa+1,+}$ to $-(\tilde\sC,\hat\sC)\in \mathbb C^{(\ell+1)}_{\kappa+1,-}$. Applying the transformation $\mathfrak S^{(\ell+1)}_{\kappa+1}$, we working with the system $\mathcal G^{(\ell+1)}_{\kappa+1}$. 

The path $\Gamma^{(\ell)}_{\kappa}$ determines a new path in the system $\sG^{(\ell+1)}_{\kappa+1}$ as follows. By definition of a generalized transition chain, for each fixed $\hat \sC$ entry such that $(\Gamma^{(\ell)}_{\kappa},\hat \sC)\cap (S^{(\ell+1)}_{\kappa+1})^t\mathbb C^{(\ell+1)}_{\kappa+1,+}\neq \emptyset$, the new path $(\Gamma^{(\ell)}_{\kappa},\hat \sC)$ is a generalized transition chain for the system $\mathcal G^{(\ell+1)}_{\kappa+1}$ with $\dt^{(\ell)}=0$ (see Proposition \ref{PropAlpha}). By the upper-semi-continuity of the Ma\~n\'e set, we get that for sufficiently small $\dt^{(\ell)}$, there exists a generalized transition chain $(\Gamma^{(\ell)}_{\kappa,\dt^{(\ell)}},\hat \sC)\subset H^1(\T^{\kappa+2},\R)$, lying in a $\dt^{(\ell)}$ neighborhood of $(\Gamma^{(\ell)}_{\kappa},\hat \sC)$ and on a fixed level set of $\al_{\mathcal G^{(\ell+1)}_{\kappa+1}}$, such that $\Gamma^{(\ell)}_{\kappa,\dt^{(\ell)}}$ also connects $\mathbb C^{(\ell)}_{\kappa,\pm}$.

However, this path $\Gamma^{(\ell)}_{\kappa,\dt^{(\ell)}}\times \{\hat \sC\}$ does not connect $(S^{(\ell+1)}_{\kappa+1})^t\mathbb C^{(\ell+1)}_{\kappa+1,+}$ to $(S^{(\ell+1)}_{\kappa+1})^t\mathbb C^{(\ell+1)}_{\kappa+1,-}$ and it remains to move $\hat \sC\to -\hat \sC$ by the central symmetry of the channels $\mathbb C^{(\ell+1)}_{\kappa+1,\pm}$. Now we are in the same situation as Lemma \ref{ladder} with $\bar\sG^{(\ell+1)}_{\kappa+1}$ playing the role of $\bar\sG_{3,\dt}$. By the same argument, we construct a generalized transition chain connecting $\mathbb C^{(\ell+1)}_{\kappa+1,\pm}$.

The open-dense set $\mathcal O^{(\ell+1)}_{\kappa+1}$ is constructed in the same way as the proof of Lemma \ref{Prop3DOF} and Lemma \ref{ladder}.
This completes the proof. 

\end{proof}

\subsection{Dynamics around complete resonances}\label{SSCompleteRes}
Suppose for frequency segment with the subscript $(i,[j])$ we have completed all the reduction of orders hence it becomes the frequency $\bs\omega^\sharp_{i,[j]}(a)$, for which there are $(n-2)$ resonant integer vectors $\bk'_{i,[j]},\ldots, \bk^{(n-2)}_{i,[j]}$ for all $a$, and for finitely many $a$'s, there is one more resonant integer vector $\bk_{i,[j]}^o$. We assume each vector is irreducible. In the above Proposition \ref{PropDoubleell}, we take $\kappa+1=n-2$. 

The complete resonance on the energy level $E>\min h$
$$\Sigma_{!}(\bK^{o}_{i,[j]})=\{ y\in h^{-1}(E)\ |\ \langle\bk^o_{i,[j]},\omega(y)\rangle=\langle\bk^1_{i,[j]},\omega(y)\rangle=\ldots=\langle\bk^{(n-2)}_{i,[j]},\omega(y)\rangle=0\}$$
is a point. We choose $y^\star\in \Sigma_{!}(\bK^{o}_{i,[j]})$ so that $\omega^\star=\omega(y^\star)$ is such a complete resonant point. In the remaining part of this subsection, we omit the subscript $(i,[j])$ for simplicity. 

We introduce a matrix $M^\sharp\in \mathrm{SL}(n,\Z)$ whose first $n-1$ rows are $\bk^o,\bk,\ldots, \bk^{(n-2)}$.% re-ordered by increasing order of lengths.

We first apply Proposition \ref{LmNormalFormell+1} to get a Hamiltonian normal form.  We next introduce a linear symplectic transformation $$\mathfrak M^\sharp:\ (x,Y)\mapsto (M^\sharp x,(M^\sharp)^{-t}Y)$$ We denote $\sA^\sharp=M^\sharp\sA (M^\sharp)^t$.
The transformed frequency has the form $M^\sharp\omega^\star=(0,\ldots, 0,\omega_{n}),\ \omega_{n}\neq 0$. Applying the symplectic transformation $\mathfrak M^\sharp$ to the normal form, one obtains a Hamiltonian of the following form
\begin{equation}\label{Eqsympj}
\begin{aligned}
H(x,Y)&=\dfrac{1}{\sqrt\eps }\omega_{n}Y_{n}+\dfrac{1}{2} \langle \sA^\sharp Y ,Y \rangle+V(x_1, x_2,\ldots,x_{n-1})+\dt^\sharp R(x,Y)
\end{aligned}
\end{equation}
defined on $T^*\T^{n}$ where the remainder $R(x,Y)$ is bounded in $C^2$ and $V=(\mathfrak M^\sharp)^{-1*} \Pi_{\bK^{o}_{i,[j]}} \sV,$ $\sV(\cdot)=P(y^\star,\cdot)$.

Next we perform a standard energetic reduction to reduce it to a system of $n-1/2$ degrees of freedom.
We update the notation $x=(x_1,\ldots,x_{n-1})$, $y=(Y_1,\ldots,Y_{n-1})$. Removing the last row and column of $\sA^\sharp$ we get a matrix $A^\sharp\in \mathrm{GL}(n-1,\R)$. As $\omega_{n}\neq 0$ and $\eps>0$ is very small, one has the function $Y_n(x,x_n,y)$ as the solution of the equation
$$
H_{}(x,x_n,y,Y_n(x,x_n,y))=E>\min\alpha_{H},
$$
which takes the form $Y_n=-Y_{\dt^\sharp }\frac{\sqrt{\eps}}{\omega_{n}}$, where
\begin{equation}\label{nonautonomous}
Y_{\dt^\sharp}=\frac 12\langle A^\sharp y, y\rangle+V(x_1,\ldots,x_{n-1})+\delta^\sharp\hat R\left(x,-\frac{x_n \omega_{n}}{\sqrt\eps},y\right),\end{equation}
is defined on $T^*\T^{n-1}\times \T$ and the remainder $\hat R\left(x,\tau,y\right)$ is bounded in $C^2$.

Applying Proposition \ref{PropSingleell} and \ref{PropDoubleell} inductively, we get the following result

\begin{pro}\label{PropComplete}
There exists an open-dense set $\mathcal O$ in the unit ball of $C^r(\T^n),\ r\geq 7$, such that for each $\sV(\cdot)=P(y^\star,\cdot)\in \mathcal O$, there exists $\dt^\sharp _0$ such that for all $0<\dt^\sharp <\dt^\sharp _0$ there exists $\eps_0^\sharp=\eps_0^\sharp(\dt^\sharp )$, such that for all $0<\eps<\eps_0^\sharp$ we have the following for the Hamiltonian system $Y_{\dt^\sharp }$:
\begin{enumerate}
\item There exist a collection of wNHICs homeomorphic to $T^*\T^1$, restricted to which the time-1 map of the system $Y_{\dt^\sharp }$ is a twist map. Any Mather set with rotation vectors $\omega^\sharp$ lie on the wNHICs, if the rotation vector does not lie in the $\lambda\sqrt\eps$-neighborhood of $\Sigma_{!}(\bK^{o}_{i,[j]})$. %Each piece of the wNHIC intersects the $\lambda$-neighborhood of the complete resonance $\Sigma^{(n-1)}_{!}$.
\item The normal hyperbolicity is independent of $\eps$ or $\dt^\sharp$. 
\item There exists a generalized transition chain connecting the two channels $\mathbb C^\sharp_\pm:=\{\partial\beta_{Y_{\dt^\sharp }}(\nu(1,0,\ldots,0))\ |\ \pm\nu>\lambda\}\subset H^1(\T^{n-1},\R)$, corresponding two neighboring wNHICs.
\end{enumerate}
\end{pro}
\begin{proof}
We take intersection of the open-dense sets obtained in Proposition \ref{ProNDGInduction} and \ref{PropDoubleell} to get the open-dense set $\mathcal O$.

Note that the system $Y_0$ is exactly the system $\sG_{\kappa+1}^{(\ell+1)}$ in Proposition \ref{PropDoubleell} with $\kappa+1=n-2$.
By induction using Proposition \ref{PropSingleell} and \ref{PropDoubleell}, the system $Y_{0}$ admits finitely many disjoint wNHICs and generalized transition chain $\Gamma^\sharp$ connecting two channels $\mathbb C_\pm^\sharp$ and lying on the constant energy level of $\al_{Y_{0}}$. For sufficiently small $\dt^\sharp $, the wNHICs persists in the system $Y_{\dt^\sharp }$. Next we have that $\|\al_{Y_{0}}-\al_{Y_{\dt^\sharp }}\|_{C^0}\leq \dt^\sharp $. For sufficiently small $\dt^\sharp $, there exists a generalized transition chain $\Gamma^\sharp_{\dt^\sharp }$ on a constant level set of $\al_{Y_{\dt^\sharp }}$ and lying in the $\dt^\sharp $ neighborhood of the chain $\Gamma^\sharp$, by the upper-semi-continuity of the Ma\~n\'e sets and the Definition \ref{chaindef1}.

\end{proof}

\section{Switching frequency segments and Proof of Theorem \ref{ThmMainNHIC}}\label{SSwitchMain}
In this section, we show how to switch from one frequency segment to another and complete the proof of Theorem \ref{ThmMainNHIC}.
\subsection{Switching from one frequency line to another}\label{SSwitch}
All the previous works are about how to move along one frequency segment. In this section, we explain how to move from one frequency segment to the next. There is a extra difficulty which does not exist when moving along a single frequency segment or in the case of three degrees of freedom \cite{C17b}. Simply put, we are required to move from a frequency segment to another one with much weaker resonances. 

The difficulty is as follows. From the construction,  our frequency segments have a hierarchy structure. To see the difficulty clearly, we consider the switch from $\bs\omega^\sharp_{1,[1]}$ to $\bs\omega^\sharp_{1,[2]}$. For simplicity, we use the subscript $[i]$ instead of $(1,[i])$ for $i=1,2$. We need to switch from
$$\bs\omega^\sharp_{[1]}(a)=\rho^\sharp_{[1],a}\omega_{1,2}^{*i}\left(\frac{a}{\omega_{1,2}^{*}},\frac{p_2}{q_2},\ldots,\frac{p_n}{q_n}\right)\mathrm{\ to\ }\bs\omega^\sharp_{[2]}(b)=\rho^\sharp_{[2],b}\omega_{1,2}^{*}\left(\frac{p_{n+1}}{q_{n+1}}, \frac{b}{\omega_{1,2}^{*}},\frac{p_3}{q_3},\ldots,\frac{p_n}{q_n}\right).$$
The switch occurs near the complete resonances $\bs\omega^\sharp_{[1]}\cap\bs \omega^\sharp_{[2]}=(\frac{p_{n+1}}{q_{n+1}},\frac{p_2}{q_2},\ldots,\frac{p_n}{q_n})$ up to a positive multiple. When moving $a$ through $\frac{p_{n+1}}{q_{n+1}}\omega^*_{1,2}$, since $\frac{p_{n+1}}{q_{n+1}}$ is much closer to a Diophantine number than other rational numbers, the new resonance introduced by $\frac{p_{n+1}}{q_{n+1}}$ is a weak resonance and the NHIC $\cC(\bk'_{[1]},\ldots,\bk^{(n-2)}_{[1]})$ (homeomorphic to $T^*\T^2$) exists. So moving $a$ through $\frac{p_{n+1}}{q_{n+1}}\omega^*_{1,2}$ is standard as in {\it a priori} unstable systems. However, it is not clear if it is possible to move $b$ through $\frac{p_{2}}{q_{2}}\omega^*_{1,2}$ along $\bs\omega^\sharp_{[2]}$, since $\frac{p_{2}}{q_{2}}$ introduces a new strong resonance $\bk^o_{[2]}$ so NHIC $\cC(\bk'_{[2]},\ldots,\bk^{(n-2)}_{[2]})$ does not exist near $\bs\omega^\sharp_{[1]}\cap \bs\omega^\sharp_{[2]}$. 

In the next proposition, we solve the problem by combining and applying repeatedly the $c$-equivalence mechanism (Proposition \ref{PropManeBroken}) and the new mechanism (Lemma \ref{ladder}). 
\begin{pro}\label{PropSwitch}
Under the assumption of Proposition \ref{PropComplete}, there exists a generalized transition chain connecting the two channels $\mathbb C^\sharp_{i,[j]}(a):=\partial \beta_{H}(\bs\omega^\sharp_{i,[j]}(a))$ and $\mathbb C^\sharp_{i',[j']}(b):=\partial \beta_{H}(\bs\omega^\sharp_{i',[j']}(b))$ near the complete resonance  $\bs\omega^\sharp_{i,[j]}\cap \bs\omega^\sharp_{i',[j']},$ $(i',[j'])=(i,[j+1])$ for $j=1,\ldots, n-1, $or $(i',[j'])=(i+1,[0]),\ j=n$, and $i=1,\ldots,M-1$. 
\end{pro}
\begin{proof} Without loss of generality, we study only the case of switching from $\bs\omega^\sharp_{[1]}(a)$ to $\bs\omega^\sharp_{[2]}(b)$ as above. All other cases are similar. 
By the construction in the previous section, there exists a NHIC $\cC(\bK_{[1]})$ with $\bK_{[1]}=\{\bk'_{[1]},\ldots,\bk^{(n-2)}_{[1]}\}$ along the frequency segment $\bs\omega^\sharp_{[1]}(a)$, since by the choice of $p_{n+1}/q_{n+1}$, the point $\bs\omega^\sharp_{[1]}(a)$ with $a=p_{n+1}/q_{n+1}\omega_{1,2}^*$ is always a point of weak resonance during each reduction of order along the segment $\bs\omega^\sharp_{[1]}(a)$.%, so Mather set with rotation vector $\bs\omega^\sharp_{[1]}(p_{n+1}/q_{n+1}\omega_{1,2}^*)$ lies on the NHIC. 

When viewed along the frequency segment $\bs\omega^\sharp_{[2]}(b)$,  the complete resonant point $\bs\omega^\dagger:=\bs\omega^\sharp_{[1]}\cap \bs\omega^\sharp_{[2]}$ admits an extra resonance $\bk^o_{[2]}$ which has shorter length than any of $\bk^{(i)}_{[2]}$. So the NHIC $\cC(\bK_{[2]})$ with $\bK_{[2]}=\{\bk'_{[2]},\ldots,\bk^{(n-2)}_{[2]}\}$ may not exists near the complete resonance $\Sigma(\bk^o_{[2]},\bK_{[2]})$, and Mather set with rotation vector $\bs\omega^\dagger$ does not lie on $\cC(\bK_{[2]})$. 

We want to move a point on $\bs\omega^\sharp_{[2]}$ to $\bs\omega^\sharp_{[1]}$. %Though the two segments $\bs\omega^\sharp_{[1]}$ and $\bs\omega^\sharp_{[2]}$ intersect, it is not clear if it is possible to move along $\bs\omega^\sharp_{[2]}$ directly to the intersection point since the NHIC $\cC(\bk'_{[2]},\ldots,\bk^{(n-2)}_{[2]})$ may disappear near $\bs\omega^\dagger$. 
The argument goes as follows.  We first move along $\cC(\bK_{[2]})$ to arrive at a point $\omega^i\in \bs\omega^\sharp_{[2]}$ with dist$(\omega^i, \omega(\Sigma(\bk^o_{[2]}, \bk'_{[2]})))<\lambda.$ By Proposition \ref{PropAlpha}  and Section \ref{SSChainFreq2}, we get a convex loop $\bs\omega^\dagger+\ell(\bk^o_{[2]}, \bk'_{[2]})$ enclosing 0 on the plane $\bs\omega^\dagger+(SM''_{[2]})^{-1}\mathrm{span}\{e_1,e_2\}$ whose Legendre transform is a generalized transition chain of Proposition \ref{PropAlpha} (essentially due to Proposition \ref{PropManeBroken}). We first find a point $\omega'$ on $\bs\omega^\dagger+\ell(\bk^o_{[2]}, \bk'_{[2]})\in (\bk'_{[1]})^\perp\cap(\partial\al(\al^{-1}(E)))$ by the argument in Section \ref{SSChainFreq2}.

Complementing to $\bK_{[1]}=\{\bk'_{[1]},\ldots,\bk^{(n-2)}_{[1]}\}$, the rational number $p_{n+1}/q_{n+1}$ introduces one more resonant integer vector denoted by $\bk^o_{[1]}$ whose lengths are much longer than any one in $\bK_{[1]}$. We introduce a normal form \eqref{Eqsympj} at this complete resonance $\bs\omega^\dagger$ as in Section \ref{SSCompleteRes}. Here the $n-1$ rows of the matrix $M^\sharp\in \mathrm{SL}(n,\Z)$ are ordered as $\bk'_{[1]},\ldots,\bk^{(n-2)}_{[1]}, \bk^o_{[1]}$. We permute the variables to $x=(x_2,x_3,\ldots,x_n,x_1),\ y=(y_2,y_3,\ldots,y_n,y_1)$. In this new coordinates system the frequency $\omega'$ has the form $(0,O(\lambda),\ldots,O(\lambda),O(\eps^{-1/2}))$ since $\omega'\in (\bk'_{[1]})^\perp\cap(\partial\al(\al^{-1}(E)))$. As in Section \ref{SSSShear} after a shear transform $S'''$ in equation \eqref{EqS'''}, we separate a subsystem $\sG_{3,\dt}$ (equation \eqref{EqtG3})of three degrees of freedom (corresponding to the first three coordinates) from the full system.  We want to kill the second  entry $O(\lambda)$. Note that the system $\sG_{3,\dt}$ admits a NHIC which is due to the NHIC $\cC(\bk'_{[1]})$ in the original system. Restricted to the NHIC we get a system $\bar\sG_{3,\dt}$ (equation \eqref{EqDouble+}) of two degrees of freedom. We remark that the NHIC here is not near strong double resonance. 
 By Lemma \ref{ladder} and Remark \ref{RkLadder}, under generic perturbation, all the cohomology classes on a level set of $\al_{\bar\sG_{3,\dt}}$ lies in a generalized transition chain, along which the frequency vector moves on a convex curve enclosing 0 on the plane span$\{e_2,e_3\}$. In this way, we kill the second entry $O(\lambda)$ of $\omega'$. Denote the resulting frequency $\omega''$. Now $\omega''$ lies on $(\bk'_{[1]})^\perp\cap(\bk''_{[1]})^\perp \cap(\partial\al(\al^{-1}(E)))$. We next perform a shear transform to separate a subsystem of four degrees of freedom and restricted to its NHIC $\mathcal C(\bk'_{[1]},\bk''_{[1]})$, we again get a subsystem of two degrees of freedom of the form $\bar\sG$ above. We then kill the next $O(\lambda)$ entry using again Lemma \ref{ladder} and Remark \ref{RkLadder}. This procedure can be done repeatedly to obtain a resulting frequency vector having the first $n-2$ entries vanished. In the original coordinates, this means that the frequency is orthogonal to $\bK_{[1]}$ so it lies on $\bs\omega^\sharp_{[1]}$. The proof is now complete. 
\end{proof}
\subsection{Proof of Theorem \ref{ThmMainNHIC}}\label{SSProofMain}
In this section, we complete the proof of Theorem \ref{ThmMainNHIC}.

\begin{proof}[Proof of Theorem \ref{ThmMainNHIC}]
When the induction in Section \ref{SHierarchy} is complete, we obtain a collections of frequency segments $\bs\om^\sharp_{i,[j]},\ i=1,\ldots,M-1,\ j=1,\ldots,n$ which concatenate into a connected curve $\omega(t):\ [0,M]\to (\partial\al_H)(\al^{-1}_H(E))$ lying in the $\varrho$-neighborhood of the union of $\Omega_{i,[j]}$.  

Next, the existence of wNHIC (part (2) and part (3.a) of Theorem \ref{ThmMainNHIC}) is given by Proposition \ref{PropComplete}(1). Neighboring wNHICs near a complete resonance are connected by a generalized transition chain by Proposition \ref{PropComplete}(2), which proves Theorem \ref{ThmMainNHIC} part (3.c).

Next, the existence of transition chain switching from one frequency line segment to the next is done by Proposition \ref{PropSwitch}.

It remains to prove the existence of the generalized transition chain along the NHIC (part (3.b)) and the cusp-residual genericity. We have finitely many genericity conditions from Proposition \ref{ProNDGInduction} and Proposition \ref{PropSingleell} and \ref{PropDoubleell}. Denote by $\mathcal O\subset \mathfrak B_1$ the open-dense set obtained by taking intersection of the finitely many open-dense sets. We choose a $P\in\mathcal O$ such that the finitely many conditions are satisfied. This $P$ determines $\eps_P$ such that for $\eps<\eps_P$ Proposition \ref{PropComplete} holds (note that $\dt^{\sharp}_0$ therein depends on $\eps$). It remains to prove part (3.b) of the theorem.
 For this purpose, we fix an $\eps$ and apply the argument in Section 7 of \cite{CY1}, which gives us an $\eps'=\eps'(\eps P)$ and an open-dense set  $\mathfrak R_{\eps'}(\eps P)\subset\mathfrak B_{\eps'}$ such that part (3.b) holds for $\eps (P+ P')$ for any $\eps P'\in \mathfrak R_{\eps'}(\eps P)$.
The details of the argument are given in Appendix \ref{AppGenericity}. 

 Finally, applying Karatowski-Ulam Theorem \ref{ThmKU} to the set $\cup_{P\in \mathcal O}\cup_{\eps<\eps_P}\mathfrak R_{\eps'}(\eps P)$, we get that there exists an open-dense set $\mathfrak R\subset \mathfrak S_1$ such that for each $P\in \mathfrak R$ there exist $\eps_P$ and a residual set $R_P\subset (0,\eps_P)$ such that the theorem holds for all $\eps P$ for $\eps\in R_P$ and $P\in \mathfrak R$. This completes the proof.
\end{proof}

\appendix
\section{A brief introduction to Mather theory and weak KAM}\label{AppMather}

In this appendix, we give a brief introduction to the Mather theory and weak KAM theory.
\subsection{Minimizing measure and $\alpha$,$\beta$ function}\label{minimalmeasure}
The theory is established for Tonelli Lagrangian.

\begin{defi}
Let $M$ be a closed manifold. A $C^2$-function $L$: $TM\times\mathbb{T}\to\mathbb{R}$ is called \emph{Tonelli Lagrangian} if it satisfies the following conditions:
\begin{enumerate}
    \item \emph{Positive definiteness}. For each $(x,t)\in M\times\mathbb{T}$, the Lagrangian function is strictly convex in velocity: the Hessian $\partial_{\dot x\dot x}L$ is positive definite.
    \item \emph{Super-linear growth}. We assume that $L$ has fiber-wise superlinear growth: for each $(x,t)\in M\times\mathbb{T}$, we have $L/\|\dot x\|\to\infty$ as $\|\dot x\|\to \infty$.
    \item \emph{Completeness}. All solutions of the Lagrangian equations are well defined for the whole $t\in\mathbb{R}$.
\end{enumerate}
\end{defi}

For autonomous systems, the completeness is automatically satisfied, as each orbit entirely stays in certain compact energy level set.

Given a closed 1-form $\langle\eta_c(x),dx\rangle$ with first cohomology class $[\langle\eta_c(x),dx\rangle]=c$, we introduce a Lagrange multiplier $\eta_c=\langle\eta_c(x),\dot x\rangle$. Without danger of confusion, we also call it a closed 1-form.

For each $C^1$ curve $\gamma$: $\mathbb{R}\to M$ with period $k$, there is a unique probability measure $\mu_{\gamma}$ on $TM\times\mathbb{T}$ so that the following holds
$$
\int_{TM\times\mathbb{T}}f\,d\mu_{\gamma}=\frac 1k\int_0^kf(d\gamma(s),s)\,ds\qquad
$$
for each $f\in C^0(TM\times\mathbb{T},\mathbb{R})$, where we use the notation $d\gamma=(\gamma,\dot\gamma)$. Let
$$
\mathfrak{H}^*=\{\mu_{\gamma}\ |\ \gamma\in C^1(\mathbb{R},M)\ \text{\rm is periodic of}\ k\}.
$$
The set $\mathfrak{H}$ of \emph{holonomic probability measures} is the closure of $\mathfrak{H}^*$ in the vector space of continuous linear functionals. One see that $\mathfrak{H}$ is convex.

For each $\nu\in\mathfrak{H}$ the action $A_c(\nu)$ is defined as
$
A_c(\nu)=\int (L-\eta_c)\,d\nu.
$
It is proved in \cite{M91,Me} that for each co-homology class $c$ there exists at least one invariant probability measure $\mu_c$ minimizing the action over $\mathfrak{H}$
$$
A_c(\mu_c)=\inf_{\nu\in\mathfrak{H}}\int (L-\eta_c)\,d\nu,
$$
called \emph{$c$-minimal measure}. 
\begin{defi}
\begin{enumerate}
\item Let $\mathfrak{H}_c\subset\mathfrak{H}$ be the set of $c$-minimal measures, the \emph{Mather set} $\widetilde{\mathcal {M}}(c)$ is defined as
$$
\widetilde{\mathcal{M}}(c)=\bigcup_{\mu_c\in\mathfrak{H}_c}\text{\rm supp} \mu_c.
$$
\item The \emph{$\alpha$-function} is defined as $\alpha(c)=-A_c(\mu_c): H^1(M,\mathbb{R})\to\mathbb{R}$. It is convex, finite everywhere with super-linear growth. 
\item Its Legendre transformation $\beta: H_1(M,\mathbb{R})\to\mathbb{R}$ is called \emph{$\beta$-function}
$$
\beta(\omega)=\max_c (\langle\omega,c\rangle -\alpha(c)).
$$
It is also convex, finite everywhere with super-linear growth (see \cite{M91}).
\end{enumerate}
\end{defi}
Note that $\int\lambda d\mu_{\gamma}=0$ holds for each exact 1-form $\lambda$ and each $\mu_{\gamma} \in\mathfrak{H}^*$. Therefore, 
\begin{defi}
for each measure $\mu\in\mathfrak{H}$ one can define its \emph{rotation vector}
$\omega(\mu)\in H_1(M,\mathbb{R})$ such that
$$
\langle[\lambda],\omega(\mu)\rangle=\int\lambda\, d\mu,
$$
holds for every closed 1-form $\lambda$ on $M$. 
\end{defi}
We have the following relation
$$
c\in\partial{\beta}(\rho)\ \ \iff \ \ \alpha(c)+\beta(\rho)=\langle c,\rho\rangle.
$$

\subsection{(Semi)-static curves, the Aubry set and the Ma\~n\'e set}
The concept of semi-static curves is introduced by Mather and Ma\~n\'e (cf. \cite{M93,Me}). 
\begin{defi}A curve
$\gamma$: $\mathbb{R}\to M$ is called \emph{$c$-semi-static} if 

\begin{enumerate}
\item in the time-1-periodic case we have
$$
[A_c(\gamma)|_{[t,t']}]=F_c((\gamma(t),t),(\gamma(t'),t'))
$$
where
\begin{equation}\label{introeq2}
[A_c(\gamma)|_{[t,t']}]=\int_{t}^{t'}\Big (L(d\gamma(t),t)-\eta_c(d\gamma(t))\Big )\, dt+\alpha(c)(t'-t),\notag
\end{equation}
$$
F_c((x,t),(x',t'))=\inf_{\stackrel {\tau=t\ \text{\rm mod}\, 1}{\scriptscriptstyle \tau '=t'\text{\rm mod}\,1}} h_c((x,\tau),(x',\tau')),
$$
in which
\begin{equation}\label{introeq3}
h_c((x,\tau),(x',\tau'))=\inf_{\stackrel{\stackrel {\xi\in C^1}{\scriptscriptstyle \xi(\tau)=x}}{\scriptscriptstyle \xi(\tau')=x'}}[A_c(\xi)|_{[\tau,\tau']}].\notag
\end{equation}
\item In autonomous case, the period is considered as any positive number. Consequently, the notation of semi-static curve in this case is simpler
$$
[A_c(\gamma)|_{(t,t')}]=F_c(\gamma(t),\gamma(t')),
$$
where
\begin{equation}\label{introeq4}
F_c(x,x')=\inf_{\tau>0}h_c((x,0),(x',\tau)).\notag
\end{equation}
\end{enumerate}
\end{defi}
\noindent{\bf Convention}: Let $I\subseteq\mathbb{R}$ be an interval $($either bounded or unbounded$)$. A continuous map $\gamma$: $I\to M$ is called a \emph{curve}. If it is differentiable, the map $d\gamma=(\gamma,\dot\gamma)$: $I\to TM$ is called an \emph{orbit}. When the implication is clear without danger of confusion, we use the same symbol to denote the graph, $\gamma:=\cup_{t\in I}(\gamma(t),t)$ is called a curve and $d\gamma:=\cup_{t\in I}(\gamma(t),\dot\gamma(t),t)$ is called an orbit. In autonomous systems, the terminology also applies to the image: $\gamma:=\cup_{t\in I}\gamma(t)$ is called a curve and $d\gamma:=\cup_{t\in I}(\gamma(t),\dot\gamma(t))$ is called an orbit.

\begin{defi}
\begin{enumerate}
\item
A semi-static curve $\gamma\in C^1(\mathbb{R},M)$ is called \emph{$c$-static} if, in addition, the relation
$$
[A_c(\gamma)|_{(t,t')}]=-F_c((\gamma(t'),\tau'),(\gamma(t),\tau))
$$
holds in time-1-periodic case and
$$
[A_c(\gamma)|_{(t,t')}]=-F_c(\gamma(t'),\gamma(t))
$$
holds in autonomous case. 
\item An orbit $X(t)=(d\gamma(t), t\, \text{\rm mod}\ 2\pi)$ is called $c$-static $($semi-static$)$ if $\gamma$ is $c$-static $($semi-static$)$. 
\end{enumerate}
\end{defi}
\begin{defi}
We call the \emph{Ma\~n\'e set} $\widetilde{\mathcal {N}}(c)$ the union of $c$-semi-static orbits
$$
\widetilde{\mathcal{N}}(c)=\bigcup\{d\gamma:\gamma\ \text{\rm is}\ c\text{\rm -semi static}\}
$$
and call the \emph{Aubry set} $\widetilde{\mathcal {A}}(c)$ the union of $c$-static orbits
$$
\widetilde{\mathcal{A}}(c)=\bigcup\{d\gamma :\gamma\ \text{\rm is}\ c\text{\rm -static}\}.
$$
\end{defi}
\begin{Not}
We use $\mathcal {M}(c)$, $\mathcal {A}(c)$ and $\mathcal {N}(c)$ to denote the standard projection of $\widetilde{\mathcal {M}}(c)$, $\widetilde{\mathcal {A}}(c)$ and $\widetilde{\mathcal {N}}(c)$ from $TM\times\mathbb{T}$ to $M\times\mathbb{T}$ respectively. 
\end{Not}
They satisfy the inclusion relation
$$
\widetilde{\mathcal{M}}(c)\subseteq\widetilde{\mathcal{A}}(c)\subseteq\widetilde{\mathcal{N}}(c).
$$
It is showed in \cite{M91,M93} that the inverse of the projection is Lipschitz when it is restricted to $\mathcal {A}(c)$ as well as to $\mathcal {M}(c)$. By adding subscript $s$ to $\mathcal{N}$, i.e. $\mathcal{N}_s$ we denote its time-$s$-section. This principle also applies to $\widetilde{\mathcal{N}}(c)$, $\widetilde{\mathcal{A}}(c)$, $\widetilde{\mathcal{M}}(c)$, $\mathcal{A}(c)$ and $\mathcal{M}(c)$ to denote their time-$s$-section respectively. For autonomous systems, these sets are defined without the time component.

On the time-1-section of Aubry set a \emph{pseudo-metric $d_c$} is introduced by Mather in \cite{M93}, whose definition relies on the quantity $h_c^{\infty}$. Define
\begin{equation}\label{introeq5}
h_c^{\infty}((x,s),(x',s'))=\liminf_{\stackrel{\stackrel{s=t\ \text{\rm mod}\ 1}{\scriptscriptstyle t'=s'\ \text{\rm mod}\ 1}}{\scriptscriptstyle t'-t\to\infty}}h_c((x,t),(x',t')).\notag
\end{equation}
For autonomous system
\begin{equation}\label{introeq6}
h^{\infty}_c(x,x')=\liminf_{\tau\to\infty}h_c((x,0),(x',\tau)).\notag
\end{equation}
The pseudo-metric $d_c$ on Aubry set is defined as
$$
d_c((x,t),(x',t'))=h_c^{\infty}((x,t),(x',t'))+h_c^{\infty}((x',t'),(x,t)).
$$
With the pseudo-metric $d_c$ one defines equivalence classes in an Aubry set. The equivalence $(x,t)\sim (x',t')$ implies $d_c((x,t),(x',t'))=0$, with which one can define \emph{quotient Aubry set} $\mathcal{A}(c)/\sim$. Its element is called \emph{Aubry class}, denoted by $\mathcal{A}_i(c)$ or $\mathcal{A}_{c,i}$, whose lift to $TM\times\mathbb{T}$ is denoted by $\widetilde{\mathcal{A}}_i(c)$. Thus,
$$
\mathcal{A}(c)= \bigcup_{i\in\Lambda}\mathcal{A}_i(c), \qquad \widetilde{\mathcal{A}}(c)=\bigcup_{i\in\Lambda}\widetilde{\mathcal{A}}_i(c).
$$
Although Mather constructed an example with a quotient Aubry set homeomorphic to an interval, it is generic that each $c$-minimal measure contains not more than $n+1$ ergodic components if the system has $n$ degrees of freedom \cite{BC}. In this case, each Aubry set contains at most $n+1$ classes.
\subsection{A brief introduction to Weak KAM theory}\label{SSWKAM}
The concept of $c$-semi-static curves can be extended to the curves only defined on $\mathbb{R}^{\pm}$, which are called \emph{forward (backward) $c$-semi-static curves}, denoted by $\gamma^{\pm}_c$ respectively. A curve $\gamma_c^-$ ($\gamma_c^+$) produces a backward (forward) semi-static orbit orbit $(\gamma_c^-,\dot\gamma_c^-)$ ($(\gamma_c^+,\dot\gamma_c^+)$).
\begin{pro}\label{weakpro1}If the Lagrangian $L$ is of Tonelli type, for each point $(x,\tau)\in M\times\mathbb{T}$, there is at least one $\gamma^{\pm}_c(t,x,\tau)$ which is a forward $($backward$)$ semi-static curve.
\end{pro}

Since both the $\omega$-limit set of $d\gamma_c^+$ and the $\alpha$-limit set of $d\gamma_c^-$ are in the Aubry set one define
$$
W^{\pm}_c=\bigcup_{(x,\tau)\in M\times\mathbb{T}}
\left\{x,\tau,\frac{d\gamma^{\pm}_c(\tau,x,\tau)}{dt}\right\},
$$
and call $W^+_c$ the stable set, $W^-_c$ the unstable set of the $c$-minimal measure respectively. If  $\dot\gamma^-(\tau,x,\tau)=\dot\gamma^+(\tau,x,\tau)$ holds for some $(x,\tau)\in M\times\mathbb{T}$, passing through the point $(x,\tau,\dot\gamma^-_c(\tau,x,\tau))$ the orbit is either in the Aubry set or homoclinic to this Aubry set.

If the Aubry set consists of one class, the stable as well as the unstable set has its own generating function $u_c^{\pm}$ such that $W^{\pm}_c=\text{\rm Graph}(du_c^{\pm})$ holds almost everywhere \cite{F}. These functions are \emph{weak KAM solutions}. We use $u_c^{\pm}$ to denote the weak KAM solution for the Lagrangian $L-\eta_c$, where $\eta_c$ is a closed form with $[\eta_c]=c$. These functions are Lipschitz, thus differentiable almost everywhere. At each differentiable point $(x,\tau)$, $(x,\tau,\partial_xu^-(x,\tau))$ uniquely determines backward $c$-semi static curve $\gamma^-_x$: $(-\infty, \tau]\to M$ such that $\gamma^-_x(\tau)=x$, $\dot\gamma^-_x(\tau)=\partial_yH(x,\tau,\partial_xu^-(x,\tau))$. Similarly, $(x,\tau,\partial_xu^+(x,\tau))$ uniquely determines forward $c$-semi static curve $\gamma^-_x$: $[\tau,\infty)\to M$ such that $\gamma^+_x(\tau)=x$, $\dot\gamma^+_x(\tau)=\partial_yH(x,\tau,\partial_xu^+(x,\tau))$.

\section{Proof of Lemma \ref{LmAllDiop}}\label{AppDiop}
We prove Lemma \ref{LmAllDiop} in this appendix. 
Fix $\varrho>0,\tau>n$. We prove the lemma by induction from $j+1$ to $j$. First, for $j=n$, it is easy to find two Diophantine numbers $\omega_n^i,\ \omega_n^f$. Suppose we already have that
$$
\l(\omega_{j+1}^{*i},\ldots,\omega_{n}^{*i}\r)\in \mathrm{DC}(n-j,\al,\tau).
$$
We claim that {\it given $\omega_j^i$ and $\omega_j^f$, there are numbers $\om_j^{*i}$ and $\om_j^{*f}$ satisfying $$|\om_j^{*i}-\om_j^{i}|<\varrho,\ |\om_j^{*f}-\om_j^{f}|<\varrho,\ \mathrm{and\ }
(\om_j^{*i,f},\omega_{j+1}^{*i},\ldots,\omega_{n}^{*i})\in \mathrm{DC}(n-j+1,\al,\tau)
$$
for sufficiently small $\al>0$. }

Indeed, by assumption we already have
$$
\Big|\Big\langle \hat\om_{n-j}^{*i},\hat \bk_{n-j}\Big\rangle\Big|\geq \dfrac{\al}{|\hat \bk_{n-j}|^\tau},\quad \forall\ \hat \bk_{n-j}\in \Z^{n-j}\setminus\{0\}.
$$
We want to show that all those $\omega_j\in\R$ which satisfy the condition
\begin{equation}\label{EqDiop+1}
|\langle (\omega_j,\hat \om_{n-j}^{*i}),\hat \bk_{n-j+1}\rangle|\geq \dfrac{\al}{|\hat \bk_{n-j+1}|^\tau},\quad \forall\ \hat \bk_{n-j+1}\in \Z^{n-j+1}\setminus\{0\}
\end{equation}
form a $\varrho$-dense set provided $\al$ is small enough. Given $\hat \bk_{n-j}$, we consider all $k_j$ and $\omega_j^\dagger$ satisfying
$$
k_j\omega^\da_j+\langle \hat\om_{n-j}^{*i},\hat \bk_{n-j}\rangle=0.
$$
Formula \eqref{EqDiop+1} is satisfied automatically for $\hat\bk_{n-j+1}=(k_j,\hat\bk_{n-j})$ when $k_j=0$, so we assume $k_j\neq 0.$
In order to guarantee \eqref{EqDiop+1} we need to remove an interval of measure $\frac{2\al}{k_j(|\hat \bk_{n-j}|+|k_j|)^\tau}$ centered at $\omega_j^\da$ so that \eqref{EqDiop+1} is satisfied for all $\omega_j$ in the complement for this $k_j$. The total measure of these intervals removed when $k_j$ ranges over $\Z\setminus\{0\}$ is
\[\sum_{k_j}\dfrac{2\al}{|k_j|(|\hat\bk_{n-j}|+|k_j|)^\tau}\leq 2\int_1^\infty\dfrac{2\al}{x(|\hat\bk_{n-j}|+x)^\tau}\,dx.\]
Next the total measure of these intervals when $\hat \bk_{n-j}$ ranges over $\Z^{n-j}\setminus\{0\}$ is
\beqa\nonumber
\sum_{\hat\bk_{n-j}}\sum_{k_j}\dfrac{4\al}{|k_j|(|\hat\bk_{n-j}|+|k_j|)^\tau}&\leq \sum_{\hat\bk_{n-j}}\int_1^\infty\dfrac{4\al}{x(|\hat\bk_{n-j}|+x)^\tau}\,dx\\
&\leq \int_{\mathbb S^{n-j-1}}\int_1^\infty\int_1^\infty\dfrac{4\al}{x(r+x)^\tau}\,dx\, r^{n-j-1}\,dr d\mathbb S^{n-j-1}\\
&\xlongequal{y=x/r}
   4\al C\int_1^\infty r^{n-j-\tau-1}\int_{1/r}^\infty\dfrac{1}{y(1+y)^\tau}\,dy\,dr
\eeqa
where the constant $C=\frac{2\pi^{(n-j-1)/2}}{\Gamma((n-j-1)/2)}$ is the area of the sphere $\mathbb S^{n-j-1}$. The inner integral converges for large $y$ and has the asymptote $\log r$ for $r$ large and $y$ close to $1/r$. Hence the iterated integral can be estimated as
$$
\int_1^\infty r^{n-j-\tau-1}\int_{1/r}^\infty\dfrac{1}{y(1+y)^\tau}\,dy\,dr \leq 2 \int_1^\infty r^{n-j-\tau-1}(\log r+\mathrm{const})\,dr
$$
where the right-hand-side is convergent since $\tau>n$. The assertion above is proven if $\al>0$ is chosen small enough.

\section{A special normally hyperbolic invariant manifold theorem}\label{SNHICSpecial}
In this paper, we need a special version of the theorem of NHIM. Here we present its detailed proof using the graph transform method. The statement given below is adapted to the setting needed in the paper and we do not pursue generality. 
\begin{theo}\label{NHICSpecial}
Let $N=(\R^m/S\Z^{m})\times \R^{m'}$, $S\in \mathrm{SL}(m,\R),$ be a submanifold of a $($non compact$)$ manifold $M^{m+m'+k}$. Given $\Lambda>0$, we denote $N_\Lambda=(\R^m/S\Z^{m})\times B(0,\Lambda)$ and choose a small neighborhood $U(\subset M)$ of $N_\Lambda$.  
Let $V=(\dot q, \dot p,\dot n)$ be a $C^2$ vector field compactly supported in $U$ satisfying the following properties:
\begin{enumerate}
\item $\begin{cases}
&\dot q=\eps^{-1/2}\omega^\star+a(p)\\
&\dot p=0
\end{cases}$ where $(q,p)\in N_\Lambda$, $\omega^\star\in \R^{m}$ is constant and $a(p)\in C^2$, $\eps>0$;
\item restricted on the normal bundle $\cup_{x\in N_\Lambda}E^s_x\oplus E^u_x,$ we have $\dot n=A n$, where $A\in \R^{k\times k}$ is a constant matrix all of whose eigenvalues lies off the imaginary axis. 
\end{enumerate}
Then there exists $\gamma_0$ such that any vector field $V_{\gamma,\eps}$ compactly supported in $U$ and satisfying that $\|V_{\gamma,\eps}-V\|_{C^1}\leq \gamma_0,$ admits 
a NHIC that is a graph over $N_\Lambda$.

\end{theo}
\begin{Rk}
%\begin{enumerate}
%\item Our setting is different from \cite{CL}. where $\gamma$ is set to be equal to $\eps$. It terms out that the proof of \cite{CL} also goes through in our setting since  equation (3.14) therein for small enough $\gamma$ can be verified. Our proof uses the graph transform method adapted to the flow by \cite{Fe}. 
 We will see in the proof that $\eps$ does not play any role, since the large term $\eps^{-1/2}\omega^\star$ is constant and does not appear in the derivative of the time-1 map of the flow, on the other hand, only derivative information matters in the proof (see \eqref{EqNHBounds}). The vector field $V_{\gamma,\eps}$ is also allowed to depend on $t$ periodically, even with fast oscillation for instance it depends on $t/\eps^\al\in \T$, any $\al>0$. Note that the $\|\cdot \|_{C^1}$ norm does not include the derivative with respect to $t$. 
%\end{enumerate}
\end{Rk}

%The proof is the standard graph transform. The main observation is that the derivative of the Hamiltonian flow does not depend on $\eps$ but only on $\dt$. 
\begin{proof}
In the proof, for clarity of the ideas, we consider first the contracting case, namely, $E^u=0$ in the splitting of $T_xM$ (see Definition \ref{DefNHIM}), i.e. all the eigenvalues of $A$ has negative real parts. %We assume the center manifold $N$ has no boundary, $\partial N=\emptyset$. In the contracting case, \cite{Fe} worked in a more general setting called {\it overflowing invariant}, namely, the backward orbit through any point in the manifold is contained in the manifold and the vector field on the boundary points strictly outward. %In the hyperbolic setting of Section, \cite{Fe} requires that the center manifold is compact without boundary. In fact, the NHIM theorem also holds true under the following assumption.
%One can verify that the whole proof goes through if we assume $N$ contains the full orbit (both backward and forward) of each of its points.

We denote by $f$ (resp. $f_\gamma$) the time-1 map generated by the vector field $V$ (resp. $V_{\gamma,\eps}$). We now introduce coordinates. We cover a neighborhood $U_d,\ d>0,$ of the center manifold $N_\Lambda$ by balls of the form $B(p_i,2d)$ with $p_i\in N_\Lambda$ using any preferred Riemannian metric. In each of the ball $B(p_i,2d)$, we choose a local coordinates given by $\exp_{p_i}: T_{p_i}N_\Lambda\oplus E^s_{p_i}\to B(p_i,2d)$ with \begin{equation}\label{EqExpMap}\exp_{p_i}(x,0)\in B(p_i,2d)\cap N_\Lambda,\ \mathrm{and\ }\exp_{p_i}(0,0)=p_i.\end{equation}
In coordinates, the map $f^n$ can be written as  $$F_{j,i}:=\exp_{p_j}^{-1}\circ f^n\circ\exp_{p_i}:\ T_{p_i}N\oplus E^s_{p_i}\to T_{p_j}N\oplus E^s_{p_j}$$ if $p=f^{-n}(p')$ for $p\in B(p_i,2d)$ and $p'\in B(p_j,2d),$ where the number of iterates $n$ will be determined later.  We suppress the subscripts $i,j$ for simplicity and denote $F(x,y)=(X(x,y),Y(x,y))$ where $Y(x,0)=0$. We denote $$dF=\left(\begin{array}{cc}
\partial_x X&\partial_y X\\
\partial_x Y&\partial_y Y
\end{array}\right):=\left(\begin{array}{cc}
A&B\\
C&D\end{array}\right).$$
We have by definition that
$$
\begin{aligned}
C(x,0)&=\partial_x Y(x,0)=0,\\
D(x,0)&=\partial_y Y(x,0)= dF|_{E^s},\\
A(x,0)&=\partial_x X(x,0)=dF|_{E^c}.
\end{aligned}
$$
Now the normal hyperbolicity assumption implies the following important bounds
\begin{equation}\label{EqNHBounds}\Vert D\Vert_{C^0}\Vert A^{-1}\Vert_{C^0}< 1/2,\quad \Vert D\Vert_{C^0}< 1/2,\quad \Vert C\Vert_{C^0}< \eta\ll 1 \end{equation}
by choosing $n$ large and the neighborhood $U_d$ small enough. The derivative $dF$ is obtained by integrating the variational equation derived from the ODE of $V.$ Note that the term $\eps^{-1/2}\omega^\star$ does not appear in the variational equation since $\omega^\star$ is a constant. Since the map $f_\gamma$ is $\gamma$-close to $f$ in the $C^1$ norm, we define $F_\gamma$ from $f_\gamma^n$ in the same way as $F$ from $f^n$. For small enough $\gamma$, the above bounds \eqref{EqNHBounds} also holds for $F_\gamma$ in the domain $U_d$. In the following, we suppress the subscript $\gamma$ and work exclusively with $F_\gamma$ instead of $F$.
\subsubsection{The graph transform}\label{SSSGraph}
Define first the set $\mathcal S$ of Lipschitz sections $S: T_{p_i}N_\Lambda\to T_{p_i}N_\Lambda\oplus E^s_{p_i}$. Next we define $$\mathcal S_\dt:=\{S\in \mathcal S\ |\ \mathrm{Lip}(S)\leq \dt\}.$$ %In the following, without danger of confusion, we use  $S$ to denote a section written in coordinates $.$

The graph transform is defined to be
\begin{equation}\label{EqGraphTrans}
G:\ \mathcal S_\dt\to\mathcal  S,\quad (G(S))(X(x,S(x)))=Y(x,S(x)).\end{equation}
\begin{lem}\label{LmSdt}For sufficiently small $\eta$, $\dt$, the image of the graph transform $G$ lies in $\mathcal S_\dt$, i.e. $G:\ \mathcal S_\dt\to\mathcal  S_\dt.$\end{lem}
\begin{proof}
Suppose $\xi=X(x,S(x))$ and $\xi'=X(x',S(x'))$ are sufficiently close. The injectivity of $X(\cdot,S(\cdot))$ will be shown below. Then we have
\begin{equation}\label{EqGs}
\begin{aligned}
\Vert (G(S))(\xi)-(G(S))(\xi')\Vert&=\Vert Y(x,S(x))-Y(x',S(x'))\Vert\\
&\leq \Vert C\Vert_{C^0} \Vert x-x'\Vert+\dt\Vert D\Vert_{C^0} \Vert x-x'\Vert.
\end{aligned}
\end{equation}
Next we bound $\Vert x-x'\Vert$ using $\Vert \xi-\xi'\Vert$.
\begin{equation*}
\begin{aligned}
\Vert \xi-\xi'\Vert&=\Vert X(x,S(x))-X(x',S(x'))\Vert\\
&\geq \Vert X(x,S(x))-X(x',S(x))\Vert-\Vert X(x',S(x))-X(x',S(x'))\Vert \\
&\geq \Vert A^{-1}\Vert^{-1}_{C^0} \Vert x-x'\Vert-\Vert B\Vert_{C^0} \Vert S(x)-S(x')\Vert\\
&\geq (\Vert A^{-1}\Vert_{C^0}^{-1}-\dt \Vert B\Vert_{C^0})\Vert x-x'\Vert.
\end{aligned}
\end{equation*}
Let $c=\frac{\Vert C \Vert_{C^0}+\dt\Vert D\Vert_{C^0}}{\Vert A^{-1}\Vert_{C^0}^{-1}-\dt \Vert B\Vert_{C^0}}$.  Combined with \eqref{EqGs}, we get
$$
\Vert (G(S))(\xi)-(G(S))(\xi')\Vert\leq c\Vert \xi-\xi'\Vert
$$
We can make $\Vert C\Vert_{C^0}$ as small as we wish by choose $\eta$ small, hence for small $\dt$, the leading term in $c$ is given by $\dt \Vert D\Vert_{C^0}\Vert A^{-1}\Vert_{C^0}\leq \dt/2$.
\end{proof}

\begin{lem}\label{LmContraction}The graph transform $G:\ \mathcal S_\dt\to\mathcal  S_\dt$ is a contraction in the $C^0$ norm, i.e. $\Vert G(S)-G(S')\Vert_{C^0}\leq \lb\Vert S-S'
\Vert_{C^0} $ for some $0<\lb<1$. \end{lem}
\begin{proof}
For $S,S'\in \mathcal S_\dt$, choosing $x$ and $x'$ with $\xi=X(x,S(x))=X(x',S'(x'))$, we get
\begin{equation*}
\begin{aligned}
&\Vert (G(S))(\xi)-(G(S'))(\xi)\Vert=\Vert Y(x,S(x))-Y(x',S'(x')) \Vert\\
&\leq \Vert C\Vert_{C^0}\Vert x-x'\Vert+\Vert D\Vert_{C^0}(\Vert S(x)-S'(x)\Vert+\Vert S'(x)-S'(x')\Vert)\\
&\leq (\Vert C\Vert_{C^0}+\dt\Vert D\Vert_{C^0} )\Vert x-x'\Vert+\Vert D\Vert_{C^0} \Vert S-S'\Vert_{C^0}.
\end{aligned}
\end{equation*}
Since $\Vert C\Vert_{C^0}<\eta$ can be as small as we wish, and $\Vert D\Vert_{C^0} <1/2$ due to the contraction. The proof will be complete if we can show $\Vert x-x'\Vert\leq c\Vert S-S'\Vert_{C^0}$ for some constant $c$. We have
\begin{equation*}
\begin{aligned}
&\Vert X(x,S(x))-X(x',S(x))\Vert\geq \Vert A\Vert_{C^0}\Vert x-x'\Vert
\end{aligned}
\end{equation*}
and
\begin{equation*}
\begin{aligned}
&\Vert X(x',S'(x'))-X(x',S(x))\Vert\leq \Vert B\Vert_{C^0} (\dt\Vert x-x'\Vert+\Vert S-S'\Vert_{C_0}).
\end{aligned}
\end{equation*}
Since we have $\xi=X(x,S(x))=X(x',S'(x'))$, combining the two estimates we get $\Vert x-x'\Vert\leq c\Vert S-S'\Vert_{C^0}$ for some constant $c$. This completes the proof.
\end{proof}
By the contracting mapping theorem, there exists a unique $\dt$-Lipschitz solution $S$ to the graph transform, $S=G(S).$ By the uniqueness of the fixed point of $G$, we get that $\exp S$ is invariant under $f_\gamma$.

For hyperbolic splitting, i.e. the matrix $A$ has both positive and negative eigenvalues, we introduce coordinates respecting the splitting and write the map $f_\gamma^n$ in coordinates as before\begin{equation}\label{EqFt} F(x,y,z)=(X(x,y,z), Y(x,y,z),Z(x,y,z))\in E_{p'}^c\oplus E_{p'}^u\oplus E_{p'}^s,\end{equation}
where $(x,y,z)\in E_p^c\oplus E_p^u\oplus E_p^s$ and $f_\gamma^{-n}(p')=p$ with the derivative control for sufficiently small $\eta$, and in a sufficiently small neighborhood $U_d$
$$
\Vert \partial_x X^{-1}\Vert_{C^0}^k\Vert \partial_z Z\Vert_{C^0}<1/2,\quad \Vert \partial_x X\Vert_{C^0}^k\Vert (\partial_z Y)^{-1}\Vert_{C^0}<1/2,
$$
$$\Vert \partial_x Z\Vert_{C^0},\Vert \partial_x Y\Vert_{C^0}, \Vert \partial_y Z\Vert_{C^0}, \Vert \partial_z Y\Vert_{C^0}< \eta.$$
The graph transform is defined to be for $S(x)=(S^u(x),S^s(x))$, a section in $E_{p'}^c\to E_{p'}^c\oplus E_{p'}^u\oplus E_{p'}^s$, we assign $S'=(S'^u(x),S'^s(x))=G(S)$, where
$
S'^s(X(x,S^u(x),S^s(x)))=Z(x,S^u(x), S^s(x))$ and $S'^u(x)$ is solved implicitly from $$S^u(X(x,S'^u(x),S^s(x)))=Y(x,S'^u(x),S^s(x)).$$ The solution exists since $Y(x,0,0)=0$ and $\partial_y Y\neq 0.$ One can verify that the graph transform $G$ is a contraction from $\mathcal S_\dt\to\mathcal  S_\dt$, hence there is unique solution $(S^u,S^v)$ satisfying
$$
\begin{aligned}
&S^s(X(x,S^u(x),S^s(x)))=Z(x,S^u(x),S^s(x)),\\
&S^u(X(x,S^u(x),S^s(x)))=Y(x,S^u(x),S^s(x)).
\end{aligned}
$$

Here we only show how to prove the existence of the NHIC. We see from the above proof that the $\eps^{-1/2}\omega^\star$ term does not play a role since it disappears in the derivative of the map. It turns out the conclusion of the standard normally hyperbolic invariant manifold theorem holds in our setting. For more information such as the regularity of the center manifold, the existence and regularity of stable and unstable manifolds, we refer the readers to \cite{Fe}. 

\end{proof}
\section{Variational construction of global diffusion orbits}\label{SVariation}
\setcounter{equation}{0}
Global diffusion orbits are constructed shadowing a sequence of local connecting orbits end to end. There are two types of local connecting orbit, one is called \emph{type-{\it h}} as which looks like a ``heteroclinic" orbit, another one is called \emph{type-{\it c}} as it is constructed by using ``cohomology equivalence".

\subsection{Local connecting orbits of type-{\it h} with incomplete intersections}\label{incompleteconnection} For an Aubry set, if its stable set ``intersects"  its unstable set transversally, this Aubry set is connected to any other Aubry set nearby by local minimal orbits. It can be thought as a variational version of Arnold's mechanism, the condition of geometric transversality is replaced by the total disconnectedness of minimal points of the barrier function.

However, this condition is not always satisfied for the problem we encountered here. The stable set may intersect the unstable set on a set with nontrivial first homology, i.e. {\it incomplete intersection}.  In this section, we design a new method to handle this problem. Let us first formulate a version for time-periodic dependent Lagrangian.

Recall the definition of the function $h_c^{\infty}$ introduced in \cite{M93}
$$
h_c^{\infty}(x,x')=\liminf_{k\to\infty}\inf_{\stackrel{\gamma(-k)=x}{\scriptscriptstyle \gamma(k)=x'}}\int_{-k}^{k} \Big(L(\gamma(t),\dot\gamma(t),t)-\langle c,\dot\gamma\rangle+\alpha(c)\Big)dt.
$$
This function is closely related to weak KAM. Indeed, for $x\in \mathcal{A}_{c,i}|_{t=0}$ (the time-1-section of the Aubry class $\mathcal{A}_{c,i}\subset\mathcal{A}(c)$) we have
$$
h_c^{\infty}(x,x')=u_{c,i}^-(x')-u_{c,i}^+(x),
$$
where both $u_{c,i}^-$ and $u_{c,i}^+$ are the time-1-section of backward and forward elementary weak KAM respectively (see the Appendix A.3 for details). It inspired us to introduce a barrier function for two Aubry classes $\mathcal{A}_{c,i}$ and $\mathcal{A}_{c,j}$
$$
B_{c,i,j}(x)=u^-_{c,j}(x)-u^+_{c,i}(x).
$$
Passing through its minimal point there is a semi-static curve connecting these two classes, provided this point does not lie in the Aubry set.

If the Aubry set contains only one class, we work in certain finite covering space so that there are two classes. For example, if the configuration space is $\mathbb{T}^{j+k+\ell}$ and the time-1-section of the Aubry set stays in a neighbourhood of certain lower dimensional torus, $\mathcal{A}_0(c)\subset \mathbb{T}^{j+\ell}+\delta$, we introduce a covering space $\mathbb{T}^{j+\ell}\times\mathbb{T}^{k-1}\times 2\mathbb{T}$. With respect to this covering space the Aubry set contains two classes.

We introduce some notation and conventions. 
\begin{Not}
\begin{enumerate}
 \item For the product space $\mathbb{T}^{j+k+\ell}$ we use $\mathbb{T}^{j+\ell}=\{x\in\mathbb{T}^{j+k+\ell}:x_i=0,\ \forall\ i=j+1,\cdots,j+k\}$.
     \item   Given a set $S$, a point $x$ and a number $\delta$, $S+x$ denotes the translation of $S$ by $x$, i.e. $S+x=\{x'+x:x'\in S\}$ and $S+\delta$ denotes $\delta$-neighborhood of $S$, i.e. $S+\delta=\{x: d(x,S)\le\delta\}$.
          \item A set $N$ is called \emph{neighborhood of $(j,\ell)$-torus} if it is homeomorphic to an open neighborhood of $(j+\ell)$-dimensional torus whose first homology group is generated by $\{e_i:i=1,\cdots,j,j+k+1,\cdots,j+k+\ell\}$.
              \item Given a function $B$, we use $\mathrm{Argmin}\{B,S\}=\{x\in S:B(x)=\min B\}$ to denote the set of those minimal points of $B$ which are contained in the set $S$.
\end{enumerate}
\end{Not}
\begin{theo}\label{ThmTypeht}
For a time-periodic $C^2$-Lagrangian $L:T\mathbb{T}^{j+k+\ell}\times\mathbb{T}\to\mathbb{R}$ and a first cohomology class $c\in H^1(\mathbb{T}^{j+k+\ell},\mathbb{R})$ we assume the conditions as follows:
\begin{enumerate}
    \item the Aubry set $\mathcal{A}(c)$ contains two classes $\{\mathcal{A}_{c,i},\mathcal{A}_{c,i'}\}$ which lie in a neighbourhood of $(j,\ell)$ torus $\mathcal{A}_{c,i}|_{t=0}\subset N_i$ and $\mathcal{A}_{c,i'}|_{t=0}\subset N_{i'}$. These neighborhoods are separated, i.e. $\bar N_{i}\cap\bar N_{i'}=\varnothing$;
    \item there exist topological balls $\{O_m\subset\mathbb{T}^{j+k}\}$ with $\bar O_m\cap\bar O_{m'}=\varnothing$ for $m\neq m'$, each connected component of $$
        \mathrm{Argmin}\{B_{c,i,i'},\mathbb{T}^{j+k+\ell}\backslash N_i\cup N_{i'}\}
        $$
        is contained in certain $O_m\times\mathbb{T}^{\ell}$;
\end{enumerate}
Then, for $c'\in H^1(\mathbb{T}^{j+k+\ell},\mathbb{R})$ satisfying following conditions
\begin{enumerate}
    \item $\langle c'-c,g\rangle =0$ holds $\forall$ $g\in H_1(\mathbb{T}^{j+k+\ell},\mathbb{T}^{j+k}, \mathbb{Z})$ and $|c'-c|\ll 1$;
    \item the Aubry set $\mathcal{A}(c')\subset N_i\cup N_{i'}$;
\end{enumerate}
there exists an orbit $(\gamma,\dot\gamma)$ of $\phi_L^t$ which connects $\widetilde{\mathcal{A}}(c)$ to $\widetilde{\mathcal{A}}(c')$ in the following sense, the $\alpha$-limit set of $(\gamma,\dot\gamma)$ is contained in $\widetilde{\mathcal{A}}(c)$, the $\omega$-limit set of $(\gamma,\dot\gamma)$ is contained in $\widetilde{\mathcal{A}}(c')$ or vice versa.
\end{theo}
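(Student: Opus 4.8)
\emph{Approach.} The plan is to phrase the desired connection as a two–cohomology minimization problem and then reduce it, in a suitable finite covering, to minimizing a barrier function on the configuration torus; the entire difficulty is concentrated in showing that the minimum is realized off the two Aubry sets. By time reversal it suffices to produce an orbit whose $\alpha$-limit lies in $\tilde{\mathcal{A}}(c)$ and whose $\omega$-limit lies in $\tilde{\mathcal{A}}(c')$. First I would record the content of hypothesis $(1)$ on $c'$: the long exact sequence of the pair gives $H_1(\mathbb{T}^{j+k+\ell},\mathbb{T}^{j+k},\mathbb{Z})\cong\mathbb{Z}^{\ell}$, generated by the last $\ell$ coordinate loops, so the vanishing of $\langle c'-c,\cdot\rangle$ on it means that $c'-c$ is the pull-back to $\mathbb{T}^{j+k+\ell}$ of a class on $\mathbb{T}^{j+k}$; fix closed $1$-forms $\eta_c$, $\eta_{c'}=\eta_c+\mu$ representing $c$, $c'$ with $\mu=\pi^{*}\mu_0$ for $\pi\colon\mathbb{T}^{j+k+\ell}\to\mathbb{T}^{j+k}$. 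Then I would fix backward elementary weak KAM solutions $u^-_{c,i},u^-_{c,i'}$ for $L-\eta_c$ calibrated on $\mathcal{A}_{c,i},\mathcal{A}_{c,i'}$, and a forward weak KAM solution $u^+_{c'}$ for $L-\eta_{c'}$ calibrated on the Aubry class of $\mathcal{A}(c')$, which by hypothesis $(2)$ on $c'$ lies in $N_i\cup N_{i'}$; since $|c'-c|\ll1$, the solution $u^+_{c'}$ is $C^{0}$-close to a forward weak KAM solution for $c$.

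\emph{The variational problem.} Next I would consider, over all $x$ and all homotopy classes of the ``left half-curve'', configurations $\gamma\colon\mathbb{R}\to\mathbb{T}^{j+k+\ell}$ with $\gamma(0)=x$ minimizing the relative action
\[
\mathcal{F}(\gamma)=\int_{-\infty}^{0}\bigl(L(\gamma,\dot\gamma,t)-\eta_c(\dot\gamma)+\alpha(c)\bigr)\,dt+\int_{0}^{+\infty}\bigl(L(\gamma,\dot\gamma,t)-\eta_{c'}(\dot\gamma)+\alpha(c')\bigr)\,dt,
\]
the improper integrals regularized by subtracting the weak KAM values at the moving endpoints as in Mather's theory; this amounts to minimizing the ``bridge barrier'' $\mathfrak{B}(x)=u^+_{c'}(x)-\min\{u^-_{c,i}(x),u^-_{c,i'}(x)\}$ over the compact covering torus, where a minimizer exists by continuity. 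A minimizing configuration consists of a backward-calibrated curve for $L-\eta_c$ (so its $\alpha$-limit lies in $\tilde{\mathcal{A}}(c)$) joined at a point $x^{\ast}$ to a forward-calibrated curve for $L-\eta_{c'}$ (so its $\omega$-limit lies in $\tilde{\mathcal{A}}(c')$). As soon as $x^{\ast}\notin\mathcal{A}(c)\cup\mathcal{A}(c')$, the barrier $\mathfrak{B}$ is differentiable there, the two halves match $C^{1}$ by the Euler--Lagrange equation plus the calibration property, and the concatenation is a genuine orbit of $\phi^t_L$ with the asserted asymptotics. Thus the whole theorem reduces to locating the minimizer off the two Aubry sets.

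\emph{Main step, and the obstacle I anticipate.} This last point is the hard part, and it is here that the incomplete-intersection hypotheses $(2)$ for $\mathcal{A}(c)$ and $(1)$ for $c'$ are used. Suppose some minimizing configuration had its transition point inside, say, $\mathcal{A}_{c,i}$; then $\min\mathfrak{B}$ would be pinned to $\min B_{c,i,i'}$ plus a correction measuring the work of $\eta_{c'}-\eta_c$ along the excursion from $\mathcal{A}_{c,i}$ to $\mathcal{A}(c')$. I would then pass to the finite covering $\mathbb{T}^{j}\times\mathbb{T}^{k-1}\times 2\mathbb{T}\times\mathbb{T}^{\ell}$ in which the two $(j,\ell)$-tori $\mathcal{A}_{c,i},\mathcal{A}_{c,i'}$ lift to disjoint pieces, and invoke hypothesis $(2)$: off $N_i\cup N_{i'}$ the minimizers of the honest type-$h$ barrier $B_{c,i,i'}$ are confined to the balls $O_m\times\mathbb{T}^{\ell}$, so the gap between $\mathrm{Argmin}\{B_{c,i,i'}\}$ and the boundary of each $O_m\times\mathbb{T}^{\ell}$ is uniformly positive. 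Since $c'-c=\pi^{*}\mu_0$ is pulled back from $\mathbb{T}^{j+k}$ and pairs nontrivially with an arc joining $N_i$ to $N_{i'}$ through some $O_m$, one can slide the candidate curve across that gap inside $O_m\times\mathbb{T}^{\ell}$ and strictly lower the $\eta_{c'}$-corrected action by an amount of order $|c'-c|$; once $|c'-c|$ is small but fixed relative to the localized, bounded excess action of the heteroclinic, this beats the cost of the modification, contradicting minimality. Hence $x^{\ast}$ lies off the Aubry sets, and one concludes as above. The point of the $(j,\ell)$-torus / $O_m\times\mathbb{T}^{\ell}$ shape is that only the $\mathbb{T}^{j+k}$-projection of the minimizer set need be controlled — total disconnectedness along the $\mathbb{T}^{\ell}$ directions is not required, which is exactly what ``incomplete intersection'' permits — so the usual Cheng--Yan total-disconnectedness step is replaced by this localization-and-sliding argument, carried out as in Section~8.3 of \cite{C12}. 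The main obstacle I expect is making the sliding estimate quantitative and uniform, i.e.\ tying the gap furnished by hypothesis $(2)$ to the admissible size of $|c'-c|$ so that the contradiction is genuine; the weak KAM regularity (semiconcavity of $\mathfrak{B}$, $C^2$ extremals, Euler--Lagrange matching) needed to upgrade an off-Aubry minimizer to an honest orbit is routine.
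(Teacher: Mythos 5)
Your reduction to a single junction point breaks down exactly at the step you call routine: the claim that, at a minimizer $x^{\ast}$ of your bridge barrier lying off the Aubry sets, ``the two halves match $C^{1}$''. At such a point one gets $du^{+}_{c'}(x^{\ast})=du^{-}_{c}(x^{\ast})$, so the momenta of the two calibrated arcs are $du^{-}_{c}(x^{\ast})+\eta_c(x^{\ast})$ and $du^{+}_{c'}(x^{\ast})+\eta_{c'}(x^{\ast})$; they differ by $\eta_{c'}(x^{\ast})-\eta_c(x^{\ast})$, which cannot be made to vanish on a neighborhood of the (unknown) junction since $c'\neq c$. Equivalently, the interior first variation of your functional $\mathcal{F}$ at the free junction time yields $\partial_{\dot x}L(0^-)-\eta_c=\partial_{\dot x}L(0^+)-\eta_{c'}$, i.e.\ a genuine velocity jump, so the concatenation is not an orbit of $\phi_L^t$. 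The missing idea --- and the actual content of the hypotheses --- is a correction function $u$: hypothesis $(2)$ confines the set where the transition must occur to tubular sets $O_m\times\mathbb{T}^{\ell}$ whose first homology is carried by the $\mathbb{T}^{\ell}$ factor, and hypothesis $(1)$ on $c'$ says precisely that $c'-c$ annihilates that homology, so there is a globally defined $u$ with $\partial u=c'-c$ on $(O_m+\epsilon)\times\mathbb{T}^{\ell}$ and $\partial u=0$ outside a slightly larger set. The paper minimizes the action of $L-\langle c,\dot x\rangle-\tau_1(t)v(x)-\tau_2(t)\langle c'-c-\partial u,\dot x\rangle$, where the small potential $\tau_1 v$ (constant on $O_m\times\mathbb{T}^{\ell}$, supported nearby, active only on a short time window) forces the minimizer to make its transition inside a fixed ball during the switching of $\tau_2$; there the modification $c'-c-\partial u$ vanishes identically, so the minimizer solves the unmodified Euler--Lagrange equation everywhere and no junction matching is ever required.

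Relatedly, your use of hypothesis $(1)$ --- as a nontrivial pairing of $c'-c$ with an arc from $N_i$ to $N_{i'}$ that produces an action gain under ``sliding'' --- is not how the hypothesis functions: it is a vanishing condition enabling the local primitive $u$ on each $O_m\times\mathbb{T}^{\ell}$, not a nonvanishing one, and the quantitative sliding estimate you flag as the main obstacle is not part of the argument. Without the terms $\tau_1 v$ and $\tau_2\langle c'-c-\partial u,\dot x\rangle$, the construction does not close.
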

\brk{\rm If $\ell=0$, the set $\mathrm{Argmin}\{B_{c,i,i'},\mathbb{T}^{j+k+\ell} \backslash N_i\cup N_{i'}\}$ is topologically trivial, it implies the stable set intersects the unstable set topologically transversely. Therefore, it turns out to be a variational version of Arnold's mechanism. The case of $\ell>0$ is a generalization of Arnold's mechanism in which case we allow that stable and unstable sets to intersect non transversely. Geometrically, this allows the separatrix to remain non splitting on the $\T^\ell$ component. }
\erk
\brk{\rm If the Aubry set consists of one Aubry class, we study this problem in certain covering space so that the Aubry set consists of two classes. The second condition for $c$ can be weakened so that the result becomes sharper, but the condition here is easier to verify and good enough for our purpose. Because of the upper semi-continuity of Ma\~n\'e set in the first cohomology class, the Aubry set $\mathcal{A}(c')$ is also contained in neighborhoods of these lower dimensional tori.}
\erk
\begin{proof}
It is proved by exploiting the upper semi-continuity of Ma\~n\'e set with respect to perturbation on the Lagrangian. As $\mathcal{A}(c')\subset N_i\cup N_{i'}$, without lose of generality we assume $\mathcal{A}(c')\cap N_{i'}\ne\varnothing$.

Given a ball $O_m$ there exists small $\epsilon$ such that $O_m+\epsilon$ does not touch other balls. Let $\tau_{1}$: $\mathbb{R}\to [0,\eps]$ be a smooth function such that $\tau_1(t)=0$ for $t\in(-\infty,0]\cup[1,\infty)$, $\tau_1(t)\ge0$ for $t\in[0,1]$ and $\max\tau_1=1$, Let $\tau_2$: $\mathbb{R}\to [0,1]$ be a smooth function such that $\tau_2(t)=0$ for $t\le 0$ and $\tau_2(t)=1$ for $t\ge 1$. Let $v$: $\mathbb{T}^{j+k+\ell}\to [0,\eps]$ so that $v(x)=0$ if $x\notin (O_m+\epsilon) \times\mathbb{T}^{\ell}$ and $v(x)=\eps$ if $x\in O_m\times\mathbb{T}^{\ell}$. As $\langle c'-c,g\rangle=0$ for each $g\in H_1(\mathbb{T}^{j+k+\ell}, \mathbb{T}^{j+k},\mathbb{Z})$, $\exists$ smooth function $u\in\mathbb{T}^{j+k+\ell}\to\mathbb{R}$ so that $\partial u=c'-c$ when it is restricted in $(O_m+\epsilon)\times\mathbb{T}^{\ell}$ and $\partial u=0$ if $x\notin (O_m+2\epsilon)\times\mathbb{T}^{\ell}$.

We introduce a modified Lagrangian
$$
L_{c,v,u}(\dot x,x,t)=L(\dot x,x,t)-\langle c,\dot x\rangle-\tau_1(t) v(x)-\tau_2(t)\langle c'-c-\partial u,\dot x\rangle
$$
and consider the minimizer $\gamma_{k^-,k^+}$: $[-k^-,k^+]\to\bar M$ of the action
$$
h^{k^-,k^+}_{c,v,u}(x^-,x^+)=\inf_{\stackrel{\gamma(-k^-)=x^-}{\scriptscriptstyle \gamma(k^+)=x^+}}\int_{-k^-}^{k^+} L_{c,v,u}(\gamma(t),\dot\gamma(t),t)dt+k^-\alpha(c)+k^+\alpha(c')
$$
where $x\in\mathcal{A}_{c,i}|_{t=0}$ and $x'\in\mathcal{A}_{c',i'}|_{t=0}$. As the Lagrangian is Tonelli, for any large $T$, the set of the curves $\{\gamma_k|_{[-T,T]}:k^-,k^+\ge T\}$ is $C^2$-bounded, therefore it is $C^1$-compact. Let $T\to\infty$, by diagonal extraction argument, we can find a subsequence of $\gamma_{k_i}$ which converges $C^1$-uniformly on each compact interval to a $C^1$-curve $\gamma$: $\mathbb{R}\to\bar M$ which is a minimizer of $L_{c,v,u}$ on any compact interval of $\mathbb{R}$.

Let $\mathscr{C}(L_{c,v,u})$ denote the set of minimal curves of $L_{c,v,u}$, it follows from the above argument that the set  $\mathscr{C}(L_{c,v,u})$ is non-empty. Restricted on $(-\infty,0]$ as well as on $[1,\infty)$, each curve in $\mathscr{C}(L_{c,v,u})$ solves the Euler-Lagrange equation for $L$ since $\tau_1=0$ and $\langle c'-c-\partial u,\dot x\rangle$ is closed. We are going to show that it also solves the equation for $t\in[0,1]$.

If both $\tau_1$ and $\tau_2$ vanish, each curve in the set $\mathscr{C}(L_{c,v,u})$ is nothing else but a $c$-semi static curve of $L$. These curves produce orbits which connect $\mathcal{A}_{c,i}$ to $\mathcal{A}_{c,i'}$. Consider all semi-static curves which intersect $O_m\times\mathbb{T}^{\ell}$ at $t=0$. As $O_m\times\mathbb{T}^{\ell}$ is open, the set of semi-static curves is closed, $\exists$ small $t_{\delta}>0$ such that these curve intersect $ O_m\times\mathbb{T}^{\ell}$ also for $t\in[0,t_{\delta}]$. If we set $\tau_1=0$ for $t\in(-\infty,0]\cup[t_{\delta},\infty)$ and set $\tau_2\equiv 0$, these semi-static curves solve the Euler-Lagrange equation produced by $L_{c,v,u}$. As a matter of fact, along these curves the function $v$ keeps constant when $\tau_1\neq 0$, the term $\tau_1v$ does not contribute to the equation. Clearly, the action of $L_{c,v,u}$ along these curves is smaller than those semi-static curves which do not pass through $O_m\times\mathbb{T}^{\ell}$ around $t=0$. Since $L_{c,v,u}$ is no longer time-periodic, a time-1-translation of its minimal curve is not necessarily minimal, i.e. $\gamma\in\mathscr{C}(L_{c,v,u})$ does not guarantee $k^*\gamma\in\mathscr{C}(L_{c,v,u})$ for $k\in\mathbb{Z}$, where $k^*$ denotes a translation operator such that $k^*\tau(t)=\tau(t+k)$.

Next, let us recover the term $\tau_2$. Because of upper semi-continuity, the minimal curve of $L_{c,v,u}$ must pass through $O_m\times\mathbb{T}^{\ell}$ if $c'$ is sufficiently close to $c$. Again, along these curves, the term $\tau_2\partial u$ does not contribute to the Euler-Lagrange equation, along these curves $\partial u=c'-c$ when $\tau_2\in (0,1)$.

Obviously, the orbit produced by each curve in the set $\mathscr{C}(L_{c,v,u})$ takes $\widetilde{\mathcal{A}}(c)$ as its $\alpha$-limit set and take $\widetilde{\mathcal{A}}(c')$ its $\omega$-limit set.
\end{proof}

The orbit $(\gamma,\dot\gamma)$ obtained in this theorem is locally minimal in the following sense:

\noindent{\bf Local minimum}: {\it There are open balls $V_i^-$, $V_{i'}^+$ and positive integers $t^-,t^+$ such that $\bar V_i^-\subset N_i\backslash\mathcal {A}_0(c)$, $\bar V_{i'}^+\subset N_{i'}\backslash\mathcal {A}_0(c')$, $\gamma(-k^-)\in V_i^-$, $\gamma(k^+)\in V_{i'}^+$ and
\begin{equation}\label{localeq1}
\begin{aligned}
&h_c^{\infty}(x^-,m_0)+h_{c,v,u}^{k^-,k^+}(m_0,m_1)+h_{c'}^{\infty}(m_1,x^+)\\
&-\liminf_{\stackrel {k^-_i\to\infty}{\scriptscriptstyle k_i^+\to\infty}}\int_{-k^-_i}^{k^+_i}
L_{c,v,u}(d\gamma(t),t)dt-k^-_i\alpha(c)-k^+_i\alpha(c')>0
\end{aligned}
\end{equation}
holds $\forall$ $(m_0,m_1)\in\partial(V_i^-\times V_{i'}^+)$, $x^-\in N_i\cap\pi_x(\alpha(d\gamma))_{t=0}$, $x^+\in N_{i'}\cap\pi_x(\omega(d\gamma))|_{t=0}$, where $k^-_i, k^+_i\in\mathbb{Z}^+$ are the sequences such that $\gamma(-k^-_i)\to x^-$ and $\gamma(k^+_i)\to x^+$.}

The set of curves starting from $V_i^-$ and reaching $V_{i'}^+$ with time $k^-+k^+$ make up a neighborhood of the curve $\gamma$ in the space of curves. If it touches the boundary of this neighborhood, the action of $L_{c,v,u}$ along a curve $\xi$ will be larger than the action along $\gamma$. The local minimality is crucial in the variational construction of global connecting orbits.

Next, we formulate the theorem for autonomous Lagrangian. As the Lagrangian is independent of time, one angle variable plays the role of time. Given a first cohomology class, some coordinate system exists $G_c^{-1}x$ such that $\omega_1(\mu)>0$ for each ergodic $c$-minimal measure $\mu$ if $\alpha(c)>\min\alpha$, where we use $\omega(\mu)=(\omega_1(\mu),\cdots,\omega_n(\mu))$ to denote the rotation vector of the invariant measure  (see \cite{Lx}). For this purpose, we work in a covering space $\bar\pi :\bar M=\mathbb{R}\times\pi_{ -1}\check M$, where $\pi_{ -1}$ denotes the operation to eliminate the first entry, $\pi_{ -1}(x_1,x_2,\cdots,x_{m})=(x_2,\cdots,x_m)$, the dimension $\mathbb{R}$ is for the coordinate $x_1$, $\check M=\mathbb{T}^{j+\ell}\times\mathbb{T}^{k-1}\times 2\mathbb{T}$ if the Aubry set consists of only one class which stays in a neighbourhood of $(j,\ell)$-torus and $\check M=\mathbb{T}^{j+k+\ell}$ if the Aubry set contains two classes.

\begin{theo}\label{connecting-lemma}
For the autonomous $C^2$-Lagrangian $L$: $T\mathbb{T}^{j+k+\ell}\to\mathbb{R}$ and the first cohomology class $c\in H^1(\mathbb{T}^{j+k+\ell},\mathbb{R})$ we assume the conditions as follows:
\begin{enumerate}
    \item $\omega_1(\mu)>0$ holds for each ergodic $c$-minimal measure.
    \item the Aubry set $\mathcal{A}(c,\check M)$ contains two classes $\{\mathcal{A}_{c,i},\mathcal{A}_{c,i'}\}$, both stay in a neighbourhood of $(j,\ell)$ torus, i.e. $\mathcal{A}_{c,i}\subset N_i$, $\mathcal{A}_{c,i'}\subset N_{i'}$. These neighborhoods are separated, i.e. $\bar N_{i}\cap\bar N_{i'}=\varnothing$. The lift of both $N_{i}$ and $N_{i'}$ to $\bar M$ is still connected and extends to $x_1=\pm\infty$;
    \item there exist topological disks $\{O_m\subset\pi_{ -1}(\mathbb{T}^{j}\times\mathbb{T}^k)\}$ with $\bar O_m\cap\bar O_{m'}=\varnothing$ for $m\neq m'$, such that each connected component of $$
        \mathrm{Argmin}\{B_{c,i,i'},\Sigma_0\backslash N_i\cup N_{i'}\}
        $$
        is contained in certain $\{x_1=0\}\times O_m\times\mathbb{T}^{\ell}$, where $\Sigma_0=\{x_1=0\}\times\pi_{ -1}\check M$ is a section of $\bar M$.
\end{enumerate}
Then, for $c'\in H^1(\mathbb{T}^{j+k+\ell},\mathbb{R})$ satisfying following conditions
\begin{enumerate}
    \item $\alpha(c')=\alpha(c)$;
    \item $\langle c'-c,g\rangle =0$ holds $\forall$ $g\in H_1(\mathbb{T}^{j+k+\ell},\mathbb{T}^{j+k}, \mathbb{Z})$ and $|c'-c|\ll 1$;
    \item the Aubry set $\mathcal{A}(c')\subset N_i\cup N_{i'}$;
\end{enumerate}
there exists an orbit $(\gamma,\dot\gamma)$ of $\phi_L^t$ which connects $\widetilde{\mathcal{A}}(c)$ to $\widetilde{\mathcal{A}}(c')$ in the following sense, the $\alpha$-limit set of $(\gamma,\dot\gamma)$ is contained in $\widetilde{\mathcal{A}}(c)$, the $\omega$-limit set of $(\gamma,\dot\gamma)$ is contained in $\widetilde{\mathcal{A}}(c')$ or vice versa.
\end{theo}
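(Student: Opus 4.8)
The plan is to deduce Theorem \ref{connecting-lemma} from its time-periodic counterpart, Theorem \ref{ThmTypeht}, by means of the standard energetic (Maupertuis) reduction in which the angle $x_1$ plays the role of the new time. Since $\alpha(c)=\alpha(c')=:E$, the Mather sets $\tilde{\mathcal A}(c)$ and $\tilde{\mathcal A}(c')$ lie on the same energy level $\{H_L=E\}$ of the Hamiltonian $H_L$ conjugate to $L$. Hypothesis (1), namely $\omega_1(\mu)>0$ for every ergodic $c$-minimal measure, persists for $c'$-minimal measures once $|c'-c|\ll1$; consequently, in the coordinate system $G_c^{-1}x$ of \cite{Lx} the velocity component $\dot x_1$ is bounded away from zero along all $c$- and $c'$-semi-static curves and in a neighborhood of the relevant Aubry and Ma\~n\'e sets. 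Solving $H_L(x_1,\hat x,y_1,\hat y)=E$ for $y_1=-\mathsf H(\hat x,\hat y,x_1)$, with $\hat x=(x_2,\dots,x_{j+k+\ell})$, yields an $x_1$-periodic Tonelli Hamiltonian $\mathsf H$ (and Lagrangian $\mathsf L$) on the torus $\mathbb T^{j+k+\ell-1}$ of one lower dimension, exactly as in the construction underlying Theorem \ref{1mored}; since $x_1$ is one of the circulating base directions of the $(j,\ell)$-torus near which $\mathcal A(c)$ sits, the reduced Aubry set sits near a $(j-1,\ell)$-torus, and the reduction carries the data of Theorem \ref{connecting-lemma} to the data of Theorem \ref{ThmTypeht} with parameters $(j-1,k,\ell)$.

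First I would verify this dictionary in detail: restriction to $\{x_1=0\}$ is passage to the time-$0$ section of the non-autonomous system, so the separated neighborhoods $N_i,N_{i'}$, the disks $O_m$, the section $\Sigma_0$, and the barrier function $B_{c,i,i'}$ all descend to their non-autonomous analogues for $\mathsf L$, and condition (3) for $c$ becomes condition (2) of Theorem \ref{ThmTypeht}. On the $c'$ side, $\alpha(c')=\alpha(c)$ guarantees that the reduction is taken at the same energy, $\mathcal A(c')\subset N_i\cup N_{i'}$ descends directly, and since the natural map $H_1(\mathbb T^{j+k+\ell},\mathbb T^{j+k},\mathbb Z)\to H_1(\mathbb T^{j+k+\ell-1},\mathbb T^{j+k-1},\mathbb Z)$ is onto, the vanishing condition $\langle c'-c,g\rangle=0$ is preserved. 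Theorem \ref{ThmTypeht} then produces an orbit of $\phi_{\mathsf L}^\tau$ connecting $\tilde{\mathcal A}(c_{\mathrm{red}})$ to $\tilde{\mathcal A}(c'_{\mathrm{red}})$; reparametrizing back by the original time $t$ lifts it to an orbit $(\gamma,\dot\gamma)$ of $\phi_L^t$ with the asserted $\alpha$- and $\omega$-limit behaviour, and the local-minimality property \eqref{localeq1} is inherited because the time reparametrization is an action-preserving bijection between minimizing configurations.

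The main obstacle I expect is making the energetic reduction \emph{uniform and global}, rather than the formal bookkeeping above. One must show that $\omega_1(\mu)>0$ for every ergodic minimal measure in fact forces $\dot x_1>0$ with a uniform positive lower bound on a full neighborhood of $\mathcal N(c)\cup\mathcal N(c')$ --- large enough to contain all the minimizers arising in the variational problem of Theorem \ref{ThmTypeht}, including those touching the boundary of the balls $V_i^-\times V_{i'}^+$ --- since otherwise the reparametrization degenerates and the two variational problems cease to be equivalent near that boundary. This is handled by compactness of the set of semi-static curves together with upper semicontinuity of the Ma\~n\'e set in the cohomology class: $\dot x_1$ is continuous and strictly positive on the compact set of $c$-semi-static (and $c'$-semi-static) curves, hence bounded below, and the bound survives for nearby curves and nearby classes. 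A secondary delicate point is the covering-space bookkeeping: one must check that the lift $\bar M=\mathbb R\times\pi_{-1}\check M$ used to arrange $\dot x_1>0$ is compatible with the covering $\mathbb T^{j+\ell}\times\mathbb T^{k-1}\times 2\mathbb T$ that splits a single Aubry class into the two classes $\mathcal A_{c,i},\mathcal A_{c,i'}$ to which Theorem \ref{ThmTypeht} is applied.
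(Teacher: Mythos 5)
Your route — energetic reduction with $-x_1$ as new time, followed by an application of the time-periodic Theorem~\ref{ThmTypeht} — is genuinely different from the paper's, and it has a gap that your own ``main obstacle'' paragraph does not close. To solve $H_L=E$ for $y_1$ and obtain a globally defined, $x_1$-periodic Tonelli Lagrangian $\mathsf L$ on $T\mathbb{T}^{j+k+\ell-1}\times\mathbb{T}$, you need $\partial_{y_1}H_L\neq0$ (equivalently $\dot x_1$ bounded away from zero) not merely along the semi-static curves and their limit sets, but on the entire region of phase space swept by \emph{all competitors} of the fixed-endpoint variational problem in Theorem~\ref{ThmTypeht} — i.e.\ arbitrary absolutely continuous curves joining $V_i^-$ to $V_{i'}^+$, which a priori explore the whole configuration space and all velocities. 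Hypothesis (1) only controls rotation vectors of ergodic minimal measures; compactness of the set of semi-static curves plus upper semi-continuity of the Ma\~n\'e set, as you propose, gives a uniform lower bound on $\dot x_1$ only in a neighborhood of $\mathcal N(c)\cup\mathcal N(c')$, not on the domain where the reduced action functional must be defined and coercive. Compare Theorem~\ref{1mored}, which explicitly \emph{assumes} $\partial_{y_n}H\neq0$ on a slab of the energy level, and the use of the energetic reduction in Section~\ref{SSCompleteRes}, where the term $\eps^{-1/2}\omega_n Y_n$ dominates and makes this hypothesis automatic; neither circumstance is available here. One could extend $\mathsf L$ artificially to a global Tonelli Lagrangian, but then one must show a posteriori that the minimizers of the extended problem stay where the extension agrees with the true reduction, which is essentially the localization problem all over again.

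The paper avoids the reduction entirely. It keeps $t$ as time and works on the covering space $\bar M=\mathbb{R}\times\pi_{-1}\check M$ with a \emph{space-step} Lagrangian: the modification $\chi\langle c'-c-\partial u,\dot x\rangle$ is switched on across the section $\Sigma_0$ in the $x_1$ variable rather than in time, and the bump singling out the component of $\mathrm{Argmin}\{B_{c,i,i'},\cdot\}$ in $S_{ii'}$ is the term $w\langle\partial(\tau\circ s'),\dot x\rangle$ built from an arc-length parameter along the semi-static curves. The role you assign to ``fixed time $=$ fixed increment of $x_1$'' is played instead by minimizing over the transition time $T$: Lemma~\ref{semicontinuitylem2} (which is where $\omega_1(\mu^\pm)>0$ actually enters) shows $h_L^T(\bar m_0,\bar m_1)\to\infty$ both as $T\to0$ and as $T\to\infty$, so the infimum over $T$ is attained and the class $\mathscr C(L_{c,v,u})$ of pseudo connecting curves is nonempty; upper semi-continuity then shows the modification terms do not contribute to the Euler--Lagrange equation along these minimizers, so they are genuine orbits of $\phi_L^t$. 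If you want to salvage your reduction approach, you would at minimum have to restrict the reduced variational problem to a neighborhood where $\dot x_1>0$ and prove that minimizers cannot leave it — at which point you are reproving Lemma~\ref{semicontinuitylem2} and Proposition~\ref{semicontinuitypro1} in disguise.
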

\brk{\rm For autonomous system, barrier function keeps constant along minimal curve. The intersection of minimal curves of autonomous system with the section $\Sigma_c$ is an analogy of $\mathcal{A}_0(c)$ and $\mathcal{N}_0(c)$ for time-periodic system.}
\erk

To prove this theorem and establish an analogous inequality of (\ref{localeq1}), we need some notations and definitions. A Lagrangian $L$: $T\bar M\to\mathbb{R}$ is called space-step if there exist Lagrangian $L^-,L^+\in C^2(T\mathbb{T}^{j+k+\ell},\mathbb{R})$, such that $L^-(x_1,\cdot)|_{(-\infty,-\delta)}= L(x_1,\cdot)|_{(-\infty,-\delta)}$ and $L^+(x_1,\cdot)|_{(\delta,\infty)}=L(x_1,\cdot)|_{(\delta,\infty)}$ where we treat $L^{\pm}$: $TM\to\mathbb{R}$ as its natural lift to $T\mathbb{T}^{j+k+\ell}$. We assume some conditions:
\begin{enumerate}
   \item $\omega_1(\mu^{\pm})>0$ for each ergodic minimal measure $\mu^{\pm}$ of $L^{\pm}$ respectively;
   \item $\min\beta_{L^-}=\min\beta_{L^+}$, without losing of generality, it equals zero;
   \item $|L^--L^+|\le\frac 12\min_{\omega_1=0}\{ \beta_{L^-}(\omega'),\beta_{L^+}(\omega')\}$.
\end{enumerate}
 As the minimal average action of $L^{\pm}$ is achieved on $\text{\rm supp}\mu^{\pm}$ with $\omega(\mu^{\pm})=0$, one has $\min_{\omega_1(\nu)\neq0}\int L^{\pm}d\nu>\min\int L^{\pm}d\nu$, so the third condition makes sense. To introduce the concept of minimal curve for space-step Lagrangian, we define
\begin{equation*}
h_{L}^{T}(\bar m_0,\bar m_1)=\inf_{\stackrel{\bar\gamma(-T)=\bar m_0} {\scriptscriptstyle \bar\gamma(T)=\bar m_1}}A_L(\bar\gamma|_{[-T,T]}), \qquad \forall\ \bar m_0,\bar m_1\in\bar M,
\end{equation*}
where
$$
A_L(\bar\gamma|_{[-T,T]})=\int_{-T}^{T}L(\bar\gamma(t),\dot{\bar\gamma}(t))dt.
$$

To generalize semi-static curve to space-step Lagrangian, we first define a set $\mathscr{G}(L)$ of minimal curves. We have the following lemma.
\begin{lem}\label{semicontinuitylem2}
If the rotation vector of each ergodic minimal measure has positive first component $\omega_1(\mu^{\pm})>0$, $\bar m_0\neq\bar m_1$, then
$$
\lim_{T\to 0}h_{L}^{T}(\bar m_0,\bar m_1)=\infty \ \ \ \
 and\ \ \ \ \lim_{T\to\infty}h_{L}^{T}(\bar m_0,\bar m_1)=\infty.
$$
\end{lem}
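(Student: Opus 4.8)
\emph{Strategy and the limit $T\to 0$.} I would treat the two limits separately: the $T\to 0$ statement is elementary, while the $T\to\infty$ statement is where the hypothesis $\omega_1(\mu^{\pm})>0$ enters, through the claim that $\beta_L(0)>0$. For $T\to 0$: since $\bar m_0\neq \bar m_1$, put $d:=d(\bar m_0,\bar m_1)>0$; any curve $\bar\gamma\colon[-T,T]\to\bar M$ joining $\bar m_0$ to $\bar m_1$ has $\int_{-T}^{T}|\dot{\bar\gamma}|\,dt\ge d$. The space-step Lagrangian $L$ (which is $C^2$, fiberwise strictly convex and superlinear, being $L^{-}$, resp. $L^{+}$, outside the strip $\{|x_1|\le\delta\}$ and an interpolation of $L^{\pm}$ inside it) is uniformly superlinear: for each $A>0$ there is $B_A$ with $L(x,v)\ge A|v|-B_A$. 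Hence $A_L(\bar\gamma|_{[-T,T]})\ge Ad-2TB_A$, so $\liminf_{T\to 0}h_L^{T}(\bar m_0,\bar m_1)\ge Ad$ for every $A>0$, i.e. the limit is $+\infty$.

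\emph{Reduction of the limit $T\to\infty$.} I claim $\beta_L(0)>0$, where $\beta_L$ is the $\beta$-function of the Euler--Lagrange flow of $L$ on $\bar M$ (see Appendix \ref{AppMather}); equivalently $\int L\,d\mu>0$ for every $\phi_L^{t}$-invariant probability measure $\mu$. Granting this, a closing-up argument finishes. Given a competitor $\bar\gamma\colon[-T,T]\to\bar M$ with the prescribed endpoints, concatenate it with a fixed curve from $\bar m_1$ back to $\bar m_0$ of bounded time $s_0$ and bounded action $K_0$; the resulting loop in $\bar M$, normalized, is a closed (holonomic) measure $\mu$ with $\rho(\mu)=0$, so that $A_L(\bar\gamma|_{[-T,T]})=(2T+s_0)\!\int\! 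L\,d\mu-K_0\ge 2T\beta_L(0)-K_0\to+\infty$. Taking the infimum over $\bar\gamma$ yields $h_L^{T}(\bar m_0,\bar m_1)\to+\infty$.

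\emph{Proof of $\beta_L(0)>0$.} By Mather's theorem the infimum defining $\beta_L(0)$ is attained on an invariant probability measure, and we may pass to an ergodic component $\mu$. Since $dx_1$ is a bounded closed $1$-form on $\bar M$, Poincar\'e recurrence rules out drift of the $x_1$-coordinate, so the support of $\mu$ is bounded in $x_1$ and $\rho_1(\mu)=\int dx_1(v)\,d\mu=0$. Thus this support lies in $\{x_1<-\delta\}$, in $\{x_1>\delta\}$, or meets the strip. In the first two cases $\mu$ is invariant for $\phi_{L^{\mp}}^{t}$ on its support, hence a closed measure for $L^{\mp}$ with $\rho_1(\mu)=0$, so
\[
\int L\,d\mu=\int L^{\mp}\,d\mu\ \ge\ \beta_{L^{\mp}}(\rho(\mu))\ \ge\ \min_{\omega_1=0}\beta_{L^{\mp}}=:e^{\mp}>0,
\]
where positivity of $e^{\mp}$ is exactly the hypothesis that every ergodic minimal measure of $L^{\pm}$ has $\omega_1>0$, combined with $\min\beta_{L^{\pm}}=0$ and the superlinearity and lower semicontinuity of $\beta_{L^{\pm}}$. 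In the remaining case write $L=L^{-}+(L-L^{-})$; by the strip interpolation and condition (3) in the definition of a space-step Lagrangian, $\|L-L^{-}\|_\infty\le\|L^{+}-L^{-}\|_\infty\le\tfrac12\min_{\omega_1=0}\{\beta_{L^{-}},\beta_{L^{+}}\}\le\tfrac12 e$ with $e:=\min(e^{-},e^{+})$, hence $\int L\,d\mu\ge\beta_{L^{-}}(\rho(\mu))-\tfrac12 e\ge e-\tfrac12 e=\tfrac12 e>0$. In every case $\int L\,d\mu\ge\tfrac12 e$, so $\beta_L(0)\ge\tfrac12 e>0$.

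\emph{Main obstacle.} The delicate point is the strip $\{|x_1|\le\delta\}$, where $L$ is neither $L^{-}$ nor $L^{+}$ and the $\beta$-function estimate is not available directly; one must absorb the discrepancy via $\int L\,d\mu\ge\beta_{L^{-}}(\rho(\mu))-\|L-L^{-}\|_\infty$, and it is precisely the smallness hypothesis (3) built into the notion of a space-step Lagrangian that makes this quantitatively sufficient. Everything else is routine Mather/weak-KAM bookkeeping, essentially as in \cite{C12}.
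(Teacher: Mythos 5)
The paper does not actually prove this lemma: it cites Lemma 6.2 of \cite{C12} and refers the reader there. Your argument is correct and is the expected one — superlinearity together with $d(\bar m_0,\bar m_1)>0$ for the $T\to 0$ limit, and for $T\to\infty$ a uniform positive lower bound on $\int L\,d\nu$ over closed measures with $\rho_1(\nu)=0$, extracted from hypothesis (1) (which forces $\min_{\omega_1=0}\beta_{L^{\pm}}>0$) and condition (3) (which absorbs the discrepancy inside the strip). Two minor inaccuracies, neither fatal: the measure obtained by closing up a competitor is holonomic rather than invariant and has $\rho_1=0$ but not necessarily $\rho=0$, so the relevant quantity is $\min_{\omega_1=0}$ of the action over holonomic measures rather than $\beta_L(0)$ — your $\tfrac12 e$ bound already covers this, and it also makes the detour through Mather's attainment theorem (delicate on the noncompact $\bar M$) unnecessary; and the paper's definition of a space-step Lagrangian does not specify $L$ inside the strip $\{|x_1|\le\delta\}$, so your estimate $\|L-L^{-}\|_\infty\le\|L^{+}-L^{-}\|_\infty$ there is an extra interpolation assumption, satisfied by the Lagrangians $L_{c,v,u}$ and $L_{c,u}$ actually used in Section \ref{SVariation} but not forced by the definition as stated.
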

\begin{proof}
Let $\bar\gamma^{T}_{L}$: $[-T,T]\to\bar M$ be the minimizer of $h_{L}^{T}(\bar m_0,\bar m_1)$. Let $m_0=\pi \bar m_0$, $m_1=\pi \bar m_1$, $\zeta$: $[0,1]\to M$ be a smooth curve connecting $m_1$ to $m_0$, $\dot\zeta(0)=\dot{\bar\gamma}^{T}_{L}(T)$ and $\dot\zeta(1)=\dot{\bar\gamma}^{T}_{L}(-T)$. The action of $L^+$ along $\zeta$ is clearly bounded, thus for any $\epsilon>0$, one has $A_{L^+}(\zeta)\le 2T\epsilon$ provided $T$ is sufficiently large. The curve $\xi=\zeta\ast\pi \bar\gamma^{T}_{L}$ determines a holonomic probability measure $\nu^{T}_{L}\in\mathfrak{H}$ such that
$$
\int fd\nu^{T}_{L}=\frac 1{2T+1}\int_{-T}^{T+1}f(\xi(t),\dot\xi(t))dt\qquad \forall\ f\in C(TM,\mathbb{R}).
$$
Since $|\bar\gamma_L^T(T)-\bar\gamma_L^T(-T)|$ is bounded for any $T>0$, one has $\omega_1(\nu^{T}_{L})\to 0$ as $T\to\infty$. By using the third condition, we obtain
\begin{align*}
\frac 1{2T}h_{L}^{T}(\bar m_0,\bar m_1)=&\frac{2T+1}{2T}\int L^+d\nu^{T}_{L}-\frac 1{2T}
\int_0^1 L^+(\zeta(t),\dot\zeta(t))dt \\
&+\frac 1{2T}\int_{-T}^{T}(L-L^+)(\bar\gamma^{T}_{L}(t), \dot{\bar\gamma}^{T}_{L}(t))dt\\
\ge&\int L^+d\nu^{T}_{L}-\frac 12\min_{\omega_1=0}\beta_{L^+}(\omega)-\epsilon>0.
\end{align*}
It implies that $\lim_{T\to\infty}h_{L}^{T}(\bar m_0,\bar m_1)=\infty$. The case for $T\to 0$ is a consequence of the super-linear growth of $L$ in $\dot x$.
\end{proof}

Consequently, the following definition makes sense
\begin{defi}A curve $\bar\gamma:\mathbb{R}\to\bar M$ is in $\mathscr{G}(L)$ if
$$
A_L(\bar\gamma|_{[-T,T]})=\inf_{T'\in\mathbb{R}_+} h_{L}^{T'}(\bar\gamma(-T),\bar\gamma(T)).
$$
\end{defi}
The set $\mathscr{G}(L)$ is nonempty. Denote by $\bar\gamma_{L}(\cdot,\bar m_0,\bar m_1):[-T,T]\to M$ the minimizer such that $\bar\gamma_{L}(-T)=\bar m_0$, $\bar\gamma_{L}(T)=\bar m_1$ and
$$
A(\bar\gamma_{L})=\int_{-T}^{T}L(\bar\gamma_{L}(t),\dot{\bar\gamma}_{L}(t))dt
=\inf_{T'\in\mathbb{R}_+} h_{L}^{T'}(\bar m,\bar m').
$$
Because of Lemma \ref{semicontinuitylem2}, this infimum is attained for finite $T>0$ if $\bar m_0$ and $\bar m_1$ are two different points in $\bar M$. The super-linear growth of $L$ in $\dot x$ guarantees that $T\to\infty$ as $-\bar m_{01},\bar m_{11}\to\infty$, where $\bar m_{i1}$ denotes the first entry of $\bar m_i$ for $i=0,1$. Given an interval $[-T,T]$, for sufficiently large $-\bar m_{01},\bar m_{11}$, the set $\{\bar\gamma_{L}(\cdot,\bar m_0,\bar m_1)|_{[-T,T]}\}$ is
pre-compact in $C^1([-T,T],\bar M)$. Let $T\to\infty$. By diagonal extraction argument, there is a subsequence of $\{\bar\gamma_{L}(\cdot,\bar m_0,\bar m_1)\}$ which converges
$C^1$-uniformly on any compact set to a $C^1$-curve $\bar\gamma$: $\mathbb{R}\to\bar M$. \begin{pro}\label{semicontinuitypro1}
Some number $K>0$ exists so that $|h_{L}^{T}(\bar\gamma(-T), \bar\gamma(T))|\le K$ holds simultaneously for all curve $\bar\gamma\in \mathscr{G}(L)$ and all $T>0$.
\end{pro}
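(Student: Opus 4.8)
The plan is to exploit the space-step structure of $L$: a curve feels only $L^-$ while $x_1<-\delta$ and only $L^+$ while $x_1>\delta$, so the whole difficulty is localized in the compact slab $\{|x_1|\le\delta\}$. Following the argument of the corresponding statement in \cite{C12}, I would first show that every $\bar\gamma\in\mathscr G(L)$ decomposes into three pieces — an incoming ray contained in $\{x_1\le-\delta\}$, a transition arc, and an outgoing ray contained in $\{x_1\ge\delta\}$ — and then bound the $L$-action of each piece by a constant independent of the endpoints. Granting such a decomposition the proposition follows at once: for $\bar\gamma\in\mathscr G(L)$ one has $h_L^T(\bar\gamma(-T),\bar\gamma(T))=A_L(\bar\gamma|_{[-T,T]})$ (by definition of $\mathscr G(L)$ the curve realizes the infimum over all times, hence a fortiori over time $2T$, and $\bar\gamma|_{[-T,T]}$ is itself a competitor for $h_L^T$), and $[-T,T]$ meets at most these three pieces, so $|h_L^T(\bar\gamma(-T),\bar\gamma(T))|\le K$ with $K$ the sum of the three bounds; in particular the estimate is uniform in $T$ and in $\bar\gamma$.

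The two rays are the routine part. Since $\bar\gamma$ minimizes on every compact subinterval and its incoming ray stays where $L=L^-$, that ray is a semi-infinite minimizing orbit of $L^-$ (competitors leaving $\{x_1\le-\delta\}$ being excluded by the detour estimate below); by weak KAM theory it is calibrated by a backward weak KAM solution $u^-$ of $L^-$ at the zero cohomology class, normalized so that $\alpha_{L^-}(0)=0$, which is precisely the hypothesis $\min\beta_{L^-}=0$. Calibration gives $\int_a^b L^-\,dt=u^-(\bar\gamma(b))-u^-(\bar\gamma(a))$ for all $a\le b$ inside the incoming ray, and since $u^-$ descends to the compact torus $\mathbb{T}^{j+k+\ell}$ its oscillation is a fixed finite number; hence the $L$-action of any subarc of the incoming ray has absolute value at most $2\operatorname{osc}u^-$, uniformly. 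The outgoing ray is handled with $u^+$ in the same way. Moreover the $\alpha$-limit set of the incoming ray lies in the Mather set of $L^-$, which by hypothesis (1) carries only measures with $\omega_1>0$; this forces $x_1(\bar\gamma(t))\to-\infty$ as $t\to-\infty$ and symmetrically $x_1(\bar\gamma(t))\to+\infty$ as $t\to+\infty$, so the three pieces genuinely exist and the transition arc has finite length.

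The crux — and the step I expect to cause the most trouble — is to arrange the splitting so that the transition arc has length bounded uniformly over $\mathscr G(L)$, i.e. to show that a minimizer can neither overshoot (descend from $\{x_1\ge\delta\}$ back into $\{x_1\le-\delta\}$) nor linger inside $\{|x_1|\le\delta\}$ for arbitrarily long time. Here hypothesis (3), $|L^--L^+|\le\tfrac12\min_{\omega_1=0}\{\beta_{L^-},\beta_{L^+}\}$, is the key: any such detour carries a displacement with vanishing net $x_1$-rate, and by Mather theory for $L^\pm$ the cost of a long sideways displacement exceeds, per unit time, the number $\min_{\omega_1=0}\beta_{L^\pm}$, which strictly dominates twice the largest saving obtainable by slipping into the cheaper of $L^-,L^+$; comparing $\bar\gamma$ with the competitor obtained by deleting the detour and splicing in a fixed bounded bridge across the slab (built from $L^\pm$-semistatic segments attached to the Mather sets on $\{x_1=\mp\delta\}$) then contradicts minimality once the detour is long enough. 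The same estimate pins $x_1$ to a fixed compact interval along the transition arc, so that arc lies in a fixed compact subset of $\bar M$ and its $L$-action is at most (transition length)$\times\sup|L|$ over that set. Collecting the three constants yields the uniform $K$, completing the proof.
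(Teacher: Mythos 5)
The paper does not actually prove this proposition: it states it and defers to \cite{C12} (``it is shown in \cite{C12} that\dots''), so there is no in-paper argument to compare yours against. Your overall architecture is the intended one, and several pieces are sound: the identity $h_L^T(\bar\gamma(-T),\bar\gamma(T))=A_L(\bar\gamma|_{[-T,T]})$, and the detour estimate (a subarc of duration $S$ with $O(1)$ net $x_1$-displacement has average $L^{\pm}$-action at least $\min_{\omega_1=0}\beta_{L^{\pm}}-o_S(1)$, of which hypothesis (3) lets the step in the Lagrangian recover at most half, so long loitering in or returns to the slab contradict minimality against a bounded splice). That correctly confines the slab-crossing to a time interval of uniformly bounded length.

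There are, however, two genuine gaps. First, the orientation of your decomposition is assumed, not proved, and cannot be proved from membership in $\mathscr{G}(L)$ alone. You justify ``incoming ray in $\{x_1\le-\delta\}$'' by saying its $\alpha$-limit lies in the Mather set of $L^-$, but that presupposes the ray already lives in the $L^-$-region and is semi-static there, which is what needs showing. In fact the statement fails for general elements of $\mathscr{G}(L)$: take $L=L^-=L^+=\frac12|\dot x-\omega_1^*e_1|^2$ with $\omega_1^*>0$ (all three hypotheses hold); the descending line $\bar\gamma(t)=\bar x_0-t\omega_1^*e_1$ satisfies $A_L(\bar\gamma|_{[-T,T]})=\inf_{T'}h_L^{T'}(\bar\gamma(-T),\bar\gamma(T))=4T(\omega_1^*)^2$, so it belongs to $\mathscr{G}(L)$ while $h_L^T\to\infty$. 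The bound is only true for the curves the paper actually uses --- limits of minimizers with $\bar m_{01}\to-\infty$, $\bar m_{11}\to+\infty$, equivalently the pseudo connecting curves of Definition \ref{semicontinuitydef2} --- and your proof must take that ``connecting'' orientation as an input rather than derive it. Second, the calibration of the rays is asserted rather than established: a subarc of the backward ray minimizes the free-time $L$-action among curves in $\bar M$ (fixed $x_1$-lift, with competitors permitted to enter the region where $L\ne L^-$), which is strictly weaker than being $L^-$-semi-static on $\mathbb{T}^{j+k+\ell}$, so the calibration identity is not available. The bounds you need are still true but require a different justification: the lower bound follows from the domination inequality $u^-(\gamma(b))-u^-(\gamma(a))\le\int_a^b L^-$, valid for \emph{every} curve with no minimality hypothesis; the upper bound requires exhibiting an explicit competitor in $\bar M$ of uniformly bounded action, e.g.\ a genuine forward semi-static segment of $L^-$ (calibrated, hence of action at most $\mathrm{osc}\,u^-$) run up to the $x_1$-level of $\bar\gamma(b)$ and closed off by a short arc, using that all minimal measures of $L^-$ drift upward in $x_1$. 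With those two repairs your outline becomes a correct proof.
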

\begin{proof}
By the assumption, one has $\alpha_{L^{\pm}}(0)=\min\beta_{L^{\pm}}=0$. So, some $K'>0$ exists such that $|A(\gamma|_I)|\le K'$ holds for any interval $I\subset\mathbb{R}_+(\mathbb{R}_-)$ provided it is a forward (backward) semi-static curves for $L^{+}$ ($L^-$). Also, some $K''>0$ exists such that
$$
-K''\le\max_{\bar x,\bar x'\in \{x\in\bar M:|x_1|\le 1\}}\inf_{T\ge 0}h^T_L(\bar x,\bar x')\le K''.
$$
We claim that $K\le 2K'+K''$.

If there exists some $\bar\gamma\in\mathscr{G}(L)$ and some $T>0$ such that  $h_{L}^{T}(\bar\gamma(-T), \bar\gamma(T))>2K'+K''$, we join $\bar\gamma(-T)$ to $\bar\gamma(T)$ by another curve $\xi=\bar\gamma_-\ast\zeta\ast\bar\gamma_+$ where $\bar\gamma_-$ is a lift of backward semi-static curve $\gamma_-$ for $L_-$ such that $\bar\gamma(-T)=\bar\gamma_-(0)$, denote by $\bar x_-$ the intersection point of this curve with the section $\{\bar x\in\bar M:\bar x_1=-1\}$,  $\bar\gamma_+$ is a lift of forward semi-static curve $\gamma_+$ for $L_+$ such that $\bar\gamma(T)=\bar\gamma_+(0)$, denote by $\bar x_+$ the intersection point of this curve with the section $\{\bar x\in\bar M:\bar x_1=1\}$, $\zeta$ is a minimal curve of $L$ that connects the point $\bar x_-$ to $\bar x_+$. Obviously, one has $A_L(\xi)\le 2K'+K''<h_{L}^{T}(\bar\gamma(-T), \bar\gamma(T))$, but it contradicts the definition of $\mathscr{G}(L)$.
\end{proof}

Each $k\in\mathbb{Z}$ defines a Deck transformation ${\bf k}:\bar M\to\bar M$: ${\bf k}x=(x_1+k,x_2,\cdots,x_n)$. Let $\bar M^-_{\delta}=\{x\in\bar M:x_1<-\delta\}$, $\bar M^+_{\delta}=\{x\in\bar M:x_1>\delta\}$. With this notation we are able to define the set of {\it pseudo connecting curve}.
\begin{defi}[pseudo connecting curve]\label{semicontinuitydef2}
A curve $\bar\gamma\in\mathscr{G}(L)$ is called pseudo connecting curve if the following holds
$$
A_L(\bar\gamma|_{[-T,T]})=\inf_{\stackrel{\stackrel{T'\in\mathbb{R}_+} {\scriptscriptstyle {\bf k}^-\bar\gamma(-T)\in \bar M^-_{\delta}}}{\scriptscriptstyle {\bf k}^+ \bar\gamma(T)\in \bar M^+_{\delta}}} h_{L}^{T'}({\bf k}^- \bar\gamma(-T),{\bf k}^+\bar\gamma (T))
$$
for each $\bar\gamma(T)\in \bar M^-_{\delta}$ and $\bar\gamma(T)\in\bar M^+_{\delta}$. Denote by $\mathscr{C}(L)$ the set of all pseudo connecting curves.
\end{defi}

\begin{lem}\label{semicontinuitylem3}
The set $\mathscr{C}(L)$ is non-empty.
\end{lem}
\begin{proof} Let us start with a curve $\bar\gamma\in\mathscr{G}(L)$. Given $\Delta>0$, if some interval $[t^-_i,t^+_i]$  exists such that ${\bf k}^-_i \bar\gamma(t^-_i)$ can be connected to ${\bf k}^+_i\bar\gamma(t_i^+)$ by another curve $\zeta_i$ with smaller action
$$
A_L(\gamma|_{[t_i^-,t_i^+]})-A_L(\zeta_i)\ge\Delta>0,
$$
then one obtain a curve $\bar\gamma_i={\bf k}^-_i \bar\gamma|_{(-\infty,t^-_i]}\ast\zeta\ast{\bf k}^-_i \bar\gamma|_{[t^+_i,\infty)}$ by one step of such surgery.

Given any $\Delta>0$, we claim that there are finitely many intervals $[t_i^-,t_i^+]$ with $t_i^+\le t_{i+1}^-$ such that ${\bf k}^-_i \bar\gamma(t^-_i)$ can be connected to ${\bf k}^+_i\bar\gamma(t_i^+)$ by another curve $\zeta_i$ with the action $\Delta$ smaller than the original one.
Let us assume the contrary. Then, for any positive integer $m$, some large $T>0$ exists such that $[-T,T]\supset \cup_{i=1}^m [t_i^-,t_i^+]$. We can choose arbitrarily many of such intervals such that  either $t_1^->\delta$ or $t^+_m<-\delta$. In the first case, let $\bar x^-=\bar\gamma(-T)$ and $\bar x^+=\Pi_{\ell=1}^m{\bf k}^-_{\ell}{\bf k}^+_{\ell}\bar\gamma(T)$. By assumption, these two points can be connected by a curve $\zeta$ along which the action $A_L(\zeta)\le K-m\Delta$ as it follows from Proposition \ref{semicontinuitypro1} that $A_L(\bar\gamma|_{[-T,T]})\le K$. Since $m$ can be arbitrarily large, it implies the existence of a curve along which the action of $L$ approaches to minus infinity, it also contradicts Proposition \ref{semicontinuitypro1}.

Given a curve $\bar\gamma\in\mathscr{G}(L)$ and any small $\epsilon_i>0$, by finitely many steps of such surgery, we obtain a curve $\bar\gamma_i:\mathbb{R}\to\bar M$ with following properties:

1, for each small $\epsilon_i>0$, some large $T_i$ exists such that $\bar\gamma(-T_i)\in\bar M^-_{\delta}$, $\bar\gamma(T)\in\bar M^+_{\delta}$ and
$$
A_L(\bar\gamma_i|_{[-T_i,T_i]})\le\inf_{\stackrel{\stackrel{T'\in\mathbb{R}_+} {\scriptscriptstyle {\bf k}^-\bar\gamma(-T)\in \bar M^-_{\delta}}}{\scriptscriptstyle {\bf k}^+ \bar\gamma(T)\in \bar M^+_{\delta}}}h_{L}^{T'}({\bf k}^-\bar\gamma_i(-T),{\bf k}^+\bar\gamma_i(T)) +\epsilon_i.
$$

2, $\bar\gamma_i$ is smooth everywhere except for two points which fall beyond the region $\{x\in\bar M: |x_1|\le\Theta_i\}$, and $\Theta_i\to\infty$ as $\epsilon_i\to 0$.

Let $T'_i>0$ such that $\bar\gamma_{i1}(\pm T'_i)=\pm\Theta_i$. Because of Lemma
\ref{semicontinuitylem2}, we see that $T'_i\to\infty$ as $\Theta_i\to\infty$. In virtue of the argument before, for any large $T$ $\exists$ $i_0>0$ such that the set $\{\bar\gamma_i|_{[-T,T]}:i\ge i_0\}$ is pre-compact in $C^1([-T,T],\bar M)$. Let $T\to\infty$, by diagonal extraction argument, there is a subsequence of $\{\bar\gamma_i\}$ which converges $C^1$-uniformly on each compact set to a $C^1$-curve $\bar\gamma$: $\mathbb{R}\to\bar M$. Obviously, $\bar\gamma\in\mathscr{C}(L)$.
\end{proof}

\begin{theo}\label{semicontinuitythm3}
The map $L\to\mathscr{C}(L)$ is upper semi-continuous.
\end{theo}
\begin{proof}
Let $\bar\gamma_i\in\mathscr{C}(L_i)$, $L_i\to L$. If $\{\bar\gamma_i\}$ converges $C^1$-uniformly on each compact set to a $C^1$-curve $\bar\gamma$, it is obvious that $\bar\gamma\in\mathscr{C}(L)$.
\end{proof}
Obviously, if the space-step Lagrangian $L$ is periodic in $x_1$, then a curve $\bar\gamma\in\mathscr{C}(L)$ if and only if its projection $\gamma=\pi \bar\gamma$: $\mathbb{R}\to M$ is semi-static.

\begin{proof}[Proof of Theorem \ref{connecting-lemma}]
In autonomous system, $\widetilde{\mathcal{A}}(c)$ can be connected to $\widetilde{\mathcal{A}}(c')$ only if $\alpha(c)=\alpha(c')$. If $c,c'\in\alpha^{-1}(\min\alpha)$, then $\widetilde{\mathcal {A}}(c)\cap\widetilde{ \mathcal {A}}(c')\neq\varnothing$ (see \cite{Ms}), it is trivial to connect an Aubry set to itself. So, we only need to work on the energy level set $H^{-1}(E)$ with $E>\min\alpha$. Under this condition, there exists a coordinate system so that $\omega_1(\mu)>0$ holds for each ergodic minimal measure of $c$ and $c'$ if they are close to each other.

The section $\Sigma_0$ separates $\bar M$ into two parts, the upper part $\bar M^+$ extending to $\{x_1=\infty\}$ and the lower part $\bar M^-$ connected to $\{x_1=-\infty\}$. Denote a $\delta$-neighborhood of $\Sigma_0$ in $\bar M$ by $\Sigma_0+\delta$, we introduce a smooth function $\chi\in C^r(\bar M,[0,1])$ such that $\chi=0$ if $x\in\bar M^- \backslash (\Sigma_0+\delta)$, $\chi=1$ if $x\in\bar M^+\backslash (\Sigma_0+\delta)$.

For those $c'$ such that $\langle c'-c,g\rangle =0$ holds for each $g\in H_1(\mathbb{T}^{j+k+\ell},\mathbb{T}^{j+k},\mathbb{Z})$, there exists a smooth function $u:\bar M\to\mathbb{R}$ so that $\partial u=c'-c$ if $x\in\{|x_1|<\epsilon\}\times(O_m+\epsilon)\times\mathbb{T}^{\ell}$ and $\partial u=0$ if $x\notin \{|x_1|<2\epsilon\}\times(O_m+2\epsilon)\times\mathbb{T}^{\ell}$.

Without lose of generality we assume $\widetilde{\mathcal{A}}(c')\cap N_{i'}\ne\varnothing$. We consider the set of semi-static curves which generate orbits connecting the Aubry class $\widetilde{\mathcal{A}}_{c,i}$ to another class $\widetilde{\mathcal{A}}_{c,i'}$. The lift of the curves to $\bar M$ intersect the section $\Sigma_0$ in the set $\mathrm{Argmin}\{B_{c,i,i'},\Sigma_0\}$. We pick up a connected component of this set contained in certain tubular domain $S_{ii'}=\{x_1=0\}\times O_m\times\mathbb{T}^{\ell}$. Let $\bar\gamma_{ii'}(t,x)$ denote the lift of semi-static curves $\gamma_{ii'}(t,x)$ so that $\bar\gamma_{ii'}(0,x)=x\in S_{ii'}$. As all curve in $\{\gamma_{ii'}(t,x)\}$ take $\mathcal{A}_{c,i}$ as their $\alpha$-limit set and take $\mathcal{A}_{c,i'}$ as their $\omega$-limit set, $\dot{\bar\gamma}_{ii'}(t,x)$ is Lipschitz in $x\in\mathrm{Argmin}\{B_{c,i,i'},S_{ii'}\}$. We extend these curves to the whole $S_{ii'}$, so that $\dot{\bar\gamma}_{ii'}(t,x)$ is still Lipschitz in $x$ although the extended curves $\{\bar\gamma_{ii'}(t,x)\}$ do not generate orbits of $\phi_L^t$ if $x\notin\mathrm{Argmin}\{B_{c,i,i'},S_{ii'}\}$. By deforming $\Sigma_0\to\Sigma'$ we can assume that these curves pass transversally through the section $\Sigma'$.

Let $s=s(\bar\gamma_{ii'}(t,x))$ denote the arc-length of the curve from $\bar\gamma_{ii'}(0,x)$ to $\bar\gamma_{ii'}(t,x)$ in the Euclidean metric such that $s(\bar\gamma_{ii'}(0,x))=0$ and $s(\bar\gamma_{ii'}(t,x))>0$ if $t>0$. We approximate the function $s$ by a smooth function $s'$ in the tubular domain made up by the curves $\{\bar\gamma_{ii'}(t,x)\}$ with $\bar\gamma_{ii'}(0,x)\in S_{ii'}$. Let $\tau$: $\mathbb{R}\to[0,1]$ be a smooth function so that $\tau=0$ if $s\le 0$, $\tau=1$ if $s\ge s_0$ and $\dot\tau>0$ if $s\in(0,s_0)$. Let $w\in C^r(T\bar M,[0,1])$ such that $w=1$ when $(x,\dot x)$ is restricted in $\{(\bar\gamma_{ii'}(t,x),\dot{\bar\gamma}_{ii'}(t,x)):x\in S_{ii'},s\in [0,s_0]\}+\delta$ and $w=0$ when $(x,\dot x)$ does not lie in the set $\{(\bar\gamma_{ii'}(t,x),\dot{\bar\gamma}_{ii'}(t,x)):x\in S_{ii'},s\in [0,s_0]\}+2\delta$.

Next, we are going to show the curves in $\mathscr{C}(L_{c,v,u})$ produce orbits of $\phi_L^t$ connecting $\tilde{\mathcal{A}}(c)$ to $\tilde{\mathcal{A}}(c')$, where the modified Lagrangian $L_{c,v,u}$ is defined as follows
\begin{equation}\label{langrangianforh}
L_{c,v,u}=L-\langle c,\dot x\rangle-w\langle\partial(\tau\circ s'),\dot x\rangle-\chi\langle c'-c-\partial u,\dot x\rangle+\alpha(c),
\end{equation}
where $\chi=0$ if $x\in\bar M^- \backslash (\Sigma_0+\delta)$, $\chi=1$ if $x\in\bar M^+\backslash (\Sigma_0+\delta)$, the function $s'$ is extended to the whole space in any way one likes because $\pi_x\mathrm{supp}w$ is contained in the tubular domain where $s'$ is well-defined, $\pi_x$: $TM\to M$ denotes the standard projection along tangent fibers.
As the first step, let us set $\chi\equiv0$. Because of the upper semi-continuity of $L\to\mathscr{C}(L)$, each curve in $\mathscr{C}(L_{c,v,u})$ either is contained in $\{(\bar\gamma_{ii'}(t,x),\dot{\bar\gamma}_{ii'}(t,x)):x\in S_{ii'}\}+\delta$ or keeps away from the larger tubular domain $\{(\bar\gamma_{ii'}(t,x),\dot{\bar\gamma}_{ii'}(t,x)):x\in S_{ii'}\}+2\delta$. As $w\equiv1$ holds on the smaller tubular domain, the term $\langle\partial(\tau\circ s'),\dot x\rangle$ does not contribute to the Euler-Lagrange equation. By definition, along each orbit $(\gamma_{ii'}(t,x),\dot\gamma_{ii'}(t,x))|_{s\in[0,s_0]}$ one has $\langle\partial(\tau\circ s'),\dot x\rangle>0$. Therefore, in the lift of $\{\gamma_{ii'}(t,x)):x\in S_{ii'}\}$, only those curves are the member of $\mathscr{C}(L_{c,v,u})$ for $\chi=0$ if they pass through the section $S_{ii'}$. Other curves in the lift are not in $\mathscr{C}(L_{c,v,u})$ because they have larger Lagrange action.

Next, we recover the term $\chi\langle c'-c-\partial u,\dot x\rangle$ which is $C^2$-small. Due to the upper semi-continuity again, all curves in $\mathscr{C}(L_{c,v,u})$ must pass through $S_{ii'}$ if they connect $\mathcal{A}_{c,i}$ to $\mathcal{A}_{c',i'}$. As $\partial u=c'-c$ if $x\in\{|x_1|<\epsilon\}\times(O_m+\epsilon) \times\mathbb{T}^{\ell}$ and $\partial u=0$ if $x\notin\{|x_1|<2\epsilon\}\times(O_m+2\epsilon) \times\mathbb{T}^{\ell}$, one can see from the definition of $\chi$ that the term $\chi\langle c'-c-\partial u,\dot x\rangle$ does not contribute to the Euler-Lagrange equation. It implies these curves produce orbits of $\phi_L^t$ which connects $\widetilde{\mathcal{A}}(c)$ to $\widetilde{\mathcal{A}}(c')$.
\end{proof}

The orbit $(\gamma,\dot\gamma)$ obtained here is local minimal in following sense (analogous to \ref{localeq1}):

\noindent{\bf Local minimum}: {\it there exist two $(n-1)$dimensional disks $V_i^-$, $V_{i'}^+\subset\bar M$ and positive numbers $T,d>0$ such that $\bar\pi V_i^-\subset N_i\backslash\mathcal{A}(c)$, $\bar\pi V_{i'}^+\subset N_{i'}\backslash\mathcal{A}(c')$, $\gamma$ transversally passes $\bar\pi V_i^-$ and $\bar\pi V_{i'}^+$ at the time $-T$ and $T$ respectively, and
\begin{equation}\label{localmineq1}
\begin{aligned}
&h_c^{\infty}(x^-,\bar\pi\bar m_0)+h_{L_{c,v,u}}^{T'}(\bar m_0,\bar m_1)+h_{c'}^{\infty}(\bar\pi\bar m_1,x^+) \\
&-\lim_{\stackrel{t^-_i\to\infty}{\scriptscriptstyle t^+_i\to\infty}} \int_{-t^-_i}^{t^+_i}L_{c,v,u} (\gamma(t),\dot\gamma(t))dt-(t_i^-+t_i^+)\alpha(c)>0
\end{aligned}
\end{equation}
holds $\forall$ $(\bar m_0,\bar m_1,T')\in\partial(V_i^-\times V_{i'}^+\times [T-d,T+d])$, $x^-\in N_i\cap\pi_x(\alpha(d\gamma))$ and $x^+\in N_{i'}\cap\pi_x(\omega(d\gamma))$. Where $t^-_i\to\infty$ and $t^+_i\to\infty$ are the sequences such that $\gamma(-t^-_i)\to x^-$ and $\gamma(t^+_i)\to x^+$.}

\subsection{Local connecting orbits of type-$c$}\label{SStypec}
For autonomous system, if $c'$ is equivalent to $c$ with $|c-c'|\ll 1$, then $\langle c'-c,g\rangle=0$ holds for all $g\in H_1(\mathcal{N}(c)\cap\Sigma_c,\mathbb{Z})$ where $\Sigma_c$ is a section of $M$. So there is a function $u$ defined on the whole torus and $\partial u=c'-c$ holds in a small neighborhood of $\mathcal{N}(c)\cap\Sigma_c$. To connect $\widetilde{\mathcal{A}}(c)$ to $\widetilde{\mathcal{A}}(c')$, we work in a coordinate system $G_c^{-1}x$ so that $\omega_1(\mu_c)>0$ holds for each ergdic $c$-minimal measure.  The new coordinate system $G_c$ is chosen so that the lift $\Sigma_c$ to the covering manifold $\bar M$ contains infinitely many compact connected components. We fix one component, denoted by $\Sigma_{c}^0$. Other components in the lift of $\Sigma_c$ are obtained by translating this one by $2k\pi$ in the direction of $x_1$. The section $\Sigma_{c}^0$ separates $\bar M$ into two parts $\bar M^-$ and $\bar M^+$. In $\bar M^{\pm}$, the coordinate $x_1$ can be extended to $\pm\infty$. Let $\text{\rm sign}$ be a sign function defined as $\text{\rm sign}(x)=\pm 1$ if $x\in\bar M^{\pm}$.

Let $L_{c,u}$ be a space-step Lagrangian defined on the covering manifold $\bar M$
\begin{equation}\label{lagrangianforc}
L_{c,u}=L-\langle c,\dot x\rangle-\chi\langle c'-c-\partial u,\dot x\rangle+\alpha(c)
\end{equation}
where $\alpha(c)=\alpha(c')$, $\chi=0$ if $x\in\bar M^- \backslash (\Sigma_0+\delta)$, $\chi=1$ if $x\in\bar M^+\backslash (\Sigma_0+\delta)$. Obviously, for $c'=c$, we have $\bar\pi\mathscr{C}(L_{c,u})=\mathcal{N}(c)$. According to the upper semi-continuity, for sufficiently small $|c'-c|$, the image of each curve $\bar\gamma\in\mathscr{C}(L_{c,u})$ falls in a small neighborhood of $\mathcal{N}(c)$. Therefore, $\langle c'-c-\partial u,\dot x\rangle=0$ holds along this curve when it passes through a small neighborhood of $\Sigma_c^0$. It implies that the term $\chi\langle c'-c-\partial u,\dot x\rangle$ does not contribute to the Euler-Lagrange equation determined by $\bar L$. Therefore, this curve also solves the Euler-Lagrange equation for $L$. Clearly, $\bar\pi(\bar\gamma(t),\dot{\bar\gamma}(t))$ approaches the Aubry set for class $c'$ as $t\to\infty$. Therefore, we have

\begin{theo}[connecting orbits of type-{\it c}]\label{typecthm1}
Assume the cohomology class $c^*$ is $c$-equivalent to the class $c'$ through the path $\Gamma$: $[0,1]\to H^1(\mathbb{T}^n,\mathbb{R})$. For each $s\in [0,1]$, the following are assumed:
\ben
\item there exists a coordinate systems $G_s^{-1}x$ where the first component of rotation vector is positive, $\omega_1(\mu_{\Gamma(s)})>0$ for each ergodic $\Gamma(s)$-minimal measure $\mu_{\Gamma(s)}$;
\item for the covering space $\bar M_s=\mathbb{R}\times\mathbb{T}^{n-1}$ in the coordinate system the lift of non-degenerately embedded codimension-one torus $\Sigma_{\Gamma(s)}$ has infinitely many connected and compact components, each of which is also a codimension-one torus.
\een
Then there exist some classes $c^*=c_0, c_1,\cdots,c_k=c'$ on this path,  closed 1-forms $\eta_i$ and $\bar\mu_i$ on $M$ with $[\eta_i]=c_i$ and $[\bar\mu_i]=c_{i+1}-c_i$,  and smooth functions $\varrho_i$ on $\bar M$ for $i=0,1,\cdots,k-1$, such that the pseudo connecting curve set $\mathscr{C}(L_i)$ for the space-step Lagrangian
$$
L_{c_i,u_i}=L-\langle c_i,\dot x\rangle-\chi_i\langle c_{i+1}-c_i-\partial u_i,\dot x\rangle+\alpha(c_i)
$$
possesses the properties:
\ben
\item[(i)] each curve $\bar\gamma\in\mathscr{C}(L_i)$ determines an orbit $(\gamma,\dot{\gamma})$ of $\phi_L^t$;
\item[(ii)] the orbit $(\gamma,\dot{\gamma})$ connects $\widetilde{\mathcal{A}}(c_{i})$ to $\widetilde{\mathcal{A}}(c_{i+1})$, i.e., the $\alpha$-limit set $\alpha(d\gamma)\subseteq\widetilde{\mathcal{A}} (c_{i})$ and $\omega$-limit set $\omega(d\gamma)\subseteq\widetilde{\mathcal{A}}(c_{i+1})$.
\een
\end{theo}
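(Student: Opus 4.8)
The plan is to build the chain of pseudo-connecting curve sets $\mathscr{C}(L_i)$ by discretizing the $c$-equivalence path $\Gamma$ and patching together the one-step construction carried out in the proof of Theorem \ref{connecting-lemma}. First I would invoke the definition of $c$-equivalence along $\Gamma$: for each $s_0\in[0,1]$ there is $\epsilon(s_0)>0$ such that $\Gamma(s)-\Gamma(s_0)\in\mathbb{V}_{\Gamma(s_0)}^{\perp}$ whenever $|s-s_0|<\epsilon(s_0)$, and $\alpha(\Gamma(s))$ is constant. By compactness of $[0,1]$ I extract a finite partition $0=s_0<s_1<\cdots<s_k=1$, set $c_i=\Gamma(s_i)$, and arrange that consecutive $c_i,c_{i+1}$ both lie within the equivalence radius of each other, so that $c_{i+1}-c_i\in\mathbb{V}_{c_i}^{\perp}$ and $|c_{i+1}-c_i|\ll1$. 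The annihilator condition $c_{i+1}-c_i\in\mathbb{V}_{c_i}^{\perp}$ means precisely that $\langle c_{i+1}-c_i,h\rangle=0$ for all $h\in\mathbb{V}_{c_i}=i_{U*}H_1(U,\mathbb{R})$, where $U$ is a suitable neighborhood of $\mathcal{N}(c_i)\cap\Sigma_{c_i}$; hence there exists a smooth function $u_i$ on $M$ (equivalently a closed $1$-form $\bar\mu_i$ with $[\bar\mu_i]=c_{i+1}-c_i$) with $\partial u_i=c_{i+1}-c_i$ in a neighborhood of $\mathcal{N}(c_i)\cap\Sigma_{c_i}$ and $\partial u_i=0$ away from it. This is the $u_i$ appearing in the statement.

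Next, for each $i$ I would run the type-$c$ construction of Section \ref{SStypec} at the single class $c_i$, using hypotheses (1)--(2): the coordinate system $G_{s_i}^{-1}x$ in which $\omega_1(\mu_{c_i})>0$ for every ergodic $c_i$-minimal measure, and the lift of $\Sigma_{\Gamma(s_i)}$ to the covering $\bar M_{s_i}=\mathbb{R}\times\mathbb{T}^{n-1}$ having infinitely many compact codimension-one connected components. Fix one component $\Sigma^0_{c_i}$, which separates $\bar M_{s_i}$ into $\bar M^-$ and $\bar M^+$, introduce the cutoff $\chi_i\in C^r(\bar M_{s_i},[0,1])$ with $\chi_i=0$ on $\bar M^-\setminus(\Sigma^0_{c_i}+\delta)$ and $\chi_i=1$ on $\bar M^+\setminus(\Sigma^0_{c_i}+\delta)$, and form the space-step Lagrangian
\begin{equation}\label{Eqtypec-split}
L_{c_i,u_i}=L-\langle c_i,\dot x\rangle-\chi_i\langle c_{i+1}-c_i-\partial u_i,\dot x\rangle+\alpha(c_i).
\end{equation}
Here the two halves $L^{\pm}$ of the space-step Lagrangian are $L-\langle c_i,\dot x\rangle+\alpha(c_i)$ and $L-\langle c_{i+1},\dot x\rangle+\alpha(c_i)=L-\langle c_{i+1},\dot x\rangle+\alpha(c_{i+1})$; since $\alpha(c_i)=\alpha(c_{i+1})$ their $\beta$-minima agree, and for $|c_{i+1}-c_i|\ll1$ the closeness condition $|L^--L^+|\le\frac12\min_{\omega_1=0}\{\beta_{L^-},\beta_{L^+}\}$ holds, so Lemma \ref{semicontinuitylem2} and Proposition \ref{semicontinuitypro1} apply and $\mathscr{C}(L_{c_i,u_i})\neq\varnothing$ with a uniform bound $|h^T_{L_{c_i,u_i}}(\bar\gamma(-T),\bar\gamma(T))|\le K$. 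Property (i) then follows exactly as in the proof of Theorem \ref{connecting-lemma}: upper semi-continuity of $L\mapsto\mathscr{C}(L)$ forces any $\bar\gamma\in\mathscr{C}(L_{c_i,u_i})$ to stay in a small neighborhood of $\mathcal{N}(c_i)$, where $\partial u_i=c_{i+1}-c_i$, so $\chi_i\langle c_{i+1}-c_i-\partial u_i,\dot x\rangle$ vanishes identically along the curve and $\bar\gamma$ solves the Euler--Lagrange equation for $L$. Property (ii)---that $\alpha(d\gamma)\subseteq\tilde{\mathcal{A}}(c_i)$ and $\omega(d\gamma)\subseteq\tilde{\mathcal{A}}(c_{i+1})$---comes from the asymptotics of pseudo connecting curves in $\bar M^{\mp}$ together with the definition of $\mathscr{C}(L_{c_i,u_i})$ via Definition \ref{semicontinuitydef2}: in $\bar M^-$ the Lagrangian reduces to $L^-$ whose pseudo connecting curves project to $c_i$-semi-static curves, and in $\bar M^+$ to $L^+$ whose pseudo connecting curves project to $c_{i+1}$-semi-static curves.

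The main obstacle I anticipate is not any single analytic step but the verification that the one-step construction can be applied \emph{uniformly along the whole path} with a single finite partition: the equivalence radius $\epsilon(s)$ depends on $s$, the coordinate change $G_s$ and the section $\Sigma_{\Gamma(s)}$ vary with $s$, and one must check that the cutoff functions, the neighborhoods $U$, and the smallness thresholds on $|c_{i+1}-c_i|$ required for the upper semi-continuity arguments can all be chosen consistently. I would handle this by a standard compactness/Lebesgue-number argument: the sets $\{s':|s'-s|<\epsilon(s)/2\}$ cover $[0,1]$, so a finite subcover gives a uniform mesh, and along each piece the relevant constants (the bound $K$ in Proposition \ref{semicontinuitypro1}, the size of the neighborhood in which $\partial u_i=c_{i+1}-c_i$, the admissible perturbation size) depend continuously on $s$ and hence are uniformly controlled on the compact interval. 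A secondary technical point is ensuring the closed $1$-forms $\eta_i$ with $[\eta_i]=c_i$ and the functions $\varrho_i=u_i$ on $\bar M$ are genuinely globally defined on $M$ (resp.\ $\bar M$); this follows because $c_{i+1}-c_i$ annihilates $\mathbb{V}_{c_i}$, which is exactly the obstruction to finding such a primitive near $\mathcal{N}(c_i)\cap\Sigma_{c_i}$, and the extension to all of $M$ is then unobstructed by a partition-of-unity argument. Once these uniformities are in place, the theorem follows by concatenating the $k$ one-step results.
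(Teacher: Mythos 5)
Your proposal is correct and follows essentially the same route as the paper: discretize the $c$-equivalence path by compactness, use the annihilator condition $c_{i+1}-c_i\in\mathbb{V}_{c_i}^{\perp}$ to build $u_i$ with $\partial u_i=c_{i+1}-c_i$ near $\mathcal{N}(c_i)\cap\Sigma_{c_i}^0$, form the space-step Lagrangian, and invoke upper semi-continuity of $L\mapsto\mathscr{C}(L)$ so that the cutoff term does not contribute to the Euler--Lagrange equation along the minimizers. This is precisely the argument sketched in the paragraph preceding the theorem (imported from Theorem 6.4 of \cite{C12}), so no further comment is needed.
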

\begin{proof}
By the definition of $c$-equivalence, there exists a path $\Gamma$: $[0,1]\to H^1(M,\mathbb{R})$ with $\Gamma(0)=c^*$, $\Gamma(1)=c'$ such that for each $c=\Gamma(s)$ ($s\in [0,1]$) on the path, there exists $\epsilon>0$ such that $\Gamma(s')-c\in \mathbb{V}_{{\Gamma}(s)}^{\bot}$ whenever $s'\in [0,1]$ and
$|s-s'|<\epsilon$. Thus, there exist a non-degenerately embedded  ($n-1$)-dimensional torus $\Sigma_c$,  a closed form $\bar\mu_{c}$ and a neighborhood $U$ of $\mathcal {N}(c)\cap \Sigma_{c}$ such that $[\bar\mu_{c}]=\Gamma(s')-c$ and $\text{\rm supp}\bar\mu_{c}\cap
U=\varnothing$.

In the new coordinates $x\to G^{-1}_{c}x$ on the torus as above, the codimension one hypersurface $\Sigma_c^0$ separates $\bar M$ into two parts, the upper part $\bar M^+$ and the lower part $\bar
M^-$. $\bar M^{\pm}$ extends to where the first coordinate $x_1\to\pm\infty$. Let $\Sigma_c^0+\delta$ denotes the $\delta$-neighborhood of $\Sigma_c^0$ in $\bar M$, we introduce a smooth function $\varrho\in C^r(\bar M,[0,1])$ such that $\varrho=0$ if $x\in\bar M^- \backslash (\Sigma_c^0+\delta)$, $\varrho=1$ if $x\in\bar M^+\backslash(\Sigma_c^0+\delta)$. Let $\eta$ and $\bar\mu$ are closed 1-forms on $M$ such that $[\eta]=c$ and $[\eta+\bar\mu]=c'$. These forms have natural lift on $\bar M$, with the same notation.

A sufficiently small $\delta>0$ can be chosen so that
$$
(\Sigma_{c}^0+\delta)\cap(\mathcal {C}(L+\eta)+2\delta)\subset U,
$$
It follows from the upper semi-continuity of $\mathcal {C}(L)$ w.r.t. $L$, we find
\begin{equation}\label{typeceq1}
(\Sigma_{c}^0+\delta)\cap(\mathcal{C}(L+\eta+\varrho\bar\mu)+\delta)\subset U,
\end{equation}
if $\varrho\bar\mu$ is $C^0$-sufficiently small. As $\bar\mu$ is carefully chosen so that its support is disjoint from $U$, each curve $\bar\gamma\in\mathscr{C}(L+\eta+\varrho\bar\mu)$ is clearly a solution of the Euler-Lagrange equation determined by $L$, the term $\varrho\bar\mu$ has no contribution to the equation along $\bar\gamma$. In other words, each curve in
$\mathscr{C}(L+\eta+\varrho\bar\mu)$ generates an orbit $d\gamma$ of $\phi_L^t$: $\mathbb{R}\to TM$.

The definition of $\mathscr{C}$ tells us that for each curve $\bar\gamma\in\mathscr{C}$, $\gamma|_{(-\infty,t_0]}$ is backward $\Gamma(s)$-semi static once $\bar\gamma|_{(-\infty,t_0]}$ falls entirely into
$\bar M^-\backslash (\Sigma_c^0+\delta)$, $\gamma|_{[t_1,\infty)}$ is forward $\Gamma(s')$-semi static once $\bar\gamma|_{[t_1,\infty)}$ falls entirely into $\bar M^+\backslash (\Sigma_c^0+\delta)$. Therefore,
$(\gamma(t),\dot\gamma(t))\to\tilde{\mathcal {A}}(\Gamma(s))$ as $t\to -\infty$ and $(\gamma(t),\dot\gamma(t))\to\tilde{\mathcal {A}}(\Gamma(s'))$ as $t\to\infty$.

Because of the compactness of $[0,1]$, there are finitely many numbers $s_0,\cdots,s_k\in [0,1]$ such that above argument applies if $s$ and $s'$ are replaced respectively by $s_i$ and $s_{i+1}$. Set $c_i=\Gamma(s_i)$.
\end{proof}

\begin{cor}\label{typeccor1} Let $c_i$, $c_{i+1}$, $\chi_i$ and $u_i$ be defined as in  Theorem \ref{typecthm1}. Let $U_i$ be a neighborhood of $\mathcal {N}(c_i)\cap \Sigma_{c_i}^0$ such that $c_{i+1}-c_i-\partial u_i|_{U_i}=0$. Then, there exist large $K_i>0$, $T_i>0$ and small $\delta>0$ such that for each $\bar m,\bar m'\in\bar M$, with $-K_i\le\bar m_1\le -K_i+2\pi$, $K_i-2\pi \le\bar m'_1\le K_i$, the quantity $h_{L_{c_i,u_i}}^{T}(\bar m,\bar m')$ reaches its minimum at some $T<T_i$ and the corresponding minimizer $\bar\gamma_{i}(t,\bar m,\bar m')$ satisfies the condition
\begin{equation}\label{typeceq2}
\text{\rm Image}(\bar\gamma_{i})\cap(\Sigma_{c_i}^0+\delta)\subset U_i.
\end{equation}
\end{cor}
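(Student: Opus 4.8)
The plan is to combine finiteness and a uniform upper bound for the minimizing time, furnished by Lemma~\ref{semicontinuitylem2} and Proposition~\ref{semicontinuitypro1}, with the upper semi-continuity of the assignment $L\mapsto\mathscr{C}(L)$ recalled before Theorem~\ref{typecthm1}, so as to force the minimizer to meet $\Sigma_{c_i}^0$ only inside the prescribed neighborhood $U_i$. Throughout we may assume $|c_{i+1}-c_i|$ as small as we like, since the intermediate classes of Theorem~\ref{typecthm1} are obtained by subdividing a path and the subdivision may be refined.

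First I would fix $K_i>2\pi$ large enough that the two slabs $\{-K_i\le x_1\le-K_i+2\pi\}$ and $\{K_i-2\pi\le x_1\le K_i\}$ are disjoint and both disjoint from $\Sigma_{c_i}^0+\delta$; then any $\bar m$ in the left slab and $\bar m'$ in the right slab are distinct, and any curve joining them lies partly in $\bar M^-$ and partly in $\bar M^+$, hence crosses $\Sigma_{c_i}^0$. The two ``ends'' $L^{\pm}$ of the space-step Lagrangian $L_{c_i,u_i}$ correspond to $c_i$ and $c_{i+1}$, which lie on the path of Theorem~\ref{typecthm1}, so $\omega_1(\mu^{\pm})>0$ for every ergodic minimal measure. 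Lemma~\ref{semicontinuitylem2} then gives $h_{L_{c_i,u_i}}^{T'}(\bar m,\bar m')\to\infty$ as $T'\to0^+$ and as $T'\to\infty$, so that $\inf_{T'\in\R_+}h_{L_{c_i,u_i}}^{T'}(\bar m,\bar m')$ is attained at some finite $T=T(\bar m,\bar m')$, realized by a curve $\bar\gamma_i(\cdot,\bar m,\bar m')$ by the Tonelli property of $L_{c_i,u_i}$. For the uniform bound $T<T_i$ I would argue by compactness: the two slabs are compact modulo the deck group $\{\mathbf{k}\}$, and $(\bar m,\bar m')\mapsto\inf_{T'}h_{L_{c_i,u_i}}^{T'}(\bar m,\bar m')$ is finite and continuous on them, hence bounded above by some $M_i$; on the other hand, since all ergodic minimal measures of $L^{\pm}$ have positive $x_1$-drift, there are constants $c_0>0$, $C_0$ independent of the slab endpoints with $h_{L_{c_i,u_i}}^{T'}(\bar m,\bar m')\ge c_0T'-C_0$ (a near action-minimizing curve spending a long time in the bounded window $|x_1|\le K_i$ has time-average close to a minimal measure, whose $x_1$-drift is incompatible with such a sojourn). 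Then $T_i:=(M_i+C_0)/c_0+1$ works.

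The main step is showing $\mathrm{Image}(\bar\gamma_i)\cap(\Sigma_{c_i}^0+\delta)\subset U_i$. Here I would use that for $c_{i+1}=c_i$ one has $\bar\pi\mathscr{C}(L_{c_i,u_i})=\mathcal{N}(c_i)$, and that, by the upper semi-continuity of $L\mapsto\mathscr{C}(L)$ from \cite{C12}, for $|c_{i+1}-c_i|$ small every pseudo connecting curve of $L_{c_i,u_i}$ stays in a preassigned small neighborhood of $\mathcal{N}(c_i)$, so meets $\Sigma_{c_i}^0$ only inside the open set $U_i$. Suppose the conclusion failed: there would be slab endpoints whose minimizer $\bar\gamma_i$ crosses $\Sigma_{c_i}^0+\delta$ at a point bounded away from $U_i$, for $K_i$ arbitrarily large. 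Recentring such minimizers in $x_1$ so that they meet $\Sigma_{c_i}^0$ at time $0$ and passing to $K_i\to\infty$, the uniform action bound over the crossing region (Proposition~\ref{semicontinuitypro1} together with the bound of the previous paragraph) yields $C^1$-precompactness on compact time intervals; a subsequential limit is a curve in $\mathscr{C}(L_{c_i,u_i})$ extending to $x_1=\pm\infty$, the deck-transformation minimality of Definition~\ref{semicontinuitydef2} being inherited in the limit, yet it still crosses $\Sigma_{c_i}^0$ at a point away from $U_i$ --- contradicting the previous assertion once $c_{i+1}$ is close enough to $c_i$ and $\delta$ is small enough that $\Sigma_{c_i}^0+\delta$ meets every such curve only in $U_i$. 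Finally, since $c_{i+1}-c_i-\partial u_i$ vanishes on $U_i$, the curve $\bar\gamma_i$ solves the genuine Euler--Lagrange equation of $L$ while it is in $U_i$, which is what the variational construction of Section~\ref{SVariation} requires.

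The hard part is the last paragraph: transferring the upper semi-continuity of $\mathscr{C}(L_{c_i,u_i})$, an a priori statement about bi-infinite curves, to the finite-time minimizers $\bar\gamma_i(\cdot,\bar m,\bar m')$ uniformly over the slab endpoints, i.e. making precise the heuristic ``a minimizer with far-off endpoints is $C^1$-close, near $\Sigma_{c_i}^0$, to a genuine pseudo connecting curve.'' The finiteness of the minimizing time, the uniform bound $T_i$, and the reduction to the genuine $L$-dynamics inside $U_i$ are then routine consequences of the Tonelli machinery and Lemma~\ref{semicontinuitylem2}.
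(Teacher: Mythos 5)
The paper does not actually prove this statement: it is quoted verbatim as Corollary~6.1 of \cite{C12}, so the only basis for comparison is the machinery assembled in Section~\ref{SVariation} (Lemma~\ref{semicontinuitylem2}, Proposition~\ref{semicontinuitypro1}, Definition~\ref{semicontinuitydef2} and the upper semi-continuity of $L\mapsto\mathscr{C}(L)$). Your overall strategy --- finite minimizing time from Lemma~\ref{semicontinuitylem2}, a uniform $T_i$ from compactness of the slabs plus a linear lower bound on the action of long sojourns, and localization of the crossing of $\Sigma_{c_i}^0$ via upper semi-continuity and a limiting argument --- is exactly the intended route, and the first two steps are sound.

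There is, however, a genuine gap at the step you yourself identify as the hard one. You claim that the limits of the fixed-endpoint minimizers $\bar\gamma_i(\cdot,\bar m,\bar m')$, as $K_i\to\infty$, lie in $\mathscr{C}(L_{c_i,u_i})$, ``the deck-transformation minimality of Definition~\ref{semicontinuitydef2} being inherited in the limit.'' But the approximating curves have no deck-transformation minimality to inherit: they minimize $h^{T'}_{L_{c_i,u_i}}(\bar m,\bar m')$ with the endpoints $\bar m,\bar m'$ held \emph{fixed} and arbitrary in the slabs, whereas membership in $\mathscr{C}$ requires beating all curves joining translated endpoints $\mathbf{k}^-\bar\gamma(-T)$ to $\mathbf{k}^+\bar\gamma(T)$. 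The limit therefore lies only in $\mathscr{G}(L_{c_i,u_i})$, which in the unperturbed case $c_{i+1}=c_i$ projects onto $\mathcal{N}(c_i,\bar M)$ rather than $\mathcal{N}(c_i,M)$; as the paper itself notes in Appendix~A, one may have $\pi\mathcal{N}(c,\bar M)\supsetneq\mathcal{N}(c,M)$, so such a limit curve can cross $\Sigma_{c_i}^0$ outside $U_i$ without contradicting the upper semi-continuity of $\mathscr{C}$. To close the gap you must either prove the localization for all bi-infinite curves in $\mathscr{G}(L_{c_i,u_i})$ with $x_1\to\pm\infty$ (for instance by showing, via Proposition~\ref{semicontinuitypro1} and the elementary weak KAM solutions $u^{\pm}_{c_i}$ lifted to $\bar M$, that any such curve crosses $\Sigma_{c_i}^0$ only where the barrier $u^-_{c_i}-u^+_{c_i}$ is close to its minimum, hence in $U_i$), or restrict the corollary to endpoints realizing the deck-minimal value, which is weaker than what is stated and what the gluing in Theorem~\ref{constructionthm1} consumes. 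Two further minor points: ``recentring in $x_1$'' should be a time reparametrization (translating by a deck transformation does not move the crossing point relative to the fixed section $\Sigma_{c_i}^0$), and for the upper bound $M_i$ upper semi-continuity of $(\bar m,\bar m')\mapsto\inf_{T'}h^{T'}$ suffices; continuity need not be invoked.
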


There is some flexibility to choose the coordinate system and the non-degenerately embedded codimension one torus. Let $\pi_s$: $\bar M_s\to M=\mathbb{T}^n$ be a covering space such that $\bar M_s=\mathbb{R} \times\mathbb{T}^{n-1}$ in the coordinate system $G_s^{-1}x$.

\begin{defi}[admissible toral section]\label{typecdef1}
For $s\in [0,1]$, the non-degenerately embedded codimension one torus $\Sigma_s$ is called admissible for the coordinate system $G_s^{-1}x$ if the lift of $\Sigma_s$ to the covering space $\bar M_s$ consists of infinitely many connected and compact components, the first component of the rotation vector is positive $\omega_1(\mu_{\Gamma(s)})$ for each ergodic $\Gamma(s)$-minimal measure.
\end{defi}
\subsection{Global connecting orbits}\label{SGlobalConn}
In this section, we explain how to construct globally connecting orbit from local ones, i.e. Theorem \ref{constructionthm1}.
\begin{proof}[Sketch of the proof of Theorem \ref{constructionthm1}] The proof of this theorem is the same as \cite{LC}. We only sketch the idea of the proof here, readers can refer to \cite{LC} and Section 5 of \cite{CY1,CY2} for the details. Because of the condition of generalized transition chain, there is a sequence $0=s_0<s_1<\cdots<s_k=1$ such that for each $0\le j<k$, $\tilde{\mathcal{A}} (\Gamma(s_j))$ is connected to $\tilde{\mathcal{A}}(\Gamma(s_{j+1}))$ by local minimal orbit either of type-$h$ with incomplete intersection or of type-$c$. The global connecting orbits are constructed shadowing such a sequence of orbits.

Recall the construction of local connecting orbit as above, for each $i\in\{0,1,\cdots,k\}$ let $\eta_i(x,\dot x)=\langle c_i,\dot x\rangle$ and
$$
\mu_i(x,\dot x)=w_i\langle\partial(\tau_i\circ s'_i),\dot x\rangle,\qquad \psi_i(x,\dot x)=\chi_i\langle c_{i+1}-c_i-\partial u_i,\dot x\rangle
$$
in certain coordinate system $G^{-1}_ix$ (see (\ref{langrangianforh}), (\ref{lagrangianforc}) for the definition), if it is for type-$c$, we set $\mu_i=0$. For each integer $k$ we introduce a translation operator on functions $k^*f(x_1,x_2,\cdots,x_n)=f(x_1-k,x_2,\cdots,x_n)$.

Let $\tilde\pi$: $\mathbb{R}^n\to M$ be the universal covering space. For a curve $\tilde\gamma$: $[-K,K']\to\mathbb{R}^n$, let $\gamma=\tilde\pi\tilde\gamma$: $[-K,K']\to M$. Let $\vec{t}=(t_0^-,t_1^{\pm},\cdots, t_{k-1}^{\pm},t_{k}^+)$, $\vec{x}=(\tilde x_0^-,\tilde x_1^{\pm},\cdots, \tilde x_{k-1}^{\pm},\tilde x_{k}^+)$ with $t_i^+<t_i^-<t^+_{i+1}$, $t_0^-=-K$ and $t_k^+=K'$. we consider the minimal action
\begin{align*}\label{constructioneq23}
h_{L}^{K,K'}(m,m',\vec{x},\vec{t})&=\inf\sum_{i=0}^{k}\int^{t_{i}^-}_{t_i^+}(L-\eta_i) (d\tilde\gamma_i^-(t))dt \notag\\
&+\sum_{i=0}^{k-1}\int^{t_{i+1}^+}_{t_{i}^-}(L-\eta_i-(k_iG_i)^*(\mu_i+\psi_i))(d\tilde\gamma_i^+(t))dt
\end{align*}
where the infimum is taken over all absolutely continuous curves $\tilde\gamma$: $[-K,K']\to\mathbb{R}^n$ satisfying the boundary conditions $\tilde\gamma^-(t_i^-)=\tilde\gamma_i^+(t_i^-)=\tilde x_i^-$, $\tilde\gamma_i^+(t_{i+1}^+)=\tilde x_{i+1}^+$ for $i=0,1,\cdots, i_{k-1}$, $\gamma(-K)=m$, $\gamma(K')=m'$. By carefully setting boundary condition we find that the minimizer is smooth everywhere, along which the term $(k_iG_i)^*(\mu_i+\psi_i)$ does not contribute to the Euler-Lagrange equation. It is guaranteed by the local minimality of (\ref{typeceq2}) as well as (\ref{localmineq1}) and setting the translation $k_{i+1}-k_i$ sufficiently large. The condition of incomplete intersection does not cause new difficulty in verifying the smoothness of the minimizer.  Therefore, the minimizer produces an orbit $(\tilde\gamma,\dot{\tilde\gamma})$ of $\phi_L^t$ which has the properties stated in the theorem.
\end{proof}

\section{The proof of genericity}\label{AppGenericity}
\setcounter{equation}{0}
In this section, we present a proof of the genericity property of (H1) type generalized transition chain by applying the ideas and the techniques of \cite{CY1,CY2}.
\subsection{The setting and the main result}
Given a Hamiltonian $H$,  let $\Phi_H^{t,t'}$ denote the Hamiltonian flow of $H$, it maps the initial value at the time-$t$-section to the time-$t'$-section.

We consider two settings, the nonautonomous case (A) and the autonomous case (B):
\begin{enumerate}
\item[(A)] Given a Tonelli Hamiltonian $H(p,q,t):\ T^*\T^{n}\times \T\to \R$.
\begin{enumerate}
  \item there exists a normally hyperbolic and weakly invariant cylinder $\tilde\Pi$, which is a deformation of a standard cylinder $\{(p,q,t)\in T^*\T^{n}\times \T:\ (\hat p_{n-1},\hat q_{n-1})=0.\}$;
  \item there is a continuous path $\Gamma_c$: $[0,1]\to H^1(\mathbb{T}^n,\mathbb{R})$ such that for any $c\in\Gamma_c$, the Aubry set entirely lies in the cylinder $\tilde\Pi$;
 \end{enumerate}
\item[(B)] Given a Tonelli Hamiltonian $H(p,q):\  T^*\T^{n}\to \R$ and an energy level $E>\min\al_H$,
\begin{enumerate}
\item there is a subsystem $G:\ N\to \R$ where $N\subset T^*\T^n$ is a NHIM of the Hamiltonian flow of $H$. Coordinates can be given such that $G$ is a Tonelli system defined on $T^*\T^2$. 
\item there exists a continuous path $\Gamma_c$: $[0,1]\to H^1(\mathbb{T}^n,\mathbb{R})$ such that for any $c\in\Gamma_c$, the Aubry set entirely lies in the level set $\tilde\Pi:=G^{-1}(E)$.

\end{enumerate}
\end{enumerate}

\begin{Not}
\begin{enumerate}
\item Let $\check{\pi}$: $\check{M}\to\mathbb{T}^n$ be a double covering space of $\mathbb{T}^n$ such that the lift of $\tilde\Pi$ to $T^*\check{M}\times\mathbb{T}$ consists two copies, denoted by $\tilde\Pi_\ell$ and $\tilde\Pi_r$. For $c\in\Gamma_c$, if the Aubry set $\tilde{\mathcal{A}}(c)$ is an invariant torus $\tilde\Upsilon_c\subset\tilde\Pi$, its lift also consists of two components, $\tilde\Upsilon_{c,\ell}\subset\tilde\Pi_\ell$ and $\tilde\Upsilon_{c,r}\subset\tilde\Pi_r$. 
\item Let $\tilde\Pi_0$, $\tilde\Pi_{\ell,0}$, $\tilde\Pi_{r,0}$, $\tilde\Upsilon_{c,0}$, $\tilde\Upsilon_{c,\ell,0}$ and $\tilde\Upsilon_{c,r,0}$ denote the time-0-section of $\tilde\Pi$, $\tilde\Pi_{\ell}$, $\tilde\Pi_{r}$, $\tilde\Upsilon_{c}$, $\tilde\Upsilon_{c,\ell}$ and $\tilde\Upsilon_{c,r}$ respectively. 
\item Denote by $\pi$ the projection such that $\pi(p,q,t)=(q,t)$, let $\Upsilon=\pi\tilde\Upsilon$. 
\item Let $\Gamma_c^*\subset\Gamma_c$ such that
$$
\Gamma_c^*=\{c\in\Gamma_c:\,\tilde{\mathcal{A}}(c)\, \text{\rm is an invariant torus}\}.
$$
\end{enumerate}
\end{Not}
We allow two types of perturbations: \begin{enumerate}
\item[(a)]perturbations depending on all the variables and
\item[(b)] perturbations depending only on the angular variables. 
\end{enumerate}The latter case is also called the Ma\~n\'e perturbation. 

Let $B_D\in\mathbb{R}^n$ denotes a ball about the origin of radius $D$. We assume that $D>0$ is suitably large, such that for all $c\in\Gamma_c$ the $c$-minimal orbits of $H$ entirely stay in $B_D\times\mathbb{T}^{n+1}$. Let $\mathfrak{B}_{\epsilon}\subset C^r(B_D\times\mathbb{T}^{n+1},\mathbb{R})$ (or $\subset\mathfrak{B}_{\epsilon}\subset C^r(\mathbb{T}^n$) in the Ma\~n\'e perturbation case) denote a ball about the origin of radius $\epsilon>0$. In the autonomous case, we define $\mathfrak{B}_{\epsilon}$ similarly as subsets in $ C^r(B_D\times\mathbb{T}^{n})$ or $C^r(\mathbb{T}^n)$.

\begin{theo}\label{fundamental} Let $H$ be a $C^r$ Tonelli Hamiltonian $r\geq 2$ as the above case $(A)$ or $(B)$. Then there exists $\epsilon_0=\epsilon_0(H)$ such that for all $\epsilon<\epsilon_0$ and any small $d_1>0$, there exists a set $\mathfrak{O}$ open-dense in $\mathfrak{B}_{\epsilon}$ such that for each $H_\delta\in\mathfrak{O}$, it holds for $H+H_{\delta}$ and simultaneously for all $c\in\Gamma^*_c$ that the diameter of each connected component of the set
$$
\mathcal{N}(c,\check M)|_{t=0}\backslash(\mathcal{A}(c,\check M)+\delta)|_{t=0}\ne\varnothing
$$
is not larger than $d_1$.
\end{theo}

The following subsections are devoted to the proof of this theorem.  In Section \ref{SSBarrier}, we review the definition and basic properties of barrier functions. In the main body of the proof, i.e. Section \ref{SSRegularity}, \ref{SSPerturbation} and \ref{SSFundamental}, we work on the nonautonomous case (A) and perturbations of type (a).
 In Section \ref{SSRegularity}, we parametrize the barrier functions into a H\"older family. In Section \ref{SSPerturbation} we show how to perturb the barrier function through perturbing the Hamiltonian. In Section \ref{SSFundamental} we give the proof of Theorem \ref{fundamental}.
 Finally, in the last Section \ref{SSModification}, we explain how to modify the argument to include the autonomous case (B) as well as Ma\~n\'e perturbations (b). We remark that only Ma\~n\'e perturbations are allowed in Proposition \ref{PropCross3Res} and its decedents Proposition \ref{PropDoubleell} and \ref{PropComplete} on the dynamics crossing triple and higher resonances.

\subsection{Barrier function and semi-static curves}\label{SSBarrier}

Given an Aubry class for $c\in\Gamma_c$ we can define its elementary weak KAM solution. In the covering space $\check{M}$, there are two Aubry classes for $c\in\Gamma_c$, $\tilde\Upsilon_{c,\ell}$ and $\tilde\Upsilon_{c,r}$. To define the elementary weak KAM solution $u^\pm_{c,\ell}$ for $\tilde\Upsilon_{c,\ell}$, 

 We consider a Tonelli Hamiltonian system $H:\ T^*\T^{n}\times \T\to \R$.
If two or more Aubry classes exist, there are infinitely many weak KAM solutions, among which we are interested in so-called {\it elementary weak KAM solution}, obtained from the function $h^{\infty}_c$. Indeed, treated as the function of $(x,t)$, the function $h^{\infty}_c((x,t),(x',t'))$ is a weak KAM solution that determines orbits approaching the Aubry set as the time approaches infinity, treated as the function of $(x',t')$, the function $h^{\infty}_c((x,t),(x',t'))$ is a weak KAM solution that determines orbits approaching the Aubry set as the time approaches minus infinity. Let $(x,t)$ range over an Aubry class, denoted by $\mathcal{A}_{c,i}$ one has a decomposition
$$
h^{\infty}_c((x,t),(x',t'))=u^-_{c,i}(x',t')-u^+_{c,i}(x,t), \qquad \forall\ (x',t')\in \T^n\times\mathbb{T},
$$
where $u^+_{c,i}$ is a constant, and $u^-_{c,i}$ is called elementary weak KAM solution with respect to $\mathcal{A}_{c,i}$. Similarly, let $(x',t')$ range over an Aubry class, one obtains an elementary weak KAM solution $u^-_{c,i}$. Again, for autonomous system, one skips the time component.

For almost every point $(q,t)\in\check{M}\times\mathbb{T}\backslash\Upsilon_{c,\ell}$ the initial condition $(\partial_pu_{c,r}^\pm(q,t)+c,q,t)$ determines a forward (backward) $c$-minimal orbit that approaches $\tilde\Upsilon_{c,r}$ as $t\to\pm\infty$.
For points $(q,t)\in\check{M}\times\mathbb{T}\backslash\Upsilon_{c,r}$, $u_{c,\ell}^\pm$ determines a $c$-minimal orbit approaching $\tilde\Upsilon_{c,\ell}$.

\begin{defi}
The barrier functions for $c\in\Gamma_c$ are defined as follows
$$
B^\ell_c(q,t)=u^-_{c,\ell}(q,t)-u^+_{c,r}(q,t), \qquad B^r_c(q,t)=u^-_{c,r}(q,t)-u^+_{c,\ell}(q,t).
$$
\end{defi}
In the following, we only study $B_c^\ell$. The arguments for $B_c^r$ are the same.
Since the backward weak KAM is semi-concave and the forward weak KAM is semi-convex, the barrier function is semi-concave. Therefore,
\begin{lem}
At each minimal point of $B^\ell_c$, both $u^-_{c,r}$ and $u^+_{c,\ell}$ are differentiable.
\end{lem}
\begin{proof}
By the definition, semi-concave function admits a local decomposition as the sum of a smooth function and a concave function. For a concave function $u$, one can define its sup-derivative $D^+u(x)$ at a point $x$ such that
$u(x+x')-u(x)\le\langle p,x'\rangle$ holds for any $p\in D^+u(x)$ which is a convex set. The function $u$ is differentiable at $x$ iff $D^+u(x)$ is a singleton.

Since $B^\ell_c$ is a sum of two semi-concave functions, its sup-derivative is the sum of the sup-derivatives of $u^-_{c,\ell}$ and $-u^+_{c,r}$. Therefore, $D^+B^\ell_c$ is a single point iff both $D^+u^-_{c,\ell}$ and $D^+(-u^+_{c,r})$ are singleton \cite{CaC}.
\end{proof}

\begin{lem}
If $(q,t)\in\check{M}\times\mathbb{T}\backslash((\Upsilon_{c,\ell}\cup\Upsilon_{c,r})+\delta)$ is a global minimal point of $B^\ell_c$, then $(q,t)\subset\mathcal{N}(c,\check{M})$, namely, passing through the point $(q,t)$ there is a $c$-semi-static curve in the covering space $\check{M}\times\mathbb{T}$.
\end{lem}
\begin{proof}
By the definition, $\partial u^-_{c,\ell}=\partial u^+_{c,r}$ holds at a global minimal point of $B^\ell_c$, denoted by $x=(q,t)$. Therefore, the backward minimal curve $\gamma^-_{c,x}$ is joined smoothly to the forward minimal curve $\gamma^+_{c,x}$. They make up a $c$-semi-static curve for $\check{M}$.
\end{proof}

For a class $c\in\Gamma^*_c$, the covering space $\check{M}\times\mathbb{T}$ is divided into two annuli $\mathbb{A}_{c,r}$ and $\mathbb{A}_{c,\ell}$, bounded by $\Upsilon_{c,\ell}$ and $\Upsilon_{c,r}$. Clearly, one has $\check{\pi}\mathbb{A}_{c,r}=\check{\pi}\mathbb{A}_{c,\ell}$.
The set $\mathcal{N}(c,\check{M})\backslash\mathcal{A}(c,\check{M})$ contains $c$-minimal curves which cross the annulus from one side to another side or vice versa. Each of the curves produces a homoclinic orbit to the torus $\tilde\Upsilon_c$.

\begin{lem}\label{A3}
There is a finite partition of $\Gamma_c$: $\Gamma_c=\cup I_k$, each $I_k$ is a segment of $\Gamma_c$. For each $I_k$ there is an annulus $N_k\subset\mathbb{A}_{c,r}|_{t=0}$, two numbers $\delta>0$ and $d>0$ such that for each $c\in I_k\cap\Gamma^*_c$
\begin{enumerate}
  \item $\mathrm{dist}(N_k,\Upsilon_{c,\ell}\cup\Upsilon_{c,r})\ge\delta$;
  \item each curve $(\gamma(t),t)$ lying in $(\mathcal{N}(c,\check{M})\backslash\mathcal{A} (c,\check{M}))\cap\mathbb{A}_{c,r}$ passes through $N_k$;
  \item for each backward (forward) $c$-minimal curve $\gamma$, let $\{q_i=\gamma(2i\pi)\in N_k\}$, then $|q_i-q_j|\ge d$ if $i\ne j$.
\end{enumerate}
\end{lem}
\begin{proof}
Because $\Gamma_c$ is compact, the speed of each $c$-minimal orbit is uniformly upper bounded for all $c\in\Gamma_c^*$. Given an integer $m>0$, there will be small $\delta_c>0$ such that the period for each $c$-minimal curve to cross the annulus $N_c=\mathbb{A}_{c,r}\backslash((\Upsilon_{c,\ell}\cup\Upsilon_{c,r})+\delta_c)$ is not shorter than $4m\pi$. Because of the upper semi-continuity of Ma\~n\'e set in $c$, there exists some $\delta'_c>0$ such that $\Upsilon_{c',\ell}\cup\Upsilon_{c',r}$ does not touch $N_c$ and the period for each $c'$-minimal curve to cross the annulus $N_c$ is not shorter than $2m\pi$ provided $|c-c'|\le\delta'_c$ and $c'\in\Gamma^*_c$. The first two items are then proved if we notice $\Gamma^*_c$ is compact.

For the third one, we notice that the condition $\gamma(2i\pi)=\gamma(2j\pi)$ for $i\ne j$ implies that $\gamma$ is a curve in the Aubry set. It contradicts the assumption. Since both $N_k$ and $I_k$ are compact, such a constant $d>0$ exists.
\end{proof}
\subsection{The regularity of barrier functions}\label{SSRegularity}
The next lemma on the regular dependence on certain parameter of the invariant circles of the twist map is the key observation to establish the genericity.
\begin{lem}\label{Lm1/2Holder}
There exist a constant $C_L$ and a parametrization $\sigma \mapsto c(\sigma)\in I_k\cap \Gamma_c^*$ such that the invariant curves $\tilde\Upsilon_{c(\sigma),0}(q)$ on the NHIC forms a $1/2$-H\"older family in the $C^0$ norm with respect to the parameter $\sigma$:
\begin{equation}\label{Eq1/2Holder}
\max_q|\tilde\Upsilon_{c(\sigma),0}(q)-\tilde\Upsilon_{c(\sigma'),0}(q)|\le\sqrt{2C_L|\sigma-\sigma'|}.
\end{equation}
\end{lem}
\begin{proof}By the definition, the Aubry set $\tilde{\mathcal{A}}(c)$ is an invariant torus if $c\in\Gamma^*_c$. Its time-$2\pi$-section is an invariant circle lying in the cylinder. Fix one of the circles, we are able to parameterize other circle by the algebraic area bounded by the circles. Let us consider the twist map on the standard cylinder first. It is well-known that all invariant circles are Lipschitz with the constant $C_L$ which depends on the twist condition only. Treating each circle as the graph of some periodic function and fixing one as $\tilde\Upsilon_{0,0}$ one can parameterize another circle by the algebraic area bounded by these two circles. The annulus bounded by the circle $\tilde\Upsilon_{\sigma,0}$ and $\tilde\Upsilon_{\sigma',0}$ contains a diamond, the height of the vertical diagonals is $\max_q|\tilde\Upsilon_{\sigma,0}(q)-\tilde\Upsilon_{\sigma',0}(q)|$ and the length of the horizontal diagonal is not shorter than $\frac 1{C_L}\max_q|\tilde\Upsilon_{\sigma,0}(q)-\tilde\Upsilon_{\sigma',0}(q)|$. So, one has \eqref{Eq1/2Holder}.
A non-standard cylinder can be regarded as the image of the standard cylinder under a symplectic diffeomorphism, so the $\frac 12$-H\"older continuity still holds.
\end{proof}

Each invariant circle corresponds to a unique $c\in\Gamma_c$ such that the Aubry set is the circle. The parameter $\sigma$ is usually defined on a Cantor set, denoted by $\Sigma$. We next use the normal hyperbolicity of the cylinder to extend the H\"older estimate to barrier functions defined on $\T^{n}$.

\begin{lem}\label{moduluscontinuity}
For $\sigma,\sigma'\in\Sigma$, let $c=c(\sigma)$, $c'=c(\sigma')$. If $c,c'\in I_k$ and $q\in N_k$, then
$$
|B^\ell_{c(\sigma)}(q,0)-B^\ell_{c(\sigma')}(q,0)|\le C(\sqrt{|\sigma-\sigma'|}+|c-c'|).
$$
\end{lem}
\begin{proof}
For $c=c(\sigma)$ with $\sigma\in\Sigma$, the minimal measure is uniquely ergodic. There is only one pair of weak KAM solutions $u^{\pm}_c$ for the configuration space $\mathbb{T}^2$. With respect to the covering space $\check{M}$, we have introduced the elementary weak KAM solutions $u^{\pm}_{c,\ell}$ and $u^{\pm}_{c,r}$. Since the projection $\check\pi$ is an injection when it is restricted in the neighborhood $\Upsilon_{c,\imath}+\delta$ for $\imath=\ell,r$ respectively, for $(q,t)\in\Upsilon_{c}+\delta$ one has
\begin{equation}\label{A10}
u^{\pm}_{c,\ell}(\check\pi^{-1}(q,t)\cap(\Upsilon_{c,\ell}+\delta)) =u^{\pm}_{c,r}(\check\pi^{-1}(q,t)\cap(\Upsilon_{c,r}+\delta))
=u^\pm_c(q,t).
\end{equation}

By the definition of weak KAM solutions, for any $t'<t$ one has
$$
u^-_{c,\ell}(\gamma(t),t)-u^-_{c,\ell}(\gamma(t'),t')\le\int_{t'}^t(L(\dot\gamma(s),\gamma(s),s)-\langle c,\dot\gamma(s)\rangle)ds+(t-t')\alpha(c)
$$
which becomes an equality when $\gamma$ is a backward $c$-semi static curve. Assume $\gamma^-_{c,q}$ is a backward $c$-minimal curve such that $\gamma^-_{c,q}(0)=q$, we have
$$
\begin{aligned}
u^-_{c,\ell}(q,0)-u^-_{c,\ell}(\gamma^-_{c,q}(-2K\pi),0)=&\int_{-2K\pi}^0(L(\dot\gamma^-_{c,q}(s), \gamma^-_{c,q}(s),s)-\langle c,\dot\gamma^-_{c,q}(s)\rangle)ds\\
&+2K\pi\alpha(c),\\
u^-_{c',\ell}(q,0)-u^-_{c',\ell}(\gamma^-_{c,q}(-2K\pi),0)\le&\int_{-2K\pi}^0(L(\dot\gamma^-_{c,q}(s), \gamma^-_{c,q}(s),s) -\langle c',\dot\gamma^-_{c,q}(s)\rangle)ds\\
&+2K\pi\alpha(c').
\end{aligned}
$$
Since $N_k$ keeps away from $\Upsilon_{c,\ell}$, some $K>0$ exists such that for each $q\in N_k$, $c\in I_k$ and $q\in N_k$ one has $\gamma^-_{c,q}(-2K\pi)\in(\Upsilon_{c,\ell}+\delta)$.
Since $c$ and $c'$ are located in a compact set $\Gamma_c$, the $\alpha$ function is convex and finite everywhere, there is some constant $C_1$ such that $|\alpha(c')-\alpha(c)|\le C_1|c-c'|$. Let $\bar\gamma^-_{c,q}$ be the lift of $\gamma^-_{c,q}$ to the universal covering space, one has $|\bar\gamma^-_{c,q}(0)-\bar\gamma^-_{c,q}(-2K\pi)|\le 2C_2K\pi$.
$$
\begin{aligned}
&u^-_{c',\ell}(q,0)-u^-_{c,\ell}(q,0)-(u^-_{c',\ell}(\gamma^-_{c,q}(-2K\pi),0)-u^-_{c,\ell} (\gamma^-_{c,q}(-2K\pi),0))\\
&\le 2K\pi(C_1+C_2)|c-c'|.
\end{aligned}
$$
In the same way one can also obtain
$$
\begin{aligned}
&u^-_{c,\ell}(q,0)-u^-_{c',\ell}(q,0)-(u^-_{c,\ell}(\gamma^-_{c',q}(-2K\pi),0)-u^-_{c',\ell} (\gamma^-_{c',q}(-2K\pi),0))\\
&\le 2K\pi(C_1+C_2)|c-c'|.
\end{aligned}
$$
For $u^+_{c,r}$, $u^+_{c'r}$ we also have similar inequalities. Therefore, it follows from (\ref{A10}) that some points $(q_\ell,0),(q_r,0)\in\Upsilon_{c}+\delta$ exist such that
$$
\begin{aligned}
|B_c(q,0)-B_{c'}(q,0)|&\le 4K\pi(C_1+C_2)|c-c'|\\
&+|u^-_c(q_\ell,0)-u^-_{c'}(q_\ell,0)-u^+_c(q_r,0)+u^+_c(q_r,0)|.
\end{aligned}
$$
By the assumption, both $u_c^-$ and $u_c^+$ are $C^{1,1}$ when they are restricted in $\Upsilon_c+\delta$. Due to the normal hyperbolic property, each $(p,q)\in\tilde\Pi_{0}$ has its stable and unstable fiber which is $C^{r-1}$-smoothly depends on the point $(p,q)$. The fibers are defined by $\partial_q u^{\pm}_c+c$ and one has that
$$
|\partial _qu^\pm_{c}-\partial_qu^\pm_{c'}+c-c'|\le C_3\sqrt{|\sigma-\sigma'|}
$$
holds for some constant $C_3>0$, independent of $c,c'$. Combining above two inequalities, one finishes the proof of the lemma.
\end{proof}

\subsection{Perturbing the barrier function through perturbing the Hamiltonian}\label{SSPerturbation}
In this section, we show how to perturb the barrier function through perturbing the Hamiltonian.
We consider the $c$-minimal curves for $c\in I_k$. Because $I_k$ is compact, there exists a constant $D>0$ such that $|\dot\gamma(t)|\le D$ holds for any $c$-minimal curve with $c\in I_k$.
Let $\Omega_{\tau}=\{(q',q)\in\mathbb{R}^2\times\mathbb{R}^2:|q'-q|\le 2D\tau\ \mathrm{with}\ \tau>0\}$. We consider the action
$$
S_{-\tau}(q',q)=\min_{\stackrel {\xi(-\tau)=q'}{\scriptscriptstyle \xi(0)=q}}\int_{-t}^0 L(\dot\xi(s),\xi(s),s)ds.
$$
For suitably small $\tau>0$, there exists a unique minimal curve if $(q',q)\in\Omega_\tau$. Indeed, because $L$ is Tonelli, the second derivative of any solution $q(t)$ of the Euler-Lagrange equation is bounded by $|\ddot q|\le |\partial_{\dot q\dot q}L^{-1}(\partial_qL-\partial^2_{\dot qq}L\dot\gamma-\partial_{\dot qt}L)|$. Recall the Taylor formula
$$
q(t')=q(t)+\dot q(t)(t'-t)+\frac 12\ddot q(\lambda t+(1-\lambda)t')(t'-t)^2
$$
holds for small $|t'-t|$, where both entries of $\lambda\in\mathbb{R}^2$ takes value in $[0,1]$. Therefore, for small $|t'-t|$, there is an one to one correspondence the initial speed $\dot\gamma(t)$ and the end point $\gamma(t')$. In this case, $S_{-\tau}(q',q)$ is $C^r$-differentiable in both $q'$ and $q$. By the definition of weak KAM, for $c\in I_k$ one has
$$
u^-_c(q,0)=\min_{q'\in\mathbb{T}^2,\,|q'-q|\le 2D\tau}(S_{-\tau}(q',q)-\langle c,q-q'\rangle+u^-_c(q',-\tau))
$$
We extend $S_{-\tau}$ smoothly to the whole $\mathbb{R}^2\times\mathbb{R}^2$ such that it satisfies the twist condition. Recall the quantities defined in Lemma \ref{A3} such as the annulus $N_k$ and the number $d>0$.
\begin{lem}\label{A5} For any $\epsilon>0$ small enough, there exists $\dt$ such that if  $S_\delta(q)$ be a $C^r$-function such that $\max\{|q-q'|:q,q'\in\mathrm{supp} S_\delta\}\le d$, $\mathrm{supp} S_\delta\subset N_k$ and $\|S_\delta\|_{C^r}\leq \dt$. Then, restricted on $I_k$, there exists a perturbation $H\to H'=H+H_\delta$ with $\|H_\delta\|_{C^r}<\epsilon$ and the barrier function is subject to a translation
$$
B_c(q,0)\to B_c(q,0)+S_\delta(q) \qquad \forall\ c\in I_k,\ q\in\mathrm{supp} S_\delta.
$$
\end{lem}
\begin{proof}
The function $S_{-\tau}(q',q)$ induces a symplectic map between the time $-\tau$ section and the time-0-section $\Phi$: $(p',q')\to(p,q)$
$$
p=\frac{\partial S_{-\tau}}{\partial q}(q',q)\qquad p'=-\frac{\partial S_{-\tau}}{\partial q'}(q',q).
$$
We introduce a smooth function $\kappa$ such that $\kappa(q',q)=1$ if $|q'-q|\le K$ and $\kappa(q',q)=0$ if $|q'-q|\ge K+1$. Let $\Phi'$ be the map determined by the generating function $S_{-\tau}+\kappa S_\delta$, the symplectic diffeomorphism $\Psi= \Phi'\circ\Phi^{-1}$ is close to identity if $S_\delta$ is $C^r$-small. We choose a smooth function $\rho(s)$ with $\rho(-\tau)=0$, $\rho(0)=1$ and let $\Phi_s'$ be the symplectic map produced by $S_{-\tau}+\rho(s)\kappa S_\delta$ and let $\Psi_s=\Phi_s'\circ\Phi^{-1}$. Clearly, $\Psi_s$ defines a symplectic isotopy between the identity map and $\Psi$. Thus, there is a unique family of symplectic vector fields $X_s$: $T^*\mathbb{T}^2\to TT^*\mathbb{T}^2$ such that
$$
\frac d{ds}\Psi_s=X_s\circ\Psi_s.
$$
By the choice of perturbation, there is a simply connected and compact domain $D$ such that $\Psi_s|_{T^*\mathbb{T}^2\backslash D}=id$. It follows that there exists a Hamiltonian $H_1(p,q,s)$ such that $X_s=J\nabla H_1(p,q,s)$. Re-parametrizing $s$ by $t$, we can make $X_s$ smoothly depend on $t$ and smoothly connected to the zero vector field at $t=-\tau,0$. To show the smallness of $dH'$ we apply a theorem of Weinstein \cite{W}. A neighborhood of the identity in the symplectic diffeomorphism group of a compact symplectic manifold can be identified with a neighborhood of the zero in the vector space of closed 1-forms on the manifold. Since Hamiltomorphism is a subgroup of symplectic diffeomorphism, there is a function $H'$, sufficiently close to $H$, such that $\Phi^{-\tau,0}_{H'}=\Phi_{H_1}^{-\tau,0}\circ\Phi_{H}^{-\tau,0}$.

For all $c\in\Gamma_c$, by the assumption, any backward (forward) $c$-minimal curve will not return back to $\mathrm{supp}S_{-\tau}$ if its initial point falls into the support. Let $u^{\pm,S_\delta}_{c,\imath}$ denotes the elementary weak KAM solution for the perturbed Hamiltonian
$$
\begin{aligned}
u^{-,S_\delta}_{c,\imath}(q,0)=&\min_{|q'-q|\le 2D\tau}(S_{-\tau}(q',q)+S_\delta(q)-\langle c,q-q'\rangle+u^-_{c,\imath}(q',-\tau))\\
=&S_\delta(q)+\min_{|q'-q|\le 2D\tau}(S_{-\tau}(q',q)-\langle c,q-q'\rangle+u^-_{c,\imath}(q',-\tau))\\
=&S_\delta(q)+u^{-}_{c,\imath}(q,0).
\end{aligned}
$$
Obviously, one has $u^{+,S_\delta}_{c,\imath}(q,0)=u^{+}_{c,\imath}(q,0)$. The lemma is proved because the barrier function is the difference of the two functions.
\end{proof}
\subsection{Proof of Theorem \ref{fundamental}}\label{SSFundamental}

The proof of Theorem \ref{fundamental} is based on the following lemma.

Given $q^*\in\mathbb{T}^2$, let $\mathbb{S}_{d_1}(q^*)=\{|q-q^*|\le d_1\}$ denote a square. Given a function $B\in C^0(\mathbb{S}_{d_1}(q^*),\mathbb{R})$, let
$$
\mathrm{Argmin}(\mathbb{S}_{d_1}(q^*),B)=\{q\in\mathbb{S}_{d_1}(q^*):B(q)=\min B\}.
$$

\begin{lem}\label{LmFundamental}
For any small $\epsilon>0$, there is a set $\mathfrak{O}$ open-dense in $\mathfrak{B}_{\epsilon}$ such that for each $H_{\delta}\in\mathfrak{O}$,
letting $B^\ell_{c,\delta}$ be the barrier function for the Hamiltonian $H+H_{\delta}$ and the class $c$, it holds simultaneously for all $c\in I_k\cap\Gamma_c^*$ that the set $\mathrm{Argmin}(\mathbb{S}_{d_1}(q^*),B^\ell_{c,\delta})$ is trivial for $\mathbb{S}_{d_1}(q^*)$ provided $\mathbb{S}_{d_1}(q^*)\subset N_k$ and $d_1<d/3$ is suitably small.
\end{lem}
We first complete the proof of Theorem \ref{fundamental} assuming the lemma.

\begin{proof}[Proof of Theorem \ref{fundamental}]

Let $\pi_i$ be the projection so that $\pi_i(q_1,q_2)=q_i$ ($i=1,2$). A connected set $V$ is said to be non-trivial for $\mathbb{S}_{d_1}(q^*)$ if $\pi_iV\cap\mathbb{S}_{d_1}(q^*)=\pi_i\mathbb{S}_{d_1}(q^*)$ holds for $i=1$ or $2$. Otherwise, it is said to be trivial for $\mathbb{S}_{d_1}(q^*)$.
To finish the proof of Theorem \ref{fundamental}, we split the annulus $N_k$ equally into squares $\{\mathbb{S}_j=|q-q_j|\le\frac {d_1}5\}$. By Lemma \ref{LmFundamental}, for each $\mathbb{S}_j$, there exists an open-dense set $\mathfrak{O}_{k,j}\subset\mathfrak{B}_{\epsilon}$, for each $H_{\delta}\in\mathfrak{O}_{k,j}$ it holds simultaneously for all $c\in I_k\cap\Gamma^*_c$ that the set $\mathrm{Argmin}(\mathbb{S}_j,B^\ell_{c,\epsilon})$ is trivial for $\mathbb{S}_j$. The intersection $\cap\mathfrak{O}_{k,j}$ is still open-dense in $\mathfrak{B}_{\epsilon}$. For each $H_{\delta}\in\cap_{k,j}\mathfrak{O}_{k,j}$, it holds simultaneously for all $c\in\Gamma^*_c$ that the diameter of each connected component of the Ma\~n\'e set is not larger than $\frac 45d_1$ if it keeps away from the Aubry set.
\end{proof}
Let us now give the proof of Lemma \ref{LmFundamental}.
\begin{proof}[Proof of Lemma \ref{LmFundamental}]

The openness is obvious. To show the denseness, by Lemma \ref{A5}, we construct the perturbations $H_\delta\in\mathfrak{B}_{\epsilon}$ such that the barrier function is under a translation $B_c(q,0)\to B_c(q,0)+S_\delta(q)$ for all $c\in I_k\cap\Gamma_c^*$ and $q\in\mathrm{supp}S_\delta$. 

Recall the number $d>0$ defined in Lemma \ref{A3}. Given a square $\mathbb{S}_{d_1}(q^*)\subset N_k$ with $3d_1<d$, we consider the space of $C^r$-functions $\mathfrak{S}_1$, a function $S\in\mathfrak{S}_1$ if it satisfies the conditions that $\mathrm{supp}S\subset B_{d/2}(q^*)$ and $S$ is constant in $q_2$ when it is restricted in $\mathbb{S}_{d_1}(q^*)$. Similarly, we can define $\mathfrak{S}_2$ such that $S\in\mathfrak{S}_2$ implies that $\mathrm{supp}S\subset B_{d/2}(q^*)$ and it is constant in $q_1$ when it is restricted in $\mathbb{S}_{d_1}(q^*)$.

In $\mathfrak{S}_i$ we define an equivalent relation $\sim$, two functions $S_1\sim S_2$ implies $S_1-S_2=\mathrm{constant}$ when they are restricted on $\mathbb{S}_{d_1}(q^*)$. Obviously, $\mathfrak{S}_i/\sim$ is a linear space with infinite dimensions. For $S_1,S_2\in\mathfrak{S}_i/\sim$,
$\|S_1-S_2\|_r$ measures the $C^r$-distance if they are regarded as the functions defined on $\mathbb{S}_{d_1}(q^*)$. We also use $\mathfrak{B}_{i,\epsilon}$ to denote a ball in $\mathfrak{S}_i/\sim$, about the origin of radius $\epsilon$ in the sense of the $C^r$-topology.

We claim that there exists a set $\mathfrak{O}_{1,\epsilon}$ open-dense in $\mathfrak{B}_{1,\epsilon}$ such that for each $S_{\delta}\in\mathfrak{O}_{1,\epsilon}$ it holds simultaneously for all $c\in I_k\cap\Gamma_c^*$ that
\begin{equation}\label{Z1}
\pi_1\mathrm{Argmin}(\mathbb{S}_{d_1}(q^*),B^\ell_{c}+S_{\delta})\subsetneqq[q_1^*-d_1,q_1^*+d_1]
\end{equation}
Let $\mathfrak{F}_c=\{B^\ell_c(q,0):c\in\Gamma^*_c\}$ be the set of barrier functions. For $i=1,2$ we set
$$
\mathfrak{Z}_i=\{B\in C^0(\mathbb{S}_{d_1}(q^*),\mathbb{R}):\pi_i\mathrm{Argmin}(\mathbb{S}_{d_1}(q^*),B)=[q_i^*-d_1,q_i^*+d_1]\},
$$
where $q^*=(q^*_1,q^*_2)$.

If the denseness does not hold, there would be small $\epsilon>0$, for each $S_\delta\in\mathfrak{B}_{1,\epsilon}$, some $c\in\Gamma_c^*$ exists such that $B^\ell_c+S_\delta\in\mathfrak{Z}_1$. Let $\mathfrak{B}_{1,\epsilon}^k$ be the intersection of $\mathfrak{B}_{1,\epsilon}$ with a $k$-dimensional subspace. The box-dimension of $\mathfrak{B}_{1,\epsilon}^k$ in $C^0$-topology will not be smaller than $k$.

For any $B^\ell_c\in\mathfrak{F}_c$ there is only one $S_\delta\in\mathfrak{B}_{1,\epsilon}$ such that $B^\ell_c+S_\delta\in\mathfrak{Z}_1$. Otherwise, there would be $S_\delta'\ne S_\delta$ such that $B^\ell_c+S_\delta'\in\mathfrak{Z}_1$ also. As we have $B^\ell_c+S'_\delta=B^\ell_c+S_\delta+S'_\delta-S_\delta$ where $B^\ell_c+S_\delta\in\mathfrak{Z}_1$ and $S'_\delta\sim S_\delta$, which contradicts the definition of $\mathfrak{S}_1$. For $S_\delta\in\mathfrak{B}_{1,\epsilon}$, let $\mathfrak{S}_{S_\delta}=\{B_c^\ell\in\mathfrak{F}_c:B_c^\ell+S_\delta\in\mathfrak{Z}_1\}$. If the denseness does not hold, $\mathfrak{S}_{S_\delta}$ is non-empty. For any $S_{\delta},S'_{\delta}\in\mathfrak{B}^k_{1,\epsilon}$, each $B_c^\ell\in\mathfrak{S}_{S_\delta}$ and each $B_{c'}^\ell\in\mathfrak{S}_{S'_\delta}$ one has
\begin{equation}\label{isometric}
\begin{aligned}
d(B_c^\ell,B_{c'}^\ell)&=\max_{q\in\mathbb{S}_{d_1}(q^*)}|B_{c}^\ell(q,0)-B_{c'}^\ell(q,0)|\\
&\ge\max_{|q_1-q_1^*|\le d_1}\Big|\min_{|q_2-q_2^*|\le d_1}B_{c}^\ell(q,0)-\min_{|q_2-q_2^*|\le d_1}B_{c'}^\ell(q,0)\Big|\\
&=\max_{|q_1-q_1^*|\le d_1}|S_{\delta}(q)-S'_{\delta}(q)|=d(S_{\delta},S'_{\delta})
\end{aligned}
\end{equation}
where $q=(q_1,q_2)$ and $d(\cdot,\cdot)$ denotes the $C^0$-metric. It implies that the box-dimension of the set $\mathfrak{F}_c$ is not smaller than the box-dimension of $\mathfrak{B}_{1,\epsilon}^k$ in $C^0$-topology. Guaranteed by the modulus continuity of Lemma \ref{moduluscontinuity}, the box dimension of the set $\mathfrak{F}_c$ is not larger than 3. Therefore, we will obtain an absurdity if we choose $k\ge 4$.

In the same way, we can show that there exists a set $\mathfrak{O}_{2,\epsilon}$ open-dense in $\mathfrak{B}_{2,\epsilon}$ such that for each $S_{\delta}\in\mathfrak{O}_{2,\epsilon}$ it holds simultaneously for all $c\in I_k\cap\Gamma_c^*$ that
\begin{equation}\label{Z2}
\pi_2\mathrm{Argmin}(\mathbb{S}_{d_1}(q^*),B^\ell_{c}+S_{\delta})\varsubsetneq[q_2^*-d_1,q_2^*+d_1].
\end{equation}
Therefore, $\exists$ arbitrarily small $S_{i,\delta}\in\mathfrak{B}_{i,\epsilon}$ such that  $\pi_i\mathrm{Argmin}(\mathbb{S}_{d_1}(q^*),B^\ell_c+S_{1,\delta}+S_{2,\delta})$ is trivial for $\mathbb{S}_{d_1}(q^*)$ and for all $c\in I_k\cap\Gamma_c^*$. Due to Lemma \ref{A5} we obtain the density.
\end{proof}
\subsection{The autonomous case and the Ma\~n\'e perturbation case}\label{SSModification}
We have thus completed the proof of Theorem \ref{fundamental} in the nonautonomous case (A) and for perturbations depending on all the variables. To generalize the argument to the autonomous case (B) and the Ma\~n\'e perturbation, we first review the argument. The argument relies on the following three ingredients
\begin{enumerate}
\item one can perturb the barrier function through perturbing the Hamiltonian (Lemma \ref{A5});
\item the barrier functions associated to invariant curves in the NHIC can be parametrized into a H\"older family (Lemma \ref{Lm1/2Holder} and \ref{moduluscontinuity});
\item arbitrarily small perturbations to the Hamiltonian can make simultaneously all the barrier functions with $c\in \Gamma^*_c$ nonconstant (Lemma \ref{LmFundamental}).
\end{enumerate}
\subsubsection{The autonomous case}
To prove Theorem \ref{fundamental} in the autonomous case (B) and for perturbations depending on all the variables, we need the following two modifications. First, the nonautonomous case (A) has a natural section of the Hamiltonian flow given by $\{t=0\}$, restricted to which we study the regularity and perturbation of the barrier functions. In the autonomous case (B), in place of $\{t=0\}$, we need to pick a torus $\mathsf T$ that is homologous to $\T^{n-1}$ and transverse to orbits in the projected Ma\~n\'e set, and consider the restriction of to $\mathsf T$ of the barrier function (see Section 4.2 of \cite{C17b}). Second, the regularity results Lemma \ref{Lm1/2Holder} and Lemma \ref{moduluscontinuity} should be replaced by the corresponding versions restricted to an energy level, i.e. the following Theorem \ref{Thm1/3Holder} and Theorem \ref{extension}.

With these modifications, one can verify that the proofs of Lemma \ref{LmFundamental} and Theorem \ref{fundamental} goes through.

The next result is the main theorem of \cite{CX}.
\begin{theo}[Theorem 1.1 of \cite{CX}]\label{Thm1/3Holder}
Let $G:\ T^*\T^2\to \R$ be a Tonelli Hamiltonian, the set $\mathcal E_E$ be the set of extremal points of the convex set
$\cup_{E'\leq E}\{\alpha_G^{-1}(E')\}$, $E>\min\al_G$, and $u_c^\pm:\ \R^2\to \R,\ c\in \mathcal E_E$, be lifted elementary weak KAM solutions to $\R^2$ normalized by $u_c^\pm(0)=0$. For given bounded domain $\Omega\subset \R^2$, there exists a constant $C(\Omega,G)$ depending only on $\Omega$ and $G$, and a one-to-one parametrization of the elementary weak KAM solutions of cohomology classes in $\mathcal E_E$ by a number $\sigma\in\Sigma\subset [0,1],$ such that we have the following H\"older regularity: $\forall\ \sigma\in \Sigma,\forall\ c\in c(\Sigma)=\mathcal E_E,$\footnote{In the main theorem of \cite{CX}, the half sentence ``where $C$ is a constant depending only on the Hamiltonian $H$" is redundent and should be removed. }
$$
\|u^{\pm}_{c(\sigma)}-u^{\pm}_{c(\sigma')}\|_{C^0(\Omega)}\le C(\Omega,G)(\|c(\sigma)-c(\sigma')\|+|\sigma-\sigma'|^{\frac 13}).
$$
\end{theo}
In the presence of a NHIC, we have the following regularity result in the higher dimensional case. The proof is identical to Lemma \ref{moduluscontinuity}.
\begin{theo}[Theorem 6.1 of \cite{CX}]\label{extension}
Let $\mathbb{T}^k\times\mathbb{R}^k(\subset\mathbb{T}^n\times\mathbb{R}^n),\ k<n,$ be a normally hyperbolic invariant manifold for the Hamiltonian flow with $\ell\geq 2$ and let $u^{\pm}_{c(\sigma)}$ be elementary weak KAMs defined on $\mathbb{T}^n$ for $c(\cdot):\Sigma\to H^1(\T^k,\R)$ continuous and one-to-one, where $\Sigma$ is a compact subset of $\R^k$. If $\bar u^{\pm}_{c(\sigma)}:=\bar u^{\pm}_{c(\sigma)}|_{\mathbb{T}^k}$ is $\nu$-H\"older continuous in $\sigma$, then the weak KAM solutions $u^{\pm}_{c(\sigma)}$ satisfy the following estimate
$$\|u^\pm_{c(\sigma)}- u^\pm_{c(\sigma')}\|_{C^0(\T^n)}\leq C(\|\sigma-\sigma'\|^{\nu}+ \|c(\sigma)-c(\sigma')\|).$$
for some constant $C$.
 \end{theo}
\subsubsection{The Ma\~n\'e perturbation}
To prove Theorem \ref{fundamental} for Ma\~n\'e perturbations, we have to show that items (1) and (3) at the beginning of the subsection can be done using only Ma\~n\'e perturbations. Note that this is the case that we have to consider in order to prove Proposition \ref{PropCross3Res} and its decedents Proposition \ref{PropDoubleell} and \ref{PropComplete}. In fact, Lemma \ref{LmFundamental} was proved in Section 4.2 of \cite{C17b} as Theorem 4.2 where Lemma \ref{A5} is replaced by another argument. We refer readers to \cite{C17b} for details.

\noindent{\bf Acknowledgment.} The first author is supported by NNSF of China (No.11790272
and No.11631006). The second author is supported by NNSF of China (No.11790273),
NSF grant DMS-1500897 and One Thousand Young Talents Program in China.

\end{document}